\documentclass[12pt]{amsart}

\usepackage[T1]{fontenc}
\usepackage{fullpage}
\usepackage{amsmath,amssymb,mathtools,mathrsfs}
\usepackage[dvipsnames]{xcolor}
\usepackage{tikz-cd}
\usepackage{longtable}
\usepackage{caption}
\usepackage{booktabs}
\usepackage{placeins}
\usepackage{graphicx}
\usepackage{adjustbox}
\usepackage{microtype}
\usepackage[
  colorlinks=true,
  linkcolor=BrickRed,
  citecolor=Blue,
  urlcolor=Blue
]{hyperref}

\usepackage{algorithm}
\usepackage{algpseudocode}
\usepackage{needspace}
\usepackage{array}
\usepackage{multicol}
\usepackage{multirow}
\numberwithin{equation}{section}

\makeatletter
\newenvironment{breakablealgorithm}{%
  \par\addvspace{\intextsep}%
  \Needspace{6\baselineskip}%
  \begingroup
  \edef\BreakableAlgorithmItemPenalty{\the\@itempenalty}%
  \let\BreakableAlgorithmItem\@item
  \def\@item[##1]{%
    \BreakableAlgorithmItem[##1]%
    \@itempenalty\BreakableAlgorithmItemPenalty\relax
  }%
  \let\BreakableAlgorithmIf\If
  \renewcommand{\If}[1]{\BreakableAlgorithmIf{##1}\@itempenalty\@M}%
  \let\BreakableAlgorithmElse\Else
  \renewcommand{\Else}{\BreakableAlgorithmElse\@itempenalty\@M}%
  \let\BreakableAlgorithmForAll\ForAll
  \renewcommand{\ForAll}[1]{\BreakableAlgorithmForAll{##1}\@itempenalty\@M}%
  \let\BreakableAlgorithmWhile\While
  \renewcommand{\While}[1]{\BreakableAlgorithmWhile{##1}\@itempenalty\@M}%
  \let\BreakableAlgorithmEndIf\EndIf
  \renewcommand{\EndIf}{\@itempenalty\@M\BreakableAlgorithmEndIf}%
  \let\BreakableAlgorithmEndFor\EndFor
  \renewcommand{\EndFor}{\@itempenalty\@M\BreakableAlgorithmEndFor}%
  \let\BreakableAlgorithmEndWhile\EndWhile
  \renewcommand{\EndWhile}{\@itempenalty\@M\BreakableAlgorithmEndWhile}%
  \let\BreakableAlgorithmState\State
  \renewcommand{\State}{%
    \@ifnextchar\Return{\@itempenalty\@M\BreakableAlgorithmState}%
      {\BreakableAlgorithmState}%
  }%
  \renewcommand{\caption}[1]{%
    \refstepcounter{algorithm}%
    \def\@currentlabelname{##1}%
    \addcontentsline{loa}{algorithm}{\protect\numberline{\thealgorithm}##1}%
    \hrule height .8pt\kern 2pt\nobreak
    {\raggedright\noindent\textbf{Algorithm~\thealgorithm}\enspace ##1\par}%
    \nobreak\kern 2pt\hrule height .4pt\kern 2pt\nobreak
  }%
}{%
  \par\nobreak\kern 2pt\hrule height .8pt\relax
  \endgroup
  \par\addvspace{\intextsep}%
}
\makeatother

\theoremstyle{plain}
\newtheorem{thm}{Theorem}[section]
\newtheorem{lem}[thm]{Lemma}
\newtheorem{prop}[thm]{Proposition}
\newtheorem{cor}[thm]{Corollary}

\theoremstyle{definition}
\newtheorem{defn}[thm]{Definition}

\theoremstyle{remark}
\newtheorem{rmk}[thm]{Remark}

\newcommand{\CC}{\mathbb C}
\newcommand{\ZZ}{\mathbb Z}
\newcommand{\FF}{\mathbb F}
\newcommand{\PP}{\mathbb P}
\newcommand{\DD}{\mathbb D}
\newcommand{\calF}{\mathcal F}
\newcommand{\calH}{\mathcal H}
\newcommand{\calM}{\mathcal M}

\DeclareMathOperator{\Aut}{Aut}
\DeclareMathOperator{\GL}{GL}
\DeclareMathOperator{\PGL}{PGL}
\DeclareMathOperator{\SL}{SL}
\DeclareMathOperator{\PSL}{PSL}
\DeclareMathOperator{\Sym}{Sym}
\DeclareMathOperator{\diag}{diag}
\DeclareMathOperator{\rank}{rank}
\DeclareMathOperator{\tr}{tr}
\DeclareMathOperator{\Hom}{Hom}
\DeclareMathOperator{\Ann}{Ann}
\newcommand{\Auts}{\Aut^{\mathrm s}}
\DeclareMathOperator{\ord}{ord}

\newcommand{\PBasisLabel}[1]{\mathrlap{P_{#1}}\phantom{P_{99}}}
\newenvironment{SecEightAlignedBasis}[1]{%
  \begingroup\renewcommand{\arraystretch}{1.18}%
  \[\begin{array}{@{}#1@{}}%
}{\end{array}\]\endgroup}

\newenvironment{familydisplay}{%
  \begingroup
  \setlength{\arraycolsep}{1.35pt}%
  \renewcommand{\arraystretch}{0.96}%
  \setlength{\jot}{2pt}%
}{\endgroup}

\newenvironment{SecEightMatrixBlock}{%
  \par\smallskip\begingroup
  \setlength{\arraycolsep}{1.55pt}\renewcommand{\arraystretch}{0.96}%
  \noindent\centering\leavevmode
}{%
  \par\endgroup\vspace{-0.15em}}
\newcommand{\SecEightMatrix}[2]{%
  \mbox{$\displaystyle #1=#2$}%
  \penalty0\hskip1.25em plus 0.45em minus 0.20em\relax}
\newcommand{\SecEightDenseMatrix}[2]{%
  \par\smallskip\noindent\begin{center}%
  \begingroup\setlength{\arraycolsep}{1.0pt}\renewcommand{\arraystretch}{0.94}%
  \adjustbox{max width=.98\linewidth}{$\displaystyle #1=#2$}%
  \endgroup\end{center}\vspace{-0.20em}}
\newenvironment{SecEightBasisBlock}{%
  \par\smallskip\begingroup\noindent\centering\leavevmode
}{%
  \par\endgroup\vspace{-0.15em}}
\newcommand{\SecEightBasisLine}[1]{%
  \adjustbox{max width=.98\linewidth}{\(\displaystyle #1\)}%
  \penalty0\hskip1.45em plus 0.55em minus 0.25em\relax}
\newcommand{\smat}[1]{\left(\begin{smallmatrix}#1\end{smallmatrix}\right)}

\title{Classification of Automorphism Groups of Smooth Cubic Threefolds and Fourfolds}

\author[J. Fu]{Jie Fu}
\address{Tsinghua University, China}
\email{fu-j21@mails.tsinghua.edu.cn}

\author[S. Wang]{Shihao Wang}
\address{Tsinghua University, China}
\email{wangshihao25@mails.tsinghua.edu.cn}

\author[Z. Zheng]{Zhiwei Zheng}
\address{Tsinghua University, China}
\email{zhengzhiwei@mail.tsinghua.edu.cn}

\date{}

\begin{document}
\raggedbottom
\allowdisplaybreaks[4]
\bibliographystyle{amsalpha}

\begin{abstract}
We classify the automorphism groups of smooth cubic threefolds and fourfolds. We also show that there are $156$ (respectively, $40$) connected families of smooth cubic fourfolds (respectively, threefolds) with specified automorphism group action. For each family, the group action and defining equations are also given. The classification combines both representation theory (based on GAP) and lattice theory (based on SageMath).
\end{abstract} 

\maketitle
\setcounter{tocdepth}{1}
\tableofcontents

\medskip
\noindent\emph{Notation.}
\begin{enumerate}
\item For subgroups $G_1$ and $G_2$ of a group $G$, we write
$G_1\leq G_2$ for inclusion. The centralizer and normalizer of $G_1$
in $G$ are denoted by $C_G(G_1)$ and $N_G(G_1)$.
\item For elements or subgroups of $G$, the relation $\sim_G$ denotes
conjugacy in $G$. For subgroups $G_1,G_2\leq G$, we write
$G_1\preceq_G G_2$ if a conjugate of $G_1$ is contained in $G_2$.
We write $G_1\prec_G G_2$ if this containment is proper. Abstract
isomorphism is denoted by $\cong$.
\item We write $G_1:G_2=G_1\rtimes G_2$ for a semidirect product, where $G_1$ is the normal subgroup. We write $G_1.G_2$ for a group fitting into a short exact sequence:
\[1\longrightarrow G_1\longrightarrow G_1.G_2\longrightarrow G_2\longrightarrow 1. \]
\item For a finite group $G$, write
$G_{\mathrm{ab}}=G/[G,G]$ and
$\operatorname{Out}(G)=\operatorname{Aut}(G)/\operatorname{Inn}(G)$.
\item We use $D_{2n}$ for the dihedral group of order $2n$, $Q_8$ for the
quaternion group of order $8$, $\mathrm{QD}_{16}$ for the quasidihedral
group of order $16$. We use $\mathrm{Dic}_3$ and $\mathrm{Hol}(5)$ for the groups
with GAP IDs $[12,1]$ and $[20,3]$, respectively.

\item The group of $n$-th roots of unity in $\CC^\times$ is denoted by $\mu_n$.  We occasionally write $n$ for a cyclic group of order $n$.
\item We write $L_2(q)=\PSL_2(\FF_q)$ and $3^{1+4}=3^{1+4}_+$ for the extraspecial group of order $3^5$ and exponent $3$.
\item Put $\zeta_n=\exp(2\pi\sqrt{-1}/n)$ and $\omega=\zeta_3$. For
$a=(a_i)_{i=1}^r\in\ZZ^r$, we sometimes put
$\frac1n(a)=\diag(\zeta_n^{a_i})_{i=1}^r$.
\item For $\sigma\in S_r$, we identify $\sigma$ with
the matrix whose $(i,\sigma(i))$-entry is $1$, and write it in disjoint
cycles; a monomial matrix is written $D\sigma$.
\item We write $\CC[x_1,\ldots,x_n]_d$ for the vector space of homogeneous
polynomials of degree $d$ in $x_1,\ldots,x_n$.
\item We regard $x=(x_1,\ldots,x_n)$ as a row vector and use the left action
on forms
$(A\cdot F)(x)=F(xA)$.  We write $F\sim_{\mathrm{lin}}F'$ if
$F'=A\cdot F$ for some $A\in\GL(n,\CC)$.
\item The natural projection $\GL(n,\CC)\to\PGL(n,\CC)$ is denoted by
$\pi$.
\end{enumerate}
\section{Introduction}

For a smooth cubic fourfold $X\subset\PP^5_{\CC}$, a basic problem is to
determine its automorphism group $\Aut(X)$.  Its symplectic subgroup
$\Auts(X)$ consists of the automorphisms acting trivially on
$H^{3,1}(X)$.  Laza--Zheng classified the possible symplectic automorphism
groups and their coinvariant lattices \cite{laza2022automorphisms}.
Koike described the corresponding projective representations and
invariant cubic forms, giving $48$ connected symplectic families
\cite{KOIKE202512,KOIKE2026}.  Yang--Yu--Zhu classified finite groups
admitting faithful actions on smooth cubic fourfolds
\cite{yang2024automorphism}. However, it happens that certain faithful group action on a smooth cubic fourfold is never realized as an automorphism group action, see \cite[Theorem~1.2(5)(b)]{laza2022automorphisms} for an example. It is still open to classify all possible full automorphism groups $\Aut(X)$ with their action on $\PP^5$. 

This paper continues \cite{FWZ26}, \cite{FZ26}.  The first paper studies the
non-symplectic index $[\Aut(X):\Auts(X)]$, proves general restrictions on
its possible values, and determines the automorphism groups when
the symplectic coinvariant lattice has rank $19$.  The second paper
gives a lattice-theoretic classification of automorphism groups and
their actions on the middle cohomology lattice when the symplectic
coinvariant rank is at least $15$.

Our aim is to complete the classification, including the remaining
cases of rank less than $15$, and to describe the corresponding
families explicitly. If
$F\in\CC[x_1,\ldots,x_n]_3$ defines a smooth cubic hypersurface
$X_F\subset\PP^{n-1}$, with $n=5$ or $6$, we put
\[
\Aut(F)=\{A\in\GL(n,\CC)\mid F(xA)=F(x)\}
\]
and call it the linear automorphism group of $F$. Every automorphism of $X_F$ is projectively induced
\cite{Matsumura1963OnTA}, and there is an exact sequence
\[
1\longrightarrow\ \omega I_n\longrightarrow\Aut(F)
\overset{\pi}{\longrightarrow}\Aut(X_F)\longrightarrow1.
\]
The group $\Aut(X_F)$ together with its projective representation may not determine the family; see \cite[\S 8 (8.6)]{KOIKE202512}. Thus in the proof of classification, we will concentrate on $\Aut(F)$ instead of $\Aut(X_F)$.

For a finite subgroup $H\leq\GL(n,\CC)$, consider the family of smooth
$H$-invariant cubic forms modulo linear equivalence.  We call $H$ and
its associated family \emph{saturated} if a general such form $F$ satisfies
$\Aut(F)=H$.

Two saturated families given by $H_1,H_2\leq\GL(n,\CC)$ are
called \emph{equivalent} if $A^{-1}H_1A=H_2$ for some
$A\in\GL(n,\CC)$. Equivalently, a linear change of coordinates
identifies their spaces of invariant cubic forms.

\begin{thm}[=Theorem~\ref{thm:fourfold-classification}]
\label{thm:fourfold}
There are exactly $156$ equivalence classes of saturated families of smooth cubic fourfolds. They are listed in Table~\ref{tab:fourfold}. For each family, more precise description of the group and defining equation are given in \S\ref{subsubsection: description for fourfolds}.
\end{thm}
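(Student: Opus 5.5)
The plan is to reduce the classification of saturated groups $H=\Aut(F)\le\GL(6,\CC)$ to a finite search organized by the symplectic subgroup. Every $H$ contains $\omega I_6$, and $\pi(H)=\Aut(X_F)$ contains the symplectic subgroup $G_s=\Auts(X_F)$ as a normal subgroup with cyclic quotient. By Laza--Zheng, $G_s$ runs over a known finite list of groups with their coinvariant lattices, and Koike gives the $48$ connected symplectic families with their projective representations. By \cite{FWZ26}, the non-symplectic index $[\Aut(X):\Auts(X)]$ takes only finitely many values. So the enumeration is: for each symplectic family $(G_s,\rho)$ and each admissible index $d$, list the lifts $\tilde G_s\le \GL(6,\CC)$ containing $\omega I_6$. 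Then list the extensions $H$ of $\tilde G_s$ generated by one further element $g$ whose image in $\PGL$ acts on $H^{3,1}$ by a primitive $d$-th root of unity and normalizes $\tilde G_s$. Up to $\tilde G_s$, such $g$ lies in $N_{\GL}(\tilde G_s)$, modulo the centralizer. This makes the candidate list finite. I would produce it in GAP, working with the character of $V=\CC^6$ and the decomposition of $\Sym^3V^*$ into $H$-isotypic parts.

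Next, each candidate $H$ is filtered by two conditions. First, the space $(\Sym^3 V^*)^H$ must contain a smooth form. I would check this by writing a general invariant cubic and testing one random specialization for smoothness, using a Jacobian criterion over a finite field or numerically. Second, $H$ must be saturated, meaning a general invariant $F$ has $\Aut(F)=H$ exactly. For saturation I would use the lattice side. For the ranks $\ge 15$ I would invoke the classification of \cite{FZ26}, which pins down the possible actions on $H^4(X,\ZZ)$. For lower rank I would compare dimensions: the moduli dimension of the family,
\[
\dim (\Sym^3V^*)^H-\dim N_{\GL(6)}(H)\text{-orbit},
\]
must match the dimension of the period domain for the corresponding eigenlattice, namely a ball or a type IV domain. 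For each overgroup $H'\supsetneq H$ that appears in the list, one checks that $(\Sym^3V^*)^{H'}$ gives a family of strictly smaller dimension, or that it is not conjugate to a subspace containing the general member. Finally, equivalent families, meaning conjugate $H$, are merged using a conjugacy test, and possibly distinct connected components with the same abstract group (cf. Koike's example) are kept separate. Counting what survives should give $156$. The equations in \S\ref{subsubsection: description for fourfolds} serve as certificates of existence and smoothness.

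The main obstacle is the saturation step in the low-rank range ($\operatorname{rk}<15$). There the families are large, overgroups are numerous, and it is not automatic that the generic member has no extra automorphisms. The failure in \cite[Theorem~1.2(5)(b)]{laza2022automorphisms} shows this can genuinely happen. My first attempt would be to exhibit, for each candidate, an explicit random member $F$ and compute $\Aut(F)$ directly. This is done by computing the finite group of linear maps preserving the Hessian-type invariants, or by computing the stabilizer in the normalizer of $H$ and checking that no other finite overgroup from the list has $F$ invariant. Upper semicontinuity then gives $\Aut(F)=H$ for the general member. If direct computation is too expensive, the fallback is the lattice criterion. By the global Torelli theorem, the general member of a family whose period lies in the generic stratum of the $H$-eigenperiod domain has automorphism group equal to the stabilizer of the generic period point. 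That stabilizer is computable in SageMath as the isometry group of the $H$-invariant data.
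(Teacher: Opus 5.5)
Your skeleton matches the paper's: $G_s\triangleleft G$ with cyclic quotient, Koike's lift $K^+$, $H=\langle K^+,g\rangle$, and identification of conjugate groups through characters. The genuine gap is the saturation step. You treat it as the main obstacle, but the tools you bring either test nothing or are left unspecified.
\begin{itemize}
\item Matching $m(H)$ against the dimension of the eigenperiod domain is automatic for every nonempty family (Theorem~\ref{thm:equivariant-torelli}), since a non-saturated $H$ has $m(H)=m(K)$ for its saturation $K$.
\item The stabilizer of a random $F$ in $N_{\GL(6,\CC)}(H)$ can return $H$ even when $\Aut(F)\supsetneq H$, because $H$ may be self-normalizing in $\Aut(F)$.
\item Computing $\Aut(F)$ for an explicit cubic, or generic Hodge-isometry stabilizers in low rank, is not specified. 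The latter would need the integral action on $H^4(X,\ZZ)$ of each candidate, which your enumeration does not produce.
\end{itemize}

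The missing idea is completeness. Every $\Aut(F)$ of a smooth cubic is itself saturated: the locus where the automorphism group equals $\Aut(F)$ is open and nonempty in the irreducible $W_3(\Aut(F))^{\mathrm{sm}}$. Every condition used to build the list is necessary for such a group, so the list contains all of them. If a candidate $H$ were not saturated, its saturation $K$ would therefore be conjugate to a candidate with $H\prec K$ and $m(K)=m(H)$ (Proposition~\ref{prop:dimension-test}). So an exhaustive conjugate-containment test among candidates of equal family dimension proves saturation of every survivor. That test is decided by comparing characters along embeddings $H\to K$ (Lemma~\ref{lem:strict-character-criterion}). This is how the paper goes from $177$ candidates to $156$ without computing the automorphism group of any individual cubic. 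Your clause about overgroups ``that appear in the list'' is the right test, but only completeness shows it suffices.

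The enumeration needs repair as well.
\begin{itemize}
\item Working in $N_{\GL(6,\CC)}(K^+)$ modulo the centralizer does not make the list finite, since $C_{\GL(6,\CC)}(K^+)$ is positive-dimensional. For $G_s=1$ one has $K^+=\langle\omega I_6\rangle$ and $N=C=\GL(6,\CC)$, so everything about families $132$--$156$ lives in the part you quotient out.
\item Finiteness and completeness come from bounding the abstract group $G$ and classifying the faithful six-dimensional characters of central extensions $1\to\langle\omega I_6\rangle\to H\to G\to1$ that restrict to $\chi_{K^+}$. The paper takes $G$ from \cite{yang2024automorphism} and \cite{FZ26}. It uses the vanishing of the $3$-part of the Schur multiplier to confine the extension classes to $\operatorname{Ext}^1(G_{\mathrm{ab}},C_3)$. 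It prunes with the prime-order classification \cite{gonzalez2011automorphisms}, and handles liftable abelian groups via the $32$ maximal diagonal groups built from \cite{zheng2021liftableabelian}.
\item A singular random specialization does not justify discarding a candidate. Completeness requires proving that every member of $W_H$ is singular, which the paper does by hand for seven spaces in Appendix~\ref{app:singular-families}.
\item If you rely on \cite{FZ26} in ranks $\geq15$, a lattice class of index $1$ or $2$ can carry two families exchanged by complex conjugation. Counting families then requires the self-conjugacy check $H\sim_{\GL(6,\CC)}\overline H$.
\end{itemize}
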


For each family, we determine the groups
$\Auts(X_F)$, $\Aut(X_F)$ and $\Aut(F)$ for a general member $F$.
Explicit representatives or references to known ones are given in \S \ref{sec:tables}. We also determine the incidence relations among the families, and find $21$ minimal ones (which is maximal in the sense of group action). They are listed in Table~\ref{tab:fourfold-action-maximal}.

For $\rank(S)\geq15$, with $S$ the symplectic coinvariant lattice,
\cite{FZ26} classifies the possible groups and their
actions on middle cohomology. We combine this with Koike's symplectic
representations and invariant cubics \cite{KOIKE202512,KOIKE2026} to
construct the corresponding strict lifts and cubic families. The lattice calculation used OSCAR \cite{OSCAR} and is inspired by Brandhorst--Hofmann
\cite{Brandhorst_Hofmann_2023}.

For $\rank(S)<15$, the group classifications and index restrictions of
\cite{laza2022automorphisms}, \cite{yang2024automorphism}, \cite{FWZ26} restrict the
possible groups. In Algorithm \ref{alg:liftable-abelian-enumeration}, we use the classified diagonal actions for liftable
abelian groups \cite{zheng2021liftableabelian}. For non-abelian groups, we
enumerate central extensions and six-dimensional representations extending
the specified symplectic action. The prime-order classification of
\cite[Theorem~3.8]{gonzalez2011automorphisms} gives further restrictions; see Algorithm \ref{alg:small-nonabelian-enumeration}.

We then determine the invariant cubic spaces, which both contain smooth forms and are saturated; see Algorithms \ref{alg:small-nonabelian-smoothness} and \ref{alg:equal-dimension-saturation}. The family dimensions and conjugate containment of strict lifts then identify the
saturated families and their incidence relations, see Algorithm \ref{alg:cross-dimensional-containment}.  These finite-group
calculations use GAP \cite{GAP4}.

The classification for cubic fourfolds also gives a classification for automorphism groups of
cubic threefolds.  Wei--Yu classified the finite groups admitting
faithful actions on smooth cubic threefolds \cite[Theorem~1.1]{LiYuThreefold}.
Here we determine the actions of $\Aut(F)$ and their invariant families
using the cubic suspension
\[
F(x_1,\ldots,x_5)\longmapsto
\widehat F=F(x_1,\ldots,x_5)+x_6^3.
\]
Put $\delta=\diag(1,1,1,1,1,\omega)$.  Restriction to the fixed
hyperplane of $\delta$ gives
\[
C_{\Aut(\widehat F)}(\delta)/\langle\delta\rangle
\cong\Aut(F).
\]
The uniqueness of the maximal additive splitting
\cite{harrison1975higherdegreeforms} ensures that equivalent suspensions come from equivalent cubic threefolds. These arguments lead to Algorithm~\ref{alg:threefold-extraction}.

\begin{thm}\label{thm:threefold}
There are exactly $40$ equivalence classes of saturated families of
smooth cubic threefolds. They are listed in Table~\ref{tab:threefold}. For each family, more precise description of the group and defining equation are given in \S\ref{subsubsection: description of threefolds}.
\end{thm}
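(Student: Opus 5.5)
The plan is to deduce the threefold classification from the fourfold classification of Theorem~\ref{thm:fourfold} through the cubic suspension $F\mapsto\widehat F=F+x_6^3$ and the identification $C_{\Aut(\widehat F)}(\delta)/\langle\delta\rangle\cong\Aut(F)$.

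\textbf{Step 1 (restriction).} Let $F$ be a smooth cubic form in five variables. Then $\widehat F$ is smooth, and $\delta=\diag(1,1,1,1,1,\omega)$ lies in $\Aut(\widehat F)$. A matrix commuting with $\delta$ is block diagonal, with a $5\times5$ block and a $1\times1$ block. If such a matrix preserves $\widehat F$, its $5\times5$ block preserves $F$ and its scalar block is a cube root of unity. This gives the stated isomorphism.

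\textbf{Step 2 (saturated threefold families sit inside fourfold families).} Let $H\le\GL(5,\CC)$ be saturated, and put $\widehat H=H\times\langle\delta\rangle$. A general $H$-invariant $F$ suspends to $\widehat F$, and $\Aut(\widehat F)$ is the group of a general member of some saturated fourfold family $\mathcal F$ from Table~\ref{tab:fourfold}. To make this precise, take $\mathcal F$ to be the saturated family attached to the generic automorphism group along the locus of suspensions. Conversely, start from a fourfold family with group $G$ and an element $g\in G$ of type $\delta$, meaning $g$ has eigenvalue $1$ of multiplicity $5$ and $\omega$ of multiplicity $1$. Such a $g$ forces an invariant form to be $F(x_1,\dots,x_5)+c\,x_6^3$. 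Each pair $(G,g)$ up to $G$-conjugacy therefore gives a candidate group $C_G(g)/\langle g\rangle\le\GL(5,\CC)$ together with its invariant cubics in five variables. This is the procedure of Algorithm~\ref{alg:threefold-extraction}.

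\textbf{Step 3 (saturation check and identifications).} A candidate $C_G(g)/\langle g\rangle$ need not be saturated. The fourfold family $\mathcal F$ may be strictly larger than the suspension locus of the candidate, and then a general suspension can acquire extra automorphisms commuting with $g$. So for each candidate I would apply Step 1 to a general member and keep only the pairs for which the suspension locus is an open part of a family whose general member has group exactly $G$. Equivalently, I would compare the family dimension $\dim(\text{$H$-invariant cubics})-\dim N(H)$ with the dimension of the fourfold families containing it, using the incidence data already computed for Theorem~\ref{thm:fourfold}.

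For the identifications, two threefold families are equivalent if and only if their suspensions are equivalent. This uses Harrison's uniqueness of the maximal additive splitting: the summand $x_6^3$, and hence the class of $F$, is canonically determined, provided $F$ itself has no further additive summand. When $F$ does split further, the summand $x_6^3$ is no longer unique. The distinct elements of type $\delta$ are then conjugate in $\Aut(\widehat F)$, since they permute the summands $x_i^3$, and so they still yield one class. Counting the surviving classes should give $40$, and the groups and equations are read off in Table~\ref{tab:threefold}.

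\textbf{Main obstacle.} I expect Step 3 to be the hardest. It requires verifying, family by family, that a general suspension has no automorphisms beyond $C_G(g)$, and that the equivalence classes of pairs $(G,g)$ correspond bijectively to threefold classes. The approach I would try first is to use the fact that $\langle\delta\rangle$ is the full group of diagonal-type reflections $x_6\mapsto\omega x_6$ attached to the splitting, so that the conjugacy class of $g$ in $\Aut(\widehat F)$ is canonical. The remaining dimension and containment checks are the same finite GAP computations as in the fourfold case.
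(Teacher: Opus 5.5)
Your overall route matches the paper's. You restrict $C_{\Aut(\widehat F)}(\delta)$ to the fixed hyperplane, use Harrison's uniqueness to compare suspensions, and extract from Table~\ref{tab:fourfold} the groups containing an element with eigenvalues $1,1,1,1,1,\omega$.

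The trouble is Step~3: it rests on a false premise and leaves unproved exactly what is needed.
\begin{itemize}
\item Since $g\in G$, every $G$-invariant cubic is already of the form $F+c\,x_6^3$ in an eigenbasis of $g$. So the fourfold family is never larger than the suspension locus.
\item A general member $\widehat F$ of the $G$-family has $\Aut(\widehat F)=G$. Proposition~\ref{prop:threefold-centralizer} then gives $\Aut(F)=\rho_g(C_G(g))$ \emph{exactly}. Hence the candidate is the full automorphism group of a smooth cubic threefold, and no extra automorphisms commuting with $g$ can appear.
\end{itemize}
Your claim that a candidate ``need not be saturated'' is therefore wrong. The family-by-family check you single out as the main obstacle addresses a non-issue. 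As written, though, your proposal never proves three things: that the extracted groups are saturated, that family dimensions agree, or that the correspondence is bijective. It defers them to a dimension comparison whose justification you do not give.

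The missing idea is the explicit splitting structure.
\begin{itemize}
\item Write the maximal Fermat splitting $F=S_r+F_0$, with $S_r=x_1^3+\cdots+x_r^3$, and put $H_0=\Aut(F_0)$ and $M_s=\mu_3^s\rtimes\mathfrak S_s$. Proposition~\ref{prop:aut-splitting} gives $\Aut(F)=M_r\times H_0$ and $\Aut(\widehat F)=M_{r+1}\times H_0$.
\item The independent $\mu_3$-actions and the symmetric-group actions force $W_H=\CC S_r\oplus W_3(H_0)$ (or $W_3(H_0)$ if $r=0$) and $W_{\widehat H}=\CC(S_r+x_6^3)\oplus W_3(H_0)$. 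Rescaling the Fermat variables shows that smooth $\widehat H$-invariant cubics are exactly suspensions of smooth $H$-invariant cubics.
\item The centralizer of $M_s$ in $\GL(s,\CC)$ is scalar. So invariant-space and centralizer dimensions are both unchanged when $r>0$ and both rise by one when $r=0$, giving $m(H)=m(\widehat H)$.
\end{itemize}
Lemma~\ref{lem:threefold-fermat-element} completes the picture. An element with eigenvalues $1^5,\omega$ splits off a single-variable block of the maximal splitting, and all such elements form one conjugacy class. Together these yield the dimension-preserving bijection of Proposition~\ref{prop:threefold-suspension-map} with no case-by-case saturation checks. The count of $40$ then reduces to checking which of the $63$ groups of index divisible by $3$ contain an order-$3$ element of trace $5+\omega$.

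Two smaller corrections:
\begin{itemize}
\item Your $H\times\langle\delta\rangle$ is not the group of a general suspension when $r\ge1$, because it omits the permutations of the $r+1$ Fermat variables.
\item The caveat in your uniqueness argument should read ``$F$ has no single-variable summand'', not ``no further additive summand''.
\end{itemize}
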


There are $7$ A-maximal cubic threefold families; see
Table~\ref{tab:threefold-action-maximal}.  Their automorphism groups are
listed, and their equations are displayed below or given in the cited
classifications.

\begin{rmk}\label{rmk:characteristic-zero}
The classifications in Theorems~\ref{thm:fourfold} and~\ref{thm:threefold},
including the specified automorphism group actions, remain valid over
every algebraically closed field of characteristic zero; see
\S~\ref{subsec:change-of-ground-field}.
\end{rmk}

The next two sections recall the preliminaries and liftability criteria.
We then treat abelian and non-abelian actions and complete the cubic fourfold
classification.  The cubic threefold classification follows by suspension;
the tables and representatives, with references to those already known,
appear at the end.

The complete calculations and results are archived in
\cite{FuWangZhengAutcub4fold2026}. There, \path{gap_classification/} records
the individual calculations, and \path{gap_result/} gives the final groups,
invariant bases, and containment relations in the order of the tables below.
A compact version is provided in the ancillary files in \path{anc/}.

\textbf{Acknowledgements:} We thank Xun Yu and Zigang Zhu for their interest in this work and helpful discussions. The initial mathematical exploration, core ideas and algorithms of this work were developed by the authors. LLM-based AI tools were used at later stages to assist with coding and manuscript polishing. We take full responsibility for the content of this paper and its supplementary files.

\section{Preliminaries}

\subsection{Automorphisms of smooth hypersurfaces}

Let $n,d\geq3$, and let $F\in\CC[x_1,\ldots,x_n]_d$ define the smooth
hypersurface
\[
X_F:=V(F)\subset\PP^{n-1}.
\]
We write $\Aut(X_F)$ for the projective automorphism group of $X_F$ and set
\[
\Aut(F)=\{A\in\GL(n,\CC)\mid A\cdot F=F\}.
\]
From \cite{Matsumura1963OnTA}, if $(n,d)\neq (3,3),(4,4)$, the natural
projection $\pi:\GL(n,\CC)\to\PGL(n,\CC)$ induces an exact sequence of finite groups
\begin{equation}\label{eq:linear-projective}
1\longrightarrow\langle\zeta_d I_n\rangle
\longrightarrow\Aut(F)\overset{\pi}{\longrightarrow}\Aut(X_F)
\longrightarrow 1.
\end{equation}
In particular, classifying $\Aut(F)$ up to conjugacy determines
$\Aut(X_F)\leq\PGL(n,\CC)$.

For any subgroup $G\leq\Aut(X_F)$, define its full inverse image in
$\Aut(F)$ by
\[
\widehat G_F:=\pi^{-1}(G)\cap\Aut(F).
\]
It fits into the exact sequence
\[
1\longrightarrow \langle\zeta_d I_n\rangle \longrightarrow \widehat G_F
\overset{\pi}{\longrightarrow}G
\longrightarrow 1.
\]
We call $\widehat G_F$ the \emph{strict lift} of $G$ with respect to $F$.
A subgroup $H\leq\Aut(F)$ is the strict lift of $\pi(H)$ if and only if
$\mu_dI_n\leq H$; in this case $H=\widehat{\pi(H)}_F$.

For a finite subgroup $H\leq\GL(n,\CC)$, put
\[
W_d(H)=\Sym^d((\CC^n)^*)^H,
\qquad
W_d(H)^{\mathrm{sm}}
=\{F\in W_d(H)\mid X_F\text{ is smooth}\}.
\]
Assume $W_d(H)^{\mathrm{sm}}\ne\varnothing$. This smooth locus is
irreducible, and smooth forms have finite linear automorphism groups.
Since $N_{\GL(n,\CC)}(H)/C_{\GL(n,\CC)}(H)$ is finite, the family of
smooth $H$-invariant forms modulo linear equivalence has dimension
\begin{equation}\label{eq:family-dimension}
m_d(H)=\dim W_d(H)-\dim C_{\GL(n,\CC)}(H)\geq 0.
\end{equation}
For cubic forms we write $W_H=W_3(H)$ and $m(H)=m_3(H)$.

\begin{defn}\label{def:linear-saturation}
The $H$-invariant family is \emph{saturated} if $\Aut(F)=H$ for a
general $F\in W_d(H)^{\mathrm{sm}}$; we then also call $H$ saturated.

Let $K$ be the automorphism group of a general $F\in W_d(H)^{\mathrm{sm}}$.
We call the $K$-invariant family $W_d(K)$ the saturation of the $H$-invariant family $W_d(H)$.
\end{defn}

These notions depend only on the $\GL(n,\CC)$-conjugacy class of $H$. Moreover, we have

\begin{prop}\label{prop:dimension-test}
Assume $W_d(H)^{\mathrm{sm}}\ne\varnothing$.  Then $H$ is saturated if and only if
\[
m_d(L)<m_d(H)
\]
for every finite subgroup $L\leq\GL(n,\CC)$ such that
$H\prec_{\GL(n,\CC)}L$ and
$W_d(L)^{\mathrm{sm}}\ne\varnothing$.
\end{prop}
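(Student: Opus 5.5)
We prove both implications by comparing $H$-invariant forms with their saturation. Throughout, we use that the linear equivalence classes in $W_d(H)^{\mathrm{sm}}$ form a family of dimension $m_d(H)$. This is because the map
\[
\Phi_H:\GL(n,\CC)\times W_d(H)^{\mathrm{sm}}\to \Sym^d((\CC^n)^*),\qquad (A,F)\mapsto A\cdot F,
\]
has image of dimension $\dim\GL(n,\CC)-\dim C_{\GL(n,\CC)}(H)+m_d(H)$.

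To see this, look at the fibre of $\Phi_H$ over a general $A\cdot F$. It consists of pairs $(B,G)$ with $G=B^{-1}A\cdot F$ invariant under $H$. Equivalently, $B^{-1}A$ conjugates $H$ into $\Aut(F)$, i.e. $H^{B^{-1}A}\leq \Aut(F)$ up to the appropriate inversion. Since $\Aut(F)$ is finite, the set of such $B$ is a finite union of cosets of $N_{\GL(n,\CC)}(H)$. The group $N_{\GL(n,\CC)}(H)$ has the same dimension as $C_{\GL(n,\CC)}(H)$, since $N/C$ is finite. So the fibre dimension is $\dim C_{\GL(n,\CC)}(H)$, and the count follows.

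\emph{($\Rightarrow$)} Suppose $H$ is saturated and $H\prec L$, where $L$ is finite with $W_d(L)^{\mathrm{sm}}\neq\varnothing$. After conjugating, we may take $H<L$. Then $W_d(L)\subseteq W_d(H)$. Every smooth $L$-invariant form $G$ has $\Aut(G)\supseteq L\supsetneq H$, so it lies in the proper closed subset $\{F: \Aut(F)\neq H\}$. Here I am using that the locus where the automorphism group is strictly larger than $H$ is constructible and, by saturation, not dense.

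Now $\Phi_H(\GL\times W_d(L)^{\mathrm{sm}})$ equals $\Phi_L(\GL\times W_d(L)^{\mathrm{sm}})$. By the count above, its dimension is $\dim\GL+m_d(L)$. It lies inside the image of $\GL\times Z$, where $Z\subsetneq W_d(H)^{\mathrm{sm}}$ is closed and $\GL$-saturated within the $H$-invariant forms. The key point is that $\Phi_H$ has equidimensional fibres of dimension $\dim C(H)$ over the whole image, since finiteness of $\Aut$ holds everywhere on smooth forms. Hence a proper closed $Z$ yields an image of strictly smaller dimension, and so $m_d(L)<m_d(H)$.

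\emph{($\Leftarrow$)} Suppose $H$ is not saturated. Then a general $F\in W_d(H)^{\mathrm{sm}}$ has $K_F=\Aut(F)\supsetneq H$. Finite subgroups of $\GL(n,\CC)$ containing $H$ with bounded order (bounded by the Jordan-type finiteness for smooth hypersurfaces) fall into finitely many conjugacy classes, and only finitely many subgroups of $\GL$ containing $H$ arise up to $N(H)$-action. Hence some single $L\supsetneq H$ satisfies $L\leq\Aut(F)$ for $F$ in a dense subset of $W_d(H)^{\mathrm{sm}}$, up to the $N(H)$-action. It follows that $W_d(H)^{\mathrm{sm}}$ is covered, up to $\GL$, by $W_d(L)^{\mathrm{sm}}$. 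The dimension count then gives $m_d(L)\geq m_d(H)$, contradicting the hypothesis. Note $W_d(L)^{\mathrm{sm}}\neq\varnothing$ since it contains $F$.

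The main obstacle is this finiteness step in ($\Leftarrow$): we must ensure that only countably (indeed finitely) many relevant overgroups $L$ occur. The plan is to argue by countability, noting that finite subgroups of $\GL(n,\CC)$ up to conjugacy form a countable set. A countable union of constructible sets covering an irreducible variety must include a dense one. This avoids needing an explicit order bound.
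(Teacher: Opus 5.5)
Your proof is correct. It differs substantively from the paper in the implication ``$H$ not saturated $\Rightarrow$ some $L$ with $H\prec L$ and $m_d(L)\ge m_d(H)$''.

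\textbf{The hard implication.} The paper gets $L$ by citing \cite[Proposition~2.5]{FWZ26}. That result gives an overgroup, the saturation, such that \emph{every} smooth $H$-invariant form is linearly equivalent to a smooth $L$-invariant one. You build $L$ yourself. You cover $W_d(H)^{\mathrm{sm}}$ by the sets $N_{\GL(n,\CC)}(H)\cdot W_d(L)^{\mathrm{sm}}$, one for each of the finitely (or countably) many $N(H)$-classes of overgroups; these are constructible by Chevalley, and you pick a dense one.

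\textbf{The other implication.} Here you run the paper's argument in contrapositive form. The paper shows $m_d(L)\le m_d(H)$ always, and that equality puts a general point of the $H$-image into the $L$-image. You instead make the dimension count explicit through the fibres of $\Phi_H$, which are finite unions of $N(H)$-cosets.

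Your version is self-contained. The paper's is shorter, gives the stronger ``every form'' statement, and identifies $L$ as the saturation.

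\textbf{Two things to tidy.}
\begin{enumerate}
\item The image of $\Phi_H$ has dimension $\dim\GL(n,\CC)+m_d(H)$, not $\dim\GL(n,\CC)-\dim C_{\GL(n,\CC)}(H)+m_d(H)$. You use the correct value later, so this is only a slip in the first formula.
\item Both directions rely on the exceptional locus $Z=\{F:\Aut(F)\neq H\}$ being closed (or at least constructible) and $N(H)$-stable. You assert this without proof. It follows from upper semicontinuity of $|\Aut(F)|$ on smooth forms, or from a uniform bound on $|\Aut(F)|$, which makes your cover finite.
\begin{itemize}
\item In the countable version, closedness is what lets you pass from ``not saturated'' to ``$\Aut(F)\supsetneq H$ for general $F$''.
\item $N(H)$-stability is what keeps the fibres of $\Phi_H$ restricted to $\GL(n,\CC)\times Z$ of dimension at least $\dim C_{\GL(n,\CC)}(H)$. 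That lower bound is what gives the strict inequality in your forward direction.
\end{itemize}
\end{enumerate}
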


\begin{proof}
For any such $L$, we may conjugate $H$ so that $H\subsetneq L$.
Then the image of the $L$-family under linear equivalence is contained
in that of the $H$-family, and hence $m_d(L)\leq m_d(H)$.

If $H$ is not saturated, \cite[Proposition~2.5]{FWZ26} gives such
an $L$ for which every smooth $H$-invariant form is linearly equivalent
to a smooth $L$-invariant form. The two family images therefore coincide,
so $m_d(L)=m_d(H)$.

Conversely, suppose $m_d(L)=m_d(H)$ for such an $L$.  The image of the
$L$-family is contained in that of the irreducible $H$-family and has the
same dimension, so it contains a general point of the latter.  Thus a
general smooth $H$-invariant form
is linearly equivalent to an $L$-invariant form. Its linear
automorphism group has order at least $|L|>|H|$, so $H$ is not
saturated.
\end{proof}

\subsection{Additive splittings}

Let $F\in \CC[x_1,\ldots,x_n]_d$, where $d\geq 3$.  An
\emph{additive splitting} of $F$ is an expression which, after a linear
change of coordinates, has the form
\begin{equation}\label{eq:additive-splitting}
    F=F_1+\cdots+F_s,
\end{equation}
where $\{1,\ldots,n\}=I_1\sqcup\cdots\sqcup I_s$
is a partition into nonempty subsets and
$$
    0\neq F_i\in \CC[x_j\mid j\in I_i]_d.
$$
Writing $e_1,\ldots,e_n$ for the coordinate basis, the corresponding coordinate subspaces are $V_i=\operatorname{Span}\{e_j\mid j\in I_i\}$, so that
$\CC^n=V_1\oplus\cdots\oplus V_s$. Thus distinct summands involve
disjoint blocks of coordinates. The splitting is
\emph{maximal} if none of the $F_i$ admits a nontrivial additive splitting.
A form admitting no nontrivial additive splitting is called
\emph{unpartitionable}.

Set
$$
    \Ann_1(F)
    :=
    \left\{
        (a_1,\ldots,a_n)\in\CC^n
        \ \middle|\
        \sum_{i=1}^n a_i\frac{\partial F}{\partial x_i}=0
    \right\}.
$$
The condition $\Ann_1(F)=0$ is equivalent to the linear independence of the
first partial derivatives of $F$.  Equivalently, no linear change of
coordinates makes $F$ independent of one of the variables.  Notice that
$\Ann_1(F)=0$ whenever the hypersurface $V(F)\subset\PP^{n-1}$ is smooth.

The decomposition theorem used below is the following uniqueness statement.

\begin{thm}\label{thm:unique-splitting}
Let $F\in\CC[x_1,\ldots,x_n]_d$, where $d\geq3$, and suppose that
$\Ann_1(F)=0$.  Its maximal additive splitting
$F=F_1+\cdots+F_s$, with coordinate spaces
$\CC^n=V_1\oplus\cdots\oplus V_s$, is unique up to permutation of the
pairs $(F_i,V_i)$. More precisely, if $F=F'_1+\cdots+F'_t$ is any additive splitting,
with coordinate spaces $\CC^n=V'_1\oplus\cdots\oplus V'_t$, then there is a
partition
$\{1,\ldots,s\}=I_1\sqcup\cdots\sqcup I_t$ such that
\[
V'_j=\bigoplus_{i\in I_j}V_i,\qquad
F'_j=\sum_{i\in I_j}F_i
\quad (1\leq j\leq t).
\]
\end{thm}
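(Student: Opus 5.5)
We will follow Harrison's approach \cite{harrison1975higherdegreeforms}: encode additive splittings of $F$ as idempotents of a commutative algebra attached to $F$. This algebra is the \emph{centre}
\[
Z(F)=\{\phi\in\operatorname{End}(\CC^n)\mid \partial^2F(\phi u,v)=\partial^2F(u,\phi v)\ \text{for all }u,v\},
\]
or more concretely the set of $\phi$ such that the polar forms satisfy $\Phi(\phi u,v,w,\dots)=\Phi(u,\phi v,w,\dots)$, where $\Phi$ is the symmetric $d$-linear form of $F$. The first step is to check directly that $Z(F)$ is a unital subalgebra of $\operatorname{End}(\CC^n)$. The nontrivial point is that $Z(F)$ is closed under composition: this follows from symmetry of $\Phi$ by moving $\phi$ and then $\psi$ across slots, which uses $d\ge3$ so that a third slot is available. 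Commutativity of $Z(F)$ then comes from
\[
\Phi(\phi\psi u,v,w,\ldots)=\Phi(\psi u,\phi v,w,\ldots)=\Phi(u,\phi v,\psi w,\ldots)=\Phi(\psi\phi u,v,w,\ldots).
\]
Nondegeneracy, namely $\Ann_1(F)=0$, means that $\Phi(x,\cdot,\ldots,\cdot)\neq0$ for every $x\neq0$. Hence an identity holding against all arguments forces equality of endomorphisms, giving $\phi\psi=\psi\phi$.

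The second step is a dictionary between additive splittings and idempotents. Given a splitting with spaces $V'_1\oplus\cdots\oplus V'_t$, the projections $e_j$ onto $V'_j$ lie in $Z(F)$, because $\Phi=\sum_j\Phi_j$ with $\Phi_j$ vanishing unless all arguments lie in $V'_j$. They form a complete set of orthogonal idempotents. Conversely, for a complete orthogonal family $e_1,\dots,e_t\in Z(F)$, put $V_j=e_j\CC^n$. The key computation is that mixed terms vanish: for $u\in V_i$ and $v\in V_j$ with $i\ne j$,
\[
\Phi(u,v,\ldots)=\Phi(e_iu,e_jv,\ldots)=\Phi(u,e_ie_jv,\ldots)=0.
\]
Hence $F=\sum_j F|_{V_j}$, each summand being nonzero because $\Ann_1(F)=0$. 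This bijection between splittings and complete orthogonal families of idempotents (up to order) respects refinement: one splitting refines another exactly when every idempotent of the coarser family is a sum of idempotents of the finer one.

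The third step is to conclude using finite-dimensional commutative algebra. $Z(F)$ is a finite-dimensional commutative $\CC$-algebra, so it is a product of local algebras and has a unique complete set of primitive orthogonal idempotents $\epsilon_1,\dots,\epsilon_s$. Every idempotent of $Z(F)$ is a sum of a subset of the $\epsilon_i$. A splitting is maximal exactly when its idempotents are primitive: if some $e_j$ decomposed further, $F_j$ would split nontrivially, and conversely a splitting of $F_j$ together with the other $V_k$ gives a finer splitting of $F$. Therefore the maximal splitting corresponds to $\{\epsilon_i\}$ and is unique up to order. Any other splitting has idempotents $e'_j=\sum_{i\in I_j}\epsilon_i$ for a partition $I_1\sqcup\cdots\sqcup I_t$. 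This gives $V'_j=\bigoplus_{i\in I_j}V_i$, and restricting $F$ gives $F'_j=\sum_{i\in I_j}F_i$, as stated.

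The main obstacle is the first step: verifying that $Z(F)$ is closed under multiplication and is commutative. This is exactly where both $d\ge3$ and $\Ann_1(F)=0$ are essential, and the slot-shuffling must be written carefully with the symmetric multilinear form.
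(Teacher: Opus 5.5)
Your proposal is correct and takes the same route as the paper, which simply cites Harrison's centre algebra: additive splittings correspond to complete sets of orthogonal idempotents in the finite-dimensional commutative centre of $F$, the unique set of primitive idempotents gives the maximal splitting, and every other splitting is obtained by grouping these idempotents. The paper leaves the details to \cite{harrison1975higherdegreeforms} and \cite{huang2022centres}; you carry them out correctly, namely closure of the centre via the third slot, $\Phi(\phi\psi u,v,w)=\Phi(\psi u,v,\phi w)=\Phi(u,\psi v,\phi w)=\Phi(u,\phi\psi v,w)$, commutativity from nondegeneracy, and the dictionary between splittings and idempotents.
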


\begin{proof}
This follows from Harrison's decomposition theory
\cite{harrison1975higherdegreeforms}; see
\cite[Proposition~2.1(1)--(3)]{huang2022centres} for a formulation in terms
of polynomials.  The condition $\Ann_1(F)=0$ is precisely nondegeneracy in
these references. Additive splittings correspond to complete sets of
orthogonal idempotents in the finite-dimensional commutative centre algebra of $F$; see \cite[Equations~(3)--(5)]{huang2022centres}. Its primitive idempotents form a unique complete set.  Their images
give the coordinate subspaces of the maximal splitting.  Every complete set
of orthogonal idempotents is obtained by partitioning this primitive set and
summing within each part.  This proves the final assertion.
\end{proof}

Suppose that $F$ is smooth and that \eqref{eq:additive-splitting} is its
maximal additive splitting.  Then every $F_i$ defines a smooth hypersurface
in its own coordinate block.

In particular, every one-dimensional summand has the form $c x^d$ with
$c\neq0$,  which can be further changed to $x^d$ by
rescaling $x$.  Consequently, every form $F$ of degree $d\geq3$ defining a
smooth hypersurface is
linearly equivalent to
\begin{equation}\label{eq:fermat-splitting}
    x_1^d+\cdots+x_r^d+
    F_0(x_{r+1},\ldots,x_n),
\end{equation}
where the maximal additive splitting of $F_0$ has no one-variable summand.
The integer $r$ and the linear-equivalence class of $F_0$ are uniquely
determined by $F$.  We call $r$ the \emph{Fermat rank} of $F$.  When $r=n$,
we set $F_0=0$.  We call
$x_1^d+\cdots+x_r^d$ the \emph{Fermat part} of $F$, and call $F_0$
\emph{purely non-Fermat}.

The maximal splitting determines $\Aut(F)$.

\begin{prop}\label{prop:aut-splitting}
With the notation of \eqref{eq:fermat-splitting}, there is a natural
isomorphism
\begin{equation}\label{eq:aut-splitting}
    \Aut(F)
    \cong
    \bigl(\mu_d^r\rtimes S_r\bigr)\times\Aut(F_0),
\end{equation}
where $S_r$ acts on $\mu_d^r$ by permuting its factors, and
$\Aut(0)$ is understood to be trivial.
\end{prop}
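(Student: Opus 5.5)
We prove \eqref{eq:aut-splitting} by showing that every $A\in\Aut(F)$ permutes the summands of the maximal additive splitting, and then identifying this permutation action concretely. Write $F=x_1^d+\cdots+x_r^d+F_0$. Let the maximal splitting of $F_0$ be $F_0=G_1+\cdots+G_k$, with blocks $U_1,\ldots,U_k$, each of dimension at least $2$. Then the maximal splitting of $F$ consists of the $r$ one-dimensional pieces $(x_i^d,\CC e_i)$ together with the pieces $(G_j,U_j)$. Since $X_F$ is smooth, $\Ann_1(F)=0$, so Theorem~\ref{thm:unique-splitting} applies.

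Let $A\in\Aut(F)$. Since $F(xA)=F(x)$, transporting the maximal splitting of $F$ by $A$ yields another maximal additive splitting of $F$. Its blocks are the images of the original blocks under the linear map corresponding to $A$, and its summands are the composites of the original summands with $A$. By the uniqueness part of Theorem~\ref{thm:unique-splitting}, $A$ permutes the pairs (block, summand) and preserves block dimensions. Hence $A$ maps the Fermat part $\CC e_1\oplus\cdots\oplus\CC e_r$ to itself and maps $U:=U_1\oplus\cdots\oplus U_k$ to itself. So $A$ is block diagonal, $A=\diag(A_1,A_0)$, with $A_1\in\GL(r,\CC)$ and $A_0\in\GL(n-r,\CC)$.

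Restricting to each block, $A_1$ preserves $x_1^d+\cdots+x_r^d$ and $A_0$ preserves $F_0$. This can be checked by setting the other variables to zero, or by matching summands directly. In the same way, $A_1$ permutes the lines $\CC e_i$, so $A_1$ is a monomial matrix $D\sigma$. The condition $(\lambda x_i)^d=x_i^d$ forces each diagonal entry of $D$ to lie in $\mu_d$. Conversely, every such monomial matrix preserves the Fermat part, and every $\diag(A_1,A_0)$ with $A_1$ monomial and $A_0\in\Aut(F_0)$ lies in $\Aut(F)$. Thus $\Aut(F)$ is the set of block diagonal matrices with monomial $\mu_d$-entry upper block and lower block in $\Aut(F_0)$. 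The monomial group is $\mu_d^r\rtimes S_r$, with $S_r$ permuting the factors. This gives \eqref{eq:aut-splitting}; for $r=n$ the second factor is trivial, which matches the convention $\Aut(0)=1$.

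The main obstacle is the transport step: we must make sure that the splitting transported by $A$ is again a maximal additive splitting in the sense of the definition. The definition is phrased via coordinate blocks after a linear change of coordinates, so this is clear once the conventions are written out. Indeed, $A$ is itself such a change of coordinates, and unpartitionability of each summand is invariant under linear maps. Uniqueness is then only up to permuting pairs, and it identifies pieces only with pieces of the same block dimension. This is exactly what separates the one-dimensional (Fermat) pieces from the pieces of $F_0$.
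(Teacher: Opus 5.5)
Your proposal is correct and follows the paper's own argument. Uniqueness of the maximal splitting (Theorem~\ref{thm:unique-splitting}) forces each $A\in\Aut(F)$ to permute the summands while preserving block dimensions, so the Fermat and non-Fermat coordinate spaces are invariant. The Fermat block is then monomial with entries in $\mu_d$, the other block lies in $\Aut(F_0)$, and the converse inclusion is immediate.
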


\begin{proof}
By Theorem~\ref{thm:unique-splitting}, every element of $\Aut(F)$ permutes
the summands of the maximal additive splitting.  Since the dimension of the
corresponding coordinate block is preserved, the one-dimensional summands are
permuted among themselves. Hence both the coordinate space of the Fermat part
and the coordinate space of $F_0$ are invariant.

On the Fermat part, an automorphism is therefore represented by a monomial
matrix.  If its nonzero entries are $\lambda_1,\ldots,\lambda_r$, preservation
of $x_1^d+\cdots+x_r^d$ gives
$\lambda_i^d=1$ for every $i$. Thus its restriction belongs to
$\mu_d^r\rtimes S_r$.  Its restriction to the remaining variables belongs to
$\Aut(F_0)$. Conversely, the two factors in
\eqref{eq:aut-splitting} act independently and preserve $F$.
\end{proof}

\subsection{The global Torelli theorem for cubic fourfolds}

Let $X\subset\PP^5$ be a smooth cubic fourfold, let $h$ be the hyperplane class, and put
\[
\Lambda=H^4(X,\ZZ),\qquad \Lambda_0=(h^2)^\perp\subset\Lambda.
\]
Then
\[
\Lambda\cong I_{21,2},\qquad
\langle h^2\rangle\cong\langle3\rangle,\qquad
\Lambda_0\cong E_8^{\oplus2}\oplus U^{\oplus2}\oplus A_2.
\]
Let $\DD$ be one component of
\[
\PP\{\omega\in\Lambda_{0,\CC}\mid \langle \omega,\omega\rangle=0,
\ \langle\omega,\overline\omega\rangle<0\}.
\]
Let $\Gamma\leq\mathrm O(\Lambda_0)$ be the subgroup preserving $\DD$ and acting trivially on the discriminant group.  Denote by $\calH_2$ and $\calH_6$ the hyperplane arrangements orthogonal to long and short roots, respectively.

The following form of the global Torelli theorem combines the results of
Voisin \cite{voisin1986torelli,voisin2008erratum}, Hassett \cite{hassett2000special}, Looijenga \cite{looi2009period}, and Laza \cite{laza2010period}.

\begin{thm}\label{thm:torelli}
The period map identifies the moduli space $\calM$ of smooth cubic fourfolds with
\[
\Gamma\backslash\bigl(\DD\setminus(\calH_2\cup\calH_6)\bigr).
\]
Moreover, every Hodge isometry of $H^4(X,\ZZ)$ fixing $h^2$ is induced by a unique automorphism of $X$.
\end{thm}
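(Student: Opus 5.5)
This statement is the global Torelli theorem for cubic fourfolds, and I would not reprove it from scratch. It assembles known results, so the plan is to identify which input gives which part and check that the normalizations (the lattice $\Lambda_0$, the group $\Gamma$, and the arrangements $\calH_2,\calH_6$) match those in the cited sources.

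\textbf{Injectivity and the isometry statement.} First, Voisin's Torelli theorem \cite{voisin1986torelli,voisin2008erratum} shows that the period map $\calM\to\Gamma\backslash\DD$ is injective. It also shows that a Hodge isometry of $H^4(X,\ZZ)$ preserving $h^2$ is induced by an automorphism of $X$.

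For uniqueness of that automorphism, I would argue that an automorphism acting trivially on $H^4(X,\ZZ)$ is the identity. By \cite{Matsumura1963OnTA}, automorphisms are linear and $\Aut(X)$ is finite. A nontrivial element $g$ of finite order has a nontrivial action on $H^{3,1}$ or on the primitive part $H^{2,2}_0$. The cleanest way I know to see this is to use Griffiths residues: $H^4_0$ is identified with graded pieces of the Jacobian ring, on which $g$ acts faithfully modulo scalars. So if $g$ acts trivially on all of $H^4_0$, then $g$ is scalar on $\CC^6$, hence trivial in $\PGL$.

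One bookkeeping point about $\Gamma$. Restricting to $\Lambda_0$ loses no information, since an isometry fixing $h^2$ determines its action on $\Lambda_0$ and conversely. Moreover, $\Gamma$ (trivial on the discriminant group) is exactly the image of monodromy, and it also equals the stabilizer of $h^2$ in $\mathrm O^+(\Lambda)$. This is the standard argument via Nikulin's extension criterion, since $\Lambda$ is unimodular.

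\textbf{Image of the period map.} Next, Hassett \cite{hassett2000special} shows that the image avoids $\calH_2\cup\calH_6$, which correspond to discriminants $2$ and $6$. Looijenga \cite{looi2009period} and Laza \cite{laza2010period} independently prove surjectivity onto the complement. Their method compares GIT compactifications of cubic fourfolds with Baily–Borel / Looijenga compactifications. It uses that the period map is an open immersion by Voisin together with properness-type arguments.

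\textbf{Main obstacle.} I expect the main difficulty to be purely one of compatibility: checking that the long and short root hyperplane arrangements in the $A_2$-twisted lattice $\Lambda_0$ coincide with the loci $\calC_2$ and $\calC_6$ of those papers. To handle this, I would observe the following. Long roots $r$ with $r^2=6$ and $\operatorname{div}(r)=3$ give $\langle h^2,(h^2+r)/3\rangle$-type saturations of discriminant $6$. Short roots $r^2=2$ give discriminant $6$ as well, via $\langle h^2,r\rangle$ with discriminant $3\cdot2$. These two cases must then be matched against Hassett's labels. Apart from this, the proof is a citation.
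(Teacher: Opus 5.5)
Your overall route is the paper's: the statement is not proved there but assembled by citation from Voisin, Hassett, Looijenga and Laza, with the same normalizations of $\Lambda_0$, $\Gamma$, $\calH_2$ and $\calH_6$.

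\textbf{The compatibility check is computed wrongly.} The one computation you do yourself is in the step you single out as the main obstacle, and it gives the wrong answer. Take a long root $r$, so $r^2=6$ and $r\cdot\Lambda_0\subseteq3\ZZ$. The class $r/3$ generates $\Lambda_0^\vee/\Lambda_0\cong\ZZ/3$, so unimodularity of $\Lambda$ gives $v=(h^2+\epsilon r)/3\in\Lambda$ for a suitable sign $\epsilon$. Then $(h^2)^2=3$, $h^2\cdot v=1$ and $v^2=(3+6)/9=1$. The saturation $\langle h^2,v\rangle$ therefore has Gram matrix $\smat{3&1\\1&1}$ and discriminant $2$, not $6$. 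Equivalently, $\langle h^2,r\rangle$ has discriminant $18$ and index $3$ in its saturation.

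A short root $r$ ($r^2=2$) has divisibility $1$ in $\Lambda_0$. So $\langle h^2,r\rangle$ is already saturated, of discriminant $6$, as you say.

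Hence $\calH_2$ is Hassett's $\mathcal C_2$ and $\calH_6$ is $\mathcal C_6$. This is exactly what the subscripts record, and what your own earlier sentence asserts. It is also what you need in order to transport Hassett's exclusion and the Looijenga--Laza surjectivity onto $\DD\setminus(\mathcal C_2\cup\mathcal C_6)$ into the statement. With your computation, both arrangements would lie over $\mathcal C_6$ and $\mathcal C_2$ would be unaccounted for. The compatibility check as written would therefore fail rather than confirm the statement.

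\textbf{The uniqueness sketch assumes what it needs.} The key sentence is that $g$ acts faithfully modulo scalars on the graded pieces of the Jacobian ring. That is just a restatement of faithfulness of $\Aut(X)$ on $H^4(X,\ZZ)$, not a proof of it. It needs either a citation or an argument.

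One argument that works goes through the Fano variety of lines $F(X)$:
\begin{enumerate}
\item The Abel--Jacobi isomorphism $H^4(X,\ZZ)\cong H^2(F(X),\ZZ)$ is equivariant, so $g$ acts trivially on $H^2(F(X),\ZZ)$.
\item Automorphisms of a $\mathrm{K3}^{[2]}$-type manifold act faithfully on $H^2$, so $g$ maps every line of $X$ to itself.
\item A general point $x\in X$ lies on six distinct lines. Then $g(x)$ lies in the intersection of any two of them, which is $\{x\}$. So $g$ fixes a general point and is the identity.
\end{enumerate}
The paper itself simply treats uniqueness as part of the cited Torelli package.
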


\subsection{Families with symmetry}
\label{subsec:families-prescribed-symmetry}

Let $A\leq\GL(6,\CC)$ be finite, with $\mu_3I_6\leq A$, and let
$\lambda:A\to\CC^\times$ be a character satisfying
$\lambda(cI_6)=c^3$ whenever $cI_6\in A$. Put
\[
V_\lambda=\{F\in\Sym^3((\CC^6)^*)\mid a\cdot F=\lambda(a)F
\text{ for all }a\in A\}
\]
and
\[
N(A,\lambda)=\{g\in\GL(6,\CC)\mid gAg^{-1}=A,
\ \lambda(gag^{-1})=\lambda(a)\text{ for all }a\in A\}.
\]
Let $V_\lambda^{\mathrm{sm}}$ be the locus of forms defining smooth cubic
fourfolds, and assume it is nonempty. We define the family with
specified action $(A,\lambda)$ by the geometric quotient
\begin{equation}\label{eq:YZ-family}
\calF(A,\lambda)
=V_\lambda^{\mathrm{sm}}/N(A,\lambda)
\cong\PP(V_\lambda^{\mathrm{sm}})
\big/\bigl(N(A,\lambda)/(\CC^\times I_6)\bigr).
\end{equation}
Here $\CC^\times I_6$ acts trivially on the projective space.
When $\lambda=1$ (the trivial character), we write
\begin{equation}\label{eq:strict-family-moduli}
\calF_A:=\calF(A,1)
=W_3(A)^{\mathrm{sm}}/N_{\GL(6,\CC)}(A).
\end{equation}
We call the family $\calF_A$ \emph{saturated} if $\Aut(F)=A$
for a general $F\in W_3(A)^{\mathrm{sm}}$.
Let
\[
\mathcal Z_A=\{[X_F]\in\calM:F\in W_3(A)^{\mathrm{sm}}\}
\]
be its image in the moduli space of smooth cubic fourfolds, with the
reduced induced structure. Thus $\calF_A$ parametrizes cubic fourfolds
with the specified group action, whereas $\mathcal Z_A$ parametrizes
their underlying cubic fourfolds. The following result compares $\calF_A$ and $\mathcal Z_A$
\cite[Proposition~2.7]{yu2020moduli}.

\begin{thm}
Forgetting the specified group action gives a finite surjective morphism
\[
\calF_A\longrightarrow\mathcal Z_A,\qquad [F]\longmapsto[X_F].
\]
If $\calF_A$ is saturated, this morphism is generically injective and
identifies $\calF_A$ with the normalization of $\mathcal Z_A$.
\end{thm}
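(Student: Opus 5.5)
We prove the two assertions in turn: first that $\calF_A\to\mathcal Z_A$ is a well-defined finite surjective morphism, then that saturation makes it generically injective, which together with normality of $\calF_A$ identifies it with the normalization of $\mathcal Z_A$.

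\emph{The morphism.} The assignment $F\mapsto X_F$ gives a morphism $W_3(A)^{\mathrm{sm}}\to V^{\mathrm{sm}}\to\calM$, where $V^{\mathrm{sm}}$ is the space of all smooth cubic forms and $\calM=V^{\mathrm{sm}}/\GL(6,\CC)$ is the GIT quotient (smooth cubics are stable). This morphism is constant on $N_{\GL(6,\CC)}(A)$-orbits, so by the universal property of the geometric quotient it descends to $\calF_A\to\calM$. Its image is $\mathcal Z_A$ by definition. Since $\calF_A$ is irreducible, being a quotient of an open subset of a vector space, $\mathcal Z_A$ is irreducible, and we give it the reduced structure.

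\emph{Quasi-finiteness and finiteness.} Quasi-finiteness follows from a fibre computation. Suppose $F,F'\in W_3(A)^{\mathrm{sm}}$ with $X_F\cong X_{F'}$. Every isomorphism is projectively induced, so after rescaling we may write $F'=g\cdot F$ with $g\in\GL(6,\CC)$. Then both $A$ and $g^{-1}Ag$ lie in $\Aut(F)$, which is finite. Hence the subgroup $g^{-1}Ag$ is one of finitely many subgroups of $\Aut(F)$. Two elements $g,g'$ give the same conjugate exactly when $g'g^{-1}\in N(A)$ (up to $\Aut(F)$), so the fibre is a finite set of $N(A)$-orbits. For properness, the cleanest route is to work on the level of GIT quotients. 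The map $W_3(A)^{\mathrm{ss}}/\!\!/N(A)\to V^{\mathrm{ss}}/\!\!/\GL_6$ is finite by the Luna/Gordon-type finiteness for restriction to fixed loci of reductive subgroups (compare Luna's theorem that $V^A/\!\!/N(A)\to V/\!\!/G$ is finite). We then restrict to the smooth locus, which is the preimage of the open set $\calM$, since a semistable $A$-invariant form that is smooth is exactly an element of $W_3(A)^{\mathrm{sm}}$. Finite morphisms with closed image then give surjectivity onto $\mathcal Z_A$, which is closed in $\calM$.

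\emph{Generic injectivity under saturation.} Take a general $F$ with $\Aut(F)=A$. In the fibre computation above, $g^{-1}Ag\le\Aut(F)=A$ forces $g^{-1}Ag=A$, so $g\in N(A)$ and $[F']=[F]$ in $\calF_A$. Thus the morphism is injective over a dense open set. Since $\calF_A$ is a quotient of a smooth variety by a reductive group, it is normal. A finite birational morphism (birational because it is generically injective in characteristic zero) from a normal variety onto $\mathcal Z_A$ is the normalization, by Zariski's main theorem.

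The main obstacle is the finiteness (properness) step. It requires care with $N(A)$ being non-connected and reductive, and with the compatibility of the semistable loci. I would cite Luna's result rather than reprove it.
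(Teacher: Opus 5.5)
The paper gives no proof of its own here; the statement is quoted from \cite[Proposition~2.7]{yu2020moduli}. Three of your steps are fine as written.
\begin{enumerate}
\item The fibre computation: the fibre over $[X_F]$ is a quotient of a set injecting into the finite set of subgroups of $\Aut(F)$.
\item Generic injectivity under saturation: $g^{-1}Ag\le\Aut(F)=A$ forces $g\in N_{\GL(6,\CC)}(A)$.
\item The normalization step: $\calF_A$ is a good quotient of a smooth variety by the reductive group $N_{\GL(6,\CC)}(A)$, hence normal.
\end{enumerate}

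The gap is the finiteness step. You flag it as the crux but do not establish it, and the form of Luna's theorem you invoke does not give it.
\begin{itemize}
\item For $\GL(6,\CC)$ acting on the affine space $V$ of cubic forms, $V/\!\!/\GL(6,\CC)$ is a point, since scalars act with weight $3$. The conclusion is therefore vacuous.
\item For $\SL(6,\CC)$, whose invariants do give $\calM$, $A$ is usually not a subgroup. In general no reductive $H\le\SL(6,\CC)$ has $V^H=W_3(A)$. Such an $H$ fixes a general member, so for saturated $A$ one gets $H\le A\cap\SL(6,\CC)$ and $V^H\supseteq W_3(A\cap\SL(6,\CC))$, which is usually strictly larger than $W_3(A)$.
\item The obvious candidate $\CC^\times A\cap\SL(6,\CC)$ even contains $-I_6$, which fixes no nonzero cubic.
\end{itemize}
Passing to $\mathrm{Proj}$ of invariant rings would then require comparing the $\SL(6,\CC)$-nullcone with the nullcone of $N_{\GL(6,\CC)}(A)\cap\SL(6,\CC)$ on $W_3(A)$. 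That is exactly the semistability compatibility you postpone. Your remark that smooth semistable $A$-invariant forms lie in $W_3(A)^{\mathrm{sm}}$ does not address it.

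The repair is short. Apply Luna's theorem (for $G$ reductive acting on an affine $X$ and $H\le G$ reductive, $X^H/\!\!/N_G(H)\to X/\!\!/G$ is finite) with the following choices:
\begin{itemize}
\item $G=\PGL(6,\CC)$, acting on the affine variety $X=\PP(V)^{\mathrm{sm}}$, the complement of the discriminant hypersurface, so that $X/\!\!/G=\calM$;
\item $H=\pi(A)$, which is finite.
\end{itemize}
A point $[F]$ is fixed by $\pi(A)$ if and only if $a\cdot F=\lambda(a)F$ for a character $\lambda$ of $A$. So $X^{\pi(A)}$ is the disjoint union of the closed pieces $\PP(V_\lambda)\cap X$, which $N_{\PGL(6,\CC)}(\pi(A))$ permutes.

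Now let $g$ lift an element of this normalizer that preserves the piece $\PP(W_3(A))\cap X$. Write $gag^{-1}=ca'$ with $a'\in A$ and $c\in\CC^\times$. Evaluating on $g\cdot F$ for $0\neq F\in W_3(A)$ gives $c^3=1$. Hence $cI_6\in\mu_3I_6\le A$, and so $g\in N_{\GL(6,\CC)}(A)$.

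Therefore the stabilizer of this piece is $N_{\GL(6,\CC)}(A)/\CC^\times I_6$. By \eqref{eq:YZ-family}, $\calF_A$ is an open and closed subvariety of $X^{\pi(A)}/\!\!/N_{\PGL(6,\CC)}(\pi(A))$. Finiteness of $\calF_A\to\calM$ then follows with no semistability bookkeeping. Your separate quasi-finiteness argument becomes unnecessary, and the rest of your proof goes through unchanged.
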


For $\calF_A$, put $G=\pi(A)=A/(\mu_3I_6)$, and let $\chi_{\mathrm{Hdg}}$ be the
character of $G$ on $H^{3,1}(X)$ for $X\in\calF_A$. If
$q=\dim\calF_A$, the $\chi_{\mathrm{Hdg}}$-eigenspace in the
primitive cohomology has signature $(q,2)$ when
$\chi_{\mathrm{Hdg}}=\overline{\chi}_{\mathrm{Hdg}}$, and Hermitian
signature $(q,1)$ otherwise. Let $\DD_A$ be the corresponding
$q$-dimensional type IV domain or complex ball, let
$\Gamma_A$ be its arithmetic monodromy group, and put
\[
\calH_s(A)=\DD_A\cap(\calH_2\cup\calH_6).
\]
We now recall the Torelli theorem with symmetry
\cite[Theorem~1.1]{yu2020moduli}.

\begin{thm}
\label{thm:equivariant-torelli}
The period map induces an algebraic isomorphism
\[
\calF_A\xrightarrow{\sim}
\Gamma_A\backslash\bigl(\DD_A\setminus\calH_s(A)\bigr).
\]
\end{thm}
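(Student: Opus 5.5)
The plan is to deduce the statement from the global Torelli theorem (Theorem~\ref{thm:torelli}) by restricting everything to the $G$-equivariant locus. First, fix $F_0\in W_3(A)^{\mathrm{sm}}$ and a marking $\Lambda\cong H^4(X_{F_0},\ZZ)$. Then $G=\pi(A)$ acts on $\Lambda_0$ by isometries fixing $h^2$, giving $\rho\colon G\to\mathrm O(\Lambda_0)$. Since $W_3(A)^{\mathrm{sm}}$ is connected and the family of cubics over it is $A$-equivariantly locally trivial in the $C^\infty$ sense, $\rho$ is locally constant. Hence it is well defined up to $\Gamma$-conjugacy on the whole family. The class of $H^{3,1}(X_F)$ lies in the $\chi_{\mathrm{Hdg}}$-eigenspace of $\rho(G)$ in $\Lambda_{0,\CC}$. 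Intersecting with $\DD$ gives $\DD_A$: a type IV domain if $\chi_{\mathrm{Hdg}}$ is real, and a complex ball otherwise. We take $\Gamma_A$ to be the subgroup of $\Gamma$ normalizing $\rho(G)$ and preserving $\chi_{\mathrm{Hdg}}$. This subgroup matches $N_{\GL(6,\CC)}(A)$ acting on $W_3(A)$, so the period map descends to a holomorphic map $\calF_A\to\Gamma_A\backslash(\DD_A\setminus\calH_s(A))$. The image avoids $\calH_s(A)$ by Theorem~\ref{thm:torelli}, and the map is algebraic by Borel's extension theorem.

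Second, I check the dimension and local injectivity, using Griffiths' residue description. We have $H^{3,1}\cong R_F^{3}$ and $H^{2,2}_{\mathrm{prim}}\cong R_F^{6}$ in the Jacobian ring, and the first-order deformations of $X_F$ are $R_F^3$. Taking $A$-invariants identifies the tangent space of $\calF_A$ at $[F]$ with $W_3(A)/(\mathfrak{gl}_6^{A}\cdot F)$, which has dimension $m(A)=q$. The differential of the period map is multiplication $R_F^3\to\Hom(R_F^3,R_F^6)$. This map is injective by Macaulay duality, and it lands in the $\chi_{\mathrm{Hdg}}$-isotypic part, which is exactly the tangent space of $\DD_A$. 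So the period map is a local isomorphism (étale) from $\calF_A$ to the quotient.

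Third, I prove global injectivity. Suppose $F,F'\in W_3(A)^{\mathrm{sm}}$ have periods related by $\gamma\in\Gamma_A$. Then $\gamma$ is a Hodge isometry $H^4(X_F)\to H^4(X_{F'})$ fixing $h^2$, and Theorem~\ref{thm:torelli} gives an isomorphism $g\colon X_F\to X_{F'}$, projectively induced by Matsumura. Because $\gamma$ normalizes $\rho(G)$, the uniqueness in Theorem~\ref{thm:torelli} forces $gGg^{-1}=G$ as projective groups. Lifting to $\GL(6,\CC)$ and using $\mu_3I_6\leq A$ gives $gAg^{-1}=A$. Thus $F'\sim F$ under $N_{\GL(6,\CC)}(A)$.

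The main obstacle is surjectivity onto $\DD_A\setminus\calH_s(A)$. Take a point there. By Theorem~\ref{thm:torelli} it is the period of some smooth cubic $X$, and each $\rho(g)$ is a Hodge isometry fixing $h^2$. The same theorem then gives an action of $G$ on $X$, so its strict lift $A'\leq\Aut(F)$ has the same cohomological action as $A$; the difficulty is to show that $A'$ is conjugate to $A$. I would argue by connectedness. The complement $\DD_A\setminus\calH_s(A)$ is connected, since a locally finite union of complex hyperplane sections has real codimension $2$. The image of the period map is open because the map is étale. To see that the image is closed, approximate a boundary point by images of $F_i\in W_3(A)^{\mathrm{sm}}$. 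After normalizing, the $F_i$ converge to a smooth cubic, because the limit period avoids $\calH_s(A)$ and the moduli $\calM$ is separated. Since finite subgroups of $\GL(6,\CC)$ are rigid up to conjugacy, the limit cubic is $A$-invariant. Finally, the map is bijective and étale, so by Zariski's main theorem it is an isomorphism onto the normal target (and $\calF_A$ is itself normal, being a quotient of a smooth variety).
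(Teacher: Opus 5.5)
The paper gives no proof of this statement; it quotes it from \cite[Theorem~1.1]{yu2020moduli}. You instead derive it from Theorem~\ref{thm:torelli}, and the outline is sound.
\begin{itemize}
\item The period map descends because $N_{\GL(6,\CC)}(A)$ induces isometries that normalize $\rho(G)$ and fix $\chi_{\mathrm{Hdg}}$.
\item Injectivity follows from the uniqueness part of Torelli together with $A=\pi^{-1}(G)\cap\Aut(F')$.
\item Surjectivity follows from connectedness of $\DD_A\setminus\calH_s(A)$.
\item Zariski's main theorem then gives the isomorphism.
\end{itemize}
The citation is economical. Your argument has the merit of isolating the one non-formal point. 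Torelli only shows that every point of $\DD_A\setminus\calH_s(A)$ is the period of a cubic with a $G$-action inducing $\rho$. What identifies the strict lift of that action with $A$ is local constancy of its conjugacy class, combined with connectedness of the arrangement complement.

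In Step~1, descent only needs the isometries induced by $N_{\GL(6,\CC)}(A)$ to lie in your $\Gamma_A$. The converse, that $\Gamma_A$-orbits of periods are $N_{\GL(6,\CC)}(A)$-orbits, is exactly what Steps~3 and~4 establish. So it should not be asserted at that stage.

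Two steps need repair.

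\textbf{Step~2.} The Jacobian-ring degrees are shifted. For a cubic fourfold, $H^{3,1}\cong R_F^0$, $H^{2,2}_{\mathrm{prim}}\cong R_F^3$ and $H^{1,3}\cong R_F^6$; note that $\dim R_F^3=20$. The differential you need is $R_F^3\to\Hom(R_F^0,R_F^3)$, which is an isomorphism outright. The $\det$-twists on $H^{3,1}$ and $H^{2,2}_{\mathrm{prim}}$ cancel, so taking $A$-invariants gives $(R_F^3)^A\cong\Hom_A(H^{3,1},H^{2,2}_{\mathrm{prim}})=T\DD_A$. The map $R_F^3\to\Hom(R_F^3,R_F^6)$ that you wrote is the $H^{2,2}_{\mathrm{prim}}\to H^{1,3}$ component of the differential. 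It is also injective by Macaulay duality, so your conclusion survives, but the identifications as you stated them are wrong.

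\textbf{Closedness step.} Rigidity of finite subgroups does not by itself produce a limiting symmetry.
\begin{itemize}
\item You first choose $g_i$ with $g_i\cdot F_i\to F$. This uses Torelli surjectivity at $x$ and openness of the quotient map, not separatedness.
\item The $g_i$ may be unbounded, and you need the conjugates $g_iAg_i^{-1}\leq\Aut(g_i\cdot F_i)$ to stay bounded. This follows from properness of the $\GL(6,\CC)$-action on smooth cubic forms.
\item A subsequence then converges to a subgroup of $\Aut(F)$ with the same character as $A$. By Lemma~\ref{lem:strict-character-criterion} this subgroup is conjugate to $A$.
\item Continuity of the action on the local system shows that the limiting action induces $\rho$ for the marking at $x$.
\end{itemize}
Only after all this is $x$ in the image of the $\rho$-marked period map.
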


For a smooth cubic fourfold $X=X_F$, let $\Omega$ be the standard
projective volume form.  Griffiths residue identifies $H^{3,1}(X)$
with $\CC\operatorname{Res}_{X}(\Omega/F^2)$.  Since
$A^*\Omega=\det(A)\Omega$ and $A\cdot F=F$ for $A\in\Aut(F)$, its
action on this line is given by $\det(A)$.  Thus
\begin{equation}\label{eq:symplectic-diagram}
\Auts(F)=\Aut(F)\cap\SL(6,\CC),
\qquad
\Aut(X)/\Auts(X)\cong\mu_{m(X)},
\end{equation}
where $\Auts(F)$ is the strict symplectic lift of $\Auts(X)$ and
$m(X)=[\Aut(X):\Auts(X)]$.

For $G_s=\Auts(X)$, put
$T=\Lambda_0^{G_s}$ and $S=T^\perp\subset\Lambda_0$.
Here $S$ and $T$ are the coinvariant and invariant lattices used in
the lattice calculations. The lattice $T$ contains the transcendental
lattice $T(X)$ of $X$, but need not equal it. The corresponding
connected symplectic family has dimension $20-\rank(S)$
\cite{yu2020moduli}\cite{laza2022automorphisms}. The generic value of
$m(X)$ on such a family is called its \emph{generic index}; for a
positive-dimensional family it is $1$ or $2$ \cite{FWZ26}\cite{FZ26}.

\section{Extensions and liftability}
\label{sec:extension-liftability}

\subsection{Central extensions and liftings}

We recall the cohomological description of central extensions. Let $A$
be an abelian group with trivial $G$-action. A central
extension
\[
1\longrightarrow A\longrightarrow E\longrightarrow G\longrightarrow1
\]
determines a class $[E]\in H^2(G,A)$.  Indeed, if $s\colon G\to E$ is a
set-theoretic section with $s(1)=1$, then
$s(g)s(h)=c(g,h)s(gh)$ for a $2$-cocycle $c\colon G^2\to A$.
Changing the section changes $c$ by a coboundary.  The extension splits if
and only if $[E]=0$.  If $K\leq G$, the restriction of $[E]$ to $H^2(K,A)$ is
the class of the inverse image of $K$ in $E$.

If $K\leq G$ has finite index and $M$ is a $G$-module, restriction and
corestriction satisfy
\[
\operatorname{cor}_K^G\operatorname{res}_K^G(\alpha)
=[G:K]\alpha,
\qquad \alpha\in H^i(G,M).
\]
For these facts and the relevant conventions for $H^2(G,\CC^\times)$,
see \cite[\S2.2]{xiezhenglift}.

A finite subgroup $G\leq\PGL(n,\CC)$ is liftable if there exists a
subgroup $H\leq\GL(n,\CC)$ such that
$\pi|_H\colon H\to G$ is an isomorphism.  In this
case, $H$ is called a \emph{lifting} of $G$.  Choosing a matrix
$A_g$ above every $g\in G$ gives scalars $c(g,h)\in\CC^\times$ such that
$A_gA_h=c(g,h)A_{gh}$.  Their cohomology class
\[
o_G\in H^2(G,\CC^\times)
\]
is independent of the choices, and $G$ is liftable if and only if
$o_G=0$; see \cite[\S3]{xiezhenglift}.

Let $F\in\CC[x_1,\ldots,x_n]_d$ define a smooth hypersurface
$X_F\subset\PP^{n-1}$, and let $G\leq\Aut(X_F)$.  A lifting $H$ is an
\emph{$F$-lifting} if
$H\leq\Aut(F)$.  If such a lifting exists, we say that $G$ is
$F$-liftable.  Put $Z_d=\langle\zeta_dI_n\rangle$.  The invariant
extension from \eqref{eq:linear-projective},
\[
1\longrightarrow Z_d\longrightarrow\widehat G_F
\longrightarrow G\longrightarrow1,
\]
defines a class $e_F(G)\in H^2(G,Z_d)$.  Thus $G$ is $F$-liftable if and
only if this extension splits.  The inclusion $Z_d\hookrightarrow\CC^\times$ sends
$e_F(G)$ to $o_G$.  In particular, $F$-liftability implies liftability.

If $\rho_1,\rho_2\colon G\to\GL(n,\CC)$ are lifting homomorphisms, then
$\rho_2(g)=\chi(g)\rho_1(g)$ for a unique
$\chi\in\Hom(G,\CC^\times)$.  Conversely, twisting a lifting by such a
character gives another lifting. Two $F$-liftings differ by a unique character in $\Hom(G,\mu_d)$. If $\rho$ is a lifting
and $\rho(g)\cdot F=\lambda(g)F$, then $\lambda$ is a character.  An
$F$-lifting exists precisely when $\lambda$ lies in the image of the
$d$-th power map on $\Hom(G,\CC^\times)$.

The next lemma tests whether a character of the scalar subgroup extends
to the whole group.

\begin{lem}\label{lem:central-character-extension}
Let $Z$ be a central subgroup of a finite group $H$, and let
$\lambda_Z\colon Z\to\CC^\times$ be a character.  Then $\lambda_Z$ extends
to a linear character of $H$ if and only if it is trivial on
$Z\cap[H,H]$.
\end{lem}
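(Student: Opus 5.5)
Linear characters of $H$ are exactly the characters of the finite abelian group $H_{\mathrm{ab}}=H/[H,H]$. So the plan is to reduce the statement to extending characters from a subgroup of a finite abelian group. That extension problem is handled by divisibility (injectivity) of $\CC^\times$.

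\emph{Necessity.} Suppose $\lambda\colon H\to\CC^\times$ is a linear character extending $\lambda_Z$. Since $\CC^\times$ is abelian, $\lambda$ kills every commutator, so $[H,H]\leq\ker\lambda$. Hence $\lambda_Z=\lambda|_Z$ is trivial on $Z\cap[H,H]$.

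\emph{Sufficiency.} Assume $\lambda_Z$ is trivial on $Z\cap[H,H]$.
\begin{enumerate}
\item Let $p\colon H\to H_{\mathrm{ab}}$ be the quotient map. Its restriction to $Z$ has kernel $Z\cap[H,H]$ and image $\bar Z=Z[H,H]/[H,H]\cong Z/(Z\cap[H,H])$.
\item By hypothesis, $\lambda_Z$ factors through a character $\bar\lambda\colon\bar Z\to\CC^\times$.
\item Because $\CC^\times$ is divisible, it is an injective $\ZZ$-module, so $\bar\lambda$ extends to a homomorphism $\Lambda\colon H_{\mathrm{ab}}\to\CC^\times$.
\item If one prefers to avoid injectivity, use the elementary fact that for a finite abelian group $B$ and a subgroup $C$, restriction $\Hom(B,\CC^\times)\to\Hom(C,\CC^\times)$ is surjective. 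This holds because $|\Hom(B,\CC^\times)|=|B|$ and the kernel of restriction is $\Hom(B/C,\CC^\times)$, of order $|B/C|$. The image therefore has order $|C|=|\Hom(C,\CC^\times)|$.
\item Then $\lambda=\Lambda\circ p$ is a linear character of $H$, and it restricts to $\lambda_Z$ on $Z$.
\end{enumerate}

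None of these steps is a real obstacle; the only point needing care is the surjectivity of restriction of characters for finite abelian groups, handled by the counting argument in step 4. Centrality of $Z$ is not actually needed for the argument. It only reflects the intended application, namely scalar subgroups $Z\leq\mu_3I_6$ inside a strict lift.
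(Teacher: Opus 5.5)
Your proof is correct and matches the paper's argument: necessity because linear characters kill $[H,H]$, and sufficiency by descending $\lambda_Z$ to $Z[H,H]/[H,H]\leq H_{\mathrm{ab}}$, extending via divisibility of $\CC^\times$, and pulling back. Your counting argument in step 4 and your remark that centrality of $Z$ is not needed are both valid additions.
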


\begin{proof}
Every linear character of $H$ is trivial on $[H,H]$, which proves the
necessity.  Conversely, suppose that $\lambda_Z$ is trivial on
$Z\cap[H,H]$.  It then defines a character of the subgroup
$Z[H,H]/[H,H]$ of $H_{\mathrm{ab}}$.  A character of a subgroup of a finite
abelian group extends to the whole group because $\CC^\times$ is divisible.
Pulling this extension back to $H$ proves the assertion.
\end{proof}

The central extension gives the following liftability criterion.

\begin{prop}\label{prop:stem-liftability}
Let
\[
1\longrightarrow Z\longrightarrow H\longrightarrow G\longrightarrow1
\]
be a central extension, and let $\rho:H\to\GL(V)$ be a faithful representation such that
$\rho(Z)=\rho(H)\cap(\CC^\times I_V)$.

The induced projective representation of $G$ is liftable if and only if
$Z\cap[H,H]=1$.  In particular, if $Z\cong C_3$, then it is non-liftable if
and only if $Z\leq[H,H]$.

With the trivial $G$-action on $C_3$, the universal coefficient
sequence is
\[
0\longrightarrow\operatorname{Ext}^1(G_{\mathrm{ab}},C_3)
\longrightarrow H^2(G,C_3)
\longrightarrow\operatorname{Hom}(M(G),C_3)
\longrightarrow0,
\]
where $M(G)$ is the Schur multiplier.  The liftable classes are precisely
the embedded $\operatorname{Ext}^1$ part.  If the extension class has nonzero image in $\Hom(M(G),C_3)$,
then $Z\le [H,H]$.
\end{prop}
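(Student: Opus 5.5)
The plan is to reduce liftability of the projective representation to a statement about linear characters of $H$, using Lemma~\ref{lem:central-character-extension}. Write $\rho(z)=\lambda_Z(z)I_V$ for $z\in Z$; this defines a character $\lambda_Z\colon Z\to\CC^\times$, and it is injective because $\rho$ is faithful. The projective representation is $\pi\circ\rho$, and it factors through $G$ because $\rho(Z)$ consists of scalars. Suppose first that $\lambda_Z$ extends to a linear character $\lambda$ of $H$. Then $\rho'(h)=\lambda(h)^{-1}\rho(h)$ is trivial on $Z$, so it descends to a homomorphism $G\to\GL(V)$ lifting the projective representation. Conversely, suppose $\sigma\colon G\to\GL(V)$ is a lifting. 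Then for each $h\in H$ we have $\rho(h)=\lambda(h)\sigma(\bar h)$ for a unique scalar $\lambda(h)$. The map $\lambda$ is a homomorphism, since both $\rho$ and $\sigma$ are, and on $Z$ it restricts to $\lambda_Z$. So liftability is equivalent to $\lambda_Z$ extending to $H$. By Lemma~\ref{lem:central-character-extension}, this happens exactly when $\lambda_Z$ is trivial on $Z\cap[H,H]$. Since $\lambda_Z$ is injective, that condition is $Z\cap[H,H]=1$.

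Strictly speaking, the hypothesis $\rho(Z)=\rho(H)\cap\CC^\times I_V$ is needed for the projective image to be identified with $G$. That hypothesis is what makes $\pi\circ\rho$ have kernel exactly $Z$, so that "liftable" refers to $G$ itself. For $Z\cong C_3$ the only subgroups are $1$ and $Z$, so $Z\cap[H,H]\neq1$ if and only if $Z\le[H,H]$.

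For the cohomological statement, the universal coefficient sequence is standard. We have $H_1(G,\ZZ)=G_{\mathrm{ab}}$ and $H_2(G,\ZZ)=M(G)$, and the sequence is natural and split. The substantive claim is that the liftable classes are exactly the $\operatorname{Ext}$ part. I would argue this via the map $H^2(G,C_3)\to H^2(G,\CC^\times)$ induced by $C_3\hookrightarrow\CC^\times$. Because $\CC^\times$ is divisible, $\operatorname{Ext}^1(G_{\mathrm{ab}},\CC^\times)=0$, so $H^2(G,\CC^\times)\cong\Hom(M(G),\CC^\times)$. The coefficient map is compatible with the two universal coefficient sequences, and on the $\Hom$ terms it is the injection $\Hom(M(G),C_3)\hookrightarrow\Hom(M(G),\CC^\times)$. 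Hence a class in $H^2(G,C_3)$ dies in $H^2(G,\CC^\times)$ exactly when its image in $\Hom(M(G),C_3)$ vanishes, that is, when it lies in the $\operatorname{Ext}^1$ part. The extension class $[H]$ maps to the obstruction $o_G$, just as $e_F(G)$ maps to $o_G$ in the preceding discussion. So the liftable classes are precisely those in the $\operatorname{Ext}^1$ part.

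The last assertion then follows. If the image of $[H]$ in $\Hom(M(G),C_3)$ is nonzero, the projective representation is non-liftable, and by the first part (with $Z\cong C_3$) this gives $Z\le[H,H]$. The main point needing care is naturality: I must check that the coefficient map is compatible with the universal coefficient sequences and that the extension class maps to $o_G$. Both are standard from the cocycle description given in the paper.
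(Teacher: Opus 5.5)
Your proposal is correct. The first half is the same as the paper's proof: you extend the scalar character $\lambda_Z$ via Lemma~\ref{lem:central-character-extension}, and conversely you recover such an extension from a linear lift. For the cohomological half you take a different route.

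\textbf{The paper's route.} It applies the five-term homology sequence $M(H)\to M(G)\xrightarrow{\tau}Z\to H_{\mathrm{ab}}\to G_{\mathrm{ab}}\to0$ to the extension itself. It identifies the image of the extension class in $\Hom(M(G),C_3)$ with the connecting map $\tau$ and reads off $\operatorname{im}(\tau)=Z\cap[H,H]$. So the $\operatorname{Ext}^1$-part condition is matched with $Z\cap[H,H]=1$ directly. The final assertion is then just surjectivity of a nonzero $\tau$ onto $C_3$.

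\textbf{Your route.} You push the class forward along $\lambda_Z\colon Z\hookrightarrow\CC^\times$. You use $\operatorname{Ext}^1(G_{\mathrm{ab}},\CC^\times)=0$ and naturality of the universal coefficient sequence in the coefficient module to see that the kernel of $H^2(G,C_3)\to H^2(G,\CC^\times)$ is exactly the $\operatorname{Ext}^1$ part. You then combine the criterion that $G$ is liftable iff $o_G=0$ with your first half.

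\textbf{What each buys.} Your version avoids having to justify that the $\Hom$-component of an extension class is the transgression $\tau$, a standard fact the paper simply asserts, with sign conventions to track. It needs only the cocycle-level observation that $[H]$ maps to $o_G$. The paper's version is intrinsic to the abstract extension: it gives $\operatorname{im}(\tau)=Z\cap[H,H]$ for any central extension without reference to $\rho$. It also yields the last assertion without passing through liftability.

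One small wording point: only the universal coefficient sequence is natural, not its splitting. You only use naturality, so nothing in your argument is affected.
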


\begin{proof}
Let $\lambda_Z$ be the scalar character of $Z$.  If
$Z\cap[H,H]=1$, Lemma~\ref{lem:central-character-extension} extends it to a
character $\lambda$ of $H$.  The representation
$h\mapsto\lambda(h)^{-1}\rho(h)$ is trivial on $Z$ and therefore descends to
a linear representation of $G$ with the specialized projectivization.
Conversely, comparison with any linear lift of the projective representation
gives a character of $H$ extending $\lambda_Z$.  Lemma~\ref{lem:central-character-extension} again forces
$Z\cap[H,H]=1$.  Since $Z$ has order three in the cubic case, a nontrivial
intersection is all of $Z$.  As $Z$ is already central, this is exactly the
stem condition for the distinguished kernel.

For the last assertion, the five-term homology sequence of the extension is
\[
M(H)\longrightarrow M(G)\xrightarrow{\tau} Z
\longrightarrow H_{\mathrm{ab}}\longrightarrow G_{\mathrm{ab}}
\longrightarrow0,
\]
where $\tau$ is the connecting map.  Exactness at $Z$ gives
\[
\operatorname{im}(\tau)
=\ker(Z\to H_{\mathrm{ab}})
=Z\cap[H,H].
\]
Under the last map in the
universal coefficient sequence, the extension class maps to $\tau$.
The kernel of the map
$H^2(G,C_3)\to\Hom(M(G),C_3)$
is the image of $\operatorname{Ext}^1(G_{\mathrm{ab}},C_3)$. If $Z\cong C_3$,
every nonzero $\tau$ is surjective.  The result follows.
\end{proof}

Thus the zero class corresponds to $F$-liftability of the invariant
extension, the whole $\operatorname{Ext}^1$ term corresponds to liftability,
and the classes with nonzero multiplier component are non-liftable.  In
particular, a nonzero $\operatorname{Ext}^1$ class may be liftable without
being $F$-liftable.

For an invariant cubic extension with $Z=\langle\omega I_n\rangle$,
$F$-liftability holds exactly when the image of $Z$ in
$H_{\mathrm{ab}}/3H_{\mathrm{ab}}$ is nontrivial.  Indeed, a nonzero image
admits a character $H\to\mu_3$ whose restriction to $Z$ is an isomorphism;
its kernel is a complement to $Z$.  Conversely, a complement gives such
a character since $Z$ is central.  Equivalently,
$Z\cap([H,H]H^3)=1$, where $H^3$ is the subgroup generated by all cubes.

The small central extensions needed below can be read directly from the
groups of orders $9$ and $27$.

\begin{lem}\label{lem:small-central-extensions}
Let
\[
1\longrightarrow Z\longrightarrow E\longrightarrow Q\longrightarrow1
\]
be a central extension with $Z\cong C_3$.
\begin{enumerate}
    \item If $Q\cong C_3$, then the extension splits exactly when
    $E\cong C_3^2$; otherwise $E\cong C_9$.

    \item If $Q\cong C_9$, then the extension splits exactly when
    $E\cong C_9\times C_3$; otherwise $E\cong C_{27}$.

    \item If $Q\cong C_3^2$, then the extension splits exactly when
    $E\cong C_3^3$.  The remaining abelian case is
    $E\cong C_9\times C_3$, and the two non-abelian cases have exponent
    $3$ and $9$, respectively.
\end{enumerate}
Moreover, when $E$ is the invariant extension of a projective
$C_3^2$-action, that action is liftable if and only if $E$ is abelian.
\end{lem}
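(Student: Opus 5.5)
The plan is to combine the list of groups of orders $9$ and $27$ with the observation that a central extension splits exactly when $Z$ has a complement. For (1) and (2), $Q$ is cyclic, so $E/Z$ is cyclic with $Z$ central, and hence $E$ is abelian. We then have $|E|=9$ or $27$, and $E$ surjects onto $C_3$ or $C_9$. For $Q\cong C_3$ this leaves only $C_9$ and $C_3^2$. The extension splits if and only if there is a complement of order $3$ meeting $Z$ trivially. In $C_3^2$ any line other than $Z$ is such a complement. In $C_9$ the unique subgroup of order $3$ is $Z$ itself, so there is no complement.

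For $Q\cong C_9$, an abelian group of order $27$ with a $C_9$ quotient is $C_{27}$ or $C_9\times C_3$ (but not $C_3^3$). In $C_{27}$ there is no complement, because the unique order-$3$ subgroup is contained in every nontrivial subgroup. In $C_9\times C_3$ we need a complement. Since $E/Z\cong C_9$, the subgroup $Z$ cannot be $3E=\langle(3,0)\rangle$, as $E/3E\cong C_3^2$ is not cyclic. Pick an element $x$ of order $9$ mapping to a generator of $Q$ (it exists because $E\to Q$ is onto and $E$ has exponent $9$). Then $\langle x\rangle\cap Z$ is either trivial or equal to $\langle 3x\rangle=3E$. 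The latter is excluded, so $\langle x\rangle$ is a complement.

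For (3), $|E|=27$ and $E/Z\cong C_3^2$. Run through the five groups of order $27$. The group $C_{27}$ is excluded because its quotients are cyclic. The groups $C_3^3$ and $C_9\times C_3$ occur. The two non-abelian groups $3^{1+2}_+$ (exponent $3$) and $3^{1+2}_-$ (exponent $9$) occur with $Z=Z(E)=[E,E]$, and $E/Z\cong C_3^2$ holds in both cases. For $C_3^3$ any complementary plane splits the extension. For $C_9\times C_3$ a splitting would give $E\cong Z\times C_3^2$ of exponent $3$, which is impossible. For the non-abelian groups a splitting $E\cong Z\times Q$ would make $E$ abelian, which is also impossible. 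This gives the stated trichotomy.

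For the final assertion I would apply Proposition~\ref{prop:stem-liftability}: the projective action is liftable if and only if $Z\cap[E,E]=1$. If $E$ is abelian, then $[E,E]=1$ and the action is liftable. If $E$ is non-abelian, then $[E,E]$ is a nontrivial subgroup of $Z(E)$, since $E/Z$ is abelian. It has order $3$, so $[E,E]=Z(E)$. Moreover $Z\le Z(E)$ and $|Z(E)|=3$ for a non-abelian group of order $27$, so $Z=Z(E)=[E,E]$ and the action is not liftable. The hypotheses of that proposition, namely faithfulness and $\rho(Z)=\rho(E)\cap\CC^\times I$, hold automatically for the invariant extension.

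The main obstacle is the case $C_9\times C_3$ in (2). One has to make sure that it genuinely splits rather than merely being abelian, and the argument above handles this through the position of $Z$ relative to $3E$.
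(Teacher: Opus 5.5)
Your proof is correct and follows the same route as the paper. Like the paper, you enumerate the groups of orders $9$ and $27$, decide splitting by the existence of a complement, and conclude the final assertion from Proposition~\ref{prop:stem-liftability} using $[E,E]\le Z$. The step you flag as the main obstacle in (2) is in fact immediate: when $E\cong C_9\times C_3$, any lift $x$ of a generator of $Q\cong C_9$ has order $9$ because $E$ has exponent $9$, so $\langle x\rangle\to Q$ is an isomorphism and $\langle x\rangle$ is already a complement, and the comparison of $Z$ with $3E$ is unnecessary.
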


\begin{proof}
Since $|E|=3|Q|$, the assertions follow from the groups of orders $9$ and
$27$.  If $Q$ is cyclic, then $E$ is abelian, giving $C_3^2,C_9$ for
$Q\cong C_3$ and $C_9\times C_3,C_{27}$ for $Q\cong C_9$; in each pair only
the first extension splits. For $Q\cong C_3^2$, the condition $E/Z\cong C_3^2$
excludes $C_{27}$, gives a split extension for $C_3^3$, and forces $Z$ to be
the subgroup of cubes in $C_9\times C_3$; the two non-abelian groups, of
exponents $3$ and $9$, cannot split.  Finally, $[E,E]\leq Z$, so
Proposition~\ref{prop:stem-liftability} gives liftability exactly when
$[E,E]=1$, equivalently when $E$ is abelian.
\end{proof}

\subsection{Liftability criteria}

We first collect three elementary consequences of the preceding results.

\begin{prop}\label{prop:basic-liftability}
Let $F\in\CC[x_1,\ldots,x_n]_d$ define a smooth hypersurface
$X_F\subset\PP^{n-1}$, and let $G\leq\Aut(X_F)$ be finite.
\begin{enumerate}
    \item Every element $g\in G$ of finite order $m$ has a lifting in
    $\GL(n,\CC)$ of the same order.

    \item If $\gcd(d,m)=1$, then $g$ has a unique $F$-lifting.

    \item If $\gcd(d,|G|)=1$, then $G$ has a unique $F$-lifting.
\end{enumerate}
\end{prop}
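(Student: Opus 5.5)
For (1), I will start from any matrix $A\in\GL(n,\CC)$ with $\pi(A)=g$. Since $g^m=1$ in $\PGL(n,\CC)$, we have $A^m=cI_n$ for some $c\in\CC^\times$. I will choose an $m$-th root $c^{1/m}$ and replace $A$ by $A'=c^{-1/m}A$. Then $A'^m=I_n$ and $\pi(A')=g$. If $A'^k=I_n$ for some $k<m$, then $g^k=1$, which is impossible. Hence $A'$ has order exactly $m$. Smoothness of $X_F$ plays no role here.

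For (2), I will use the strict lift $\widehat{\langle g\rangle}_F$. By \eqref{eq:linear-projective}, it is a central extension of $\langle g\rangle\cong C_m$ by $Z_d\cong C_d$. When $\gcd(d,m)=1$, any extension of $C_m$ by $C_d$ splits (Schur--Zassenhaus), and the extension is abelian of order $dm$. The complement is then unique: it is the subgroup of elements of order dividing $m$. Concretely, pick $A\in\Aut(F)$ with $\pi(A)=g$. Then $A^m=\zeta_d^kI_n$. Choose $u$ with $um\equiv1\pmod d$ and set $A'=\zeta_d^{-ku}A$. This gives $A'\in\Aut(F)$ with $A'^m=\zeta_d^{k-kum}I_n=I_n$, so $A'$ is an $F$-lifting. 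For uniqueness, any two $F$-liftings of $\langle g\rangle$ differ by a character in $\Hom(C_m,\mu_d)$, and this group is trivial when $\gcd(d,m)=1$. I will phrase "unique $F$-lifting of $g$" as the unique $F$-lifting of the cyclic group $\langle g\rangle$, equivalently a unique element of $\Aut(F)$ over $g$ of order $m$.

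For (3), I will apply the same argument to $G$ itself. The class $e_F(G)$ lies in $H^2(G,Z_d)$. This group is killed both by $|G|$ (via $\operatorname{cor}\circ\operatorname{res}$ to the trivial subgroup) and by $d$, since $Z_d$ has exponent $d$. When $\gcd(d,|G|)=1$ it is therefore zero, so $\widehat G_F\to G$ splits and an $F$-lifting exists. Uniqueness follows because two $F$-liftings differ by a character in $\Hom(G,\mu_d)$, which is trivial when $\gcd(|G|,d)=1$. Equivalently, $Z_d$ is a normal Hall subgroup of $\widehat G_F$, and the lifting is the unique Hall $\pi'$-subgroup. Since $Z_d$ is central, the complement is normal and hence unique.

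The only delicate point is the precise meaning of uniqueness in (2), which I will resolve as above. The rest is routine.
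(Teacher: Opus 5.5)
Your proof is correct, but it takes a different route from the paper. The paper argues very little directly. It cites \cite[Lemma~4.1 and Proposition~4.7]{xiezhenglift} for (1) and (2). It obtains existence in (3) from \cite[Theorem~1.3]{xiezhenglift}, part of the Sylow criterion recorded as Theorem~\ref{thm:sylow-liftability}, whose prime condition is vacuous when $\gcd(d,|G|)=1$. Only uniqueness in (3) is argued in the text, via $\Hom(G,\mu_d)=1$, and that is exactly your uniqueness argument.

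You instead give self-contained arguments:
\begin{itemize}
\item for (1), you rescale an arbitrary lift by an $m$-th root of the scalar $A^m$;
\item for (2), you rescale an element of $\Aut(F)$ over $g$ by $\zeta_d^{-ku}$ with $um\equiv1\pmod d$;
\item for existence in (3), you use that $H^2(G,Z_d)$ is killed by both $|G|$ and $d$. Equivalently, Schur--Zassenhaus applies to the coprime central extension $1\to Z_d\to\widehat G_F\to G\to1$, and the complement is normal, hence unique.
\end{itemize}

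Your version shows that the only geometric input is the exact sequence \eqref{eq:linear-projective}, i.e.\ surjectivity of $\Aut(F)\to\Aut(X_F)$ with kernel $\mu_dI_n$. It is much more elementary than invoking a Sylow criterion. The paper's citation costs it nothing, since it needs the Sylow criterion anyway for Corollary~\ref{cor:coprime-liftability} and Proposition~\ref{prop:cubic-liftability}. There the coprimality is between $d$ and $n$ rather than between $d$ and $|G|$, and your cohomological vanishing argument would no longer apply.
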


\begin{proof}
The first two assertions are \cite[Lemma~4.1 and
Proposition~4.7]{xiezhenglift}.  Existence in the last assertion follows
from \cite[Theorem~1.3]{xiezhenglift}.  It is unique because
$\Hom(G,\mu_d)=1$ when $\gcd(d,|G|)=1$.
\end{proof}

The Sylow criterion of \cite[Theorem~1.1(2) and Theorem~1.3]{xiezhenglift} gives the following
reduction.

\begin{thm}
\label{thm:sylow-liftability}
Let $F\in\CC[x_1,\ldots,x_n]_d$ define a smooth hypersurface
$X_F\subset\PP^{n-1}$, and let $G\leq\Aut(X_F)$ be finite.
\begin{enumerate}
    \item The group $G$ is liftable if and only if, for every prime
    $p\mid\gcd(|G|,d,n)$, a Sylow $p$-subgroup of $G$ is liftable.

    \item The group $G$ is $F$-liftable if and only if, for every prime
    $p\mid\gcd(|G|,d,n)$, a Sylow $p$-subgroup of $G$ is $F$-liftable.
\end{enumerate}
\end{thm}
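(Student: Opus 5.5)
The statement is the Sylow reduction for liftability. My plan is to deduce it from restriction--corestriction, applied separately to the obstruction classes $o_G\in H^2(G,\CC^\times)$ and $e_F(G)\in H^2(G,Z_d)$. Both classes are natural under restriction: for $K\leq G$, the class $o_K$ is $\operatorname{res}^G_K(o_G)$, and $e_F(K)$ is $\operatorname{res}^G_K(e_F(G))$, because the inverse image of $K$ in $\widehat G_F$ is exactly $\widehat K_F$. This gives necessity at once: if $G$ is liftable (resp.\ $F$-liftable), then so is every subgroup, in particular every Sylow subgroup.

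For sufficiency in part (2), I will first note that $e_F(G)$ is killed by $d$, because $Z_d$ has exponent $d$. It is also killed by $|G|$. Hence $e_F(G)$ has order dividing $\gcd(d,|G|)$, and it suffices to show that its $p$-primary component vanishes for every prime $p\mid\gcd(d,|G|)$. I take $P$ a Sylow $p$-subgroup. Since $\operatorname{cor}\circ\operatorname{res}$ is multiplication by $[G:P]$, which is prime to $p$, restriction to $P$ is injective on $p$-primary parts. So it suffices that $\operatorname{res}_P e_F(G)=e_F(P)=0$. If $p\mid n$, this vanishing is the hypothesis. If $p\nmid n$, I will show $P$ is automatically $F$-liftable, as follows. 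Take any lift of $P$ to $\mathrm{SL}(n,\CC)$; the determinant-one scalars there are $\mu_n$, so the obstruction lands in $H^2(P,\mu_n)$, which is $0$ because $p\nmid n$. This gives a genuine lifting $\rho$ of $P$. Its character $\lambda$ on $F$ takes values in a $p$-group, and I must check that it is a $d$-th power in $\Hom(P,\CC^\times)$. That group is the dual of $P_{\mathrm{ab}}$, which is divisible by $d$ only prime to $p$, so I cannot argue abstractly; instead I will use the particular lift. Write $\rho(g)\cdot F=\lambda(g)F$ and compare determinants using the Hessian, which is covariant: $\operatorname{Hess}(\rho(g)\cdot F)=\det\rho(g)^2\,\rho(g)\cdot\operatorname{Hess}(F)$. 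This yields $\lambda^n=\det^{2}=1$, since $\rho(P)\le\mathrm{SL}$. So $\lambda$ has order prime to $p$ and is therefore trivial, and $\rho$ is an $F$-lifting.

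Part (1) follows by the same scheme with $o_G$, which is killed by $|G|$. Restriction is again injective on the $p$-part for each $p\mid|G|$. The extra primes are handled by two facts. First, $o_G$ is the image of $e_F(G)$, so it is killed by $d$; hence only $p\mid d$ matters. Second, via the $\mathrm{SL}$ reduction above, $o_G$ comes from $H^2(G,\mu_n)$, so it is also killed by $n$; hence only $p\mid n$ matters.

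The main obstacle is the case $p\nmid n$ in part (2): producing an $F$-lifting, not merely a lifting, of $P$. The Hessian determinant computation is the step I would verify most carefully, namely the exponent relation between $\lambda$ and $\det$. Alternatively, I could simply cite \cite[Theorem~1.1(2), Theorem~1.3]{xiezhenglift} as the paper does.
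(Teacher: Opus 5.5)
Your part (1) is complete. The reduction in part (2) is also correct: $e_F(G)$ is killed by $\gcd(d,|G|)$, and restriction to a Sylow $p$-subgroup detects its $p$-primary part. For $p\nmid n$, the vanishing of $H^2(P,\mu_n)$ gives a lifting $\rho\colon P\to\SL(n,\CC)$.

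The gap is the step meant to show that the character $\lambda$ of $\rho$ on $F$ is trivial. The Hessian is a covariant, not an invariant. Combining $\rho(g)\cdot F=\lambda(g)F$ with $\operatorname{Hess}(A\cdot F)=\det(A)^2\,A\cdot\operatorname{Hess}(F)$ only yields $\rho(g)\cdot\operatorname{Hess}(F)=\lambda(g)^n\det\rho(g)^{-2}\operatorname{Hess}(F)$. This says $\operatorname{Hess}(F)$ is again a semi-invariant, with some character. To conclude $\lambda^n=\det^2$ you would need $\operatorname{Hess}(F)$ to be fixed by $\rho(g)$, and nothing guarantees that. The relation is in fact false. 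Take the Fermat cubic $F=x_1^3+x_2^3+x_3^3$ and $A=-P$, where $P$ is the permutation matrix of $(1\;2)$. Then $\det A=1$ but $A\cdot F=-F$, so $\lambda^3=-1\neq1=\det(A)^2$. As written, the case $p\mid d$, $p\nmid n$ of part (2) is therefore unproved.

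The missing ingredient is a genuine invariant that does not vanish at $F$: the discriminant $\Delta$ of degree-$d$ forms in $n$ variables. It has three properties you need.
\begin{itemize}
\item It is homogeneous of degree $n(d-1)^{n-1}$ in the coefficients.
\item It satisfies $\Delta(A\cdot F)=\det(A)^{d(d-1)^{n-1}}\Delta(F)$; the exponent is forced by scalar matrices.
\item $\Delta(F)\neq0$ because $X_F$ is smooth.
\end{itemize}
For $A=\rho(g)\in\SL(n,\CC)$ this gives $\lambda(g)^{n(d-1)^{n-1}}=1$. Since $p\mid d$ implies $p\nmid d-1$, and $p\nmid n$, the $p$-power order of $\lambda(g)$ is prime to $n(d-1)^{n-1}$. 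Hence $\lambda=1$, and $\rho(P)\le\Aut(F)$ is an $F$-lifting.

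With this replacement your proof is complete and self-contained. The paper instead simply quotes \cite[Theorem~1.1(2) and Theorem~1.3]{xiezhenglift}. Your cohomological route (restriction--corestriction, the $\SL$-lift, and the discriminant for primes not dividing $n$) has the advantage of showing directly why only primes dividing $\gcd(|G|,d,n)$ can obstruct.
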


\begin{cor}\label{cor:coprime-liftability}
Let $F\in\CC[x_1,\ldots,x_n]_d$ define a smooth hypersurface
$X_F\subset\PP^{n-1}$.  If $\gcd(d,n)=1$, then every finite subgroup
$G\leq\Aut(X_F)$ is $F$-liftable.
\end{cor}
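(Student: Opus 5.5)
The plan is to deduce the corollary directly from Theorem~\ref{thm:sylow-liftability}(2). That theorem says a finite subgroup $G\leq\Aut(X_F)$ is $F$-liftable if and only if, for every prime $p$ dividing $\gcd(|G|,d,n)$, a Sylow $p$-subgroup of $G$ is $F$-liftable. So the whole argument consists of checking that, when $\gcd(d,n)=1$, this condition holds for every $G$.

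The key step is elementary. Any prime $p$ dividing $\gcd(|G|,d,n)$ divides both $d$ and $n$. Hence it divides $\gcd(d,n)=1$, which is impossible. The set of primes in Theorem~\ref{thm:sylow-liftability}(2) is therefore empty, the condition is vacuously satisfied, and $G$ is $F$-liftable.

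For completeness, I would also recall the standing hypothesis $(n,d)\neq(3,3),(4,4)$ from \eqref{eq:linear-projective}. It is automatically satisfied here, since $\gcd(3,3)=3$ and $\gcd(4,4)=4$. So $\Aut(X_F)$ is finite, and the strict-lift setup used in Theorem~\ref{thm:sylow-liftability} applies.

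There is no real obstacle beyond the correct reading of the cited theorem: the empty-condition case must mean that $G$ is $F$-liftable. As a sanity check I would compare with Proposition~\ref{prop:basic-liftability}(3). When $\gcd(d,|G|)=1$, every Sylow subgroup meets the hypothesis trivially, and that proposition gives the same conclusion independently. In the cubic threefold case ($d=3$, $n=5$), the corollary then says that every subgroup of $\Aut(X_F)$ lifts isomorphically into $\Aut(F)$.
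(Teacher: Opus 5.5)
Your proposal is correct and follows the paper's route: the paper gives no separate proof, because the corollary is an immediate consequence of Theorem~\ref{thm:sylow-liftability}(2), and when $\gcd(d,n)=1$ no prime divides $\gcd(|G|,d,n)$, so the Sylow condition is vacuous. Your check that the exceptional pairs $(n,d)=(3,3),(4,4)$ are automatically excluded is correct but not needed, and your cubic threefold specialization matches how the paper uses the corollary in Proposition~\ref{prop:cubic-liftability}(1).
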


For cubic threefolds and fourfolds these criteria simplify. Part~(3) of
the following proposition concerns $\Aut(X_F)$ itself, not an arbitrary
subgroup.

\begin{prop}\label{prop:cubic-liftability}
Let $F$ be a cubic form defining a smooth hypersurface $X_F$.
\begin{enumerate}
    \item If $F$ has five variables, then every finite subgroup of
    $\Aut(X_F)$ is $F$-liftable.

    \item If $F$ has six variables and $G\leq\Aut(X_F)$ is finite, then
    $G$ is liftable (respectively, $F$-liftable) if and only if a Sylow
    $3$-subgroup of $G$ is liftable (respectively, $F$-liftable).

    \item If $F$ has six variables, then $\Aut(X_F)$ is liftable if and
    only if every subgroup isomorphic to $C_3^2$ is liftable.  Moreover,
    $\Aut(X_F)$ is $F$-liftable if and only if every subgroup isomorphic
    to $C_3^2$ is liftable and every element of order $3$ is
    $F$-liftable.
\end{enumerate}
\end{prop}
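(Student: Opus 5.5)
Parts (1) and (2) follow directly from the criteria already stated. For (1), $n=5$ and $d=3$ are coprime, so Corollary~\ref{cor:coprime-liftability} gives $F$-liftability of every finite subgroup. For (2), $\gcd(|G|,3,6)$ is $1$ or $3$, so Theorem~\ref{thm:sylow-liftability} reduces both liftability and $F$-liftability to a Sylow $3$-subgroup. If $3\nmid|G|$, the Sylow subgroup is trivial and the statement holds vacuously. Proposition~\ref{prop:basic-liftability}(3) is consistent with this case.

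For (3), the forward directions are immediate: subgroups of a liftable ($F$-liftable) group are liftable ($F$-liftable). This is because restricting a lifting homomorphism, or a splitting of $\widehat G_F$, gives one for the subgroup. So the content is the converse. By (2) we may replace $G=\Aut(X_F)$ by a Sylow $3$-subgroup $P$. The point where we use that $G$ is the full automorphism group is that $\widehat P_F=\pi^{-1}(P)\cap\Aut(F)$ is a Sylow $3$-subgroup of $\Aut(F)$. Its structure is constrained by the known classification: by \cite{yang2024automorphism} and \cite{laza2022automorphisms}, or via Proposition~\ref{prop:aut-splitting} and the Fermat-rank decomposition, the $3$-groups occurring in $\Aut(X_F)$ are explicitly known, and I would invoke this list.

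The plan for the liftability converse is to use Proposition~\ref{prop:stem-liftability} with $H=\widehat P_F$ and $Z=\langle\omega I_6\rangle$. Then $P$ is non-liftable iff $Z\le[H,H]$. I would show that if $Z\le[H,H]$, some commutator $[a,b]$ of lifts is a nontrivial scalar. This is easiest when $[H,H]$ is generated by commutators, e.g.\ $[H,H]\le Z(H)$, and fails in general. Then the images of $a,b$ generate a subgroup $C_3^2\le P$: their commutator in $P$ is trivial, and the orders can be reduced to $3$ by passing to suitable powers using centrality of $Z$. By Lemma~\ref{lem:small-central-extensions}, that $C_3^2$ has nonabelian invariant extension and is non-liftable. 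The reduction from general $[H,H]$ to a single commutator is where the classification of the possible $P$ is needed. For each $3$-group on the list, one checks directly that $Z\cap[H,H]$ is detected by a pair of commuting-in-$P$ elements of order $3$.

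For the $F$-liftability converse, assume all $C_3^2$ are liftable, so $P$ is liftable and $Z\cap[H,H]=1$. By the criterion after Proposition~\ref{prop:stem-liftability}, $F$-liftability is $Z\cap([H,H]H^3)=1$. Suppose $\omega I_6\in[H,H]H^3$. Since $H/[H,H]$ is abelian with $Z$ injecting, $Z$ must meet $3H_{\mathrm{ab}}$ nontrivially. So some $h\in H$ has $h^3\equiv\omega I$ modulo $[H,H]$. I would then adjust to get $h^3=\omega I$ exactly, so that the element $\pi(h)$ of order $3$ has strict lift $\langle h\rangle\cong C_9$ and is not $F$-liftable, by Lemma~\ref{lem:small-central-extensions}(1). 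This adjustment, again modulo $[H,H]$, is the main obstacle. I expect to handle it either by the explicit list of Sylow $3$-subgroups or by an elementary argument in the abelianization combined with transfer. The restriction/corestriction identity shows that a class killed on an abelian cyclic piece of index prime to $3$ is killed globally. The remaining $3$-group reduction would be done case by case.
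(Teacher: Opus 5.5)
Your arguments for (1) and (2) are the paper's own: Corollary~\ref{cor:coprime-liftability}, and Theorem~\ref{thm:sylow-liftability} with $\gcd(3,6)=3$. The forward implications in (3) are also fine. For the converses in (3), however, the paper gives no internal argument; it quotes \cite[Theorem~1.4]{xiezhengfull}. Your attempt to replace that citation is not a proof, because both hard steps are deferred, and the one concrete reduction you propose is false.

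If $a,b\in H=\widehat P_F$ satisfy $[a,b]=\omega I_6$, then $[a^i,b^j]=\omega^{ij}I_6$ because the commutator is central. So as soon as $\pi(a)$ or $\pi(b)$ has order at least $9$, passing to the power of order $3$ kills the commutator. Equivalently, on $A=\langle\pi(a),\pi(b)\rangle$ the commutator pairing is bilinear with values in $\mu_3$. It therefore vanishes on $A[3]\times A[3]$ unless $A\cong C_3^2$. Group theory alone thus allows a non-liftable $C_9\times C_3$ whose unique $C_3^2$ has abelian strict lift.

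The same phenomenon defeats the ``adjustment to $h^3=\omega I_6$ exactly'' in the $F$-liftability converse. The obstruction is a character $\lambda$ with $\rho(g)\cdot F=\lambda(g)F$ that is nontrivial on $P_{\mathrm{ab}}[3]$. Elements of order $3$ in $P_{\mathrm{ab}}$ need not come from elements of order $3$ in $P$. Here is a purely group-theoretic example, ignoring whether a smooth semi-invariant cubic exists:
\begin{itemize}
\item take $P=\langle x,y\mid x^9=y^3=1,\ yxy^{-1}=x^4\rangle$ with a lift $\rho$;
\item set $\lambda(x)=\omega$, $\lambda(y)=1$, and $H=\{c\rho(g): c^3\lambda(g)=1\}$;
\item the elements of order $3$ form $\langle x^3,y\rangle\cong C_3^2$, on which $\lambda$ is trivial.
\end{itemize}
So $P$ is liftable, its unique $C_3^2$ is liftable, and every element of order $3$ is $F$-liftable. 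Yet $\lambda$ is not a cube, $\omega I_6\in[H,H]H^3$, and no $h\in H$ satisfies $h^3=\omega^{\pm1}I_6$.

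So the converses in (3) are exactly where input beyond formal manipulation of central extensions is required. One source is the smoothness of the cubic; compare the eigenvalue argument via Lemma~\ref{lem:square-monomial} in Lemma~\ref{lem:small-3-groups-lifting}(2). The other is the fact, stressed in the paper, that $G$ is all of $\Aut(X_F)$ rather than an arbitrary subgroup. Your plan uses fullness only through ``$\widehat P_F$ is Sylow in $\Aut(F)$''. That holds for every $G$ (since $\widehat P_F$ is Sylow in $\widehat G_F$) and gives no leverage.

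Deferring to a case-by-case check over ``the known list'' does not close the gap either. \cite{yang2024automorphism,laza2022automorphisms} provide abstract groups and symplectic actions. They do not provide the strict lifts $\widehat P_F\le\GL(6,\CC)$ of Sylow $3$-subgroups of full automorphism groups that such a check would need. As it stands, (3) must be taken from \cite[Theorem~1.4]{xiezhengfull}, as the paper does.
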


\begin{proof}
The first assertion follows from
$\gcd(3,5)=1$.  For the second, the only prime dividing
$\gcd(3,6)$ is $3$, so it follows from
Theorem~\ref{thm:sylow-liftability}.  The last assertion is
\cite[Theorem~1.4]{xiezhengfull}.
\end{proof}

\begin{cor}\label{cor:small-preimage-liftability}
Let $F\in\CC[x_1,\ldots,x_6]_3$ define a smooth cubic fourfold $X_F$, put
$G_0=\Aut(X_F)$, and let
$Z=\langle\omega I_6\rangle\leq\Aut(F)$.
\begin{enumerate}
    \item The group $G_0$ is liftable if and only if, for every subgroup
    $G\leq G_0$ with $G\cong C_3^2$, the strict lift $\widehat G_F$ is abelian.
    Equivalently,
    \[
        \widehat G_F\cong C_3^3
        \quad\text{or}\quad
        C_9\times C_3.
    \]

    \item The group $G_0$ is $F$-liftable if and only if
    \[
        \widehat G_F\cong G\times C_3
    \]
    for every subgroup $G\leq G_0$ of order $3$ or $9$.  Explicitly, the
    required strict lifts are
    \[
\begin{array}{c|ccc}
G & C_3 & C_9 & C_3^2 \\ \hline
\widehat G_F & C_3^2 & C_9\times C_3 & C_3^3
\end{array}
\]
\end{enumerate}
\end{cor}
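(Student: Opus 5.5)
The plan is to derive both parts of Corollary~\ref{cor:small-preimage-liftability} from Proposition~\ref{prop:cubic-liftability}(3), translating each condition there into a statement about strict lifts via Lemma~\ref{lem:small-central-extensions}. Throughout, $\widehat G_F$ is the central extension of $G$ by $Z\cong C_3$ from \eqref{eq:linear-projective}, and its class $e_F(G)\in H^2(G,Z)$ maps to the obstruction $o_G$.

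For part (1), Proposition~\ref{prop:cubic-liftability}(3) says $G_0$ is liftable exactly when every subgroup $G\cong C_3^2$ is liftable. For such $G$, the final sentence of Lemma~\ref{lem:small-central-extensions} says $G$ is liftable if and only if $\widehat G_F$ is abelian. Lemma~\ref{lem:small-central-extensions}(3) then lists the abelian extensions of $C_3^2$ by $C_3$: they are $C_3^3$ and $C_9\times C_3$, which gives the stated equivalent form.

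For part (2), I first show that $F$-liftability of a subgroup $G$ is equivalent to $\widehat G_F\cong G\times C_3$. If $G$ is $F$-liftable, then $\widehat G_F=Z\times H$ with $H\cong G$. Conversely, suppose $\widehat G_F\cong G\times C_3$ for $G$ of order $3$ or $9$. By Lemma~\ref{lem:small-central-extensions}(1)--(3), this isomorphism type occurs only for the split extension, because the non-split extensions have different isomorphism types ($C_9$, $C_{27}$, $C_9\times C_3$, or non-abelian). The table is just $G\times C_3$ written out for $G=C_3$, $C_9$, $C_3^2$.

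Now assume $G_0$ is $F$-liftable. Every subgroup is then $F$-liftable by restricting the lifting, so the condition holds for all $G$ of order $3$ or $9$. Conversely, assume the condition holds for all such $G$. Taking $G\cong C_3^2$ shows these subgroups are $F$-liftable, hence liftable, since $F$-liftability implies liftability. Taking $G\cong C_3$ shows every order-$3$ element is $F$-liftable. Proposition~\ref{prop:cubic-liftability}(3) then gives that $G_0$ is $F$-liftable.

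The hypothesis for $G=C_9$ is not needed for the converse but follows from $F$-liftability of $G_0$, so the stated equivalence holds as written.

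The main point needing care is that an abstract isomorphism $\widehat G_F\cong G\times C_3$ forces splitting of the specific extension with kernel $Z$. I handle this through the enumeration in Lemma~\ref{lem:small-central-extensions}, which separates the split case from all others by isomorphism type.
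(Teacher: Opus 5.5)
Your proposal is correct and follows essentially the same route as the paper. Part (1) uses Proposition~\ref{prop:cubic-liftability}(3) together with the final assertion of Lemma~\ref{lem:small-central-extensions}; part (2) identifies $F$-liftability with splitting of $1\to Z\to\widehat G_F\to G\to1$ and feeds the $C_3$ and $C_3^2$ cases into Proposition~\ref{prop:cubic-liftability}(3), with the $C_9$ condition included only for uniformity. Your explicit check that the abstract type $G\times C_3$ already forces splitting makes precise a step the paper leaves implicit in its appeal to Lemma~\ref{lem:small-central-extensions}.
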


\begin{proof}
By Proposition~\ref{prop:cubic-liftability}, the group $G_0$ is liftable
exactly when each of its subgroups isomorphic to $C_3^2$ is liftable.
Lemma~\ref{lem:small-central-extensions} says that this holds exactly when
the corresponding strict lift is abelian, which proves the first
assertion.

For any $G\leq G_0$, $F$-liftability is equivalent to the splitting of
\[
1\longrightarrow Z\longrightarrow\widehat G_F
\longrightarrow G\longrightarrow1.
\]
If $G_0$ is $F$-liftable, these extensions split for all its subgroups.
Conversely, splitting for the subgroups isomorphic to $C_3$ and $C_3^2$
gives the two conditions in Proposition~\ref{prop:cubic-liftability}(3).
The three splitting types above follow from
Lemma~\ref{lem:small-central-extensions}; the $C_9$ condition is included
to give a uniform statement for all subgroups of order $3$ or $9$.
\end{proof}

\subsection{Subgroups of order three or nine}

We now describe the inverse images of projective subgroups of order three
or nine. The following smoothness observation will also be used later.

\begin{lem}\label{lem:square-monomial}
Let $F\in\CC[x_1,\ldots,x_6]_3$ define a smooth cubic fourfold $X_F$.
Then, for every
$i\in\{1,\ldots,6\}$, there exists $j\in\{1,\ldots,6\}$
such that the monomial $x_i^2x_j$
occurs in $F$ with nonzero coefficient.
\end{lem}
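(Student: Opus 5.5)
We argue by contradiction: assume that for some index $i$, say $i=1$ after renumbering, no monomial $x_1^2x_j$ occurs in $F$. Then we exhibit a singular point of $X_F$, which contradicts smoothness. The natural candidate is the coordinate point $p=e_1=[1:0:0:0:0:0]$.

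First, expand $F$ by degree in $x_1$:
\[
F=x_1^3c+x_1^2L(x_2,\ldots,x_6)+x_1Q(x_2,\ldots,x_6)+C(x_2,\ldots,x_6),
\]
where $c$ is a constant, $L$ is linear, $Q$ is quadratic and $C$ is cubic. The hypothesis that no $x_1^2x_j$ occurs, including the case $j=1$, says precisely that $c=0$ and $L=0$. So every monomial of $F$ has degree at most $1$ in $x_1$.

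Second, check that $p$ is singular. At $p$ we have $F(p)=0$, so $p\in X_F$. For the partial derivatives: $\partial F/\partial x_1=Q+2x_1L+3cx_1^2=Q$, which vanishes at $p$ because $Q$ is homogeneous of degree $2$ in $x_2,\ldots,x_6$. For $k\geq2$, $\partial F/\partial x_k=x_1\,\partial Q/\partial x_k+\partial C/\partial x_k$. Its first term is a multiple of $x_1$ times a linear form in $x_2,\ldots,x_6$, and its second term is quadratic in $x_2,\ldots,x_6$; both vanish at $p$. Hence all partials vanish at $p$, so $p$ is a singular point of $X_F$, contradicting smoothness.

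There is no real obstacle here. The only point needing care is that the case $j=i$, the monomial $x_i^3$, is covered by the hypothesis, so that $p$ actually lies on $X_F$. The argument is the standard Euler/Jacobian check, and it works verbatim for any number of variables and any degree $d\geq2$: all partials at $e_i$ are governed by the coefficients of $x_i^{d-1}x_j$.
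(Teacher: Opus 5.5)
Your proof is correct: if no monomial $x_i^2x_j$ occurs, then $F$ and all of its partial derivatives vanish at the coordinate point $e_i$, which contradicts smoothness. The paper gives no independent argument; it cites \cite[Lemma~4.1]{Zheng2020OnAA} (case $a=1$, $b=0$) and \cite[Lemma~2.3]{gonzalez2011automorphisms}, and in this case those results amount to exactly your coordinate-point Jacobian check, so you have written out the same argument explicitly.
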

\begin{proof}
This is the case $d=3$, $a=1$, and $b=0$ of
\cite[Lemma~4.1]{Zheng2020OnAA}.  Equivalently, it follows
from \cite[Lemma~2.3]{gonzalez2011automorphisms}; see also
\cite[Lemma~1]{mayanskiy2013abelianautomorphismgroupscubic}.
\end{proof}

The next lemma gives matrix representatives for the obstructions in
Proposition~\ref{prop:cubic-liftability}(3).

\begin{lem}\label{lem:small-3-groups-lifting}
Let $X=X_F\subset\PP^5$ be a smooth cubic fourfold,
and let $G\leq\Aut(X)$.
\begin{enumerate}

    \item If $G\cong C_3$, then $G$ is liftable.  Moreover, $G$ is $F$-liftable
    if and only if its strict lift $\widehat G_F$ is not conjugate in
    $\GL(6,\CC)$ to
    \[
    \left\langle
    \frac19(1,1,4,4,7,7)
    \right\rangle.
    \]

    \item If $G\cong C_9$, then $G$ is $F$-liftable.

    \item If $G\cong C_3^2$ is not liftable, then
    \[
    \widehat G_F
    \sim_{\GL(6,\CC)}
    \left\langle
    (1\;2\;3)(4\;5\;6),
    \frac13(0,1,2,0,1,2)
    \right\rangle
    \]
    or
    \[
    \widehat G_F
    \sim_{\GL(6,\CC)}
    \left\langle
    (1\;2\;3)(4\;5\;6),
    \frac19(1,4,7,1,4,7)
    \right\rangle.
    \]

    \item If $G\cong C_3^2$ is liftable but not $F$-liftable, then
    \[
    \widehat G_F\sim_{\GL(6,\CC)}
    \left\langle
    \frac19(1,1,4,4,7,7),
    \frac13(0,1,0,1,0,1)
    \right\rangle.
    \]
\end{enumerate}
\end{lem}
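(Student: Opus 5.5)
We treat the four parts in turn. Throughout we use Lemma~\ref{lem:small-central-extensions} to identify $\widehat G_F$ abstractly, and simultaneous diagonalization together with Lemma~\ref{lem:square-monomial} to pin down the matrices.

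\textbf{Part (1).} Liftability is automatic because $H^2(C_3,\CC^\times)=0$. By Lemma~\ref{lem:small-central-extensions}(1), $G$ fails to be $F$-liftable exactly when $\widehat G_F\cong C_9$. In that case choose a generator $A$, diagonalized as $\frac19(a_1,\dots,a_6)$ with $A^3=\omega^{\pm1}I_6$. Replacing $A$ by $A^{-1}$ if needed, $A^3=\omega I_6$, so every $a_i\equiv 1 \pmod 3$, i.e.\ $a_i\in\{1,4,7\}$.

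Invariance of a monomial $x_ix_jx_k$ requires $a_i+a_j+a_k\equiv0\pmod 9$. By Lemma~\ref{lem:square-monomial}, each $i$ needs some $j$ with $2a_i+a_j\equiv0\pmod9$, which forces $a_j\equiv a_i+3 \pmod 9$. So each residue class in $\{1,4,7\}$ that occurs forces the next one to occur, and hence all three classes occur.

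Counting multiplicities is the delicate step. Let $V_1,V_4,V_7$ be the eigenspaces, of dimensions $n_1,n_4,n_7$. The invariant cubics lie in
\[
\Sym^2V_1^*\otimes V_4^*\;\oplus\;\Sym^2V_4^*\otimes V_7^*\;\oplus\;\Sym^2V_7^*\otimes V_1^*\;\oplus\;V_1^*\otimes V_4^*\otimes V_7^*
\]
(with the obvious identification of variables). Suppose $n_1>n_4$. Then the partial derivatives with respect to $V_1$-coordinates, restricted to the locus $V_4=0$, form $n_1$ quadrics in $V_1\oplus V_7$ coordinates that all vanish on $\{V_4=0,\ V_7=0\}\cong\PP(V_1)$, which has dimension $n_1-1$.

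We would instead run a standard dimension argument: on the linear subspace $\PP(V_1)$ the cubic vanishes identically, and the partials in the $V_7$-directions vanish there automatically. The only nonzero partials along $\PP(V_1)$ are those in the $V_4$-directions, and these restrict to $n_4$ quadrics $q(x_{V_1})$. These have a common zero on $\PP(V_1)$ whenever $n_1>n_4$, giving a singular point. Hence $n_1\le n_4$, and cyclically $n_4\le n_7\le n_1$, so $n_1=n_4=n_7=2$. This gives the stated normal form.

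Conversely, the displayed group is not $F$-liftable, since it is $C_9$ containing $\omega I_6$.

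\textbf{Part (2).} If $G\cong C_9$ were not $F$-liftable, then $\widehat G_F\cong C_{27}$. Its generator $A$ has $A^9=\omega^{\pm1}I_6$, so $A^3$ generates a subgroup isomorphic to $C_9$ containing $\omega I_6$, namely the non-$F$-liftable lift of the order-$3$ subgroup. By part (1), $A^3\sim\frac19(1,1,4,4,7,7)$. The eigenvalues of $A$ are then $\zeta_{27}^{b_i}$ with $b_i\equiv 1\pmod 3$ lying over each residue class mod $9$.

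We would repeat the same argument: Lemma~\ref{lem:square-monomial} forces a chain $b\mapsto -2b\pmod{27}$. Starting from $b\equiv1\pmod3$, iterating $b\mapsto-2b$ never returns to $1\pmod 3$ compatibly, since $-2b\equiv b\pmod 3$ while $-2b\neq b+9k$ consistently. So we check the orbit of $b\mapsto -2b$ on the nine residues $\{1,4,\dots,25\}$ modulo $27$. This is a finite computation, and we expect it to show that six variables cannot accommodate a closed configuration together with the smoothness dimension bound. The contradiction is reached by the same $\PP(V)$ singular-point argument applied to the eigenspace decomposition.

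\textbf{Parts (3) and (4).} If $G\cong C_3^2$ is non-liftable, Lemma~\ref{lem:small-central-extensions} makes $\widehat G_F$ nonabelian of order $27$, with exponent $3$ or $9$. Its faithful representations on which the centre acts by the scalar $\omega$ are sums of the two $3$-dimensional irreducibles with a fixed central character. Hence the representation is twice one irreducible, and the standard Heisenberg-type models give the two displayed groups. For exponent $9$, the order-$9$ elements must take the part-(1) form, which matches $\frac19(1,4,7,1,4,7)$.

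If $G$ is liftable but not $F$-liftable, then $\widehat G_F\cong C_9\times C_3$ and is diagonalizable. Its $C_9$ part is conjugate to $\frac19(1,1,4,4,7,7)$ by part (1). An extra order-$3$ diagonal element commutes with it, so it preserves the three $2$-dimensional eigenspaces, and it must take the form $\frac13(0,1,0,1,0,1)$ up to scalars and conjugacy. We determine this using Lemma~\ref{lem:square-monomial} on each eigenspace together with the condition that $G$ has order $9$ projectively.

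We expect the main obstacle to be part (2) and the multiplicity-counting step in part (1).
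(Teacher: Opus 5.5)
\textbf{The gap is in part~(2).} You set up the map $b\mapsto -2b$, but then only say you \emph{expect} a finite computation to give a contradiction ``together with the smoothness dimension bound'' via the $\PP(V)$ argument. Your claim that iterating $b\mapsto-2b$ ``never returns to $1\pmod 3$'' is also false, since multiplication by $-2$ preserves residues modulo $3$.

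No dimension count and no appeal to part~(1) is needed. Write a generator of $\widehat G_F\cong C_{27}$ as $B=\frac1{27}(b_1,\ldots,b_6)$. Since $B^9=\omega^{\pm1}I_6$, you have $3\nmid b_i$. Lemma~\ref{lem:square-monomial} gives a map $j\colon\{1,\ldots,6\}\to\{1,\ldots,6\}$ with $2b_i+b_{j(i)}\equiv0\pmod{27}$. A cycle of $j$ of length $\ell\le6$ gives $(-2)^\ell b_i\equiv b_i$, hence $(-2)^\ell\equiv1\pmod{27}$ because $b_i$ is a unit. But $(-2)^\ell\equiv25,4,19,16,22,10$ for $\ell=1,\ldots,6$, since $-2$ has order $9$ in $(\ZZ/27\ZZ)^\times$. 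This contradiction is exactly the paper's argument.

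\textbf{Part~(1)} has a harmless sign slip. From $2a_i+a_j\equiv0\pmod 9$ and $a_i\equiv1\pmod3$ you get $a_j\equiv a_i-3$, not $a_i+3$. So the invariant cubics lie in $\Sym^2V_1^*\otimes V_7^*\oplus\Sym^2V_7^*\otimes V_4^*\oplus\Sym^2V_4^*\otimes V_1^*$, and $V_1^*\otimes V_4^*\otimes V_7^*$ is not invariant, since $1+4+7\not\equiv0\pmod 9$. Your $\PP(V_1)$ argument survives with the orientation reversed and gives $n_1\le n_7\le n_4\le n_1$.

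\textbf{Part~(4)} is only a plan, but it works. Take $B=\frac19(1,1,4,4,7,7)$ and $A=\frac13(\alpha_1,\ldots,\alpha_6)$. A monomial $x_i^2x_j$ of $F$ with $x_i$ in the $\zeta_9$-eigenspace forces $x_j$ into the $\zeta_9^7$-eigenspace and $\alpha_j\equiv-2\alpha_i\equiv\alpha_i\pmod 3$. Cyclically, the sets of $A$-weights on the three eigenspaces satisfy $S_1\subseteq S_7\subseteq S_4\subseteq S_1$. If these equal sets are singletons, $A$ is scalar, contradicting $A\notin\langle B\rangle$. Otherwise each eigenspace carries weights $\{x,y\}$ with $x\neq y$, and reordering plus multiplying by a power of $\omega I_6$ gives $\frac13(0,1,0,1,0,1)$.

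\textbf{Comparison with the paper.} For (1) the paper cites Gonz\'alez-Aguilera--Liendo's Theorem~3.8. For (4) it applies that theorem to $AB$ and $AB^2$ and solves the resulting multiset conditions. Your routes need only Lemma~\ref{lem:square-monomial} and a projective dimension count, at the cost of reproving a case of that classification.

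For (3) the paper obtains $\langle P,W\rangle$ from Mayanskiy's classification and then twists by $\zeta_9$ according to the character on $F$. Your observation is cleaner: a $6$-dimensional representation of a nonabelian group of order $27$ on which $\omega I_6$ acts by $\omega$ is twice the unique $3$-dimensional irreducible with that central character. This shows (3) needs no geometry beyond the existence of $\widehat G_F$. You should add one detail: both nonabelian groups of order $27$ admit automorphisms inverting the centre. That is what makes the conjugacy class of the image depend only on the isomorphism type, that is, on exponent $3$ versus $9$.
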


\begin{proof}

\begin{enumerate}

\item This follows directly from
\cite[Theorem~3.8]{gonzalez2011automorphisms}.  The only action there that
is not $F$-liftable is the family denoted by $\calF_3^7$ in that theorem.

\item Write $G=\langle g\rangle\cong C_9$.  By
Proposition~\ref{prop:basic-liftability}(1), the group $G$ is liftable.

Suppose that $G$ is not $F$-liftable.  Choose
$B\in\widehat G_F$ whose image is $g$.  The order of $B$
cannot be $9$, since otherwise $B$ would generate an $F$-lifting of $G$.
Therefore $B$ has order $27$, and
$B^9=\omega^\varepsilon I_6$ for some $\varepsilon\in\{1,2\}$.  We may
assume that $B$ is diagonal and write
$B=\frac{1}{27}(a_1,\ldots,a_6)$.  Since
$B^9=\omega^\varepsilon I_6$, we have
$a_i\equiv\varepsilon\pmod 3$ for every $i$.

By Lemma~\ref{lem:square-monomial}, for every $i$ there
exists an index $j(i)$ such that $x_i^2x_{j(i)}$ occurs
in $F$.  Since $B\cdot F=F$, we have
$2a_i+a_{j(i)}\equiv0\pmod{27}$.  Iterating the map
$j\colon\{1,\ldots,6\}\to\{1,\ldots,6\}$ produces a directed cycle of
length $1\leq\ell\leq6$.  If $j^{(\ell)}(i)=i$, then
$a_i\equiv(-2)^\ell a_i\pmod{27}$.  Since $3\nmid a_i$, it follows that
$(-2)^\ell\equiv1\pmod{27}$.  This is impossible for
$1\leq\ell\leq6$, since the six residues are
$25,4,19,16,22,$ and $10$.

\item
Since $G$ is not liftable, it is not diagonalizable.
By the classification in
\cite[Corollary~2, Theorems~3 and~4]
{mayanskiy2013abelianautomorphismgroupscubic},
and in particular by the $C_3^2$-rows of Table~3 therein,
after changing generators of $G$ and conjugating in
$\GL(6,\CC)$, we may assume that $G$ is the projective
image of the group $E\leq\GL(6,\CC)$ generated by
\[
P=(1\;2\;3)(4\;5\;6),
\qquad
W=\frac13(0,1,2,0,1,2).
\]
In particular, we have $P^3=W^3=I_6$,
$[P,W]=\omega I_6$, $\operatorname{ord}(E)=27$, and $\pi(E)=G$.

Denote by $\chi_E:E\to \CC^\times$ the character of the $E$-action on
$F$.  Two inverse images in $E$ of a fixed element of $G$ differ by a power
of $\omega I_6$, which acts trivially on cubic forms.  Thus $\chi_E$
factors through $E\twoheadrightarrow G$.  Denote the induced character on
$G$ by $\chi$.

If $\chi$ is trivial, then
$E\leq\widehat G_F$.  Since both $E$ and
$\widehat G_F$ have order $27$, they are equal, and the
conclusion follows.

Suppose that $\chi$ is nontrivial.  Then
$\chi(G)=\chi_E(E)=\langle\omega\rangle$.  Therefore there is a pair
$(u,v)\in\{0,1,2\}^2\setminus\{(0,0)\}$ such that
$\zeta_9^uP,\zeta_9^vW\in\widehat G_F$.

One checks directly that, for any
$(u,v)\in\{0,1,2\}^2\setminus\{(0,0)\}$,
\[
\langle \zeta_9^u P,\zeta_9^v W\rangle\sim_{\GL(6,\CC)} \left\langle
(1\;2\;3)(4\;5\;6),
\frac19(1,4,7,1,4,7)
\right\rangle.
\]
Since both $\langle \zeta_9^u P,\zeta_9^v W\rangle$ and
$\widehat G_F$ have order $27$, the conclusion follows.
\item
Let $\rho\colon G\hookrightarrow\GL(6,\CC)$ be a lift, and let
$\chi\colon G\to\CC^\times$ be the corresponding character of the action
on $F$.  Since $G\cong C_3^2$ is not $F$-liftable, we have
$\chi(G)=\langle\omega\rangle$.

Choose generators $a,b$ of $G$ such that
\[
\langle a\rangle=\ker\chi,
\qquad
\chi(b)=\omega.
\]
Put $A=\rho(a)$ and $B=\rho(b)$.  Then
\[
A^3=B^3=I_6,\quad AB=BA,\quad A\cdot F=F
\text{ and}\quad B\cdot F=\omega F.
\]

By \cite[Theorem~3.8]{gonzalez2011automorphisms}, $B$ is conjugate in
$\GL(6,\CC)$ to the action associated with the family $\calF_3^7$.  Since
$A$ and $B$ can be diagonalized simultaneously, we may assume
$B=\frac{1}{3}(0,0,1,1,2,2)$ and write
$A=\frac{1}{3}(\alpha_1,\ldots,\alpha_6)$.  Applying
\cite[Theorem~3.8]{gonzalez2011automorphisms} to $AB$ and $AB^2$, we find
that the two multisets
\[
\begin{gathered}
\{\alpha_1,\alpha_2,\alpha_3+1,\alpha_4+1,\alpha_5+2,\alpha_6+2\},\\
\{\alpha_1,\alpha_2,\alpha_3+2,\alpha_4+2,\alpha_5+1,\alpha_6+1\}
\end{gathered}
\]
are both equal to $\{0,0,1,1,2,2\}$ in $\ZZ/3\ZZ$.  A direct calculation shows that,
after applying a permutation of the coordinates that fixes $B$, and
replacing $A$ by $A^{-1}$ or $\omega A$ if necessary, we may assume
\[
A=\frac13(0,1,0,1,0,1).
\]
Therefore $\left\langle
\frac19(1,1,4,4,7,7),
\frac13(0,1,0,1,0,1)
\right\rangle\sim_{\GL(6,\CC)}\langle A,\zeta_9^{-1} B\rangle
\leq \widehat G_F$.

Since both $\left\langle
\frac19(1,1,4,4,7,7),
\frac13(0,1,0,1,0,1)
\right\rangle$ and $\widehat G_F$ have order $27$, the conclusion follows.

\end{enumerate}

\end{proof}

\section{Abelian cases}

\subsection{Liftable abelian case}
\label{subsec:liftable-abelian}

\begin{lem}\label{lem:abelian-liftable-diagonal}
Let $G\leq\PGL(n,\CC)$ be a finite abelian group and $D<\PGL(n,\CC)$ be the group consisting of projective diagonal matrices. Then $G$ is liftable if and
only if it is conjugate in $\PGL(n,\CC)$ to a subgroup of $D$.
Equivalently, $G$ admits a lifting which is simultaneously diagonalizable.
\end{lem}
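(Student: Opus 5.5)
We prove the two implications separately, working through the lifting homomorphism; the ``equivalently'' clause will drop out of the argument.

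\emph{Liftable implies diagonalizable.} Suppose $G$ is liftable, and let $H\leq\GL(n,\CC)$ be a lifting, so that $\pi|_H\colon H\to G$ is an isomorphism. Then $H$ is a finite abelian group. Every element of $H$ has finite order and is therefore diagonalizable. Pairwise commuting diagonalizable matrices are simultaneously diagonalizable; equivalently, the representation $\CC^n$ of the finite abelian group $H$ decomposes into one-dimensional characters. Hence there is $A\in\GL(n,\CC)$ with $A^{-1}HA$ consisting of diagonal matrices. Projecting, $\pi(A)^{-1}G\,\pi(A)=\pi(A^{-1}HA)\leq D$. This also shows that $G$ admits a simultaneously diagonalizable lifting.

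\emph{Diagonalizable implies liftable.} Conversely, assume $\pi(A)^{-1}G\pi(A)\leq D$. Liftability is invariant under conjugation, since conjugating a lifting by $A$ gives a lifting of the conjugate group. So we may assume $G\leq D$. Let $\widetilde D\leq\GL(n,\CC)$ be the diagonal torus, so $D=\widetilde D/\CC^\times I_n$. Consider the normalized torus
\[
T_1=\{\diag(t_1,\ldots,t_n)\mid t_1=1\}\cong(\CC^\times)^{n-1}.
\]
The projection $\pi|_{T_1}\colon T_1\to D$ is an isomorphism of groups: every diagonal class has a unique representative with first entry $1$, and multiplication is preserved. Therefore $H=(\pi|_{T_1})^{-1}(G)$ is a subgroup of $\GL(n,\CC)$ mapping isomorphically onto $G$. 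It is a lifting, and it is diagonal.

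No step is a real obstacle. The only point needing care is the second direction: one must check that a section of $\pi$ over $D$ can be chosen to be a group homomorphism, rather than merely a choice of representatives up to cocycles. Normalizing the first coordinate to $1$ resolves this explicitly, because $D$ splits off the scalars inside $\widetilde D$. The first direction uses only standard facts about commuting finite-order matrices, and abelianness is used exactly there, since $H\cong G$.
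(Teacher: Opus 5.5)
Your proof is correct and follows essentially the same route as the paper: you simultaneously diagonalize a finite abelian lifting in one direction, and in the other you build a homomorphic section of $\pi$ over $D$ by normalizing one diagonal entry to $1$ (the paper normalizes the last entry instead of the first). You also remark explicitly that liftability is invariant under conjugation, which fills in a step the paper leaves implicit.
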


\begin{proof}
Suppose that $G$ is liftable and let $H\leq\GL(n,\CC)$ be a
lifting.  Since $H\cong G$ is finite abelian, its elements are
commuting finite-order matrices.  They are therefore semisimple and can be
simultaneously diagonalized.

Conversely, suppose that $G$ is contained in $D$. Every projective diagonal matrix has a unique representative whose
last diagonal entry is $1$.  Taking these representatives defines a group
homomorphism $G\to\GL(n,\CC)$ lifting the given projective action.
\end{proof}

After a change of coordinates, every liftable abelian projective group is
represented by diagonal matrices; a lift may act on $F$ through a
character rather than fix it.

Let $A$ be a finite abelian group.  A datum
\[
I=(A,\lambda,\lambda_1,\ldots,\lambda_6)
\]
consists of characters $\lambda,\lambda_i:A\to\CC^\times$ such that the
projective representation induced by
\[
\rho_I(a)=\diag(\lambda_1(a),\ldots,\lambda_6(a))
\]
is faithful.  Its semi-invariant cubic space is
$$
W_I=\left\langle
x_1^{e_1}\cdots x_6^{e_6}\ \middle|\
\sum e_i=3,\quad \prod_i\lambda_i^{e_i}=\lambda
\right\rangle.
$$
Thus $\rho_I(a)\cdot F=\lambda(a)F$ for $F\in W_I$, and in general
$\rho_I(A)$ does not preserve $F$ exactly.

Two data $I=(A,\lambda,\lambda_1,\ldots,\lambda_6)$ and
$I'=(A,\lambda',\lambda'_1,\ldots,\lambda'_6)$ are \emph{equivalent} if
there is a character $\xi:A\to\CC^\times$ such that
\[
\lambda'_i=\lambda_i\xi\quad(1\leq i\leq6),
\qquad \lambda'=\lambda\xi^3.
\]
For $B\leq A$, let
\[
I|_B=(B,\lambda|_B,\lambda_1|_B,\ldots,\lambda_6|_B).
\]
We write $[J]\leq[I]$ if $J$ is equivalent to $I|_B$ for some subgroup
$B\leq A$.  Let $\mathcal P_{3,6}$ be the poset of equivalence classes
$[I]$ for which $W_I$ contains a smooth cubic, with this order
\cite[\S2.1]{zheng2021liftableabelian}.

Define the \emph{invariant extension} of $I$ by
$$
\Gamma(I)=
\left\{
c\rho_I(a)\ \middle|\
a\in A,\ c\in\CC^\times,\ c^3\lambda(a)=1
\right\}.
$$
It is a finite diagonal abelian group containing
$Z:=\langle\omega I_6\rangle$, and it fits into an exact sequence
$$
1\longrightarrow Z\longrightarrow\Gamma(I)
\longrightarrow A\longrightarrow1.
$$
This extension need not split, even though the projective $A$-action is
liftable.

\begin{prop}\label{prop:character-to-invariant}
The construction $I\mapsto\Gamma(I)$ has the following properties.
\begin{enumerate}
    \item The space
    $W_3(\Gamma(I))=\Sym^3((\CC^6)^*)^{\Gamma(I)}$ of
    $\Gamma(I)$-invariant cubic forms is equal to $W_I$.

    \item If $I$ and $I'$ are equivalent, then
    $\Gamma(I)=\Gamma(I')$.

    \item If $J$ is obtained by restricting $I$ to a subgroup, then
    $\Gamma(J)\leq\Gamma(I)$.  Conversely, an inclusion
    $\Gamma(I)\leq\Gamma(I')$ induces, after passing to the projective
    quotients, an inequality $[I]\leq[I']$ in
    $\mathcal P_{3,6}$.

    \item The class $[I]$ is maximal in $\mathcal P_{3,6}$ if and only if
    $\Gamma(I)$ is maximal among the invariant extensions coming from
    liftable diagonal abelian actions admitting a smooth invariant cubic.
\end{enumerate}
\end{prop}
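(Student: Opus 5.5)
The plan is to check each of the four assertions directly from the definition of $\Gamma(I)$, using only that all matrices involved are diagonal and that a monomial $x^e=x_1^{e_1}\cdots x_6^{e_6}$ transforms under $\rho_I(a)$ by the scalar $\prod_i\lambda_i(a)^{e_i}$.

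\emph{(1).} An element $c\rho_I(a)$ acts on $x^e$ with $\sum e_i=3$ by $c^3\prod_i\lambda_i(a)^{e_i}$. Since $\Gamma(I)$ is diagonal, a cubic form is $\Gamma(I)$-invariant exactly when each of its monomials is invariant. Choosing $c$ with $c^3=\lambda(a)^{-1}$ turns the invariance condition into $\prod_i\lambda_i^{e_i}(a)=\lambda(a)$ for all $a\in A$, which is the defining condition of $W_I$. One also checks that $\Gamma(I)$ is closed under products and contains $Z$ (take $a=1$, $c\in\mu_3$). Its projection to $\PGL$ is $\pi(\rho_I(A))\cong A$ by faithfulness, and the kernel is $\{cI_6:c^3=1\}=Z$, which gives the exact sequence.

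\emph{(2).} If $\lambda_i'=\lambda_i\xi$ and $\lambda'=\lambda\xi^3$, then $c\rho_{I'}(a)=(c\xi(a))\rho_I(a)$. Moreover $c^3\lambda'(a)=(c\xi(a))^3\lambda(a)$, so the substitution $c\mapsto c\xi(a)$ is a bijection between the two defining sets, and hence $\Gamma(I)=\Gamma(I')$.

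\emph{(3).} The inclusion $\Gamma(I|_B)\leq\Gamma(I)$ is immediate from the definition. For the converse, suppose $\Gamma(I)\leq\Gamma(I')$. Passing to projective images gives $\pi\rho_I(A)\leq\pi\rho_{I'}(A')$. Let $B\leq A'$ be the preimage of $\pi\rho_I(A)$ under the isomorphism $A'\cong\pi\rho_{I'}(A')$; faithfulness gives an isomorphism $\phi\colon A\to B$. For each $a\in A$, choose $c$ with $c^3\lambda(a)=1$. Then $c\rho_I(a)\in\Gamma(I')$, so $c\rho_I(a)=c'\rho_{I'}(\phi(a))$ with $c'^3\lambda'(\phi a)=1$.

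Set $\xi(a)=c/c'$. This is well defined because $c$ and $c'$ are each determined up to $\mu_3$ and are changed together. It satisfies $\lambda_i(a)\xi(a)=\lambda'_i(\phi a)$ coordinatewise, and cubing gives $\lambda(a)\xi(a)^3=\lambda'(\phi a)$. Since $\xi=\lambda'_1\circ\phi/\lambda_1$, it is a character. Hence $I'|_B$ transported along $\phi$ is equivalent to $I$, so $[I]\leq[I']$. Finally, both classes lie in $\mathcal P_{3,6}$ because $W_I=W_3(\Gamma(I))$ by (1).

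\emph{(4).} This follows formally from (1)–(3): $[I]\mapsto\Gamma(I)$ is well defined by (2), order-preserving by the first half of (3), and order-reflecting by the second half. If $[I]<[I']$, then $\Gamma(I)\leq\Gamma(I')$, and the inclusion is strict because equality would give $[I']\leq[I]$. Conversely, a strict inclusion $\Gamma(I)<\Gamma(I')$ gives $[I]\leq[I']$, and equality of classes is impossible since it would force $\Gamma(I)=\Gamma(I')$ by (2). Thus maximal elements correspond on the two sides.

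The main obstacle is the converse in (3): making the identification $A\cong B$ via projective images precise and verifying that the ratio $\xi$ is a genuine character, independent of the choices of cube roots.
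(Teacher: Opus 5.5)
Your proof is correct and follows the same route as the paper: monomial-wise eigenvalues for (1), the substitution $c\mapsto c\xi(a)$ for (2), comparing the two liftings of the smaller projective group through the character $\xi=(\lambda'_1\circ\phi)/\lambda_1$ for (3), and (4) as a formal consequence, with you supplying the details the paper leaves terse. One slip is in the last sentence of (3): part (1) only gives $W_{I'}=W_3(\Gamma(I'))\subseteq W_3(\Gamma(I))=W_I$, so $[I']\in\mathcal P_{3,6}$ must be a standing hypothesis, from which $[I]\in\mathcal P_{3,6}$ then follows, rather than membership of both holding automatically.
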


\begin{proof}
An element $c\rho_I(a)\in\Gamma(I)$ acts on a cubic monomial $m=x_1^{e_1}\cdots x_6^{e_6}$ by the scalar $c^3\prod_i\lambda_i(a)^{e_i}$.
Since $c^3=\lambda(a)^{-1}$, the monomial is fixed by $\Gamma(I)$ precisely when
$\prod_i\lambda_i^{e_i}=\lambda$.  This proves the first assertion.

If $I'$ is obtained by twisting the lifting by a character
$\xi\colon A\to\CC^\times$, then
$\rho_{I'}=\xi\rho_I$ and $\lambda'=\lambda\xi^3$.  Replacing $c$ by
$c\xi(a)$ in the definition of $\Gamma(I')$ shows that
$\Gamma(I')=\Gamma(I)$.

If $B\leq A$, then $\Gamma(I|_B)\leq\Gamma(I)$ by definition.  Conversely,
an inclusion of invariant extensions gives an inclusion of their
projective quotients.  The two liftings of the smaller projective group
differ by a character, so the corresponding data are equivalent after
restriction.  The maximality statement follows.
\end{proof}

If $G\leq\Aut(X_F)$ is liftable and abelian, choose a diagonal lift $\rho$
and define $\lambda$ by $\rho(g)\cdot F=\lambda(g)F$.  For the corresponding
datum $I$, one has $\widehat G_F=\Gamma(I)$.

A simple cubic is a sum, in disjoint sets of variables, of blocks of the
following types:
\begin{align*}
K_r&=x_1^2x_2+\cdots+x_{r-1}^2x_r+x_r^2x_1,\\
T_r&=x_1^2x_2+\cdots+x_{r-1}^2x_r+x_r^3,\\
Y_{a,b}&=x_1^2x_2+\cdots+x_{a-1}^2x_a
+y_1^2y_2+\cdots+y_{b-1}^2y_b\\
&\hspace{2.3cm}+x_a^2z+y_b^2z+z^2w+w^3+x_ay_bw,
\end{align*}
where $a\geq b\geq1$.

A datum $I$ is called \emph{simple} if $W_I$ contains a simple cubic.  By
\cite[Theorem~4.2(1)--(8)]{zheng2021liftableabelian}, every maximal class
$[I]\in\mathcal P_{3,6}$ is either simple or one of the eight maximal non-simple
classes listed there.

\begin{defn}\label{def:maximal-liftable-abelian-sets}
For a simple cubic $F$ in six variables, put
\[
M(F)=\{D\in(\CC^\times)^6\mid D\cdot F=F\}.
\]
Let $\mathfrak M_0$ be a set of representatives, under conjugacy by
coordinate permutations, of all such $M(F)$ and of the invariant extensions
associated with the eight maximal non-simple cases above.  For
$M,M'\in\mathfrak M_0$, write $M\preceq_{\mathrm{perm}}M'$ if and only if
$P^{-1}MP\leq M'$ for some permutation matrix $P$.
Let $\mathfrak M$ be the set of maximal elements of
$(\mathfrak M_0,\preceq_{\mathrm{perm}})$.
\end{defn}

The set $\mathfrak M$ consists of thirty-two groups: twenty-four of simple
type and eight non-simple.

\begin{prop}\label{prop:maximal-liftable-abelian-groups}
The groups in $\mathfrak M$ are precisely the maximal invariant extensions
arising from liftable abelian actions on smooth cubic fourfolds.  In
particular:
\begin{enumerate}
    \item every $M\in\mathfrak M$ is maximal among such invariant extensions;
    \item for every liftable abelian subgroup
    $G\leq\Aut(X_F)$, the group $\widehat G_F$ satisfies
    $\widehat G_F\preceq_{\GL(6,\CC)}M$ for some $M\in\mathfrak M$.
\end{enumerate}
\end{prop}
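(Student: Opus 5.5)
The plan is to derive the statement from the poset description of \cite{zheng2021liftableabelian} together with Proposition~\ref{prop:character-to-invariant}. Everything is transferred to the poset $\mathcal P_{3,6}$ and then back to invariant extensions.

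We start with part (2). Let $G\leq\Aut(X_F)$ be liftable abelian. By Lemma~\ref{lem:abelian-liftable-diagonal} we may conjugate so that $G$ is diagonal. Choose a diagonal lift $\rho$ and let $\lambda$ be its character on $F$. This gives a datum $I$ with $F\in W_I$, so $[I]\in\mathcal P_{3,6}$, and $\widehat G_F=\Gamma(I)$. The poset $\mathcal P_{3,6}$ is finite up to the permutation symmetry, so $[I]\leq[I_{\max}]$ for some maximal class. By \cite[Theorem~4.2]{zheng2021liftableabelian}, each maximal class is either simple or one of the eight non-simple maximal classes.

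If $[I_{\max}]$ is simple, then $W_{I_{\max}}$ contains a simple cubic $F_s$. Every element of $\Gamma(I_{\max})$ is diagonal and fixes $F_s$, so $\Gamma(I_{\max})\leq M(F_s)$. Conversely, $M(F_s)$ is the invariant extension of the projective diagonal stabilizer of $F_s$, and maximality of $[I_{\max}]$ forces equality. If $[I_{\max}]$ is non-simple, $\Gamma(I_{\max})$ lies in $\mathfrak M_0$ by definition. By Proposition~\ref{prop:character-to-invariant}(3), we get $\widehat G_F=\Gamma(I)\leq\Gamma(I_{\max})$ after conjugation. Thus $\widehat G_F$ is contained, up to a coordinate permutation, in an element of $\mathfrak M_0$, and hence in an element of $\mathfrak M$ under $\preceq_{\mathrm{perm}}$.

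For part (1), let $M\in\mathfrak M$ and suppose $M\leq\widehat{G'}_{F'}$ up to $\GL(6,\CC)$-conjugacy, for another liftable abelian action. We must rule out proper containment. Both groups are diagonalizable abelian groups containing $Z$, so conjugacy between diagonal groups can be replaced by conjugacy by a monomial matrix. The diagonal part of that matrix centralizes the groups, so in effect the conjugation is by a permutation.

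By the argument of part (2), $\widehat{G'}_{F'}\preceq_{\mathrm{perm}}M'$ for some $M'\in\mathfrak M$. Then $M\preceq_{\mathrm{perm}}M'$, and maximality of $M$ in $(\mathfrak M_0,\preceq_{\mathrm{perm}})$ gives equality. This step is delicate only when $M$ and $M'$ are distinct but conjugate, which is excluded because $\mathfrak M_0$ consists of representatives up to permutation. Finally, every $M\in\mathfrak M$ does arise from some action. For simple $M=M(F_s)$, the cubic $F_s$ is smooth, and Proposition~\ref{prop:character-to-invariant}(1) identifies $M$ with the invariant extension of its projective image. For the non-simple cases, the eight classes come with smooth invariant cubics by \cite[Theorem~4.2]{zheng2021liftableabelian}.

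The main obstacle is the identification $\Gamma(I_{\max})=M(F_s)$ in the simple case, and the replacement of $\GL(6,\CC)$-conjugacy by permutation conjugacy. For the latter, we first use that two diagonal subgroups conjugate in $\GL(6,\CC)$ are conjugate by a monomial matrix, since their character multisets agree. If that does not suffice, we would fall back on a direct GAP check of the thirty-two groups, as recorded in \cite{FuWangZhengAutcub4fold2026}.
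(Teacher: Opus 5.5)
Your proposal is correct and follows essentially the same route as the paper: pass to a maximal class above $[I]$ in $\mathcal P_{3,6}$, identify a simple maximal $\Gamma(I_{\max})$ with $M(F_s)$ by maximality (using smoothness of $F_s$), and deduce (1) from maximality in $(\mathfrak M_0,\preceq_{\mathrm{perm}})$. Your reduction of $\GL(6,\CC)$-conjugate containment to permutation conjugacy is sound: a conjugate of $M$ lying inside a diagonal group is itself diagonal, and diagonal groups are compared through their multisets of coordinate characters, as the paper notes right after the proposition. So the GAP fallback you mention is not needed.
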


\begin{proof}
Let $I$ be a datum whose class is maximal in $\mathcal P_{3,6}$.  If $I$
is non-simple, then $\Gamma(I)$ belongs to $\mathfrak M_0$ by
\cite[Theorem~4.2(1)--(8)]{zheng2021liftableabelian}.

Suppose that $I$ is simple, and choose a smooth simple cubic
$F_0\in W_I$.  Let $M(F_0)$ be the subgroup of diagonal matrices in
$\Aut(F_0)$.  Since
$\Gamma(I)$ fixes every cubic in $W_I$, it is contained in $M(F_0)$.
By Lemma~\ref{lem:abelian-liftable-diagonal}, the projective image of
$M(F_0)$ is liftable, so $M(F_0)$ is itself an invariant extension of the
type considered in Proposition~\ref{prop:character-to-invariant}.
Maximality of $[I]$ therefore gives $\Gamma(I)=M(F_0)$.  Hence every
maximal invariant extension is conjugate to a member of $\mathfrak M_0$,
and therefore to a member of $\mathfrak M$.

Now let $G\leq\Aut(X_F)$ be liftable abelian, with associated datum $I$.
Choose a maximal class $[I_{\max}]$ above $[I]$.  Proposition
\ref{prop:character-to-invariant} gives
$\widehat G_F=\Gamma(I)\leq\Gamma(I_{\max})$, and the latter is conjugate
in $\GL(6,\CC)$ to a member of $\mathfrak M$.  This proves the second
assertion.

Finally, a proper invariant extension of a member of $\mathfrak M$ would be
contained in a maximal member of $\mathfrak M_0$, a contradiction to
Definition~\ref{def:maximal-liftable-abelian-sets}.
\end{proof}

Two finite diagonal groups are linearly conjugate if and only if they are
conjugate by a coordinate permutation, since their natural representations
are determined by the multisets of coordinate characters.

Set
\begin{align*}
\mathscr D=\biggl\{&
\frac13(0,0,0,0,1,2),\
\frac13(0,0,0,0,1,1),\
\frac13(0,0,0,1,1,1),\\
&\frac14(0,0,0,1,2,3),\
\frac16(0,0,1,2,4,5),\
\frac18(0,1,2,3,4,6),\\
&\frac18(0,1,2,4,5,6),\
\frac18(0,1,2,4,6,7),\
\frac1{12}(0,0,3,4,6,9)
\biggr\}.
\end{align*}
Put
\[
\mathscr D^\sharp
=\{zg^a:g\in\mathscr D,\ z\in Z,\ \gcd(a,\ord(g))=1\}.
\]

\begin{lem}\label{lem:dangerous-elements}
Let $F\in\CC[x_1,\ldots,x_6]_3$ define a smooth cubic fourfold.  If
$\Aut(F)$ contains an element conjugate in $\GL(6,\CC)$ to an element of
$\mathscr D^\sharp$, then $\Aut(X_F)$ is non-abelian.
\end{lem}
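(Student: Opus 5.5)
The strategy is to show, for each $g\in\mathscr D$, that any smooth cubic $F$ fixed by a conjugate of $g$ is forced to have an extra automorphism that does not commute projectively with $\pi(g)$. First we reduce to the elements of $\mathscr D$ themselves. If $h=zg^a\in\Aut(F)$ with $z\in Z$ and $\gcd(a,\ord(g))=1$, then $z^{-1}h\in\Aut(F)$ because $Z\leq\Aut(F)$. Since $g$ is a power of $g^a$, it follows that $g\in\Aut(F)$. After a conjugation we may therefore assume that $g$ itself, in the diagonal form listed, lies in $\Aut(F)$, and that $F\in W_3(\langle g,Z\rangle)$.

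For each of the nine $g$, we next write down the $g$-invariant cubic monomials. The key observation is that the eigenspace structure of $g$ splits $F$ into an additive or block-triangular shape, and this shape forces symmetry. Two examples illustrate this.

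\emph{Case $g=\frac13(0,0,0,0,1,2)$.} The invariant monomials are cubics in $x_1,\dots,x_4$, the monomials $x_ix_5x_6$, and $x_5^3$, $x_6^3$. So $F=C(x_1,\dots,x_4)+L(x_1,\dots,x_4)x_5x_6+ax_5^3+bx_6^3$. Smoothness forces $ab\neq0$. The swap of $x_5,x_6$ (after rescaling so that $a=b$) then preserves $F$. Its image does not commute with $\pi(g)$ projectively, because it conjugates $g$ to $g^{-1}$, and $g^{-1}$ is not $g$ times a scalar.

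\emph{Case $g=\frac13(0,0,0,0,1,1)$.} Here $F=C(x_1,\dots,x_4)+Q(x_5,x_6)$ with $Q$ a binary cubic. Then $\Aut(F)\supseteq\Aut(C)\times\Aut(Q)$, and $\Aut(Q)$ contains the $S_3$ permuting the three roots of $Q$. This $S_3$ acts on the $(x_5,x_6)$-block with order-3 elements whose projective image, together with $\pi(g)$ and the block scalars, generates a non-abelian group.

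The remaining cases run along the same lines. From the monomial list we deduce, using smoothness via Lemma~\ref{lem:square-monomial} (each $x_i$ needs some $x_i^2x_j$), that certain coefficients are nonzero. After diagonal rescaling, a coordinate permutation $\sigma$ preserves $F$, or a Fermat/binary-cubic summand appears. In the Fermat situation, Proposition~\ref{prop:aut-splitting} gives $S_r$ with $r\ge2$ acting on that summand, and $S_r$ does not commute projectively with the diagonal characters of $g$ on that block. Non-commutation is then checked by verifying that $\sigma g\sigma^{-1}g^{-1}$ is not scalar.

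The main obstacle is the eigenspaces of dimension greater than one, such as the dimension-2 eigenspaces in $\frac1{12}(0,0,3,4,6,9)$ and $\frac16(0,0,1,2,4,5)$. There the invariant part involves an arbitrary form in those variables, so producing the extra automorphism requires first normalizing that form. The approach is to use the residual $\GL$ of each eigenspace to bring the relevant coefficients to a normal form, and then exhibit an explicit involution or order-3 element in the normalizer of $\langle g,Z\rangle$ that preserves every form in the normalized family. If a uniform symmetry is hard to find, a fallback is to compute $\dim W_3(\langle g,Z\rangle)-\dim C_{\GL}(\langle g,Z\rangle)$ and compare it with the corresponding count for the candidate enlarged non-abelian group via Proposition~\ref{prop:dimension-test}. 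Equality shows that the general, and then by closedness every, smooth member acquires the extra symmetry, though the passage from general to every member needs separate care. In practice this verification is a finite computer check on the nine cases.
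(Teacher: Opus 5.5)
Your reduction to $\mathscr D$ and your treatment of $\frac13(0,0,0,0,1,2)$ and $\frac13(0,0,0,0,1,1)$ are fine. For the remaining elements, though, the proposal is not yet a proof. ``The remaining cases run along the same lines'' names no symmetry, and the obstacle you single out is left to a normalization you never carry out or to an unspecified computer check. The missing idea is that the higher-dimensional eigenspaces need no normalization at all.

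For $g=\frac14(0,0,0,1,2,3)$, $\frac16(0,0,1,2,4,5)$, $\frac18(0,1,2,3,4,6)$, $\frac18(0,1,2,4,5,6)$, $\frac18(0,1,2,4,6,7)$, $\frac1{12}(0,0,3,4,6,9)$, take respectively $\tau=(4\,6)$, $(3\,6)(4\,5)$, $(2\,4)(3\,6)$, $(2\,5)$, $(2\,6)(3\,5)$, $(3\,6)$. Each $\tau$ has the following properties:
\begin{enumerate}
\item it fixes the coordinates of every eigenspace of dimension $>1$;
\item it satisfies $\tau g\tau^{-1}=g^u$ with $u=3,5,3,5,7,7$, where $g^{u-1}$ is not scalar, so $\pi(g^u)\neq\pi(g)$;
\item it permutes the $g$-invariant cubic monomials and moves only monomials $x_i^2x_j$ that are the unique invariant monomials of that shape for $x_i$.
\end{enumerate}
Lemma~\ref{lem:square-monomial} then forces their coefficients to be nonzero. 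A diagonal $d$, which commutes with $g$, gives $d\tau\in\Aut(F)$ with $(d\tau)g(d\tau)^{-1}=g^u$. For instance, for $g=\frac1{12}(0,0,3,4,6,9)$ every invariant cubic is $C(x_1,x_2)+\ell_1x_3x_6+\ell_2x_5^2+ax_3^2x_5+bx_4^3+cx_5x_6^2$ with $ac\neq0$. Then $\diag(1,1,\alpha,1,1,\alpha^{-1})(3\,6)$ preserves it for suitable $\alpha$, whatever $C,\ell_1,\ell_2$ are.

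Your fallback cannot replace this. A dimension comparison via Proposition~\ref{prop:dimension-test} presupposes a candidate enlarged group $L$, i.e.\ the very symmetry you are looking for. It also only concerns a general member, and you pass to every smooth member by an unjustified closedness claim. This matters because the lemma is used in Algorithm~\ref{alg:liftable-abelian-enumeration} for arbitrary smooth $F$, not just general members of the $g$-family.

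Separately, you never treat $g=\frac13(0,0,0,1,1,1)$, and your test ``$\sigma g\sigma^{-1}g^{-1}$ is not scalar'' cannot work there. Here $F=C_1(x_1,x_2,x_3)+C_2(x_4,x_5,x_6)$. For general inequivalent $C_1,C_2$, every automorphism preserves the two blocks by Theorem~\ref{thm:unique-splitting}, so $\pi(g)$ is central in $\Aut(X_F)$. Non-commutativity must come from inside a summand, as in your binary case. A smooth plane cubic summand has a non-abelian linear automorphism group: after Hesse normalization it contains the group of order $27$ generated by $(4\,5\,6)$ and $\diag(1,1,1,1,\omega,\omega^2)$. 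This group injects into $\PGL(6,\CC)$.
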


\begin{proof}
It suffices to treat the representatives in $\mathscr D$.  Indeed,
$\Aut(F)$ contains $Z$, and a primitive power generates the same cyclic
group up to a central scalar.

For $g=\frac13(0,0,0,0,1,1)$, the invariant cubic splits as a cubic in
$x_1,\ldots,x_4$ plus a smooth binary cubic in $x_5,x_6$.  The latter has
a non-abelian projective automorphism group.  For
$g=\frac13(0,0,0,1,1,1)$, the cubic is a sum of two smooth cubics forms in three variables,
and the automorphisms of a smooth plane cubic again give a non-abelian
subgroup; see \cite[Lemma~3.12]{yang2024automorphism}.

For the remaining seven representatives, take $\tau$ and $u$ from the
following table:
$$
\begin{array}{c|c|c|c}
g&\tau&u&\text{monomials exchanged by }\tau\\ \hline
\frac13(0,0,0,0,1,2)&(5\;6)&2&
x_5^3,\ x_6^3\\
\frac14(0,0,0,1,2,3)&(4\;6)&3&
x_4^2x_5,\ x_5x_6^2\\
\frac16(0,0,1,2,4,5)&(3\;6)(4\;5)&5&
x_3^2x_5,\ x_4x_6^2,\ x_4^3,\ x_5^3\\
\frac18(0,1,2,3,4,6)&(2\;4)(3\;6)&3&
x_2^2x_6,\ x_3x_4^2,\ x_3^2x_5,\ x_5x_6^2\\
\frac18(0,1,2,4,5,6)&(2\;5)&5&
x_2^2x_6,\ x_5^2x_6\\
\frac18(0,1,2,4,6,7)&(2\;6)(3\;5)&7&
x_2^2x_5,\ x_3x_6^2,\ x_3^2x_4,\ x_4x_5^2\\
\frac1{12}(0,0,3,4,6,9)&(3\;6)&7&
x_3^2x_5,\ x_5x_6^2
\end{array}
$$
A direct calculation shows that $\tau g\tau^{-1}=g^u$ and that $\tau$
preserves the set of $g$-invariant cubic monomials.  Smoothness, together
with Lemma~\ref{lem:square-monomial}, implies that the coefficients of the monomials in the nontrivial $\tau$-orbits listed above are nonzero.  Comparing these
coefficients, one finds a diagonal matrix $d$ such that
$\varpi=d\tau$ preserves $F$.  Since $d$ commutes with $g$, we have
$\varpi g\varpi^{-1}=g^u$.  In each case the projective classes of $g$ and
$g^u$ are distinct, so $\Aut(X_F)$ is non-abelian.
\end{proof}

\begin{lem}\label{lem:order-72-nonabelian}
Let $X\subset\PP^5$ be a smooth cubic fourfold.  If $\Aut(X)$ contains an
abelian subgroup of order $72$ whose action is liftable, then $\Aut(X)$ is
not abelian.
\end{lem}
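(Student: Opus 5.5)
Let $G\le\Aut(X)$ be abelian of order $72=2^3\cdot 3^2$ with liftable action, where $X=X_F$. The strategy is to reduce to Lemma~\ref{lem:dangerous-elements}: it suffices to find an element of $\widehat G_F\le\Aut(F)$ conjugate to an element of $\mathscr D^\sharp$. By Lemma~\ref{lem:abelian-liftable-diagonal} we may assume that $G$ is diagonal. Then $\widehat G_F=\Gamma(I)$ for the associated datum $I$. By Proposition~\ref{prop:maximal-liftable-abelian-groups}(2), $\Gamma(I)$ is conjugate, by a coordinate permutation, to a subgroup of order $3\cdot 72=216$ of one of the thirty-two groups $M\in\mathfrak M$.

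The proof will therefore be a finite check over $\mathfrak M$, done in GAP in the spirit of the paper's other computations. For each $M\in\mathfrak M$ with $216\mid |M|$, we enumerate the subgroups $H\le M$ with $|H|=216$ and $Z\le H$. We keep only those $H$ for which $H/Z$ is abelian of order $72$ (automatic, since $M$ is abelian) and $W_3(H)$ contains a smooth cubic. The smoothness test can be organised as in Lemma~\ref{lem:square-monomial}: we require that for every $i$ some monomial $x_i^2x_j$ is $H$-invariant, and then confirm smoothness of a random member.

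For each surviving $H$, we search for a diagonal element $h\in H$ whose multiset of eigenvalues, after multiplying by a scalar in $Z$ and replacing $h$ by a power coprime to its order, matches one of the nine representatives in $\mathscr D$. The natural candidates are the following:
\begin{itemize}
\item elements of order $8$ or $24$, which should give $\frac18(\cdot)$ types;
\item elements of order $12$, giving $\frac1{12}(0,0,3,4,6,9)$;
\item elements of order $6$, giving $\frac16(0,0,1,2,4,5)$.
\end{itemize}
When $H$ has a large $2$-part, its $2$-part or its $3$-part often forces a coordinate character to be repeated three or four times. This in turn produces elements of type $\frac13(0,0,0,0,1,1)$, $\frac13(0,0,0,1,1,1)$ or $\frac14(0,0,0,1,2,3)$. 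Once such an $h$ is found, Lemma~\ref{lem:dangerous-elements} shows that $\Aut(X)$ is non-abelian.

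The main obstacle is ensuring that every surviving $H$ really contains a dangerous element, since the set $\mathscr D$ was presumably tailored exactly to make this true. If the search leaves exceptional $H$, the fallback is to treat them directly, as in the proof of Lemma~\ref{lem:dangerous-elements}. For each such $H$, we would find a permutation $\tau$ that normalises $H$, acts nontrivially on $H/Z$, and preserves the set of $H$-invariant monomials. Using Lemma~\ref{lem:square-monomial} to guarantee that the relevant coefficients are nonzero, we would then rescale $\tau$ by a diagonal matrix to obtain an automorphism of $F$ not commuting projectively with $H$.

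A structural shortcut might reduce the computation. An abelian $72$-group contains an element of order $8$ or a subgroup $C_2^2$ or $C_4\times C_2$, together with a $3$-part $C_9$ or $C_3^2$, and the $3^2$-part acting diagonally with a smooth invariant cubic already strongly constrains the eigenvalue pattern. I nevertheless expect the computer enumeration over $\mathfrak M$ to be the cleanest route.
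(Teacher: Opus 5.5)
Your reduction is sound, and it is a genuinely different route from the paper's. As written, though, it stops where the content lies: you never carry out the enumeration, and you leave room for exceptions. For the simple types it closes quickly by hand.

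\textbf{Closing the simple cases.} For a simple cubic, $M(F)$ is the product of its block groups, with
\begin{itemize}
\item $|M(K_r)|=|(-2)^r-1|$, which is odd and has $3$-adic valuation $2$ only for $r=3,6$;
\item $|M(T_r)|=3\cdot2^{r-1}$;
\item $|M(Y_{a,b})|=3\cdot2^{a+b}$.
\end{itemize}
With $s$ blocks this gives $v_2(|M(F)|)\le 6-s$, with equality only when every block is of type $T$ (counting $K_1=T_1$). A $K_3$ or $K_6$ block leaves $v_2\le2$, so $216\mid|M(F)|$ forces $s=3$ blocks, all of type $T$. The possibilities are $T_4+T_1+T_1$, $T_3+T_2+T_1$ and $T_2+T_2+T_2$. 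Each has $|M(F)|=216$, hence $H=M(F)$.
\begin{itemize}
\item The first contains $\diag(\omega,\omega^2,1,1,1,1)$, acting on the two $T_1$ blocks, of type $\frac13(0,0,0,0,1,2)$.
\item The other two contain $\diag(1,1,1,1,\omega,\omega)$ with the $\omega$'s on a $T_2$ block, of type $\frac13(0,0,0,0,1,1)$.
\end{itemize}
So the witnesses have order $3$; your candidates of order $6,8,12,24$ are beside the point. The monomial fallback is never needed. That is fortunate, because in the $T_2$ cases the automorphisms that fail to commute are root rotations of $x^2y+y^3$, which are not monomial.

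\textbf{What remains.} You still owe a check, from the list in \cite[Theorem~4.2]{zheng2021liftableabelian}, that the eight non-simple maximal classes contribute no further order-$216$ group with a smooth invariant cubic and no dangerous element.

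\textbf{Comparison with the paper.} These three groups are exactly the three normal forms the paper takes from \cite[Corollary~1 and Table~2]{mayanskiy2013abelianautomorphismgroupscubic}: two one-variable summands, or a smooth binary cubic summand. The paper's closing arguments are exchanging the two one-variable summands, and permuting the roots of the binary cubic against the given involution. These are the same mechanisms that prove Lemma~\ref{lem:dangerous-elements} for these two element types.

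So the two proofs differ only in where the list of order-$72$ diagonal actions comes from.
\begin{itemize}
\item The paper imports the list from Mayanskiy, so it needs no enumeration and no treatment of non-simple classes.
\item Your route stays inside the paper's own machinery: Proposition~\ref{prop:maximal-liftable-abelian-groups} and Lemma~\ref{lem:dangerous-elements}, both needed for the liftable abelian enumeration anyway. It shows that the order-$72$ lemma is a special case of the dangerous-element filter. The price is relying on the Peng--Zheng classification of non-simple maximal classes rather than on Mayanskiy's table.
\end{itemize}
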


\begin{proof}
By Lemma~\ref{lem:abelian-liftable-diagonal}, this action is diagonalizable.
Thus \cite[Corollary~1 and Table~2]
{mayanskiy2013abelianautomorphismgroupscubic} gives three possible normal
forms.  One normal form has two disjoint one-variable cubic
summands.  Exchanging them, after rescaling, does not commute with the
given order-$3$ action.  Each of the other two has a smooth binary cubic
summand. Permutations of roots of the binary cubic is induced by projective transformations, which extend to automorphisms of the full cubic form. These automorphisms do not commute with the involution in the given group. Thus $\Aut(X)$ is non-abelian.
\end{proof}

Suppose that $G=\Aut(X_F)$ is abelian and liftable.  The Yang--Yu--Zhu bound
restricts $|G|$ to divisors of $32$, $48$, or $72$.
Lemma~\ref{lem:order-72-nonabelian} excludes order $72$, while
\cite[Proposition~7.1]{FWZ26} shows that orders $32$ and $48$ occur only
for $G=C_{32}$ and $C_{48}$.  Apart from these two groups, the strict lift
$H=\widehat G_F$ therefore has order dividing $48$, $72$, or $108$.
Lemma~\ref{lem:dangerous-elements} excludes elements conjugate to an element
of $\mathscr D^\sharp$.  The bound $|\Auts(X_F)|\leq4$ also gives
$|H\cap\SL(6,\CC)|\leq12$.

\begin{breakablealgorithm}
\caption{Liftable abelian enumeration}
\label{alg:liftable-abelian-enumeration}
\begin{algorithmic}[1]
\Require The maximal invariant extensions $\mathfrak M$ and the spectral
set $\mathscr D^\sharp$
\Ensure Representatives of the liftable abelian candidates
\State $Z\gets\langle\omega I_6\rangle$
\State $\mathscr E\gets\varnothing$
\ForAll{$M\in\mathfrak M$}
    \State $\mathscr A\gets\varnothing$; $\mathscr B\gets\varnothing$
    \ForAll{$U\leq M/Z$}
        \State $H\gets$ the inverse image of $U$ under $M\to M/Z$
        \State Add $(H,m(H))$ to $\mathscr A$
    \EndFor
    \State Order $\mathscr A$ by decreasing $|H|$, then decreasing $m(H)$
    \ForAll{$(H,m(H))\in\mathscr A$ in this order}
        \If{there is no $(L,m(L))\in\mathscr B$ with $H\subsetneq L$ and $m(H)=m(L)$}
            \State Add $(H,m(H))$ to $\mathscr B$
        \EndIf
    \EndFor
    \ForAll{$(H,m(H))\in\mathscr B$}
        \If{$|H|\mid48$ or $|H|\mid72$ or $|H|\mid108$}
            \State Add $H$ to $\mathscr E$
        \EndIf
    \EndFor
\EndFor
\State Identify groups in $\mathscr E$ up to conjugacy by a coordinate
permutation, choosing one representative of each class
\State $Result\gets\varnothing$
\ForAll{$H\in\mathscr E$}
    \If{$H$ contains no element conjugate to an element of $\mathscr D^\sharp$
    and $|H\cap\SL(6,\CC)|\leq12$}
        \State Add $H$ to $Result$
    \EndIf
\EndFor
\State \Return $Result$
\end{algorithmic}
\end{breakablealgorithm}

\begin{prop}\label{prop:liftable-abelian-completeness}
If $Z\leq H\subsetneq L\leq M$ for some $M\in\mathfrak M$ and $m(H)=m(L)$,
then $H$ is not saturated.  Apart from the strict lifts of $C_{32}$ and
$C_{48}$, Algorithm~\ref{alg:liftable-abelian-enumeration} contains, up to
linear conjugacy, $\Aut(F)$ for every smooth cubic fourfold with abelian
liftable projective automorphism group.
\end{prop}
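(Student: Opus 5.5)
The proposition has two parts: the saturation criterion, and completeness of Algorithm~\ref{alg:liftable-abelian-enumeration}. I will prove them in that order.

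\emph{First assertion.} Suppose $Z\leq H\subsetneq L\leq M$ with $M\in\mathfrak M$ and $m(H)=m(L)$. I will apply Proposition~\ref{prop:dimension-test} with this $L$. Since $M$ is an invariant extension admitting a smooth invariant cubic, $W_3(M)^{\mathrm{sm}}\neq\varnothing$. Smoothness is an open condition, so $W_3(L)^{\mathrm{sm}}\supseteq W_3(M)^{\mathrm{sm}}$ is nonempty, and likewise $W_3(H)^{\mathrm{sm}}$. As $H\prec_{\GL(6,\CC)}L$ and $m_3(L)=m_3(H)$, Proposition~\ref{prop:dimension-test} says $H$ is not saturated. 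In fact its proof shows directly that a general $F\in W_H^{\mathrm{sm}}$ has $|\Aut(F)|\geq|L|>|H|$.

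\emph{Second assertion, locating $H$.} Let $F$ be smooth with $G=\Aut(X_F)$ abelian and liftable, $G\neq C_{32},C_{48}$, and put $H=\Aut(F)=\widehat G_F$. By Proposition~\ref{prop:maximal-liftable-abelian-groups}(2) there is $M\in\mathfrak M$ with $H\preceq_{\GL(6,\CC)}M$. After conjugating I may assume $H\leq M$, and $Z\leq H$ because $H$ is a strict lift. Hence $H$ is the inverse image of $U=H/Z\leq M/Z$, so the pair $(H,m(H))$ is placed in $\mathscr A$ during the pass over $M$. Because $F\in W_H^{\mathrm{sm}}$ and $\Aut(F)=H$, the family $W_H$ should be saturated. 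Then the first assertion rules out any $L$ with $H\subsetneq L\leq M$ and $m(L)=m(H)$, so $H$ survives the filter and enters $\mathscr B$.

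\emph{Main obstacle: the realizing form may be special.} The $F$ I am given realizes $\Aut(F)=H$, but saturation is a statement about a \emph{general} member of $W_H^{\mathrm{sm}}$, and $F$ could be a special member. I will close this gap as follows. Suppose some $L\supsetneq H$ inside $M$ had $m(L)=m(H)$. By the argument of Proposition~\ref{prop:dimension-test}, the image of the $L$-family then contains a dense open subset of the image of the $H$-family. The locus of forms in $W_H^{\mathrm{sm}}$ whose automorphism group contains a conjugate of some $L'\supsetneq H$ is a finite union of images of such families, hence constructible. It contains a dense open set, and I need it to contain all of $W_H^{\mathrm{sm}}$, which forces me to check closedness. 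Upper semicontinuity of stabilizers would give the reverse containment, so I need a different argument. I would use the following: the set of $F$ in $W_H^{\mathrm{sm}}$ admitting an automorphism group of order at least $|L|$ is closed, since it is a limit of finite group actions inside a compact set of finite-order elements of bounded order. Conjugating a finite-order element to a limit gives a finite group of the same order preserving $F$. That makes the locus the whole of $W_H^{\mathrm{sm}}$, forcing $|\Aut(F)|>|H|$, a contradiction.

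\emph{Remaining filters.} $H$ survives the last filters because of the restrictions recalled just before the algorithm. The Yang--Yu--Zhu bound, Lemma~\ref{lem:order-72-nonabelian} and \cite[Proposition~7.1]{FWZ26} give $|H|\mid48$, $72$ or $108$. Lemma~\ref{lem:dangerous-elements} shows $H$ has no element conjugate into $\mathscr D^\sharp$, since $G$ is abelian. The bound $|\Auts(X_F)|\leq4$ gives $|H\cap\SL(6,\CC)|\leq12$. Finally, identifying groups up to coordinate permutation is harmless, because diagonal groups are linearly conjugate exactly when they are permutation-conjugate.
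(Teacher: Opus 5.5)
Your proof follows the paper's. The first assertion is Proposition~\ref{prop:dimension-test} applied to $L$; the inclusion $W_3(M)^{\mathrm{sm}}\subseteq W_3(L)^{\mathrm{sm}}$ is immediate from $W_3(M)\subseteq W_3(L)$, so openness plays no role there. The second assertion embeds $H=\Aut(F)$ in some $M\in\mathfrak M$ by Proposition~\ref{prop:maximal-liftable-abelian-groups}, uses saturation of $H$ to pass the $\mathscr B$-filter, and notes that the remaining filters are necessary conditions.

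The paper simply writes ``since $H$ is saturated''. You are right that this needs an argument: $\Aut(F)=H$ for one smooth $F\in W_H$ must force $\Aut=H$ for a general member. Your patch for this is confused at two points.

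First, upper semicontinuity of $F\mapsto|\Aut(F)|$ on $W_H^{\mathrm{sm}}$, i.e.\ closedness of $\{|\Aut(F)|\geq k\}$, is exactly the statement you need, not its reverse.

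Second, your justification of that closedness is false as written. Elements of bounded finite order in $\GL(6,\CC)$ do not form a compact set; the involutions $\smat{1&t\\0&-1}\oplus I_4$ are unbounded. What keeps the automorphisms of $F_n\to F$ bounded is properness of the $\GL(6,\CC)$-action on smooth cubic forms, since smooth hypersurfaces are GIT-stable. Properness makes the stabilizer scheme over $W_H^{\mathrm{sm}}$ finite, and in characteristic zero its fibre cardinality is then upper semicontinuous.

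With this input the cleanest route is direct. The set $\{F\in W_H^{\mathrm{sm}}:\Aut(F)=H\}$ is open; this is the openness the paper invokes in the proof of Proposition~\ref{prop:classification-base-change}. It is nonempty because it contains your $F$, hence dense in the irreducible $W_H^{\mathrm{sm}}$. So $H$ is saturated, and the first assertion excludes every $L$ with $H\subsetneq L\leq M$ and $m(L)=m(H)$.
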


\begin{proof}
Every $M\in\mathfrak M$ fixes a smooth cubic, so the first assertion follows
from Proposition~\ref{prop:dimension-test}.  For the second, Proposition
\ref{prop:maximal-liftable-abelian-groups} shows that, after conjugacy,
$H=\Aut(F)$ is contained in some $M\in\mathfrak M$. Since $H$ is
saturated, no group of the same family dimension within $M$ strictly
contains it. The other conditions of
Algorithm~\ref{alg:liftable-abelian-enumeration} are necessary and are
therefore also satisfied by $H$.
\end{proof}

Algorithm~\ref{alg:liftable-abelian-enumeration} yields $51$
conjugacy classes; adjoining $C_{32}$ and $C_{48}$ gives $53$ candidates.
Their invariant spaces contain smooth cubics by the Jacobian
criterion. We determine which of these candidates are saturated in \S~\ref{sec:fourfold-completion}.
See also \path{anc/liftable_abelian.txt}.

\subsection{Non-liftable abelian case}

The non-liftable abelian case has only the following two possibilities.

\begin{prop}
    If $X=X_F$ is a smooth cubic fourfold such that $\Aut(X_F)$ is abelian
    and not liftable, then
    \[
    \Aut(F)\sim_{\GL(6,\CC)} \left\langle (1\;2\;3)(4\;5\;6),\frac{1}{9}(1,4,7,1,4,7)\right\rangle
    \]
    or
    \[
    \Aut(F)\sim_{\GL(6,\CC)} \left\langle (1\;2\;3)(4\;5\;6),\frac{1}{9}(1,4,7,1,4,7),\frac{1}{2}(0,0,0,1,1,1)\right\rangle.
    \]
\end{prop}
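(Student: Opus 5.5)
Set $G=\Aut(X_F)$, assumed abelian and not liftable. By Proposition~\ref{prop:cubic-liftability}(3), $G$ then contains a subgroup $G_1\cong C_3^2$ that is not liftable. Lemma~\ref{lem:small-3-groups-lifting}(3) says $\widehat{(G_1)}_F$ is conjugate to one of two explicit groups of order $27$. The first step rules out the first one, $E=\langle P,W\rangle$ with $P=(1\;2\;3)(4\;5\;6)$ and $W=\frac13(0,1,2,0,1,2)$. If $E\le\Aut(F)$, I will decompose $\CC^6$ as two copies of the $3$-dimensional Heisenberg representation. The $E$-invariant cubics are then spanned by the Hesse-type invariants $x_1^3+x_2^3+x_3^3$, $x_1x_2x_3$ and their analogues on the other block, together with cross terms; this space is small enough to treat directly. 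I expect every such cubic to be invariant under an extra element that does not commute projectively with $G$, for instance one normalizing the Heisenberg group. The quickest version is to exhibit an element of $\mathscr D^\sharp$ in $\Aut(F)$ and apply Lemma~\ref{lem:dangerous-elements}. This would contradict commutativity, so $E'=\langle P,\frac19(1,4,7,1,4,7)\rangle\le\Aut(F)$ after conjugation.

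Next I will compute $W_{E'}$, which is probably spanned by monomial-orbit sums such as $x_1^2x_2+x_2^2x_3+x_3^2x_1$, the same on $x_4,x_5,x_6$, and mixed terms. Since $G$ is abelian, every $g\in G$ centralizes $\pi(E')\cong C_3^2$ projectively. So any lift $\tilde g\in\Aut(F)$ normalizes $E'$ and acts trivially on $E'/Z$. Such an element lies in $N_{\GL(6)}(E')$ with trivial induced action on $E'/Z$. Because the $3$-dimensional irreducible summands are Heisenberg-type, $\tilde g$ has the form $e\cdot(B\otimes I_3)$ with $e\in E'$ and $B\in\GL(2,\CC)$ acting on the multiplicity space $\CC^2$. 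Hence $\Aut(F)=E'\cdot\{B\in\GL_2:\ (B\otimes I)\text{ preserves }F\}$. The problem reduces to the $\GL_2$ action on $W_{E'}$, whose dimension $m$ is the family dimension.

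The last step classifies the stabilizers of a general member and of each special stratum in this $\GL_2$-action. These must be finite subgroups of $\GL_2$, abelian modulo scalars, and must keep $F$ smooth. I expect the generic stabilizer to be trivial modulo $\mu_3$, which gives the first group. The only abelian enlargement should be the involution $\frac12(0,0,0,1,1,1)$, that is $B=\diag(1,-1)$, acting on the locus of $F$ that are even in the second block; this gives the second group. Larger stabilizers, for example a $C_2^2$ generated by $\diag(1,-1)$ together with the swap of the blocks, are projectively non-abelian with $E'$ or contain elements of $\mathscr D^\sharp$, so Lemma~\ref{lem:dangerous-elements} excludes them. Other cyclic $B$ of order $\ge3$ should either force singularity, which I will check with Lemma~\ref{lem:square-monomial}, or again produce dangerous elements. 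I will also check that the lifts of all of $G$, not only of $G_1$, land in this normalizer, since every element of $G$ commutes with $G_1$.

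The main obstacle is the exclusion in the first step together with the stabilizer analysis in the third. Both need explicit control of the invariant spaces. If the structural arguments fall short, I would fall back on a direct GAP computation of $N_{\GL(6)}(E)$ modulo scalars acting on $W_E$ and $W_{E'}$.
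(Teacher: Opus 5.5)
\textbf{Overall.} Your route is genuinely different from the paper's, and it is viable, but the two decisive steps are stated only as expectations. In both of them the mechanism you name is not the one that works.

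\textbf{Comparison with the paper.} Your second step is sound. Every lift of an element of $G$ normalizes $E'=\langle P,C\rangle$, where $C=\frac19(1,4,7,1,4,7)$, and acts trivially on $E'/Z$. The commutator pairing on $E'/Z\cong C_3^2$ is nondegenerate. Hence $\Aut(F)=E'\cdot S$ with $S=\{B\in\GL(2,\CC):(B\otimes I_3)\cdot F=F\}$, and $G\cong \pi(E')\times S/\mu_3$.

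The paper never analyzes $S$. It kills the $\SL(6,\CC)$-type lift in one line: $\pi(E)\le\Auts(X)$, and $\Auts(X)$ is abelian, hence of order at most $4$ by \cite{laza2022automorphisms}. It then uses $|G_3|\le 9$ from \cite{FWZ26} and Mayanskiy's classification to get $G=G_3$ or $C_2\times G_3$. Only an involutive lift $T$ then remains; it commutes with $P$ and $C$, so it is conjugate in the centralizer to $\diag(I_3,-I_3)$. Your approach would remove the dependence on those last two citations.

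\textbf{First step.} The shortcut through $\mathscr D^\sharp$ is unavailable. In the coordinates of Lemma~\ref{lem:small-3-groups-lifting}(3), $W_E$ is exactly the $8$-dimensional space displayed for family~52. Its general member has $\Aut(F)=\langle E,\tau\rangle$ of order $54$, with $\tau=(2\;3)(5\;6)$. Up to scalars, every non-scalar element of this group is of type $\frac13(0,0,1,1,2,2)$ or is an involution, and none lies in $\mathscr D^\sharp$. What you need is the normalizing element itself. The element $\tau$ fixes every basis element of $W_E$, and $\tau P\tau^{-1}=P^{-1}$, so $\pi(\tau)$ and $\pi(P)$ do not commute.

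\textbf{Third step.} The key observation is missing, and your treatment of the $C_2^2$ example is wrong. The elements $\diag(I_3,-I_3)$ and the block swap both centralize $E'$ and commute modulo scalars, so no non-commutativity arises. What excludes them is that their commutator $-I_6$ sends $F$ to $-F$.

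In general, $G$ abelian gives $[S,S]\le S\cap\CC^\times I_2=\mu_3 I_2$. Commutators have determinant $1$, while $\det(\omega I_2)\neq1$, so $S$ is abelian. After conjugating in the centralizer, $S\le\{\diag(\alpha I_3,\beta I_3)\}$.

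Now $W_{E'}$ is spanned by the $P$-orbit sums of
\[
x_1^2x_3,\quad x_1^2x_6,\quad x_1x_3x_4,\quad x_1x_4x_6,\quad x_3x_4^2,\quad x_4^2x_6 .
\]
Lemma~\ref{lem:square-monomial}, applied to $x_1$ and to $x_4$, forces ($\alpha^3=1$ or $\alpha^2\beta=1$) and ($\alpha\beta^2=1$ or $\beta^3=1$). Modulo $\mu_3$, the only non-scalar solutions are:
\begin{itemize}
\item $\diag(1,\omega^{\pm1})$, that is $\frac13(0,0,0,1,1,1)^{\pm1}$, which Lemma~\ref{lem:dangerous-elements} excludes;
\item $\diag(\pm1,\mp1)$, which are exchanged by the block swap.
\end{itemize}
Hence $S\le\mu_3\times\langle\diag(1,-1)\rangle$ up to conjugacy, which gives exactly the two groups.

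Once you supply these two pieces, your argument is complete and self-contained.
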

\begin{proof}
    Let $P=(1\;2\;3)(4\;5\;6)$ and
    $C=\frac19(1,4,7,1,4,7)$ be elements of $\GL(6,\CC)$.

    Put $G=\Aut(X)$ and let $G_3$ be a Sylow $3$-subgroup.  Since $G$ is
    abelian, so is $\Auts(X)$; hence
    $|\Auts(X)|\leq4$ by \cite[Theorem~1.2]{laza2022automorphisms}.  Moreover,
    $|G_3|\leq9$
    \cite[Proposition~5.1]{FWZ26}.  Theorem~\ref{thm:sylow-liftability} and
    Lemma~\ref{lem:small-3-groups-lifting}(3) give $G_3\cong C_3^2$.
    The first possibility in the lemma lies in $\SL(6,\CC)$ and would imply
    $G_3\leq\Auts(X)$, a contradiction to $|\Auts(X)|\leq4$. Hence
    \[
    \widehat{G_3}_F\sim_{\GL(6,\CC)}
    \left\langle (1\;2\;3)(4\;5\;6),\frac19(1,4,7,1,4,7)\right\rangle.
    \]
    Since $\det P=1$ and $\det C=\omega^2$, one has
    $|G_3\cap\Auts(X)|=3$, and hence $\Auts(X)\cong C_3$.  By
    \cite[Theorems~3 and~4]{mayanskiy2013abelianautomorphismgroupscubic},
    one has $G=G_3$ or $G\cong C_2\times G_3$.

    After a linear transformation, we may assume
    $\widehat{G_3}_F=\langle P,C\rangle$.  If $G=G_3$, this is already
    $\Aut(F)$.  Otherwise an $F$-lift $T$ of the involution may be chosen
    with $T^2=I_6$.  Since $G$ is abelian and
    $\ker(\Aut(F)\to G)=\langle\omega I_6\rangle$, there are
    $\varepsilon,\eta\in\langle\omega\rangle$ such
    that
    \[
    TPT^{-1}=\varepsilon P,\qquad TCT^{-1}=\eta C.
    \]
    The equality $T^2=I_6$ gives $\varepsilon^2=\eta^2=1$, and hence
    $\varepsilon=\eta=1$.  Thus $T$ commutes with $P$ and $C$.  Since the
    representation of $\langle P,C\rangle$ is the
    sum of two equivalent irreducible three-dimensional representations, a
    change of basis in its centralizer gives
    $T=\frac12(0,0,0,1,1,1)$, proving the second case.

\end{proof}

The corresponding families have dimensions $2$ and $1$ and both contain
smooth cubics.

\section{Non-abelian cases}

\subsection{Cases with \texorpdfstring{$\rank(S)\geq15$}{rank(S) at least 15}}
\label{sec:large-symplectic}

The rank-$19$ families are classified in \cite[Theorem~1.2 and
\S6]{FWZ26}, and the rank-$20$ cases in
\cite[Theorem~1.2(9), Theorem~1.8, and \S\S4.5, 6.3]{laza2022automorphisms}.
For explicit examples and their automorphism groups, see also
\cite[\S6.1 and Theorems~6.14--6.15]{yang2024automorphism}.
For $15\leq\rank(S)\leq18$, \cite{FZ26} determines the possible abstract
groups $G$ and their actions on the full cohomology lattice $\Lambda$.

\subsubsection{Lattice and equation candidates}

For each connected symplectic family, we use the projective representation
of $G_s$ and its invariant cubic space in \cite{KOIKE202512,KOIKE2026}.  Let
$K^+\leq\GL(6,\CC)$ be the corresponding strict symplectic lift, so that
$K^+/\langle\omega I_6\rangle\cong G_s$.  For each $G$, we then construct a
strict lift $H$ with $K^+\leq H$.  If the component has generic index $2$,
then $H$ is required to contain the strict lift of the automorphism group
of a general member of that symplectic family.
This gives $40$ connected-family candidates and $36$ proper-family
candidates.  Their groups, indices, and dimensions agree with the
lattice-classification table, and they are ordered as in
Table~\ref{tab:fourfold}.
Some equation-side candidates for the proper families are obtained by the
representation enumeration recorded in 
\path{gap_classification/gap_large_group_enumeration/}
\cite{FuWangZhengAutcub4fold2026}.

The index-$2$ $\mathrm{QD}_{16}$ family is saturated: the cubic $X'_{12}$ of
\cite[\S6.1, (12)]{yang2024automorphism} lies in the unique
$\mathrm{QD}_{16}$ component, and
$\Aut(X'_{12})\cong(C_8\times C_2):C_2$.

\begin{prop}\label{prop:extraspecial-saturation}
The four families with symplectic part $3^{1+4}:2$ and non-symplectic
indices $2$, $4$, $6$, and $12$ are saturated.
\end{prop}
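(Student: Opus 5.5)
We will apply the dimension test of Proposition~\ref{prop:dimension-test}: a family $H$ is saturated as soon as no finite $L\leq\GL(6,\CC)$ with $H\prec_{\GL(6,\CC)}L$ and $W_3(L)^{\mathrm{sm}}\neq\varnothing$ has $m(L)=m(H)$. The symplectic part is $G_s=3^{1+4}:2$, which has $\rank(S)=20$. Hence every such family has dimension $m(H)=0$, and each of the four families is a finite set of cubic fourfolds with this symplectic group, as in \cite[Theorem~1.2(9), \S\S4.5, 6.3]{laza2022automorphisms}. Saturation therefore amounts to the following claim: for a member $X_F$ of the family with prescribed index $k\in\{2,4,6,12\}$, one has $\Aut(F)=H$. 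Equivalently, $[\Aut(X_F):\Auts(X_F)]=k$ exactly and $\Auts(X_F)=3^{1+4}:2$ exactly.

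First we pin down $\Auts(X_F)$. It contains $3^{1+4}:2$, and its coinvariant lattice has rank $20$. By the Laza--Zheng classification of symplectic groups, $3^{1+4}:2$ is maximal among symplectic groups in its rank-$20$ case. An overgroup would be another group in the rank-$20$ list containing it, and a check of that finite list (orders and subgroup structure) rules this out. Therefore $\Auts(X_F)=3^{1+4}:2$.

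Next we determine the non-symplectic index. By Theorem~\ref{thm:torelli}, $\Aut(X_F)$ is the group of Hodge isometries of $\Lambda$ fixing $h^2$. Since $T=S^\perp$ has rank $2$ inside $\Lambda_0$, it is positive definite of rank two and equals $T(X)$. The quotient $\Aut(X)/\Auts(X)\cong\mu_{m(X)}$ embeds into the Hodge-isometry group of $T$ that glues with $\mathrm O(S)$-elements normalizing $G_s$. For a rank-$2$ positive definite lattice, this group is cyclic of order $2$, $4$ or $6$, enlarged by the gluing. So $m(X)$ is determined by the specific lattice $T$ and the image of $N_{\mathrm O(S)}(G_s)$ in $\mathrm O(q_S)$. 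We will read off from \cite{FZ26} (or \cite{FWZ26}) the four lattice-theoretic realizations, which have indices exactly $2,4,6,12$, and check that the full stabilizer realizes the maximal index in each case. The step we expect to be hardest is the index-$2$, $4$ and $6$ families: we must show that their members do not acquire a larger index. We will argue this by comparing the distinct transcendental lattices $T$. The four cases have non-isometric $T$ (or non-conjugate period points), so the index-$12$ point is distinct from the others, and each period's stabilizer is computed directly.

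Finally, we translate back to $F$ via \eqref{eq:linear-projective}. Since $\Aut(X_F)=\pi(H)$ with the correct order, and $\mu_3I_6\leq H\leq\Aut(F)$, the exact sequence forces $|\Aut(F)|=3|\Aut(X_F)|=|H|$. Hence $\Aut(F)=H$ and each of the four families is saturated. As a cross-check, we would verify $m(H)=\dim W_H-\dim C_{\GL(6,\CC)}(H)=0$ and smoothness for Koike's explicit invariant cubic in each case.
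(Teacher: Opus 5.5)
Your proposal has a genuine gap, and it starts from a false premise. The group $3^{1+4}:2$ is not a rank-$20$ symplectic group. It is the generic symplectic group of the rank-$18$ families Nos.~21--24 of Table~\ref{tab:fourfold}, whose dimensions are $2$, $1$, $1$, $0$. The rank-$20$ group in this series is $3^{1+4}:2.2^2$ (No.~4), and the rank-$19$ group $3^{1+4}:2.2$ (No.~9) also contains $3^{1+4}:2$. This has three consequences:
\begin{itemize}
\item Your maximality step for $\Auts(X_F)$ is false. Special members, for instance those whose two plane cubic summands become isomorphic, acquire extra symplectic automorphisms.
\item $T=S^\perp$ has rank $4$ and signature $(2,2)$, so your analysis of Hodge isometries of a definite rank-two lattice does not apply.
\item Three of the four families are positive-dimensional, so saturation is a statement about a \emph{general} member.
\end{itemize}
The real content is therefore to show that a general member of the index-$2$, $4$ and $6$ families has no linear automorphisms beyond $H$. ``Comparing the transcendental lattices'' of finitely many period points does not address this. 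Your last step is also circular: knowing that $\Aut(X_F)=\pi(H)$ has the right order is exactly the saturation claim.

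The missing idea is the additive splitting. In Koike's normal form \cite[\S6.1]{KOIKE202512}, every member is $F=f(x_1,x_2,x_3)+g(x_4,x_5,x_6)$ with $f,g$ smooth plane cubic forms. For index $2$, both $j$-invariants vary. For indices $4$ and $6$, one summand has $j=1728$ or $j=0$ respectively and the other is general. For index $12$, the summands have $j=1728$ and $j=0$.

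For a general member, $f$ and $g$ are not linearly equivalent. The uniqueness of the maximal additive splitting (Theorem~\ref{thm:unique-splitting}) then forces every element of $\Aut(F)$ to preserve the two coordinate blocks, so $\Aut(F)=\Aut(f)\times\Aut(g)$. The linear automorphism groups of a general, a $j=1728$, and a Fermat plane cubic form have orders $54$, $108$, $162$. This gives $|\Aut(X_F)|=972,1944,2916,5832$, which are the orders of $\pi(H)$ in the four cases. Since $H\leq\Aut(F)$, comparing orders gives $\Aut(F)=H$. This is the paper's argument, and it needs no period-domain computation at all.
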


\begin{proof}
Write $F=f(x_1,x_2,x_3)+g(x_4,x_5,x_6)$ as in
\cite[\S6.1]{KOIKE202512}.  For a general member of each family, the two
cubic summands in three variables are not linearly equivalent.  The uniqueness assertion in
Theorem~\ref{thm:unique-splitting} therefore implies that every element of
$\Aut(F)$ preserves their coordinate spaces.  The linear automorphism groups
of a general plane cubic form, the $j=1728$ form, and the Fermat form have orders
$54$, $108$, and $162$ \cite[Lemma~3.12]{yang2024automorphism}.  Hence the
four projective automorphism groups have orders
\[
\frac{54^2}{3}=972,\qquad \frac{54\cdot108}{3}=1944,\qquad
\frac{54\cdot162}{3}=2916,\qquad \frac{108\cdot162}{3}=5832,
\]
as required.
\end{proof}

\subsubsection{Smoothness and saturation}

For each of the $76$ cases, a smooth cubic in the family is given in
\path{gap_classification/gap_smoothness/} \cite{FuWangZhengAutcub4fold2026}, as verified by the Jacobian
criterion.  Forty-nine families are saturated by the cited classifications or
the preceding arguments.  The character comparisons in
Algorithm~\ref{alg:equal-dimension-saturation} find no proper inclusion of
any of the remaining $27$ groups in a larger candidate of the same family
dimension.  Their saturatedness and the completeness of the large-family
list are proved in the proof of Theorem~\ref{thm:fourfold}.

\subsection{Cases with \texorpdfstring{$\rank(S)<15$}{rank(S) less than 15}}
\label{sec:small-nonabelian}

The remaining symplectic groups are $C_3$, $C_2^2$, $C_4$, and $S_3$.

\subsubsection{Candidate representations}

Fix a symplectic representation in \cite[\S3]{KOIKE202512} and \cite{KOIKE2026}, and put $Z=\langle\omega I_6\rangle$.  Let
$K^+\leq\GL(6,\CC)$ be the corresponding strict lift of $G_s$, so that
$K^+/Z\cong G_s$.  We seek strict lifts $H$ with
$\ker(\det|_H)=K^+$ and projective quotient among the groups allowed by
Yang--Yu--Zhu \cite{yang2024automorphism}.  The conditions
$G_s\triangleleft G$, $G/G_s$ cyclic, and divisibility by the generic index,
together with Lemma~\ref{lem:G7212-exclusion}, give the following initial list.
The further linear containment condition below is applied after the
representation enumeration.

\begin{center}
\begin{tabular}{c|c|c}
$G_s$ & generic index & $G$ \\ \hline
$C_3$, (3.4) & $1$ & $S_3\times C_3,\ S_3,\ \mathrm{Dic}_3,\ \mathrm{Dic}_3\times C_3,\ C_3:C_8$ \\
$C_3$, (3.3) & $2$ & $S_3\times C_3,\ S_3,\ \mathrm{Dic}_3,\ \mathrm{Dic}_3\times C_3,\ C_3:C_8$ \\
$C_2^2$ & $1$ & $A_4,\ D_8,\ C_8:C_2,\ A_4\times C_2,\ D_8\times C_3$ \\
$C_4$ & $1$ & $Q_8,\ D_8,\ D_8\times C_3,\ C_8:C_2$ \\
$S_3$, (3.6) & $1$ & $S_3\times C_n$, $n=1,2,3,4,6,8,12$ \\
$S_3$, corrigendum & $2$ & $S_3\times C_n$, $n=2,4,6,8,12,24$
\end{tabular}
\end{center}
Here (3.3), (3.4), and (3.6) refer to the corresponding equations in
\cite[\S3]{KOIKE202512}; the last row uses the equations in
\cite{KOIKE2026}.

If the chosen symplectic component has generic index $2$, let
$J^+\leq\GL(6,\CC)$ be the strict lift of its generic automorphism group, so that
$K^+\triangleleft J^+$ and $[J^+:K^+]=2$.  For an extension
$1\to Z\to H_e\xrightarrow{\pi_e}G\to1$, write $\chi_{K^+}$ for the
character of the defining six-dimensional representation of $K^+$.  For
each normal subgroup $Q\triangleleft G$ with $Q\cong G_s$ and each
isomorphism $\varphi:K^+\xrightarrow{\sim}\pi_e^{-1}(Q)$ satisfying
$\varphi(Z)=\ker(\pi_e)$, we consider the degree-six characters $\chi$ of
$H_e$ such that
$\chi|_{\pi_e^{-1}(Q)}=\chi_{K^+}\circ\varphi^{-1}$.  Let $\rho_\chi$ be a representation
affording $\chi$.  Faithful characters with scalar subgroup $Z$ and determinant
kernel $\pi_e^{-1}(Q)$ are taken modulo $\Aut(H_e)$; equality of characters
is equivalent to linear conjugacy.  On a generic-index-two component we
also require $J^+$ to be conjugate to a subgroup of the matrix image.

Write $G_{72,12}$ for the group with GAP ID $[72,12]$.  For the
$S_3$ component given in \cite{KOIKE2026} and $G\cong S_3\times C_{12}$, with GAP ID $[72,27]$, we
enumerate the representations equivalently by determinant-normalized
weights.  Both the split and nonzero extension classes are included, and
the same character equivalence and containment conditions are imposed.
The $S_3\times C_{24}$ action is given by
\cite[Proposition~7.1]{FWZ26}; Lemma~\ref{lem:G7212-exclusion}
excludes $G_{72,12}$.

After collecting the representations, we apply the prime-order restriction to the eigenvalues of elements in $H$ considering the strict lifts of the prime-order automorphisms in \cite[Theorem~3.8]{gonzalez2011automorphisms}. 

\begin{breakablealgorithm}
\caption{Candidate enumeration}
\label{alg:small-nonabelian-enumeration}
\begin{algorithmic}[1]
\Require The pairs $(K^+,G)$ to be enumerated, with
$S_3\times C_{24}$ treated separately
\Ensure Matrix groups $H$ and their invariant spaces
$W_H=\Sym^3((\CC^6)^*)^H$
\State $Z\gets\langle\omega I_6\rangle$; associate $G_s=K^+/Z$ with each $K^+$
\State Exclude pairs for which the generic index does not divide $[G:G_s]$
\ForAll{remaining pairs $(K^+,G)$}
    \State $\mathscr Q(G)\gets\{Q\triangleleft G:Q\cong G_s\}$
    \State $E\gets\operatorname{Ext}^1(G_{\mathrm{ab}},C_3)$
    \ForAll{$e\in E$, including $e=0$}
        \State Construct $1\to Z\to H_e\overset{\pi_e}{\longrightarrow}G\to1$
        \ForAll{$Q\in\mathscr Q(G)$ and isomorphisms
        $\varphi:K^+\xrightarrow{\sim}\pi_e^{-1}(Q)$ with
        $\varphi(Z)=\ker(\pi_e)$}
            \State \parbox[t]{\dimexpr\linewidth-\algorithmicindent-\algorithmicindent-\algorithmicindent\relax}{%
            \raggedright\strut Enumerate degree-six characters $\chi$ of $H_e$ extending\newline
            the fixed character $\chi_{K^+}$ of $K^+$\strut}
            \State \parbox[t]{\dimexpr\linewidth-\algorithmicindent-\algorithmicindent-\algorithmicindent\relax}{%
            \raggedright\strut Select all faithful $\rho_\chi$ with
            $\rho_\chi(H_e)\cap\CC^\times I_6=\rho_\chi(Z)$, and
            $\ker(\det\rho_\chi)=\pi_e^{-1}(Q)$\strut}
        \EndFor
        \State Choose one representative of each $\Aut(H_e)$-orbit of these characters
    \EndFor
    \State Identify conjugate matrix images arising from distinct $e$
    \If{the component has generic index $2$}
        \State Discard $H$ unless $J^+\preceq_{\GL(6,\CC)}H$
    \EndIf
    \State \parbox[t]{\dimexpr\linewidth-\algorithmicindent\relax}{%
    \raggedright\strut Compute and record $W_H$, $c(H)=\dim C_{\GL(6,\CC)}(H)$,
    and $m(H)=\dim W_H-c(H)$\strut}
    \State Discard $H$ if $\dim W_H<c(H)$
\EndFor
\State Add the explicit $S_3\times C_{24}$ action
\State Apply the prime-order restriction of
\cite[Theorem~3.8]{gonzalez2011automorphisms}
\State \Return the remaining groups and their invariant spaces
\end{algorithmic}
\end{breakablealgorithm}

The dimension condition in
Algorithm~\ref{alg:small-nonabelian-enumeration}
is necessary. By \eqref{eq:family-dimension}, $W_H$ contains no smooth
cubic if $\dim W_H<c(H)$.

\subsubsection{Central extensions}

\begin{prop}\label{prop:small-nonabelian-nonliftable-completeness}
For every projective group $G$ listed above, including $S_3\times C_{24}$,
the $3$-primary part of $M(G)$ is trivial.  Consequently, the natural map
$\operatorname{Ext}^1(G_{\mathrm{ab}},C_3)\to H^2(G,C_3)$ is an isomorphism.
\end{prop}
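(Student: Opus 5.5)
The plan is to check the vanishing of the $3$-part of the Schur multiplier group by group, and then read off the isomorphism from the universal coefficient sequence in Proposition~\ref{prop:stem-liftability}. The consequence is formal: since $M(G)_{(3)}=0$ and $M(G)$ is finite, $\Hom(M(G),C_3)=0$. Exactness of
\[
0\to\operatorname{Ext}^1(G_{\mathrm{ab}},C_3)\to H^2(G,C_3)\to\Hom(M(G),C_3)\to0
\]
then makes the first map an isomorphism. So everything reduces to showing $M(G)_{(3)}=0$ for the groups in the table, namely $S_3$, $S_3\times C_n$ ($n=2,3,4,6,8,12,24$), $\mathrm{Dic}_3$, $\mathrm{Dic}_3\times C_3$, $C_3:C_8$, $A_4$, $A_4\times C_2$, $D_8$, $D_8\times C_3$, $C_8:C_2$ and $Q_8$.

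First I will use the standard fact that the $p$-primary part of $M(G)$ injects into $M(P)$ for a Sylow $p$-subgroup $P$. This follows from the corestriction identity $\operatorname{cor}\circ\operatorname{res}=[G:P]$ recalled in \S\ref{sec:extension-liftability}, applied to $H_2(G,\ZZ)$ or dually to $H^2(G,\CC^\times)$. The Sylow $3$-subgroup $P$ is trivial for $D_8$, $Q_8$ and $C_8:C_2$, so these groups are done. For $S_3$, $S_3\times C_2$, $S_3\times C_4$, $S_3\times C_8$, $\mathrm{Dic}_3$, $C_3:C_8$, $A_4$ and $A_4\times C_2$, the Sylow $3$-subgroup is cyclic. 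A cyclic group has trivial multiplier, so these are done as well.

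The remaining groups have $P\cong C_3^2$, where $M(P)=C_3$ is nonzero, so the Sylow bound alone is not enough. These groups are $S_3\times C_3$, $S_3\times C_6$, $S_3\times C_{12}$, $S_3\times C_{24}$, $\mathrm{Dic}_3\times C_3$ and $D_8\times C_3$. Here I will use the stable-element description: $M(G)_{(3)}$ is the image of $M(P)$ under corestriction, equivalently the $N_G(P)$-stable part of $H^2(P,\CC^\times)$, since $P$ is abelian (Swan). For $D_8\times C_3$ the Sylow subgroup is just $C_3$, so it is cyclic and was already covered. For the others, $P=C_3\times C_3$, with one factor coming from $S_3$ or $\mathrm{Dic}_3$ and the other from the cyclic direct factor. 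The normalizer contains an element of order $2$ that inverts the first factor and centralizes the second. Its action on $M(P)\cong\Lambda^2P\cong C_3$ is multiplication by the determinant, which is $-1$. Hence the stable subgroup of $M(P)$ is zero, and $M(G)_{(3)}=0$. As an alternative I can use the Künneth formula $M(A\times B)=M(A)\times M(B)\times(A_{\mathrm{ab}}\otimes B_{\mathrm{ab}})$. Its $3$-part is $(A_{\mathrm{ab}}\otimes C_n)_{(3)}$, which vanishes because $(S_3)_{\mathrm{ab}}=C_2$ and $(\mathrm{Dic}_3)_{\mathrm{ab}}=C_4$; and $M(S_3)=M(\mathrm{Dic}_3)=0$.

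The main obstacle is bookkeeping rather than theory. I must make sure the list matches the table exactly, and that abstract identifications such as $C_3:C_8$ and the $S_3\times C_n$ groups are the direct or semidirect products described. As a safeguard, I would confirm each case with GAP's \texttt{AbelianInvariantsMultiplier}, consistent with the computational approach of the paper.
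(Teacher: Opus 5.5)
Your proposal is correct and is essentially the paper's proof. The cyclic-Sylow reduction handles every group except $S_3\times C_3$, $S_3\times C_6$, $S_3\times C_{12}$, $S_3\times C_{24}$ and $\mathrm{Dic}_3\times C_3$. For these five the paper uses exactly your ``alternative'' direct-product formula with $(S_3)_{\mathrm{ab}}\cong C_2$ and $(\mathrm{Dic}_3)_{\mathrm{ab}}\cong C_4$, and then concludes through the universal coefficient sequence.

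Your stable-element route is also valid, and it needs only the easy half of Swan's theorem: restriction embeds the $3$-part of $H^2(G,\CC^\times)$ into the $N_G(P)$-invariants of $H^2(P,\CC^\times)$. Two small slips should be corrected. In $\mathrm{Dic}_3\times C_3$ the element inverting the first $C_3$ has order $4$, not $2$; only its image in $\Aut(P)$ is an involution. Also, $D_8\times C_3$ should not have been listed among the cases with $P\cong C_3^2$.
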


\begin{proof}
Let $P$ be a Sylow $3$-subgroup of $G$.  If $P$ is cyclic,
restriction and corestriction show that $M(G)_{(3)}$ injects into
$M(P)=0$.  Among the groups under consideration, the exceptions are
$S_3\times C_3$, $\mathrm{Dic}_3\times C_3$, $S_3\times C_6$,
$S_3\times C_{12}$, and $S_3\times C_{24}$.  For these five groups, the
direct-product formula, together with
$S_3^{\mathrm{ab}}\cong C_2$ and
$\mathrm{Dic}_3^{\mathrm{ab}}\cong C_4$, gives
$M(G)_{(3)}=0$.  Hence $\Hom(M(G),C_3)=0$, and the universal coefficient
sequence in Proposition~\ref{prop:stem-liftability} identifies
$H^2(G,C_3)$ with $\operatorname{Ext}^1(G_{\mathrm{ab}},C_3)$.
\end{proof}

\subsubsection{Smooth invariant cubics}

Let $F_1,\ldots,F_r$ be a basis of $W_H$, and let $c(H)$ be the
dimension of the centralizer of $H$ in $\GL(6,\CC)$.  The smoothness tests
are applied in the following order.

\begin{breakablealgorithm}
\caption{Smoothness test}
\label{alg:small-nonabelian-smoothness}
\begin{algorithmic}[1]
\Require A strict group $H$, the basis of $W_H$, and $c(H)$
\Ensure \texttt{smooth}, \texttt{singular}, or \texttt{unknown}
\If{$r<c(H)$, or all $F_i$ have a common nonconstant monomial factor}
    \State \Return \texttt{singular}
\EndIf
\If{some variable has exponent at most one in every monomial of every $F_i$}
    \State Verify the corresponding coordinate point and return
    \texttt{singular}
\EndIf
\State Form at most $32$ deterministic coefficient vectors, beginning with
$a_i=i^2+1$
\ForAll{at most three chosen good split primes}
    \State Test the chosen members $F_a=\sum_{i=1}^r a_iF_i$ at this prime
    \If{a member $F_a$ is certified smooth}
        \State Verify it individually, record $a$ and $F_a$, and return
        \texttt{smooth}
    \EndIf
\EndFor
\ForAll{the first at most six chosen coefficient vectors $a$}
    \State Test $F_a$ on the six characteristic-zero charts
    \If{all six charts are empty}
        \State Record $a$ and $F_a$, and return \texttt{smooth}
    \EndIf
\EndFor
\If{the bounded exact search finds a common singular point of all $F_i$}
    \State Record the point and return \texttt{singular}
\EndIf
\If{the exact six-chart common Jacobian locus is nonempty}
    \State Record a nonunit chart ideal and return \texttt{singular}
\EndIf
\State \Return \texttt{unknown} and keep the record
\end{algorithmic}
\end{breakablealgorithm}

At a good prime, If the Jacobian ideal of the tested cubic fourfold defines a zero-dimensional locus, it must be non-singular. Smoothness of this reduction proves smoothness in characteristic zero. 

A projective common zero of the partial derivatives of all basis cubics is
singular for every member of $W_H$. The coordinate-point and common-factor criteria in algorithm \ref{alg:small-nonabelian-smoothness} also show that every member is singular. If the tests neither find a smooth member nor prove that every member is singular, we record the result as unknown.  The
seven systems left unknown are proved singular in
Appendix~\ref{app:singular-families}.

\begin{lem}\label{lem:G7212-exclusion}
There is no smooth cubic fourfold $X$ such that $\Auts(X)\cong C_3$ and
$\Aut(X)\cong G_{72,12}$.
\end{lem}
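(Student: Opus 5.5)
The plan is to argue by contradiction, working in $\Aut(F)$ and not in $\Aut(X)$. Suppose $X=X_F$ is smooth with $\Auts(X)\cong C_3$ and $\Aut(X)\cong G_{72,12}$. Since $G_s=\Auts(X)\cong C_3$ is normal with cyclic quotient $G/G_s\cong\mu_{24}$ by \eqref{eq:symplectic-diagram}, the group $G$ is an extension of $C_{24}$ by $C_3$. The candidates are $C_{72}$, $C_9:C_8$, and $C_3\times(C_3:C_8)$, in which $C_3:C_8$ is the semidirect product with $C_8$ acting by inversion. The first step is to identify $G_{72,12}$ among these from its structure: a Sylow $3$-subgroup $P\cong C_3^2$ containing $G_s$, and an element $t$ of order $8$ acting on $G_s$ by inversion and centralizing a complementary $C_3$. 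In the second step, $G$ is a $\{2,3\}$-group with Sylow $3$-subgroup $C_3^2$. By Proposition~\ref{prop:cubic-liftability}(2) together with Lemma~\ref{lem:small-3-groups-lifting} and Corollary~\ref{cor:small-preimage-liftability}, I would pin down $\widehat P_F$ among the explicit possibilities. The case $\widehat P_F\le\SL(6,\CC)$ is excluded because then $P\le\Auts(X)$. This leaves the abelian lifts $C_3^3$ and $C_9\times C_3$, and the non-liftable one $\langle(1\,2\,3)(4\,5\,6),\frac19(1,4,7,1,4,7)\rangle$. In that last case the normalizer computation should be incompatible with an order-$8$ element inverting only one of the two $C_3$ factors, and I would rule it out that way.

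In the abelian cases I would take an element $g\in G$ of order $24$ mapping onto a generator of $G/G_s$, and lift it by Proposition~\ref{prop:basic-liftability}(1) to a matrix $B$ of order $24$ or $72$ with $\det B$ generating the quotient by $\det(\Auts(F))$. I would diagonalize $B$ and write $B=\frac{1}{72}(a_1,\ldots,a_6)$. Invariance of $F$, together with Lemma~\ref{lem:square-monomial}, gives for each $i$ an index $j(i)$ with $2a_i+a_{j(i)}\equiv0$. Following the cycle argument from the proof of Lemma~\ref{lem:small-3-groups-lifting}(2), this leaves a short list of weight vectors. Two cross-checks cut the list further. First, the eigenvalue multisets of the powers $B^8$ and $B^{24}$ must match the prime-order normal forms of \cite[Theorem~3.8]{gonzalez2011automorphisms}. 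Second, the symplectic generator $s\in\widehat{G_s}_F$ must satisfy $BsB^{-1}\in s^{-1}Z$ and be fixed under $B^3$-conjugation, which forces $s$ to permute the $B^3$-eigenspaces in a prescribed way.

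Once the weights are fixed, I would compute the invariant space $W_3(\langle B,s,Z\rangle)$ directly and check its members against Lemma~\ref{lem:dangerous-elements} and Proposition~\ref{prop:aut-splitting}. The expectation is that every smooth member either has a power of $B$ conjugate to an element of $\mathscr D^\sharp$, or has a nontrivial Fermat part or binary-cubic summand. Either outcome produces an extra automorphism. Using the determinant, I would show this extra automorphism either lies in $\SL(6,\CC)$ outside $\widehat{G_s}_F$, contradicting $\Auts(X)\cong C_3$, or enlarges $G$ beyond order $72$. In either case $\Aut(X)\not\cong G_{72,12}$. As a consistency check I would compare with \cite[Proposition~7.1]{FWZ26}, which describes the index-$24$ situations, so the surviving configuration should be the $S_3\times C_{24}$ family, whose symplectic part is strictly larger than $C_3$.

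I expect the main obstacle to be the weight enumeration for $B$ and the proof that the forced extra automorphism is symplectic, or at least nontrivially enlarges $G$. Here I would first try exploiting the fact that $t$ inverts $G_s$: this should force the two nontrivial eigenspaces of $s$ to be swapped by $B$ up to scalars, so that the coordinate swap (the $\tau$-type element of Lemma~\ref{lem:dangerous-elements}) composed with a suitable diagonal matrix preserves $F$ and has determinant $1$. If this fails for some weight vector, the fallback is an explicit GAP check of $\Aut(F)$ for a general member of that finite-dimensional invariant family.
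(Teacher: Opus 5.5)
Your preliminary reductions are essentially right, with two slips. First, the extensions of $C_{24}$ by $C_3$ are $C_{72}$, $C_{24}\times C_3$ and $C_3\times(C_3:C_8)$; the non-abelian $C_9:C_8$ has non-cyclic quotient by its subgroup of order $3$. Second, the non-liftable Sylow case is disposed of cleanly by $T[A,B]T^{-1}=[A,B]^{-1}$ for the scalar commutator $[A,B]$ of order $3$, which is what your normalizer remark amounts to.

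The gap is in the main step. Diagonalizing one lift $B$ of a generator of $G/G_s$ does not parametrize the action, because $s$ does not commute with $B$. Nor does $B^3$ centralize $s$ modulo $Z$, as you claim: $g^3$ still inverts $G_s$, and only even powers centralize it. So in $B$-eigencoordinates $s$ is not monomial. The square-monomial cycle argument therefore cannot see $s$-invariance; for example, it does not forbid $x_i^2u$ with $u$ a $B$-eigenvector in the $s$-nontrivial part. Consequently $W_3(\langle B,s,Z\rangle)$ is not determined by the weights of $B$.

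The missing input is the normal form of $s$. By Koike's two $C_3$ components, $s$ has eigenvalue multiplicities $(4,1,1)$ or $(2,2,2)$. (Equivalently use Gonz\'alez-Aguilera--Liendo, after discarding $\frac13(0,0,0,1,1,1)$, whose family has a large symplectic group.) Since the Sylow $3$-subgroup is liftable, $G$ lifts by Proposition~\ref{prop:cubic-liftability}(2). You should then decompose the lift $\rho$ into characters and $2$-dimensional summands on which $b$ acts by $\omega^{\pm1}$ and $t$ swaps the eigenlines. In other words, use coordinates diagonalizing the abelian index-$2$ subgroup $\langle a,b,t^2\rangle$; there every invariance condition is monomial.

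Your endgame also misreads the $(2,2,2)$ case. There no smooth member acquires an extra automorphism; the action simply cannot exist. Write $\rho(g)\cdot F=\eta(g)F$. The two $2$-dimensional summands contribute even exponents to $\det\rho(t)$. Lemma~\ref{lem:square-monomial}, applied to the two $b$-trivial coordinates, forces their $t$-weights to have equal parity. Hence the Hodge character $\det\rho\cdot\eta^{-2}$ is not faithful on $\langle t\rangle\cong C_8$, a contradiction.

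In the $(4,1,1)$ case your swap idea does work, and more directly than the paper's enumeration. The paper instead computes $W_H=\langle y^3+z^3,x_1^3,x_1x_3^2,x_2^2x_4,x_3x_4^2\rangle$ and identifies it with $X_3'$, where $\Aut(X_3')\cong S_3\times C_{24}$. Directly: normalize $s$ to have eigenvalues $1,1,1,1,\omega,\omega^2$, with $y,z$ coordinates on the $\omega$- and $\omega^2$-eigenlines. The only $s$-invariant monomials involving $y,z$ are $y^3$, $z^3$ and $yz\,x_i$, and the two cube coefficients are nonzero by smoothness. So $y\mapsto\gamma z$, $z\mapsto\gamma^{-1}y$, with $\gamma^3$ the ratio of these coefficients and the $x_i$ fixed, preserves $F$.

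You do not need determinant $1$. This swap has determinant $-1$, and every element of $\widehat G_F$ with determinant $-1$ lies over $t^4\langle b\rangle$, so it commutes with $s$. The swap inverts $s$, so it is a genuinely new automorphism. Note that Lemma~\ref{lem:dangerous-elements} alone would not give this, since $t$ already inverts $s$.
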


\begin{proof}
Write
$G=\langle a,b,t\mid a^3=b^3=t^8=1,
[a,b]=[a,t]=1, tbt^{-1}=b^{-1}\rangle
\cong C_3\times(C_3:C_8)$.  Its only normal subgroups of order $3$ are
$\langle a\rangle$ and $\langle b\rangle$; since the quotient by
$\Auts(X)$ is cyclic, $\Auts(X)=\langle b\rangle$.  The Sylow
$3$-subgroup $\langle a,b\rangle$ is liftable.  Otherwise Lemma
\ref{lem:small-3-groups-lifting}(3) gives lifts $A,B,T$ with
$[A,B]\ne1$, while the defining relations give
$T[A,B]T^{-1}=[A,B]^{-1}$, impossible for a nontrivial scalar of order
$3$.  Proposition~\ref{prop:cubic-liftability}(2) therefore lifts $G$.
Let $\rho$ be a linear lift and let $\eta$ be defined by
$\rho(g)\cdot F=\eta(g)F$; the Hodge character is
$\kappa=\det(\rho)\eta^{-2}$.  In Koike's two $C_3$ components
\cite[(3.3), (3.4)]{KOIKE202512}, the multiplicities of the eigenvalues
$1,\omega,\omega^2$ of $b$ are respectively $(4,1,1)$ and $(2,2,2)$.
In the latter case $\rho$ is the sum of two $b$-nontrivial
two-dimensional summands and two $b$-trivial characters.  The former
summands contribute even exponents to $\det(t)$, and Lemma
\ref{lem:square-monomial} forces the two remaining $t$-weights to have the
same parity.  Thus $\kappa$ is not faithful on the $C_8$-factor, a
contradiction.  In the $(4,1,1)$ component, write $W_{0,0}$ for the
two-dimensional summand on which $b$ has eigenvalues
$\omega,\omega^{-1}$ and $t$ interchanges the eigenspaces, and write
$L_{\alpha,\beta}$ for the character
$a\mapsto\omega^\alpha$, $b\mapsto1$, $t\mapsto\zeta_8^\beta$.
After twisting the lift, $\eta=1$ and
$\rho=W_{0,0}\oplus\bigoplus_{i=1}^4L_{\alpha_i,\beta_i}$.  Faithfulness of
$\kappa$ and Lemma~\ref{lem:square-monomial} give
$\sum\alpha_i\ne0\pmod3$, $\sum\beta_i\equiv1\pmod2$, and require
$-2w_i$ to occur for every $w_i=(\alpha_i,\beta_i)$.  Up to
$a\mapsto a^{-1}$ and $t\mapsto t^v$ with $v$ odd, the four weights are
$((1,0),(1,1),(1,4),(1,6))$.  Hence the complete invariant space is
obtained by choosing coordinates $y,z$ on $W_{0,0}$ and $x_i$ on the four
one-dimensional summands in this order:
\[
\left\langle y^3+z^3, x_1^3, x_1x_3^2, x_2^2x_4, x_3x_4^2\right\rangle.
\]
Smoothness forces all five coefficients to be nonzero, and diagonal
rescaling normalizes them.  Thus, its smooth locus represents, after
reordering the variables, the cubic $X_3'$ of
\cite[Proposition~7.1]{FWZ26}.  The same proposition gives
$\Aut(X_3')\cong S_3\times C_{24}$, in which $G_{72,12}$ has index $2$.
This contradicts $G=\Aut(X)$.
\end{proof}

Algorithm \ref{alg:small-nonabelian-enumeration} leaves $182$ representations. The calculation
finds $46$ smooth invariant spaces and $129$ singular ones, and leaves seven
unknown.  After the singularity arguments in Appendix \ref{app:singular-families}, the results are listed as follows.

\begin{center}
\begin{tabular}{c|r|r|r}
$\Auts(X)$ & representations & smooth & singular \\ \hline
$C_3$   & $29$ & $9$  & $20$ \\
$C_2^2$ & $30$ & $9$  & $21$ \\
$C_4$   & $24$ & $5$  & $19$ \\
$S_3$   & $99$ & $23$ & $76$ \\ \hline
total   & $182$ & $46$ & $136$
\end{tabular}
\end{center}

See also \path{anc/small_group_enumeration.txt}.

\begin{prop}\label{prop:small-nonabelian-enumeration-completeness}
Let $X=X_F$ be a smooth cubic fourfold such that $\Aut(X)$ is
non-abelian, and $\Auts(X)$ is isomorphic to $C_3$, $C_2^2$, $C_4$,
or $S_3$.  Then $\Aut(F)$ is conjugate to one of the groups constructed
above, and this conjugacy identifies their invariant cubic spaces.
\end{prop}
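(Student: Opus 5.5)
We prove this completeness statement by following $H=\Aut(F)$ through each step of Algorithm~\ref{alg:small-nonabelian-enumeration} and checking that no step can discard it. Put $G=\Aut(X)$, $G_s=\Auts(X)$ and $Z=\langle\omega I_6\rangle$.

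\textbf{Step 1: $G$ and $G_s$ occur in the initial table.} By \eqref{eq:symplectic-diagram}, $G_s\triangleleft G$ and $G/G_s\cong\mu_{m(X)}$ is cyclic. The symplectic group $G_s$ lies in one of Koike's connected components \cite{KOIKE202512,KOIKE2026}. Hence $G$ contains the generic automorphism group of that component, and the generic index divides $[G:G_s]$. The Yang--Yu--Zhu classification \cite{yang2024automorphism} restricts the abstract group $G$. Lemma~\ref{lem:G7212-exclusion} removes $G_{72,12}$, and \cite[Proposition~7.1]{FWZ26} accounts for $S_3\times C_{24}$. Together these show that $(G_s,G)$ appears in the table. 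I expect this to be the main obstacle. One must check that the table really exhausts every non-abelian $G$ allowed by \cite{yang2024automorphism} with $G_s$ one of the four groups, cyclic quotient and compatible index. This is a finite but delicate comparison against the published lists, and I would carry it out group by group. The key tools are the bound $|G_3|\le 9$ from \cite[Proposition~5.1]{FWZ26} and the restriction on the non-symplectic index.

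\textbf{Step 2: $H$ arises from one of the enumerated extensions and characters.} By Theorem~\ref{thm:equivariant-torelli} and the uniqueness of the component, after a linear change of coordinates we may assume $H\cap\SL(6,\CC)=K^+$; this strict symplectic lift equals $\ker(\det|_H)$ by \eqref{eq:symplectic-diagram}. The group $H$ is a central extension $1\to Z\to H\to G\to1$. By Proposition~\ref{prop:small-nonabelian-nonliftable-completeness}, its class lies in $\operatorname{Ext}^1(G_{\mathrm{ab}},C_3)$, so $H\cong H_e$ for some enumerated $e$, with $Z$ corresponding to $\ker\pi_e$. Set $Q=\pi(K^+)$. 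The inclusion $K^+\hookrightarrow H$ gives an isomorphism $\varphi$ of the required kind. The defining representation of $H$ then has a degree-six character $\chi$ that restricts to $\chi_{K^+}$. It is faithful, its scalar subgroup is $Z$ because $\Aut(F)\cap\CC^\times I_6=Z$, and its determinant kernel is $\pi_e^{-1}(Q)$. Equal characters give linearly conjugate matrix groups, and changing the chosen identification $H\cong H_e$ corresponds to acting by $\Aut(H_e)$. Hence $H$ is conjugate to one of the chosen representatives $\rho_\chi(H_e)$.

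\textbf{Step 3: the remaining filters keep $H$.} On a generic-index-two component, $H$ contains the strict lift of the generic automorphism group, so $J^+\preceq H$. The space $W_H$ contains the smooth form $F$, so $\dim W_H\ge c(H)$ by \eqref{eq:family-dimension}. Every element of $H$ of prime order projectively is the strict lift of a prime-order automorphism of the smooth cubic $X$. Its eigenvalue pattern is therefore one of those in \cite[Theorem~3.8]{gonzalez2011automorphisms}.

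\textbf{Step 4: the invariant spaces match.} If $H=A^{-1}H'A$ for a constructed group $H'$, then $A$ carries $W_{H'}$ isomorphically onto $W_H$, since $W$ is defined by invariance. This gives the final clause of the proposition.
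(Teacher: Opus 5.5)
Your proposal is correct and follows the paper's own proof. You conjugate so that $\ker(\det|_H)=K^+$, use Propositions~\ref{prop:small-nonabelian-nonliftable-completeness} and~\ref{prop:stem-liftability} to put the extension class in $\operatorname{Ext}^1(G_{\mathrm{ab}},C_3)$, and check that every filter of Algorithm~\ref{alg:small-nonabelian-enumeration} is a necessary condition satisfied by $\Aut(F)$. The only differences are minor: your Step~1 can simply cite the justification of the initial list given just before the algorithm, the conjugation to $K^+$ comes from Koike's classification of symplectic actions rather than from Theorem~\ref{thm:equivariant-torelli}, and the paper additionally notes that the smoothness test cannot discard $H$ because the smooth form $F$ lies in $W_H$.
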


\begin{proof}
Let $G=\Aut(X_F)$ and $H=\Aut(F)$.  After conjugacy,
$\ker(\det|_H)=K^+$.  By Propositions
\ref{prop:small-nonabelian-nonliftable-completeness} and
\ref{prop:stem-liftability}, the action is liftable and its invariant
extension lies in $\operatorname{Ext}^1(G_{\mathrm{ab}},C_3)$.  Thus
the extension $1\to Z\to H\to G\to1$ is among those considered by the algorithm.
Its natural character satisfies the faithfulness, scalar, determinant, and
generic-index conditions imposed there.  The prime-order restriction and
the smoothness test are necessary conditions for $W_H^{\mathrm{sm}}\ne
\varnothing$, so neither removes $H$.
\end{proof}

\section{Automorphism groups of smooth cubic fourfolds}
\label{sec:fourfold-completion}

The preceding constructions give $184$ candidates.  The seven invariant
spaces in Appendix~\ref{app:singular-families} contain no smooth form, and
the saturation below reduces the remaining $177$
candidates to $156$ families.

\subsection{Smooth candidates and saturation}
\label{subsec:saturation-verification}

Let $\mathscr C$ be the set of $177$ smooth candidates.  It comprises $76$
large-family, $53$ liftable abelian, $2$ non-liftable abelian, and $46$
small non-abelian records. Of these, $60$ are already known to be saturated
by \cite{he2025cubicfourfoldsorder7automorphism}, \cite{FWZ26} and the results above.
Let $\mathscr C_0$ denote their set.

The strict containment test relies on the following character criterion.

\begin{lem}\label{lem:strict-character-criterion}
Let $H,K\leq\GL(6,\CC)$ be finite subgroups and let $\chi_H,\chi_K$ be the
characters of their defining six-dimensional representations.  Then
$H\preceq_{\GL(6,\CC)}K$ if and only if there is an injective homomorphism
$\varphi:H\to K$ such that $\chi_H=\chi_K\circ\varphi$.
\end{lem}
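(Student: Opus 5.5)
The plan is to reduce the claim to the standard fact from character theory that two complex representations of a finite group are isomorphic exactly when their characters agree. The statement concerns $H\preceq_{\GL(6,\CC)}K$, that is, the existence of $A\in\GL(6,\CC)$ with $A^{-1}HA\leq K$.

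I will prove the forward direction first, since it is essentially formal. Suppose $A^{-1}HA\leq K$, and define $\varphi(h)=A^{-1}hA$. This map is an injective homomorphism $H\to K$. Since the trace is invariant under conjugation,
\[
\chi_K(\varphi(h))=\tr(A^{-1}hA)=\tr(h)=\chi_H(h).
\]

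For the converse, assume $\varphi:H\to K$ is injective with $\chi_H=\chi_K\circ\varphi$. I consider two representations of the abstract group $H$ on $\CC^6$:
\begin{enumerate}
\item the inclusion $\rho_1:H\hookrightarrow\GL(6,\CC)$;
\item the composite $\rho_2=\iota_K\circ\varphi$, where $\iota_K$ is the inclusion of $K$.
\end{enumerate}
Their characters are $\chi_H$ and $\chi_K\circ\varphi$, which agree by hypothesis. Since $H$ is finite and we work over $\CC$, both representations are completely reducible, and characters determine isomorphism classes (orthogonality of irreducible characters). Hence there is $A\in\GL(6,\CC)$ with
\[
A\rho_2(h)A^{-1}=\rho_1(h)\qquad\text{for all }h\in H,
\]
that is, $h=A\varphi(h)A^{-1}$. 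Consequently $A^{-1}HA=\varphi(H)\leq K$, which gives $H\preceq_{\GL(6,\CC)}K$.

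I do not expect a real obstacle. The only point needing care is the bookkeeping of directions: the conjugating matrix must send $H$ into $K$ rather than the reverse, and injectivity of $\varphi$ ensures the inclusion $A^{-1}HA\leq K$ is genuine. I will also remark that in this direction one only needs $\varphi$ to be a homomorphism, since the character equality already forces injectivity: $\chi_H(h)=6$ holds only for $h=1$.
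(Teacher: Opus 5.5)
Your proof is correct and follows the same route as the paper: conjugation gives the injection with preserved traces, and conversely equal characters make the inclusion of $H$ and $\iota_K\circ\varphi$ similar representations, so $A^{-1}hA=\varphi(h)$ for all $h\in H$. Your added remark that $\chi_H(h)=6$ forces $h=1$, so injectivity of $\varphi$ is automatic, is also correct.
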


\begin{proof}
A conjugate inclusion gives such an injection.  Conversely, the given
representation of $H$ and the restricted representation via $\varphi$ have
the same character.  Since complex representations of finite groups are
determined by their characters, they are linearly equivalent; hence
$P^{-1}hP=\varphi(h)$ for some $P\in\GL(6,\CC)$ and every $h\in H$.
\end{proof}

For each $K$-conjugacy class of subgroups $B\leq K$ isomorphic to $H$,
fix an isomorphism $\psi:H\to B$.  Every injection with image $B$ is
obtained by composing $\psi$ with an automorphism of $H$.  Conjugation in
$K$ and inner automorphisms of $H$ do not change the character comparison.
We therefore take automorphism representatives modulo inner automorphisms
and compare traces on every conjugacy class of $H$.  Noncontainment follows
only if an independent necessary condition fails or these comparisons are
complete and none gives equality.

\begin{breakablealgorithm}
\caption{Saturation}
\label{alg:equal-dimension-saturation}
\begin{algorithmic}[1]
\Require The $177$ smooth candidates $\mathscr C$, their dimensions $m(H)$,
and the saturated subset $\mathscr C_0$
\Ensure The retained strict lifts and equal-dimensional containment chains
for the other candidates
\State Order $\mathscr C$ by increasing $m(H)$, then increasing $|H|$
\State $\mathscr R\gets\varnothing$; leave $p(H)$ undefined
\ForAll{$H\in\mathscr C$ in this order}
    \If{$H\preceq_{\GL(6,\CC)}K$ for some $K\in\mathscr R$ with
    $m(K)=m(H)$ and $|K|=|H|$}
        \State $p(H)\gets K$
    \Else
        \State $\mathscr R\gets\mathscr R\cup\{H\}$
    \EndIf
\EndFor
\ForAll{$H\in\mathscr R\setminus\mathscr C_0$, ordered first by increasing
$m(H)$, then by increasing $|H|$}
    \ForAll{$K\in\mathscr R$ with $m(K)=m(H)$ and
    $|H|<|K|$, $|H|\mid|K|$, ordered by increasing $|K|$}
        \If{$H\preceq_{\GL(6,\CC)}K$}
            \State $p(H)\gets K$; proceed to the next $H$
        \EndIf
    \EndFor
\EndFor
\State $\mathscr S\gets\{H\in\mathscr R:p(H)\text{ is undefined}\}$
\State For each $H\in\mathscr C$, iterate $p$ until a member of $\mathscr S$
\State \Return $\mathscr S$ and the resulting containment chains
\end{algorithmic}
\end{breakablealgorithm}

\begin{prop}\label{prop:saturation-output}
Algorithm~\ref{alg:equal-dimension-saturation} returns the $156$ strict
lifts in Table~\ref{tab:fourfold}.

For each of the other $21$ strict lifts $H$,
there is a returned strict lift $K$ such that
$H\prec_{\GL(6,\CC)}K$ and $m(H)=m(K)$.
\end{prop}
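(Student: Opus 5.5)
This is a computational statement: the output of Algorithm~\ref{alg:equal-dimension-saturation} run on the $177$ smooth candidates $\mathscr C$. The plan is to verify the run and to certify each containment it produces. Every asserted inclusion $H\preceq_{\GL(6,\CC)}K$ will be reduced to a finite check through Lemma~\ref{lem:strict-character-criterion}. Concretely, we enumerate the $K$-conjugacy classes of subgroups $B\leq K$ isomorphic to $H$. For each class we fix an isomorphism $\psi\colon H\to B$, run over representatives of $\operatorname{Out}(H)$, and compare the traces $\chi_H$ and $\chi_K\circ\psi\circ\alpha$ on the conjugacy classes of $H$. A match is a certificate of containment, and the conjugating matrix can be recorded explicitly. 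Failure of all comparisons, together with completeness of the enumeration, certifies non-containment.

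\textbf{First pass.} The candidates are ordered by $(m(H),|H|)$. In the first loop we identify equal-order, equal-dimension candidates. Such an inclusion is an equality after conjugation, so these are duplicate records of the same strict lift. This is how the $177$ records collapse to the set $\mathscr R$ of pairwise non-conjugate groups.

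\textbf{Second pass.} For each $H\in\mathscr R\setminus\mathscr C_0$ we search larger $K\in\mathscr R$ with $m(K)=m(H)$ and $|H|$ properly dividing $|K|$. A found inclusion $H\prec K$ with $m(H)=m(K)$ shows, by Proposition~\ref{prop:dimension-test}, that $H$ is not saturated, and $p(H)=K$ records this. Since $K$ has strictly larger order (or is a conjugate duplicate), iterating $p$ terminates in $\mathscr S$. Composing the conjugate inclusions along each chain gives the second assertion: for every one of the $21$ records outside $\mathscr S$ there is a returned $K$ with $H\prec_{\GL(6,\CC)}K$ and $m(H)=m(K)$. For records collapsed in the first pass onto a representative that itself has a parent, the chain still ends in a proper inclusion. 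Those collapsed onto a member of $\mathscr S$ are simply the same family, and they are counted accordingly.

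\textbf{Matching the output to Table~\ref{tab:fourfold}.} Finally we check that $|\mathscr S|=156$ and that $\mathscr S$ coincides with the table, comparing GAP IDs, $m(H)$, and characters. Members of $\mathscr C_0$ are never assigned a parent. This is consistent because they are already known to be saturated, and any equal-dimension proper overgroup would contradict Proposition~\ref{prop:dimension-test}.

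\textbf{Main obstacle.} The hard part is the certified non-containment for the groups kept in $\mathscr S$. There we must be sure the subgroup-class and outer-automorphism enumeration is exhaustive, since a missed embedding would wrongly retain a non-saturated group. We would first prune with cheap necessary conditions: divisibility, the existence of a subgroup with the right GAP ID, and matching eigenvalue multisets of elements of each order. The full trace comparison is run only on the surviving pairs. For the largest $K$, such as the $3^{1+4}$ extensions, we would restrict to subgroups of $K$ containing $Z=\langle\omega I_6\rangle$ with the correct determinant image.
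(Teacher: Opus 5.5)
Your overall plan is the paper's: Lemma~\ref{lem:strict-character-criterion} for positive and negative containment tests, Proposition~\ref{prop:dimension-test}, and transitivity along the $p$-chains. However, your treatment of the first loop leaves a genuine gap.

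You assume the first pass collapses duplicate records. You then say that records ``collapsed onto a member of $\mathscr S$ are simply the same family, and they are counted accordingly.'' Such a record $H$ is not returned, so it is one of the ``other'' strict lifts in the statement. Yet its terminal group $K$ is conjugate to it, so $H\sim_{\GL(6,\CC)}K$ rather than $H\prec_{\GL(6,\CC)}K$. For those records the second assertion of the proposition fails, and nothing in your argument excludes them. The count $177=156+21$ in fact forces there to be none.

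The missing step is the one the paper's proof opens with: use the character criterion to show that the $177$ candidates are pairwise non-conjugate. A conjugate containment between groups of equal order is a conjugacy, so this makes the first loop vacuous, i.e.\ $\mathscr R=\mathscr C$. Every non-returned record then gets its parent in the second loop, where $|p(H)|>|H|$ and $m(p(H))=m(H)$. Hence each step of $p$ is a proper containment, and composing along the chain gives $H\prec_{\GL(6,\CC)}K$ with $K$ returned.

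The equality $\mathscr R=\mathscr C$ is also used again in the proof of Theorem~\ref{thm:fourfold-classification}. So the non-conjugacy check is load-bearing, not just bookkeeping; it should replace your ``duplicates collapse'' reading of the first pass.
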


\begin{proof}
The character criterion shows that the $177$ candidates are pairwise
nonconjugate.  All comparisons needed for the negative conclusions were
completed.  For precisely $21$ of them, the algorithm finds a proper
conjugate inclusion in a larger candidate of the same family dimension.
Each step of $p$ strictly increases the group order and preserves the
dimension, so iteration terminates at one of the $156$ returned groups.
Transitivity gives the stated inclusion in the terminal group.
Proposition~\ref{prop:dimension-test} shows that the other $21$ candidates
are not saturated.  The saturatedness of the returned groups is proved
in the proof of Theorem~\ref{thm:fourfold}.
\end{proof}

See also \path{anc/saturation.txt}.

Up to linear conjugacy, these groups are generated by the corresponding symplectic lift and at most one element.
Their equivalence with the computed groups is checked in
\path{gap_classification/gap_manuscript_validation/}\cite{FuWangZhengAutcub4fold2026}.

\subsection{Self-conjugacy}
\label{subsec:self-conjugacy}

For $G\leq\GL(n,\CC)$, put
$\overline G=\{\overline g:g\in G\}$, where $\overline g$ is obtained by
the complex conjugation of each entry of $g$.  For a polynomial $F$,
define $\overline F$ by the complex conjugation of its coefficients.

\begin{prop}\label{prop:conjugation}
Let $F$ define a smooth cubic fourfold.  Then
\[
\Aut(\overline F)=\overline{\Aut(F)},\qquad
\Auts(\overline F)=\overline{\Auts(F)}.
\]
Under conjugate markings, the period point $[H^{3,1}(X_{\overline{F}})]$ of $X_{\overline F}$ is the
complex conjugate of the period point of $X_F$ in $\PP(\Lambda_{0}\otimes \CC)$.  In particular, complex
conjugation preserves the family dimension and the non-symplectic index.
\end{prop}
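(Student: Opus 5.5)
The first assertion is a direct entrywise computation. Complex conjugation is a field automorphism of $\CC$, so $\overline{F}(x\overline A)=\overline{F(\overline x A)}$ as polynomials. This is proved monomial by monomial: conjugating the coefficients of $F(xA)$ gives $\overline F(x\overline A)$. Hence $A\cdot F=F$ if and only if $\overline A\cdot\overline F=\overline F$, which gives $\Aut(\overline F)=\overline{\Aut(F)}$. Since $\det\overline A=\overline{\det A}$, the condition $\det A=1$ is preserved. By \eqref{eq:symplectic-diagram}, $\Auts(F)=\Aut(F)\cap\SL(6,\CC)$, and so $\Auts(\overline F)=\overline{\Auts(F)}$. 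Smoothness of $X_{\overline F}$ follows in the same way, because the Jacobian ideal is conjugated.

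For the period statement I would use the antiholomorphic diffeomorphism $\sigma\colon X_F\to X_{\overline F}$ given by $[x]\mapsto[\overline x]$. As a real diffeomorphism of compact oriented $8$-manifolds it preserves orientation, since complex conjugation on $\CC^4$ has determinant $(-1)^4=1$. It therefore induces an isometry
\[
\sigma^*\colon H^4(X_{\overline F},\ZZ)\to H^4(X_F,\ZZ)
\]
sending $h^2$ to $h^2$. To check this, note that $\sigma$ is the restriction of conjugation on $\PP^5$, which pulls $h$ back to $-h$, so $h^2$ is sent to $h^2$. Transporting a marking of $X_F$ through $\sigma^*$ defines the conjugate marking of $X_{\overline F}$. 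On complex cohomology, $\sigma^*$ composed with complex conjugation is $\CC$-linear, and it exchanges $H^{p,q}$ with $H^{q,p}$. Consequently the image of $H^{3,1}(X_{\overline F})$ under $\sigma^*$ is $\overline{H^{1,3}}$ of the conjugated structure. Concretely, the residue $\operatorname{Res}(\Omega/\overline F^2)$ pulls back to the conjugate of $\operatorname{Res}(\Omega/F^2)$, because $\sigma^*$ of the holomorphic form built from $\overline F$ is the conjugate of the form built from $F$. It follows that, under the marking, the period line of $X_{\overline F}$ is $\overline{[H^{3,1}(X_F)]}$.

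This conjugation of the period line is the step needing the most care, together with the choice of component $\DD$ versus $\overline\DD$. I would handle it through the residue description, which avoids any abstract Hodge theory. The component issue does not matter for what follows, because only dimensions and indices are claimed.

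For the consequences I would argue as follows. The family dimension $m(H)=\dim W_3(H)-\dim C_{\GL(6,\CC)}(H)$ is unchanged under $H\mapsto\overline H$. This holds because $W_3(\overline H)=\overline{W_3(H)}$ and $C(\overline H)=\overline{C(H)}$, and conjugation preserves complex dimension. The non-symplectic index is
\[
[\Aut(X):\Auts(X)]=\bigl[\Aut(F):\Auts(F)\bigr],
\]
where the equality holds because both groups contain $\omega I_6$. By the first part, this index is preserved.
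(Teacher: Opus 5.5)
Your proof is correct and follows the paper's own argument: conjugate the identity $A\cdot F=F$, use $\det\overline A=\overline{\det A}$, and conjugate $\operatorname{Res}(\Omega/F^2)$ via $[x]\mapsto[\overline x]$. Your explicit checks (orientation, $h^2\mapsto h^2$, $W_3(\overline H)=\overline{W_3(H)}$, $C(\overline H)=\overline{C(H)}$, and the index computation through $\omega I_6$) supply details the paper leaves implicit. One wording slip does not affect your conclusion: it is $\sigma^*$ itself that is $\CC$-linear and swaps $H^{p,q}$ with $H^{q,p}$, giving $\sigma^*H^{3,1}(X_{\overline F})=H^{1,3}(X_F)=\overline{H^{3,1}(X_F)}$, whereas its composite with complex conjugation is antilinear and type-preserving.
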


\begin{proof}
The first equality follows by conjugating $A\cdot F=F$, and the second from
$\det(\overline A)=\overline{\det(A)}$.  The assertion about periods follows
by conjugating $\operatorname{Res}(\Omega/F^2)$.
\end{proof}

\begin{defn}
A finite subgroup $H\leq\GL(n,\CC)$, with inclusion representation
$\rho:H\hookrightarrow\GL(n,\CC)$, is \emph{self-conjugate} if there is
$\theta\in\Aut(H)$ such that $\rho\cong\overline\rho\circ\theta$. A family
of smooth $H$-invariant forms is \emph{self-conjugate} if complex
conjugation preserves it up to linear equivalence.
\end{defn}

\begin{prop}\label{prop:self-conjugacy-criterion}
Let $H\leq\GL(6,\CC)$ be a finite strict lift with
$W_3(H)^{\mathrm{sm}}\ne\varnothing$. If
$H\sim_{\GL(6,\CC)}\overline H$, then the family defined by $W_3(H)$ is
self-conjugate.
\end{prop}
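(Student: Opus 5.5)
The plan is to convert the conjugation of coefficients into a linear change of coordinates. Suppose $P^{-1}\overline H P=H$ for some $P\in\GL(6,\CC)$. We will show that complex conjugation carries $W_3(H)$ onto $W_3(\overline H)$, and that $P$ then carries $W_3(\overline H)$ onto $W_3(H)$. Composing the two maps sends every smooth $H$-invariant form to a smooth $H$-invariant form that is linearly equivalent to its conjugate. This is exactly what self-conjugacy of the family requires.

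First, let $F\in W_3(H)$ and $A\in H$. Conjugating the coefficients of the identity $F(xA)=F(x)$, as in the proof of Proposition~\ref{prop:conjugation}, gives $\overline F(x\overline A)=\overline F(x)$. Hence $\overline F\in W_3(\overline H)$, and the antilinear bijection $F\mapsto\overline F$ identifies $W_3(H)$ with $W_3(\overline H)$. Smoothness is preserved, because the singular locus of $X_{\overline F}$ is the complex conjugate of that of $X_F$; the Jacobian equations simply get conjugate coefficients.

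Next, use the left action convention $(A\cdot F)(x)=F(xA)$. If $G$ is invariant under $\overline H$, set $G'=P\cdot G$, so $G'(x)=G(xP)$. For $h\in H$ write $h=P^{-1}\overline h' P$ with $\overline h'\in\overline H$. Then
\[
G'(xh)=G(xP^{-1}\overline h'PP)\,,
\]
and this does not simplify directly. I will therefore choose the conjugating matrix on the correct side. Take $Q=P^{-1}$, so that $Q\overline HQ^{-1}=H$, and put $G'(x)=G(xQ)$. For $h=Q\overline h'Q^{-1}$ we get
\[
G'(xh)=G(xQ\overline h')=G(xQ)=G'(x).
\]
Hence $G'\in W_3(H)$, and since it is a linear change of variables of $G$, it is smooth whenever $G$ is. Combining the two steps, the map $F\mapsto Q\cdot\overline F$ sends $W_3(H)^{\mathrm{sm}}$ into itself, and $\overline F\sim_{\mathrm{lin}} Q\cdot\overline F$. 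So the set of linear-equivalence classes of smooth $H$-invariant forms is stable under complex conjugation. By Proposition~\ref{prop:conjugation}, the period points and the automorphism groups transform compatibly, so dimensions and indices are also preserved.

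The only real obstacle is bookkeeping: the action is a right substitution $x\mapsto xA$, so we must check which side the conjugating matrix goes on, as done above. If the definition of self-conjugate family is meant to include the saturation data, I would also note that $\Aut(Q\cdot\overline F)=Q\,\overline{\Aut(F)}\,Q^{-1}$, which equals $H$ for general $F$ in a saturated family.
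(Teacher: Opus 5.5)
Your argument is correct and is essentially the paper's proof. The paper also transports $W_3(H)$ to $W_3(\overline H)$ by the conjugating matrix (it uses $F\mapsto P^{-1}\cdot F$ when $P^{-1}HP=\overline H$), notes that this preserves smoothness and descends modulo linear equivalence, and leaves implicit the antilinear identification $F\mapsto\overline F$ that you spell out; your check of which side the matrix acts on is right, and the closing remarks about periods and saturation are not needed for the statement.
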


\begin{proof}
If $P^{-1}HP=\overline H$, then $F\mapsto P^{-1}\cdot F$ identifies
$W_3(H)$ with $W_3(\overline H)$, preserves smoothness, and descends modulo
linear equivalence.
\end{proof}

Complex conjugation introduces no further fourfold family.

\begin{prop}\label{prop:all-families-self-conjugate}
Every family in Table~\ref{tab:fourfold} is self-conjugate.
\end{prop}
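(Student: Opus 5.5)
By Proposition~\ref{prop:self-conjugacy-criterion}, it suffices to show that each of the $156$ strict lifts $H$ in Table~\ref{tab:fourfold} is conjugate in $\GL(6,\CC)$ to $\overline H$. By Lemma~\ref{lem:strict-character-criterion}, applied in both directions (the two groups have equal order), this holds exactly when $H$ is self-conjugate in the sense of the definition above. That is, we need some $\theta\in\Aut(H)$ with $\overline{\chi_H}=\chi_H\circ\theta$, where $\chi_H$ is the character of the defining six-dimensional representation. So the plan is to verify this character identity family by family, and to dispose of as many families as possible by structural arguments first.

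The first batch consists of families whose group is visibly real or closed under conjugation. If $H$ is generated by permutation matrices and diagonal matrices whose multiset of exponents is stable under negation modulo the order, then $\overline H=H$ as a set, and there is nothing to prove. This covers most of the liftable abelian candidates produced by Algorithm~\ref{alg:liftable-abelian-enumeration}: for a diagonal group $M(F)$ with $F$ a simple cubic defined over $\QQ$, conjugation of $A\cdot F=F$ gives $\overline{M(F)}=M(F)$. Subgroups cut out inside such $M$ by rational conditions are also closed under conjugation.

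More generally, for any diagonal $H$, the map $\theta\colon h\mapsto h^{-1}$ is an automorphism with $\overline{\chi_H}=\chi_H\circ\theta$. Hence every abelian family is self-conjugate automatically. The same argument works for the non-liftable abelian families. Alternatively, one can observe directly that $\overline{\frac19(1,4,7,1,4,7)}=\frac19(8,5,2,8,5,2)$, which is conjugate to the inverse via the permutation $(1\;3)(4\;6)$; this permutation normalizes $\langle P\rangle$.

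For the non-abelian families, I will use the inverse map on the quotient where possible. Concretely, I look for a $\theta\in\Aut(H)$ sending each element $g$ to a conjugate of $g^{-1}$. Such a $\theta$ exists, for example as an inner automorphism composed with inversion, whenever each conjugacy class $C$ of $H$ is mapped by $\theta$ to the class $C^{-1}$. In that case $\chi_H\circ\theta=\chi_H(\,\cdot\,^{-1})=\overline{\chi_H}$.

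For the remaining groups, such as $S_3\times C_n$, $\mathrm{Dic}_3\times C_3$, $C_3:C_8$, the large groups with $3^{1+4}$, and the $\rank(S)\geq15$ families, I will instead run the same GAP loop as in Algorithm~\ref{alg:equal-dimension-saturation}. For each family, I compute the list of class functions $\chi_H\circ\theta$, with $\theta$ ranging over $\operatorname{Out}(H)$ representatives, and test whether $\overline{\chi_H}$ occurs in this list. A useful consistency check is that the period map gives $\overline{\calF_H}=\calF_{\overline H}$, so $\overline H$ must also be a saturated group of the same dimension and index (Proposition~\ref{prop:conjugation}). If some family failed the test, its conjugate would have to appear as a different entry in the table, which the pairwise nonconjugacy proved in Proposition~\ref{prop:saturation-output} would detect.

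I expect the main obstacle to be the families whose underlying group has genuinely non-real irreducible constituents. Here the representation is not self-dual via an inner automorphism, and an outer automorphism of $H$ is needed. This occurs notably for the families with symplectic part $3^{1+4}:2$ and the order-$3$ non-symplectic extensions. For these, I would first try the explicit automorphism induced by the Galois action $\omega\mapsto\omega^2$ on Koike's generators. Since the invariant cubics there have rational coefficients up to rescaling, conjugating the equation gives another member of the same family, and hence $\overline H$ is conjugate to $H$ directly.
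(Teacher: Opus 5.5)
Your proposal is essentially the paper's argument. Both use Proposition~\ref{prop:self-conjugacy-criterion} to reduce to $H\sim_{\GL(6,\CC)}\overline H$, and both certify this by the character criterion of Lemma~\ref{lem:strict-character-criterion}, searching over $\operatorname{Out}(H)$. The only difference is bookkeeping. The paper settles $56$ families by the explicit equations in the cited literature and runs the conjugacy check only on the remaining $100$. You would apply structural shortcuts first and use GAP for whatever is left. Your real-equation argument for the $3^{1+4}:2$ families is sound once you add the genericity step. If $\overline{W_H}=W_H$ and $H$ is saturated, some $F$ satisfies $\Aut(F)=\Aut(\overline F)=H$. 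Then Proposition~\ref{prop:conjugation} gives $\overline H=\overline{\Aut(F)}=\Aut(\overline F)=H$.

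Several of your side claims are wrong. None of them breaks the plan, because the $\operatorname{Out}(H)$ search is exhaustive, but they should be corrected.
\begin{itemize}
\item For the non-liftable abelian families, $H$ is \emph{not} abelian (Lemma~\ref{lem:small-central-extensions}), so inversion is not an automorphism and ``the same argument'' does not apply. The correct reason is simpler. We have $\overline P=P$, $T$ is real, and $\overline C=C^{-1}$, since conjugating a finite-order diagonal matrix inverts it. Hence $\overline H=H$. The permutation $(1\;3)(4\;6)$ plays no role. In fact it sends $\overline C$ to $\frac19(2,5,8,2,5,8)\notin H$.
\item Every strict lift contains $\omega I_6$, and $\chi_H(\omega I_6)=6\omega$. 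So any $\theta$ with $\overline{\chi_H}=\chi_H\circ\theta$ must send $\omega I_6$ to $\omega^2I_6$. Consequently $\theta$ is \emph{never} inner, for any family. The phrase ``inner automorphism composed with inversion'' does not describe an automorphism of a non-abelian group. The need for an outer automorphism is not special to the $3^{1+4}$ cases.
\item Your consistency check cannot be turned into part of the proof. A relation $\overline{H_i}\sim H_j$ with $j\ne i$ would not contradict pairwise nonconjugacy. Moreover, knowing that $\overline{H_i}$ appears in the table presupposes completeness. For $15\leq\rank(S)\leq19$, the proof of Theorem~\ref{thm:fourfold-classification} derives completeness from this very proposition, so the argument would be circular.
\end{itemize}
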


\begin{proof}
The symplectic families are self-conjugate by the explicit equations in
\cite{KOIKE202512,KOIKE2026}. Together with \cite[\S4]{he2025cubicfourfoldsorder7automorphism}, \cite[\S6]{FWZ26},
 and Proposition~\ref{prop:extraspecial-saturation},
this gives $56$ self-conjugate families. For each of the remaining $100$
strict lifts $H$, one checks that $H\sim_{\GL(6,\CC)}\overline H$.
Then, the assertion follows from Proposition~\ref{prop:self-conjugacy-criterion}.
\end{proof}

For the self-conjugacy and liftability calculations, see also
\path{anc/self_conjugacy.txt}.

\begin{rmk}\label{rmk:conjugate-components}
By Proposition~\ref{prop:all-families-self-conjugate} and
Theorems~\ref{thm:torelli} and~\ref{thm:equivariant-torelli}, there is an isometry
of $\Lambda$ fixing $h^2$ that identifies the marked period domains of each
classified family and its complex conjugate.  In the type IV case,
this isometry exchanges the two conjugate components; see
\cite[Proposition~4.8]{yu2020moduli} for the description of the marked
period loci.
\end{rmk}

\begin{rmk}\label{rmk:transpose-conjugate}
Let $G\leq\PGL(6,\CC)$ be the automorphism group or symplectic
automorphism group of a smooth cubic fourfold $X_F$, and put $H=\widehat G_F$.
Since $H$ is conjugate to a unitary group,
$H^t\sim_{\GL(6,\CC)}\overline H$, so conjugacy to the complex-conjugate
group is equivalent to conjugacy to the transpose group.
The left and right actions of $H$ on cubic forms are given by $f(xA)$
and $f(Ax)$, using row and column coordinates, respectively.
Proposition~\ref{prop:all-families-self-conjugate} and its proof therefore
show that these two actions define equivalent families, for both
$\Aut(X_F)$ and $\Auts(X_F)$. The same holds for $G$ acting
projectively.
\end{rmk}

\subsection{The incidence relation among the classified families}
\label{subsec:fourfold-incidence}

Let $\mathscr S=\{H_1,\ldots,H_{156}\}$ be the strict lifts returned by
Algorithm~\ref{alg:equal-dimension-saturation}, numbered as in
Table~\ref{tab:fourfold}. The proof of
Proposition~\ref{prop:saturation-output} establishes their pairwise
nonconjugacy; their saturatedness is proved below.
For $H\in\mathscr S$, let $\mathcal Z_H$ be the moduli image defined in
\S~\ref{subsec:families-prescribed-symmetry}. The \emph{incidence
relation} is inclusion among these images.

Here $\prec$ denotes strict conjugate containment in $\GL(6,\CC)$.
It defines a strict partial order on $\mathscr S$. A family is
\emph{A-maximal} if its strict lift is maximal in this order. This
notion takes the linear action into account, unlike maximality of
abstract automorphism groups in \cite{yang2024automorphism}.

We determine $\prec$ using Lemma~\ref{lem:strict-character-criterion}.
The algorithm records the verified pairs in $\mathscr R$. Its
\emph{transitive reduction} consists of pairs $(H,K)\in\mathscr R$ for
which no $L\in\mathscr S$ satisfies $H\prec L\prec K$.

\begin{breakablealgorithm}
\caption{Incidence relations}
\label{alg:cross-dimensional-containment}
\begin{algorithmic}[1]
\Require The $156$ strict lifts
$\mathscr S=\{H_1,\ldots,H_{156}\}$ and their dimensions $m(H)$
\Ensure The containment relation and its pairs with no intermediate
member of $\mathscr S$
\State $\mathscr S_s\gets\{H\in\mathscr S:|H|\leq2000\}$ and
$\mathscr S_\ell\gets\mathscr S\setminus\mathscr S_s$
\State \parbox[t]{\linewidth}{%
\raggedright\strut Determine $\mathscr R$ using
Lemma~\ref{lem:strict-character-criterion} to test $H\prec K$, with
$H,K\in\mathscr S_s$, $m(H)>m(K)$, and $|H|\mid|K|$\strut}
\State Order $\mathscr S_\ell$ by decreasing $m$, then by increasing
$|H|$, and finally by family number
\State $\mathscr A\gets\mathscr S_s$
\ForAll{$K\in\mathscr S_\ell$ in this order}
    \State $\mathscr U\gets
    \{H\in\mathscr A:m(H)>m(K),\ |H|\mid|K|\}$
    and let $\mathscr Q$ be its maximal elements
    \While{$\mathscr Q\neq\varnothing$}
        \State Choose $H\in\mathscr Q$;
        $\mathscr Q\gets\mathscr Q\setminus\{H\}$;
        test $H\prec K$
        \If{the test is positive}
            \State $\mathscr D\gets
            \{L\in\mathscr U:L\prec H\}$
            \State Compose and verify the conjugating matrices for
            $L\prec H\prec K$ for all $L\in\mathscr D$
            \State \parbox[t]{\dimexpr\linewidth-\algorithmicindent-\algorithmicindent-\algorithmicindent\relax}{%
            \raggedright\strut Add $(H,K)$ and all $(L,K)$ with $L\in\mathscr D$
            to $\mathscr R$; remove $\{H\}\cup\mathscr D$ from
            $\mathscr U$ and $\mathscr Q$\strut}
        \Else
            \State $\mathscr U\gets\mathscr U\setminus\{H\}$;
            add every $L\in\mathscr U$ with
            $L\prec H$ to $\mathscr Q$
        \EndIf
    \EndWhile
    \State $\mathscr A\gets\mathscr A\cup\{K\}$
\EndFor
\State \Return the transitive reduction of $\mathscr R$ and the
groups with no outgoing containment relation
\end{algorithmic}
\end{breakablealgorithm}

\begin{proof}[Correctness of
Algorithm~\ref{alg:cross-dimensional-containment}]
For the saturated groups in $\mathscr S$, $H\prec K$ implies
$|H|\mid|K|$ and $m(H)>m(K)$, by
Proposition~\ref{prop:dimension-test}. If $K\in\mathscr S_s$, then
$|H|<|K|\leq2000$, so the first comparisons cover every such pair.

Proceed by induction through $\mathscr S_\ell$ in the stated order.
Assume that the relations within $\mathscr A$ are known. For the next
group $K$, every $H\prec K$ belongs to $\mathscr A$ by the dimension
inequality and satisfies the defining conditions of $\mathscr U$.
At every step, the relation to $K$ is determined for each group removed
from $\mathscr U$, while $\mathscr Q$ contains every maximal element of
$\mathscr U$. If $H\prec K$, transitivity gives $L\prec K$ for every
$L\in\mathscr U$ with $L\prec H$; these pairs are recorded before the
groups are removed, and no new maximal element appears. If
$H\not\prec K$, only $H$ is removed; any newly maximal element lies
below $H$ and is added to $\mathscr Q$. Thus both statements persist.
Since $\mathscr U$ is finite and decreases at each step, all possible
relations to $K$ are determined. Thus
$\mathscr R=\{(H,K)\in\mathscr S^2:H\prec K\}$. The final step returns
its transitive reduction and maximal elements.
\end{proof}

\begin{prop}\label{prop:fourfold-containment}
Let $H,K\in\mathscr S$ be distinct. Then
$\mathcal Z_K\subsetneq\mathcal Z_H$ if and only if $H\prec K$.

The $21$ A-maximal families are precisely those listed in
Table~\ref{tab:fourfold-action-maximal}. Equivalently, their moduli images
are the minimal elements under inclusion among the classified family images.
\end{prop}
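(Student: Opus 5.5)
The two directions use different inputs. The implication $H\prec K\Rightarrow\mathcal Z_K\subsetneq\mathcal Z_H$ needs only the definitions, saturation and the dimension test. The converse needs saturation of $K$ to identify the linear automorphism group of a general member of $\mathcal Z_K$.

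\emph{From $H\prec K$ to inclusion.} After conjugating, assume $H\subsetneq K$. Then $W_3(K)\subseteq W_3(H)$, and smooth $K$-invariant forms are smooth $H$-invariant forms, so $\mathcal Z_K\subseteq\mathcal Z_H$. For strictness we compare dimensions. Both $H$ and $K$ are saturated, so Proposition~\ref{prop:dimension-test} applied to $H$ with $L=K$ gives $m(K)<m(H)$. By the finite, generically injective morphism $\calF_A\to\mathcal Z_A$, we have $\dim\mathcal Z_A=m(A)$. Hence $\dim\mathcal Z_K<\dim\mathcal Z_H$ and the inclusion is strict.

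\emph{From inclusion to $H\prec K$.} Suppose $\mathcal Z_K\subsetneq\mathcal Z_H$. Take a general $F\in W_3(K)^{\mathrm{sm}}$. Since $K$ is saturated, $\Aut(F)=K$. The point $[X_F]$ lies in $\mathcal Z_H$, so there is $F'\in W_3(H)^{\mathrm{sm}}$ with $X_{F'}\cong X_F$. By Matsumura, $F'=c\,A\cdot F$ for some $A\in\GL(6,\CC)$ and $c\in\CC^\times$, and rescaling $A$ by a cube root of $c$ makes $F'=A\cdot F$. Then
\[
H\leq\Aut(F')=A^{-1}\Aut(F)A=A^{-1}KA
\]
(with the sign of conjugation fixed by the left-action convention), so $H\preceq K$. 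Equality $H\sim K$ is impossible: it would force $\mathcal Z_H=\mathcal Z_K$, and in any case distinct members of $\mathscr S$ are pairwise nonconjugate by Proposition~\ref{prop:saturation-output}. Thus $H\prec K$.

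\emph{A-maximal families.} Once the equivalence is proved, the strict partial order $\prec$ on $\mathscr S$ is anti-isomorphic to strict inclusion of the images $\mathcal Z$. Therefore the maximal elements for $\prec$ are exactly the minimal images. Algorithm~\ref{alg:cross-dimensional-containment}, whose correctness was shown above, computes the full relation $\mathscr R$ and its maximal elements, and these are the $21$ groups in Table~\ref{tab:fourfold-action-maximal}.

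\emph{Main obstacle.} The argument depends on the saturatedness of every member of $\mathscr S$, which the text says is proved in the proof of Theorem~\ref{thm:fourfold}. I would either invoke that result or order the exposition so that saturation is established first. I would also state clearly that the sentence in Proposition~\ref{prop:dimension-test}'s proof, that $H\subsetneq L$ gives $m(L)\le m(H)$ with equality only when $H$ is unsaturated, is exactly what produces the strict inequality used in the first direction.
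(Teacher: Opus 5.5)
Your proof is correct. Its skeleton matches the paper's:
\begin{itemize}
\item the inclusion comes from $W_3(K)\subseteq W_3(H)$;
\item the converse takes a general $F\in W_3(K)^{\mathrm{sm}}$ with $\Aut(F)=K$ and uses a linear equivalence to an $H$-invariant form to conjugate $H$ into $K$;
\item the A-maximal list is read off from Algorithm~\ref{alg:cross-dimensional-containment}.
\end{itemize}

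The one genuine difference is how you show that $\mathcal Z_K\subseteq\mathcal Z_H$ is proper when $H\prec K$.

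\emph{Your route.} You apply Proposition~\ref{prop:dimension-test} to the saturated group $H$ to get $m(K)<m(H)$. You then pass to moduli images via $\dim\mathcal Z_A=\dim\calF_A=m(A)$. For this step, finiteness and surjectivity of $\calF_A\to\mathcal Z_A$ already suffice; generic injectivity is not needed.

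\emph{The paper's route.} The converse argument only uses $\mathcal Z_K\subseteq\mathcal Z_H$. So if the images were equal, running it in both directions would give $H\prec K$ and $K\prec H$. In the reverse direction, saturation of $H$ is used to pick a general member with automorphism group $H$. Both relations together contradict $|H|<|K|$.

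Your own converse argument never actually uses strictness of the inclusion. So the paper's shortcut was available to you at no cost.

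\emph{What each buys.} The paper's route is more economical: it needs only Matsumura--Monsky and order counting, with no dimension theory and no finiteness result from Yu--Zheng. Yours gives the quantitative statement $\dim\mathcal Z_K<\dim\mathcal Z_H$. That is exactly the pruning condition $m(H)>m(K)$ used in the correctness proof of Algorithm~\ref{alg:cross-dimensional-containment}.

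\emph{A minor point.} Calling the correspondence an anti-isomorphism of posets requires $H\mapsto\mathcal Z_H$ to be injective on $\mathscr S$. That injectivity again follows by running your converse with only $\subseteq$. The statement that the $\prec$-maximal elements are the minimal images, however, follows directly from the equivalence you proved.
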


\begin{proof}
If $H\prec K$, a linear change of coordinates makes $H$ a proper
subgroup of $K$. Then $W_3(K)\subseteq W_3(H)$,
and hence $\mathcal Z_K\subseteq\mathcal Z_H$.

Conversely, suppose that $\mathcal Z_K\subseteq\mathcal Z_H$.
Since $K$ is saturated, choose $F\in W_3(K)^{\mathrm{sm}}$
with $\Aut(F)=K$. The inclusion of moduli images means that
$F$ is linearly equivalent to a smooth $H$-invariant form.
Its linear automorphism group is therefore a conjugate
of $K$ containing $H$. Since $H$ and $K$ are not conjugate, this gives
$H\prec K$. If the two moduli images are equal, the same argument
in the opposite direction would give $K\prec H$, a contradiction.
Thus the inclusion of moduli images is proper,
and the equivalence follows.

Algorithm~\ref{alg:cross-dimensional-containment} determines all
conjugate containments, and its maximal elements are precisely the
strict lifts in Table~\ref{tab:fourfold-action-maximal}.
\end{proof}

The strict incidence relation among the $156$ classified families
consists of $1793$ pairs. Of these, $433$ have no intermediate classified
family: for such a pair
$\mathcal Z_K\subsetneq\mathcal Z_H$, there is no $L\in\mathscr S$
with $\mathcal Z_K\subsetneq\mathcal Z_L\subsetneq\mathcal Z_H$.
For the complete incidence data, see also \path{anc/results.txt}.

\Needspace{10\baselineskip}
\subsection{Proof of the classification theorem}
\label{subsec:fourfold-classification-theorem}
\begin{thm}\label{thm:fourfold-classification}
Let $H_1,\ldots,H_{156}\leq\GL(6,\CC)$ be the strict lifts in
Table~\ref{tab:fourfold}.
\begin{enumerate}
\item For every $i$, the space $W_3(H_i)$ contains a smooth cubic form,
and a general $F\in W_3(H_i)^{\mathrm{sm}}$ satisfies $\Aut(F)=H_i$.
\item $\mathcal F_{H_1},\ldots,\mathcal F_{H_{156}}$ are
pairwise inequivalent and exhaust the saturated families of smooth
cubic fourfolds with specified automorphism group actions.
\end{enumerate}

For a general $F\in W_3(H_i)^{\mathrm{sm}}$, we have
\[
\Aut(X_F)=\pi(H_i),\qquad
\Auts(X_F)=\pi\bigl(H_i\cap\SL(6,\CC)\bigr).
\]
The dimension of $\mathcal F_{H_i}$ is
$\dim W_3(H_i)-\dim C_{\GL(6,\CC)}(H_i)$.
These groups and dimensions are given in Table~\ref{tab:fourfold}.
\end{thm}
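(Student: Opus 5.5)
The proof assembles the preceding results; the only new content is the saturation of the $156$ returned groups. I would argue in three steps.

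\emph{Exhaustion.} Let $F$ define a smooth cubic fourfold and put $H=\Aut(F)$, $G=\pi(H)$. Split according to whether $G$ is abelian. If $G$ is abelian and liftable, Proposition~\ref{prop:liftable-abelian-completeness} places $H$, up to conjugacy, among the $53$ liftable abelian candidates (including $C_{32}$, $C_{48}$). If $G$ is abelian and non-liftable, the proposition on non-liftable abelian groups gives one of the two listed groups. If $G$ is non-abelian, the symplectic group $\Auts(X_F)$ determines the coinvariant lattice $S$.
\begin{itemize}
\item If $\rank(S)\ge15$, the lattice classification of \cite{FZ26}, together with \cite{FWZ26} and \cite{laza2022automorphisms}, gives the $76$ large-family candidates of \S\ref{sec:large-symplectic}.
\item If $\rank(S)<15$, the symplectic group is $C_3$, $C_2^2$, $C_4$ or $S_3$, and Proposition~\ref{prop:small-nonabelian-enumeration-completeness} identifies $H$ with an output of Algorithm~\ref{alg:small-nonabelian-enumeration}.
\end{itemize}
Since $W_3(H)$ contains $F$, it has smooth members, so $H$ lies in the set $\mathscr C$ of $177$ smooth candidates (Appendix~\ref{app:singular-families} removes the other seven). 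Now take a general member of any saturated family. Its automorphism group is some $H\in\mathscr C$ and is itself saturated. By Proposition~\ref{prop:saturation-output} and Proposition~\ref{prop:dimension-test}, the $21$ discarded candidates are non-saturated, so $H$ is conjugate to one of $H_1,\dots,H_{156}$.

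\emph{Saturation of each $H_i$, the main obstacle.} Let $K=\Aut(F)$ for general $F\in W_3(H_i)^{\mathrm{sm}}$. Then $K$ is saturated, and $W_3(K)$ is the saturation of the $H_i$-family. By the exhaustion step applied to $F$, $K$ is conjugate to some $H_j$ with $H_i\preceq H_j$ and $m(H_j)=m(H_i)$, because the two families coincide generically. Proposition~\ref{prop:saturation-output} says that among the retained groups no $H_i$ is properly conjugate-contained in another retained candidate of the same dimension. The delicate point is that this noncontainment is certified only through the character comparisons of Lemma~\ref{lem:strict-character-criterion}. To handle it, I would use that $K$ lies in $\mathscr C$ and is saturated, hence lies in $\mathscr S$. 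The equal-dimension comparisons in Algorithm~\ref{alg:equal-dimension-saturation} then force $H_i=K$ up to conjugacy, and $|K|=|H_i|$ gives equality. For the $60$ members of $\mathscr C_0$, saturation is already cited.

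\emph{Remaining assertions.}
\begin{itemize}
\item Pairwise inequivalence follows from the character criterion (proof of Proposition~\ref{prop:saturation-output}).
\item The formulas $\Aut(X_F)=\pi(H_i)$ and $\Auts(X_F)=\pi(H_i\cap\SL(6,\CC))$ follow from \eqref{eq:linear-projective} and \eqref{eq:symplectic-diagram}.
\item The dimension formula is \eqref{eq:family-dimension}, with the values read off from the computed $W_3(H_i)$ and centralizers.
\end{itemize}
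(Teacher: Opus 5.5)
Your architecture matches the paper's. You prove exhaustion into $\mathscr C$, discard the $21$ non-saturated candidates, and prove saturation of the rest by contradiction. That last step works because $\mathscr R=\mathscr C$, so the second stage of Algorithm~\ref{alg:equal-dimension-saturation} compares each retained $H\notin\mathscr C_0$ with every equal-dimensional candidate of larger order. The abelian and $\rank(S)<15$ parts are fine.

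The gap is your assertion that, for $\rank(S)\geq15$, the classification of \cite{FZ26} ``gives the $76$ large-family candidates.'' That classification concerns actions on the lattice $\Lambda$, not linear actions up to $\GL(6,\CC)$-conjugacy. Section~\S\ref{sec:large-symplectic} explicitly defers completeness of the large list to this proof. By \cite[Proposition~3.6]{FZ26}, a lattice class of index $1$ or $2$ may carry two inequivalent families exchanged by complex conjugation. So constructing one candidate per class does not rule out a missing conjugate family.

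The paper closes this with an argument absent from yours:
\begin{enumerate}
\item Every listed family is self-conjugate (Propositions~\ref{prop:self-conjugacy-criterion} and~\ref{prop:all-families-self-conjugate}), so a class carrying a listed family carries no second one.
\item Two listed families cannot share a class, since they would then be equivalent, contrary to the character comparisons.
\item The number of listed large families equals the number of classes in \cite[Table~1]{FZ26}.
\end{enumerate}
Without this, your exhaustion fails for $\rank(S)\geq15$. Your saturation step then fails too, since it needs the saturation $K$ of a large candidate to lie in $\mathscr C$.

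You also leave a circularity unaddressed. The count against \cite[Table~1]{FZ26} counts full automorphism-group actions, so matching listed families with classes uses their saturation. Your saturation step, in turn, uses completeness. The paper orders the large candidates by decreasing $\rank(S)$, then decreasing $|H|$, and shows that a saturated $K$ properly containing $H$ always comes earlier:
\begin{itemize}
\item A proper enlargement of the symplectic group strictly raises $\rank(S)$, since primitive coinvariant lattices of equal rank coincide and then give equal symplectic groups \cite[\S4.1]{FZ26}, \cite{laza2022automorphisms}.
\item Otherwise the symplectic groups are equal and $|K|>|H|$.
\end{itemize}
This ordering makes the combined completeness-and-saturation argument well-founded.
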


\begin{proof}
The symplectic actions and their invariant cubic forms are classified in
\cite{laza2022automorphisms}, \cite{KOIKE202512,KOIKE2026}.  The cases
$\rank(S)=20$ and $19$ are determined in
\cite[Theorem~1.8 and Proposition~6.12]{laza2022automorphisms},
\cite[Theorem~1.2 and \S6]{FWZ26}, and \cite[\S6.1.1]{FZ26}.

For $15\leq\rank(S)\leq19$, the lattice actions are classified in
\cite[Theorem~1.1 and Table~1]{FZ26}.  By
\cite[Proposition~3.6]{FZ26}, a lattice-theoretic class of index at least
$3$ determines one family, whereas a class of index $1$ or $2$ determines
at most two; if there are two, they are exchanged by complex conjugation.
Every listed family is self-conjugate by Propositions
\ref{prop:self-conjugacy-criterion} and
\ref{prop:all-families-self-conjugate}.  Hence two listed families with the
same lattice-theoretic class would be equivalent, contrary to the character
comparisons.  The number of listed families agrees with the number of classes
in \cite[Table~1]{FZ26}; thus the listed large families exhaust all
possibilities in these ranks.

When $\rank(S)<15$, the bounds of
\cite{yang2024automorphism} and the enumerations in
\S~\ref{subsec:liftable-abelian}--\ref{sec:small-nonabelian} give all
possible extensions.  Proposition
\ref{prop:small-nonabelian-nonliftable-completeness} excludes any further
non-liftable non-abelian case.  The cited equations and the Jacobian tests,
together with Appendix~\ref{app:singular-families}, leave exactly the $177$
pairwise nonconjugate smooth candidates in $\mathscr C$.  Thus every
saturated family is represented in $\mathscr C$.

The symplectic parts of the large candidates are the saturated symplectic
groups classified above.  We order these candidates first by decreasing
$\rank(S)$ and then, at a fixed rank, by decreasing group order.  Suppose
that $H$ is properly contained in a saturated group $K$.  If the symplectic
subgroup of $K$ properly contains that of $H$, its primitive coinvariant
lattice contains that of $H$; equality of their ranks would make the two
lattices equal, and the description in \cite[\S4.1]{FZ26} and
\cite{laza2022automorphisms} would then make the two symplectic groups equal.
Thus the rank increases.  If the symplectic subgroups are equal, then
$|K|>|H|$.  Hence $K$ occurs earlier in this order.

The candidates in $\mathscr C_0$ are saturated by the results cited above.
Propositions~\ref{prop:saturation-output} and~\ref{prop:dimension-test}
show that the $21$ discarded candidates are not saturated.  Let $H$ be one
of the remaining candidates.  If $H\notin\mathscr C_0$ were not saturated,
let $K\leq\GL(6,\CC)$ be the strict lift defining its saturation as in
Definition~\ref{def:linear-saturation}; equivalently, after conjugacy,
$K=\Aut(F)$ for a general $F\in W_3(H)^{\mathrm{sm}}$.
By \cite[Proposition--Definition~3.5]{FZ26}, the two actions have the same
moduli image, so $H\prec_{\GL(6,\CC)}K$ and $m(H)=m(K)$.  By the
completeness above, $K$ is conjugate to a member of $\mathscr C$ and, in
the large cases, precedes $H$ in the order above.  Since the
$177$ candidates are pairwise nonconjugate, the first stage of
Algorithm~\ref{alg:equal-dimension-saturation} leaves
$\mathscr R=\mathscr C$; its second stage would therefore not retain $H$.
This contradiction proves that all $156$ retained candidates are saturated.

The character criterion gives pairwise inequivalence.  The assertions about
$\Aut(X_F)$, $\Auts(X_F)$, and the family dimensions follow from
\eqref{eq:linear-projective}, \eqref{eq:symplectic-diagram}, and
\eqref{eq:family-dimension}.
\end{proof}

\section{Automorphism groups of smooth cubic threefolds}

\subsection{Cubic suspension}

Let \(F\in\CC[x_1,\ldots,x_5]_3\), and put
\[
 \widehat F=F+x_6^3,\qquad
 \delta=\frac13(0,0,0,0,0,1),\qquad
 V_\delta=\ker(\delta-I_6)\subset\CC^6.
\]
Thus \(V_\delta\cong\CC^5\), with coordinates \(x_1,\ldots,x_5\).

The following lemma compares the two smoothness conditions.

\begin{lem}\label{lem:threefold-suspension-smooth}
The cubic threefold \(V(F)\subset\PP^4\) is smooth if and only if the cubic
fourfold \(V(\widehat F)\subset\PP^5\) is smooth.
\end{lem}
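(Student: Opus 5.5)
The plan is to apply the Jacobian criterion directly to both hypersurfaces and compare their singular loci. Since the partial derivatives of $\widehat F$ are
\[
\frac{\partial\widehat F}{\partial x_i}=\frac{\partial F}{\partial x_i}\quad(1\le i\le5),\qquad
\frac{\partial\widehat F}{\partial x_6}=3x_6^2,
\]
a point $[x_1:\cdots:x_6]\in\PP^5$ is singular on $V(\widehat F)$ exactly when $x_6=0$ and $\partial F/\partial x_i(x_1,\ldots,x_5)=0$ for $1\le i\le5$.

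First, if $x_6=0$ at a singular point of $V(\widehat F)$, then $(x_1,\ldots,x_5)\neq0$, and this tuple is a common zero of the partials of $F$. By Euler's formula $3F=\sum x_i\,\partial F/\partial x_i$, so $F$ also vanishes there, and $[x_1:\cdots:x_5]$ is a singular point of $V(F)\subset\PP^4$. (Euler's formula also shows such a point lies on $V(\widehat F)$, though the Jacobian criterion already handles this in characteristic zero.)

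Conversely, if $[p]\in\PP^4$ is a singular point of $V(F)$, then $[p:0]\in\PP^5$ is a common zero of all six partials of $\widehat F$, since $\partial\widehat F/\partial x_6=3x_6^2$ vanishes at $x_6=0$. Hence $[p:0]$ is singular on $V(\widehat F)$. This gives a bijection between the two singular loci, via the hyperplane $x_6=0$.

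It follows that $V(F)$ is smooth if and only if $V(\widehat F)$ is smooth. There is no real obstacle here; the only point needing care is using that $3$ is invertible, so that $3x_6^2=0$ forces $x_6=0$. This holds over $\CC$, and also over any field of characteristic zero, which is consistent with Remark~\ref{rmk:characteristic-zero}.
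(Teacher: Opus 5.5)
Your proof is correct and follows the same approach as the paper: since $\partial\widehat F/\partial x_6=3x_6^2$, every singular point of $V(\widehat F)$ lies on $x_6=0$ and is a singular point of $V(F)$, and conversely. Your use of Euler's formula just makes explicit a step the paper leaves implicit.
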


\begin{proof}
The last partial derivative of \(\widehat F\) is \(3x_6^2\).  Hence every
singular point of \(V(\widehat F)\) lies on \(x_6=0\) and is a singular point
of \(V(F)\).  The converse is immediate.
\end{proof}

\Needspace{6\baselineskip}
The following lemma characterizes Fermat summands in any degree.

\begin{lem}\label{lem:threefold-fermat-element}
Let \(P\in\CC[x_1,\ldots,x_n]_d\), where \(d\geq3\), define a smooth
hypersurface, and write its maximal Fermat splitting as
\[
 P\sim_{\mathrm{lin}}x_1^d+\cdots+x_r^d
 +P_0(x_{r+1},\ldots,x_n),
\]
where \(P_0\) is purely non-Fermat.  Then
\(\Aut(P)\cong(\mu_d^r\rtimes S_r)\times\Aut(P_0)\), and
\[
 r=\#\{g\in\Aut(P):\ord(g)=d,\ \tr(g)=n-1+\zeta_d\}.
\]
For \(g\in\Aut(P)\), the conditions \(\ord(g)=d\) and
\(\tr(g)=n-1+\zeta_d\) hold if and only if, after a linear change of
coordinates,
\[
 g=\frac1d(0,\ldots,0,1),\qquad
 P=Q(x_1,\ldots,x_{n-1})+x_n^d,
\]
where \(Q\) is smooth.  These elements are in bijection with the
single-variable Fermat summands in the maximal additive splitting of \(P\); if
they exist, they form one conjugacy class in \(\Aut(P)\).
\end{lem}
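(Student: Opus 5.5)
The isomorphism \(\Aut(P)\cong(\mu_d^r\rtimes S_r)\times\Aut(P_0)\) is Proposition~\ref{prop:aut-splitting}, so the work is in characterizing the elements \(g\) with \(\ord(g)=d\) and \(\tr(g)=n-1+\zeta_d\), and in counting them.

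\emph{The characterization.} The plan has two directions.

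First suppose \(\ord(g)=d\). Then \(g\) is diagonalizable with eigenvalues in \(\mu_d\). The trace is a sum of \(n\) roots of unity equal to \(n-1+\zeta_d\). Writing \(\tr(g)=\sum\lambda_i\), we get \(\sum\operatorname{Re}\lambda_i=n-1+\cos(2\pi/d)\). Each term is at most \(1\), and a term less than \(1\) is at most \(\cos(2\pi/d)\) since \(\lambda_i\in\mu_d\). Hence at most one eigenvalue differs from \(1\), and that eigenvalue is \(\zeta_d\). So \(g\) is conjugate to \(\frac1d(0,\ldots,0,1)\).

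Diagonalize \(g\) in this form. The \(g\)-invariant monomials of degree \(d\) are those where \(x_n\) has exponent \(0\) or \(d\). Thus \(P=Q(x_1,\ldots,x_{n-1})+cx_n^d\), with \(c\neq0\) by smoothness, and rescaling gives \(c=1\). Smoothness of \(P\) forces \(Q\) to be smooth, since a singular point of \(V(Q)\) with \(x_n=0\) is singular on \(V(P)\).

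The converse is immediate: \(\frac1d(0,\ldots,0,1)\) fixes \(Q+x_n^d\) and has order \(d\) and trace \(n-1+\zeta_d\).

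\emph{The bijection.} For such a \(g\), set \(V_g=\ker(g-\zeta_d I)\), a line. The decomposition \(\CC^n=\ker(g-I)\oplus V_g\) is an additive splitting of \(P\). By Theorem~\ref{thm:unique-splitting}, this splitting is a coarsening of the maximal splitting. Its one-dimensional block \(V_g\) must therefore be a single block of the maximal splitting, that is, one of the Fermat coordinate lines.

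Conversely, each Fermat line \(\CC e_i\), \(i\le r\), gives exactly one such element, namely the diagonal matrix with \(\zeta_d\) in position \(i\) and \(1\) elsewhere. It fixes \(P\) because it acts trivially on the complementary block. Moreover, \(g\) is determined by its pair \((\ker(g-I),V_g)\), and both spaces are determined by the block: \(V_g\) is the line and \(\ker(g-I)\) is the sum of the other blocks. Hence the map \(g\mapsto V_g\) is injective, and its image is the set of Fermat lines. This gives the count of \(r\).

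\emph{Conjugacy.} Within \(\Aut(P)\), the \(S_r\) factor permutes the Fermat lines transitively and conjugates the corresponding generators accordingly. So these elements form one conjugacy class when \(r\ge1\).

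\emph{Main obstacle.} The delicate step is showing that \(V_g\) is a block of the maximal splitting, rather than merely a line in which the summand \(x_n^d\) appears after some coordinate change. This is where the uniqueness statement of Theorem~\ref{thm:unique-splitting}, with the partition property, is applied. It needs \(\Ann_1(P)=0\), which holds because \(P\) is smooth.
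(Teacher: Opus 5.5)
Your proposal is correct and follows essentially the same route as the paper. Both proofs use the real-part bound on eigenvalues in $\mu_d$ to force $g\sim\frac1d(0,\ldots,0,1)$, then the invariant-monomial argument to get $P=Q+x_n^d$ with $Q$ smooth, then Theorem~\ref{thm:unique-splitting} to identify the $\zeta_d$-eigenline with a Fermat block, and finally transitivity of $S_r$ from Proposition~\ref{prop:aut-splitting}; your remark that $g$ is determined by its two eigenspaces just spells out the injectivity the paper leaves implicit.
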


\begin{proof}
The decomposition of \(\Aut(P)\) is Proposition~\ref{prop:aut-splitting}.
Since \(g\) has finite order, it is semisimple.  Let
\(\lambda_1,\ldots,\lambda_n\in\mu_d\) be its eigenvalues.
The trace equality gives
\[
 \sum_{i=1}^n(1-\lambda_i)=1-\zeta_d.
\]
For every \(\lambda\in\mu_d\setminus\{1\}\),
\[
 1-\Re(\lambda)\geq1-\cos(2\pi/d),
\]
with equality only for \(\lambda=\zeta_d^{\pm1}\).  Since \(g\) has order
\(d\), taking real parts shows that exactly one eigenvalue is nontrivial;
comparison of imaginary parts shows that it is \(\zeta_d\).  Hence
\(g\sim_{\GL(n,\CC)}\frac1d(0,\ldots,0,1)\).  In an eigenbasis, invariance
under \(g\) gives \(P=Q+c x_n^d\).  Smoothness gives \(c\ne0\) and the
smoothness of \(Q\), and rescaling \(x_n\) gives the asserted form.  The
converse is immediate.

By Theorem~\ref{thm:unique-splitting}, the maximal additive splitting
refines every additive splitting. Thus the \(\zeta_d\)-eigenspace of
\(g\) is one of its one-dimensional Fermat blocks, and the
\(1\)-eigenspace is the sum of all remaining blocks. In the fixed maximal
splitting, \(g\) therefore multiplies exactly one Fermat variable by
\(\zeta_d\) and fixes the others. Conversely, each of these \(r\)
matrices preserves \(P\) and has the required order and trace.
This proves the asserted bijection. Proposition~\ref{prop:aut-splitting}
shows that these summands are permuted transitively, so the elements form
one conjugacy class.
\end{proof}

For a cubic form \(\widehat F\) defining a smooth fourfold, we call a linear
automorphism \(\delta\in\Aut(\widehat F)\) with eigenvalues
\(1,1,1,1,1,\omega\) a \emph{Fermat element}.
For \(\widehat F=F+x_6^3\) and
\(\delta=\diag(1,1,1,1,1,\omega)\), the following proposition expresses
\(\Aut(F)\) in terms of \(C_{\Aut(\widehat F)}(\delta)\).

\begin{prop}\label{prop:threefold-centralizer}
Let \(\widehat F=F+x_6^3\), let
\(\widehat H=\Aut(\widehat F)\), and let
\(C=C_{\widehat H}(\delta)\).  Let
\(\rho_\delta:C\to\GL(V_\delta)\) be the restriction homomorphism.  It gives
an exact
sequence
\[
 1\longrightarrow\langle\delta\rangle\longrightarrow C
 \overset{\rho_\delta}{\longrightarrow}\Aut(F)\longrightarrow1.
\]
In particular,
\[
 C/\langle\delta\rangle\cong\Aut(F).
\]
If \(F\) is purely non-Fermat, then
\[
 \Aut(\widehat F)=\Aut(F)\times\langle\delta\rangle.
\]
\end{prop}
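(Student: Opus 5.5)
The plan is to show that every element of $C$ preserves the decomposition $\CC^6=V_\delta\oplus\CC e_6$, then identify $\rho_\delta$ with restriction to $V_\delta$ and compute its kernel and image. The case where $F$ is purely non-Fermat will then follow from the Fermat-rank description in Proposition~\ref{prop:aut-splitting}.

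First, every $c\in C$ commutes with $\delta$, so it preserves the eigenspaces of $\delta$: the $1$-eigenspace $V_\delta$ and the $\omega$-eigenspace $\CC e_6$. Hence $c$ is block diagonal, $c=\diag(A,\lambda)$ with $A\in\GL(5,\CC)$ and $\lambda\in\CC^\times$. Since the coordinate blocks are separate, $c\cdot\widehat F=(A\cdot F)(x_1,\dots,x_5)+\lambda^3x_6^3$. Comparing this with $\widehat F=F+x_6^3$, the mixed monomials involving both $x_6$ and some $x_i$ with $i\le5$ vanish on both sides automatically. So $c\cdot\widehat F=\widehat F$ holds exactly when $A\cdot F=F$ and $\lambda^3=1$.

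This gives the rest of the exact sequence. The map $\rho_\delta\colon c\mapsto A$ lands in $\Aut(F)$. Its kernel consists of the elements $\diag(I_5,\lambda)$ with $\lambda\in\mu_3$, which is exactly $\langle\delta\rangle$. It is surjective because, for any $A\in\Aut(F)$, the matrix $\diag(A,1)$ preserves $\widehat F$ and commutes with $\delta$. The block form also shows that $C=\Aut(F)\times\langle\delta\rangle$ internally.

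For the last assertion, suppose $F$ is purely non-Fermat. The form $F$ defines a smooth threefold by Lemma~\ref{lem:threefold-suspension-smooth} together with smoothness of $\widehat F$, which is implicit in the setting. Then the maximal splitting of $\widehat F$ is $x_6^3+F_0$ with $F_0=F$, so the Fermat rank is $r=1$. Proposition~\ref{prop:aut-splitting} then gives $\Aut(\widehat F)\cong(\mu_3\rtimes S_1)\times\Aut(F)$, where the $\mu_3$ factor acts on $x_6$; that is, $\Aut(\widehat F)=\langle\delta\rangle\times\Aut(F)$, embedded block diagonally. The main point needing care is that the natural isomorphism of Proposition~\ref{prop:aut-splitting} is realized by this block embedding, which follows from its proof: automorphisms preserve both $V_\delta$ and $\CC e_6$.
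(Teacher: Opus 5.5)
Your proposal is correct and follows essentially the same route as the paper. Both arguments block-diagonalize elements of $C$ via the eigenspaces of $\delta$, read off the kernel $\langle\delta\rangle$ and surjectivity via $\diag(B,1)$, and get the purely non-Fermat case from Proposition~\ref{prop:aut-splitting}, since $x_6^3$ is then the unique single-variable Fermat summand of $\widehat F$.
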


\begin{proof}
Every element of \(C\) preserves the two eigenspaces of \(\delta\), and
therefore has the form \(\diag(B,\lambda)\).  Since it fixes
\(F+x_6^3\), one has \(B\in\Aut(F)\) and \(\lambda^3=1\).  Thus
\(\ker(\rho_\delta)=\langle\delta\rangle\).  Conversely, every
\(B\in\Aut(F)\) extends to \(\diag(B,1)\in C\), so \(\rho_\delta\) is
surjective.  If \(F\) is purely non-Fermat, then \(x_6^3\) is the unique
single-variable Fermat summand of \(\widehat F\), and the last assertion follows
from Proposition~\ref{prop:aut-splitting}.
\end{proof}

The following proposition describes the correspondence induced by cubic suspension.

\begin{prop}\label{prop:threefold-suspension-map}
The assignment
\[
 [F]\longmapsto[F+x_6^3]
\]
is injective on linear-equivalence classes of smooth cubic forms.
It induces a dimension-preserving bijection between the saturated families
of smooth cubic threefolds and the saturated families of smooth cubic
fourfolds whose general linear automorphism group contains a Fermat element.

For such a cubic fourfold family with general linear automorphism group
\(\widehat H\), the corresponding cubic threefold family has general
linear automorphism group
\[
 H=\rho_\delta\bigl(C_{\widehat H}(\delta)\bigr)
\]
for any Fermat element \(\delta\in\widehat H\).  The resulting family is
independent of the choice of \(\delta\).
\end{prop}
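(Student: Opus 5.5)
Injectivity comes first. Suppose $F+x_6^3\sim_{\mathrm{lin}}F'+x_6^3$ with $F,F'$ smooth. Write $F\sim_{\mathrm{lin}}x_1^3+\cdots+x_r^3+F_0$ as in \eqref{eq:fermat-splitting}, and similarly for $F'$ with Fermat rank $r'$ and purely non-Fermat part $F'_0$. Then $\widehat F$ has Fermat rank $r+1$ and purely non-Fermat part $F_0$. Theorem~\ref{thm:unique-splitting} shows that the Fermat rank and the class of the purely non-Fermat part are invariants of linear equivalence. Applying this to the two suspensions gives $r+1=r'+1$ and $F_0\sim_{\mathrm{lin}}F'_0$. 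Hence $F\sim_{\mathrm{lin}}F'$. Smoothness of both sides is matched by Lemma~\ref{lem:threefold-suspension-smooth}.

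Next I will establish the description of the general automorphism group. Let $H=\Aut(F)$ for a general $F$ in a saturated threefold family. Proposition~\ref{prop:threefold-centralizer} shows that $\widehat H:=\Aut(\widehat F)$ contains the Fermat element $\delta$, and that $\rho_\delta(C_{\widehat H}(\delta))=H$. The choice of $\delta$ does not matter. Indeed, Lemma~\ref{lem:threefold-fermat-element} puts every Fermat element $\delta'\in\widehat H$ in bijection with a one-variable Fermat summand, so $\widehat F=Q'+y^3$ in an adapted basis. All such elements form one $\widehat H$-conjugacy class. Conjugating by $g\in\widehat H$ with $g\delta g^{-1}=\delta'$ carries $C_{\widehat H}(\delta)$ to $C_{\widehat H}(\delta')$ and identifies the two restricted actions on the fixed hyperplanes up to $\GL(5,\CC)$-conjugacy. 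In particular $Q'\sim_{\mathrm{lin}}F$, which also follows from the injectivity already proved.

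Then I will build the bijection of families. Starting from a threefold family with group $H\le\GL(5,\CC)$, I will map it to the family of $\widehat H_{\mathrm{gen}}=\Aut(\widehat F)$ for general $F\in W_3(H)^{\mathrm{sm}}$. This is a saturated fourfold family containing $\delta$. Going back, a saturated fourfold family with a Fermat element $\delta$ in its general group $K$ has every $K$-invariant form $G$ also $\delta$-invariant. Such a $G$ has the form $Q(x_1,\dots,x_5)+cx_6^3$, and after rescaling it is a suspension. This suspension is $\rho_\delta(C_K(\delta))$-invariant. So the fourfold family is the image of the threefold family of $\rho_\delta(C_K(\delta))$, and by Proposition~\ref{prop:threefold-centralizer} that group is $\Aut(Q)$ for general $Q$.

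Dimensions are compared through linear-equivalence classes. By injectivity, the map $[F]\mapsto[\widehat F]$ embeds the threefold moduli of the family into the fourfold one. It is surjective onto the fourfold family because general members are suspensions, as just shown, so the dimensions agree. I expect the main obstacle to be saturation in both directions: showing that the general member of the image family really has automorphism group exactly $K$, and conversely that $\rho_\delta(C_K(\delta))$ is saturated. For this I plan to use Proposition~\ref{prop:threefold-centralizer} on a general member together with the fact that generality is preserved under the dominant map of moduli. Injectivity of that map then rules out a larger group on either side, since a larger group would lift or descend via the centralizer exact sequence.
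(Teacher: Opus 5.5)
Your injectivity argument and your proof that the choice of $\delta$ is irrelevant are correct and match the paper. The gap is in the step from a saturated fourfold group $K$ back to $L=\rho_\delta(C_K(\delta))$, and it affects both the bijection and the dimension count.

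You show that every smooth $K$-invariant cubic is, after rescaling $x_6$, of the form $Q+x_6^3$ with $Q$ smooth and $L$-invariant. That proves only that the $K$-family is contained in the suspension of the $L$-family. You then assert that the $K$-family \emph{is} this image, and your dimension paragraph treats $[Q]\mapsto[Q+x_6^3]$ as a map from the $L$-family \emph{into} the $K$-family. You never prove that the suspension of a general smooth $L$-invariant cubic is linearly equivalent to a $K$-invariant one.

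This is not formal. $L$-invariance of $Q$ makes $Q+x_6^3$ invariant only under $C_K(\delta)=L\times\langle\delta\rangle$, which has index $r+1$ in $K$, where $r$ is the Fermat rank of the threefold. Only for $r=0$ do the two groups coincide and your argument goes through. For $r\ge1$, $K$ contains permutations exchanging $x_6$ with the other Fermat coordinates, and these do not commute with $\delta$. As written you only get $m(K)\le m(L)$.

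Your proposed remedy is circular. ``Generality is preserved under the dominant map'' assumes exactly this dominance. A larger group $\Aut(Q)\supsetneq L$ for general $Q$ would only enlarge $\Aut(Q+x_6^3)$ beyond $C_K(\delta)$, which contradicts nothing unless $Q+x_6^3$ is already known to be a general member of the $K$-family.

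The missing ingredient is the block structure from Proposition~\ref{prop:aut-splitting}. Put $M_s=\mu_3^s\rtimes\mathfrak S_s$ and $S_r=x_1^3+\cdots+x_r^3$. In coordinates adapted to a general member, $K=M_{r+1}\times H_0$ and $L=M_r\times H_0$. Hence $W_L=\CC S_r\oplus W_3(H_0)$ (just $W_3(H_0)$ if $r=0$) and $W_K=\CC(S_r+x_6^3)\oplus W_3(H_0)$. Smoothness forces a nonzero coefficient on $S_r$. Rescaling $x_6$ then shows that, up to linear equivalence, the smooth $K$-invariant cubics are precisely the suspensions of smooth $L$-invariant ones. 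This gives both inclusions and hence the bijection.

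The dimension equality then follows from your moduli argument. Alternatively, as in the paper, passing from $L$ to $K$ leaves $\dim W$ and $\dim C_{\GL}$ unchanged when $r>0$ and raises both by one when $r=0$.

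Saturation of $L$ needs no separate argument. $L=\Aut(Q)$ for the smooth $Q$ obtained from a general $G$, and the linear automorphism group of a smooth form is always saturated. This is what the paper uses when it writes ``so $H$ is saturated''.
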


\begin{proof}
A linear equivalence between \(F\) and \(F'\) extends to their suspensions.
Conversely, Theorem~\ref{thm:unique-splitting} identifies the maximal
additive splittings of two equivalent suspensions.  Removing any one of
their single-variable Fermat summands leaves forms equivalent to \(F\) and \(F'\),
respectively.  Hence \(F\sim_{\mathrm{lin}}F'\).

Let \(\widehat H=\Aut(\widehat F)\) for a general member \(\widehat F\) of a
saturated fourfold family, and let \(\delta\in\widehat H\) be a Fermat element.  By
Lemma~\ref{lem:threefold-fermat-element}, this member can be written
as \(\widehat F=F+x_6^3\), with
\(\delta=\diag(1,1,1,1,1,\omega)\).
Proposition~\ref{prop:threefold-centralizer} gives
\(\Aut(F)=\rho_\delta(C_{\widehat H}(\delta))=H\), so \(H\) is
saturated.  All Fermat elements of \(\widehat H\) are conjugate by
Lemma~\ref{lem:threefold-fermat-element}; conjugation identifies their
fixed spaces and the restricted centralizer actions.  Thus the resulting
\(H\)-family is independent of \(\delta\).

Write the maximal Fermat splitting of \(F\) as
\[
 F=S_r+F_0,\qquad S_r=x_1^3+\cdots+x_r^3,\qquad 0\le r\le5,
\]
where \(S_0=0\) and \(F_0\in\CC[x_{r+1},\ldots,x_5]_3\) is purely non-Fermat.
Put \(H_0=\Aut(F_0)\) and
\(M_s=\mu_3^s\rtimes\mathfrak S_s\), where \(\mathfrak S_s\) is the symmetric
group on \(s\) letters and \(M_0\) is trivial.  In these variable
blocks, Proposition~\ref{prop:aut-splitting} gives
\[
 H=M_r\times H_0,\qquad \widehat H=M_{r+1}\times H_0.
\]
The independent diagonal \(\mu_3\)-actions imply that no invariant cubic
contains mixed monomials involving a Fermat variable, and the
symmetric-group actions imply that the coefficients of the Fermat cubes
are equal. Consequently,
\[
 W_H=
 \begin{cases}
 W_3(H_0),&r=0,\\
 \CC S_r\oplus W_3(H_0),&r>0,
 \end{cases}
 \qquad
 W_{\widehat H}=\CC(S_r+x_6^3)\oplus W_3(H_0).
\]
When \(F_0=0\), the summand \(W_3(H_0)\) is understood to be zero.
Smoothness forces the coefficient of each Fermat cube to be nonzero.
Rescaling the Fermat variables therefore shows that, up to linear
equivalence, the smooth \(\widehat H\)-invariant cubics are precisely the
suspensions of smooth \(H\)-invariant cubics.

Conversely, let \(H\) define any saturated cubic threefold family and
choose a general member \(F\) with \(\Aut(F)=H\).  By
Lemma~\ref{lem:threefold-suspension-smooth}, \(F+x_6^3\) is smooth.
Its linear automorphism group \(\widehat H=\Aut(F+x_6^3)\) defines a saturated
fourfold family and contains \(\delta\).
Proposition~\ref{prop:threefold-centralizer} recovers \(H\) from
\(\widehat H\), and the invariant-space description above identifies
the corresponding families.  The two constructions are therefore inverse,
which proves the claimed bijection.

Finally, the independent scalar actions on the variable blocks force
commuting matrices to preserve those blocks.  For \(s\ge1\), the
centralizer of \(M_s\) in \(\GL(s,\CC)\) consists of scalar matrices:
commutation with \(\mu_3^s\) forces diagonality, and commutation with
\(\mathfrak S_s\) forces all diagonal entries to be equal.
Thus passing from \(H\) to \(\widehat H\) leaves both the invariant-space
dimension and the centralizer dimension unchanged when \(r>0\), and
increases both by one when \(r=0\).  Formula~\eqref{eq:family-dimension}
gives \(m(H)=m(\widehat H)\).
\end{proof}

\subsection{Extraction from the fourfold classification}

For a Fermat element \(\delta\), one has \(\det(\delta)=\omega\).  The
residue description of \(H^{3,1}\) shows that the non-symplectic index of
its fourfold family is divisible by \(3\).  It is therefore enough to
consider the corresponding families in Table~\ref{tab:fourfold}, of which
there are $63$.

By Lemma~\ref{lem:threefold-fermat-element}, the order-three elements of
trace $5+\omega$, when present, are the Fermat elements and form one
conjugacy class.

\begin{breakablealgorithm}
\caption{Cubic threefold extraction}
\label{alg:threefold-extraction}
\begin{algorithmic}[1]
\Require The linear groups \(\widehat H=\Aut(\widehat F)\) in
Table~\ref{tab:fourfold}
\Ensure The strict groups \(\Aut(F)\), their invariant cubic spaces,
and family dimensions
\State \(\mathscr T\gets\varnothing\)
\ForAll{\(\widehat H\) whose non-symplectic index is divisible by \(3\)}
    \State Determine the conjugacy classes
    \[
      \mathscr D(\widehat H)=
      \{[\delta]\subset\widehat H:
      \ord(\delta)=3,\ \tr(\delta)=5+\omega\}
    \]
    \If{\(\mathscr D(\widehat H)\ne\varnothing\)}
        \State Choose a representative \(\delta\) of a class in
        \(\mathscr D(\widehat H)\), put
        \(V_\delta=\ker(\delta-I_6)\), and set
        \[
          H=\rho_\delta\bigl(C_{\widehat H}(\delta)\bigr)
          \leq\GL(V_\delta)
        \]
        \State Compute \(W_H\) and
        \(m(H)=\dim W_H-\dim C_{\GL(V_\delta)}(H)\)
        \State Add \((H,W_H,m(H))\) to \(\mathscr T\)
    \EndIf
\EndFor
\State \Return \(\mathscr T\)
\end{algorithmic}
\end{breakablealgorithm}

\begin{prop}\label{prop:threefold-extraction}
Algorithm~\ref{alg:threefold-extraction} yields exactly $40$ pairwise
inequivalent saturated families of smooth cubic threefolds, namely those
listed in Table~\ref{tab:threefold}.  Every smooth cubic threefold belongs
to one of these families.
\end{prop}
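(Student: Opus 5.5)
The plan is to derive the statement from Proposition~\ref{prop:threefold-suspension-map} together with the fourfold classification, Theorem~\ref{thm:fourfold-classification}; after that, only the finite computation in Algorithm~\ref{alg:threefold-extraction} remains. Proposition~\ref{prop:threefold-suspension-map} gives a dimension-preserving bijection between saturated threefold families and saturated fourfold families whose general linear automorphism group $\widehat H$ contains a Fermat element. By Theorem~\ref{thm:fourfold-classification}, the latter are exactly the members of Table~\ref{tab:fourfold} that contain an element $\delta$ of order $3$ with $\tr(\delta)=5+\omega$. Such an element acts on $H^{3,1}$ by $\det\delta=\omega$, so the non-symplectic index of the family is divisible by $3$. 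It therefore suffices to run over the $63$ families with index divisible by $3$, which is the restriction imposed in the algorithm.

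For each such $\widehat H$, the algorithm tests whether the set $\mathscr D(\widehat H)$ of order-$3$ elements with trace $5+\omega$ is nonempty. By Lemma~\ref{lem:threefold-fermat-element}, these elements are exactly the Fermat elements, and they form a single conjugacy class. So it is enough to pick one representative $\delta$, and Proposition~\ref{prop:threefold-suspension-map} shows that $H=\rho_\delta(C_{\widehat H}(\delta))$ is the general linear automorphism group of the corresponding threefold family, independently of the choice of $\delta$. Also by Proposition~\ref{prop:threefold-suspension-map}, $m(H)=m(\widehat H)$, and $W_H$ is smooth because its suspension is smooth (Lemma~\ref{lem:threefold-suspension-smooth}). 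Counting the $\widehat H$ with $\mathscr D(\widehat H)\neq\varnothing$ gives $40$. This count is the computational heart of the proof: a GAP computation of conjugacy classes and traces for each of the $63$ groups. I expect this bookkeeping, namely checking traces on class representatives of large groups such as those with symplectic part $3^{1+4}:2$, to be the main practical step. Conceptually, the argument is already contained in the preceding propositions.

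Pairwise inequivalence of the resulting $40$ families follows from the injectivity in Proposition~\ref{prop:threefold-suspension-map}. If two outputs $H,H'$ were conjugate in $\GL(5,\CC)$, then their suspensions would give equivalent saturated fourfold families. Explicitly, $\diag(P,1)$ conjugates the suspended groups $\widehat H=\Aut(F+x_6^3)$, as in the proof of that proposition. This contradicts the pairwise inequivalence in Theorem~\ref{thm:fourfold-classification}.

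Finally, take an arbitrary smooth cubic threefold $V(F)$. Its suspension $F+x_6^3$ is a smooth cubic fourfold (Lemma~\ref{lem:threefold-suspension-smooth}) and contains the Fermat element $\delta$. Let $\widehat K=\Aut(F+x_6^3)$. The $\widehat K$-invariant family is its own saturation, so Theorem~\ref{thm:fourfold-classification} conjugates $\widehat K$ to some $H_i$ in Table~\ref{tab:fourfold}; this $H_i$ contains a Fermat element. Then $\Aut(F)=\rho_\delta(C_{\widehat K}(\delta))$ by Proposition~\ref{prop:threefold-centralizer}, and this is conjugate to the output $H$ for $H_i$ by the uniqueness of the Fermat class. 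Hence $F$ lies in the corresponding family of Table~\ref{tab:threefold}.
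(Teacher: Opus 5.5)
Your proposal is correct and follows essentially the same route as the paper. The suspension bijection of Proposition~\ref{prop:threefold-suspension-map} and the order/trace characterization of Fermat elements in Lemma~\ref{lem:threefold-fermat-element} reduce everything to checking which of the $63$ families with non-symplectic index divisible by $3$ contain an order-$3$ element of trace $5+\omega$, and you then get inequivalence from injectivity of the suspension and completeness from saturation of actual linear automorphism groups. The only difference is cosmetic: for completeness you apply saturation to $\Aut(F+x_6^3)$ and then use the fourfold classification, whereas the paper applies saturation directly to $\Aut(F)$ and uses the bijection.
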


\begin{proof}
By Proposition~\ref{prop:threefold-suspension-map}, the inverse
construction in Algorithm~\ref{alg:threefold-extraction} gives all
saturated cubic threefold families without repetition.
Lemma~\ref{lem:threefold-fermat-element} identifies the Fermat elements
with the elements satisfying the order and trace conditions in the
algorithm.  Among the $63$ possible strict lifts, exactly $40$ contain
such an element, as one checks from their order-three elements.  Their
restricted centralizer actions give the groups listed in
Table~\ref{tab:threefold}.  Every smooth cubic threefold belongs to the
family associated with its linear automorphism group, which is
saturated.
\end{proof}

The Fermat rank in Table~\ref{tab:threefold} is one less than that of the
source fourfold family.  Its possible values are \(0,1,2,5\), and rank \(5\)
gives the Fermat cubic threefold.  The cases of ranks \(1\) and \(2\) also follow from
the classifications of smooth cubic surfaces and plane cubics
\cite{Hosoh1997}, \cite{Dolgachev2012}, \cite{yang2024automorphism}, \cite{FWZ26}.

\subsection{Incidence relations and A-maximal families}
\label{subsec:threefold-incidence}

The suspension identifies the incidence relations among the corresponding
cubic threefold and cubic fourfold families.

\begin{prop}\label{prop:threefold-containment}
Let $\calF,\calF'$ be saturated families of smooth cubic
threefolds, and let $\widehat{\calF},\widehat{\calF}'$ be the
corresponding cubic fourfold families under suspension. In the inclusions
below, we identify each family with its image in moduli after forgetting
the group action.
Then $\calF\subseteq\calF'$ if and only if
$\widehat{\calF}\subseteq\widehat{\calF}'$.
\end{prop}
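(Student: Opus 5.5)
The plan is to reduce both inclusions to linear-equivalence statements about individual cubic forms, and then use the injectivity of suspension from Proposition~\ref{prop:threefold-suspension-map}. Let $H,H'$ be the general linear automorphism groups of $\calF,\calF'$, and let $\widehat H,\widehat H'$ be those of $\widehat\calF,\widehat\calF'$. By that proposition, the smooth $\widehat H$-invariant cubics are, up to linear equivalence, exactly the suspensions $F+x_6^3$ of smooth $H$-invariant cubics $F$, and similarly with primes. Hence the moduli image of $\widehat\calF$ is the image of the moduli image of $\calF$ under the map $[F]\mapsto[F+x_6^3]$. Two cubic fourfolds are isomorphic iff their forms are linearly equivalent up to a scalar. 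Since scalars can be absorbed by rescaling, this map is well defined on isomorphism classes.

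The forward direction is immediate from this description. If $\calF\subseteq\calF'$, then each $X_F$ with $F$ smooth $H$-invariant is isomorphic to some $X_{F'}$ with $F'$ smooth $H'$-invariant. Thus $F\sim_{\mathrm{lin}}cF'$, and therefore $F+x_6^3\sim_{\mathrm{lin}}cF'+x_6^3\sim_{\mathrm{lin}}c(F'+x_6^3)$ after rescaling $x_6$. This gives $\widehat\calF\subseteq\widehat\calF'$.

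For the converse, assume $\widehat\calF\subseteq\widehat\calF'$. Take a smooth $H$-invariant $F$. Then $F+x_6^3$ is linearly equivalent, up to scalar, to $G+x_6^3$ for some smooth $H'$-invariant $G$. Absorbing the scalar as above, we get $F+x_6^3\sim_{\mathrm{lin}}G'+x_6^3$ with $G'$ a scalar multiple of $G$. The injectivity part of Proposition~\ref{prop:threefold-suspension-map}, which rests on the uniqueness of maximal additive splittings in Theorem~\ref{thm:unique-splitting}, then gives $F\sim_{\mathrm{lin}}G'$. So $X_F\cong X_G$ lies in $\calF'$.

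The only delicate step is this converse. Removing a single-variable Fermat summand is canonical only up to the choice of summand. The injectivity argument already handles this, since all Fermat summands are permuted transitively by Proposition~\ref{prop:aut-splitting}. So the main thing to check is that the scalar bookkeeping is compatible with that argument. Optionally, one can restate the result group-theoretically: by Proposition~\ref{prop:fourfold-containment}, the fourfold inclusion is equivalent to $\widehat H'\prec\widehat H$ or equality. One could then pass to centralizers of Fermat elements, but the form-level argument avoids this.
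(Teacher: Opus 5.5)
Your proposal is correct and matches the paper's proof. The forward direction is simply adding $x_6^3$. The converse uses the fact from Proposition~\ref{prop:threefold-suspension-map} that smooth $\widehat H'$-invariant cubics are suspensions of smooth $H'$-invariant ones, together with the injectivity of suspension proved there. Your scalar bookkeeping is harmless but unnecessary, since over $\CC$ one always has $cF\sim_{\mathrm{lin}}F$ by rescaling every variable by a cube root of $c$.
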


\begin{proof}
An inclusion $\calF\subseteq\calF'$ remains an inclusion after
adding the $x_6^3$-term.  Conversely, suppose that
$\widehat{\calF}\subseteq\widehat{\calF}'$ and take any smooth
form $F$ representing a point of $\calF$. Then $F+x_6^3$ is linearly equivalent to $F'+x_6^3$
for some $F'$ in $\calF'$.  Proposition~\ref{prop:threefold-suspension-map}
gives $F\sim_{\mathrm{lin}}F'$, and hence $\calF\subseteq\calF'$.
\end{proof}

The incidence relation is therefore the restriction of the relation in
Proposition~\ref{prop:fourfold-containment} to the $40$ corresponding
cubic fourfold families.  Its strict part consists of $260$ pairs, of
which $83$ have no intermediate classified family.  The same relation
follows from direct tests of conjugate containment for every pair of
corresponding five-dimensional strict lifts satisfying the necessary dimension
and order conditions; see \path{gap_classification/gap_threefold/}\cite{FuWangZhengAutcub4fold2026}.
See also \path{anc/threefold.txt}.
The A-maximal cubic threefold families are
Nos.~$1,2,3,5,24,39,40$ in Table~\ref{tab:threefold}, corresponding to
Nos.~$1,7,8,24,96,154,156$ in Table~\ref{tab:fourfold}, respectively;
they are listed in Table~\ref{tab:threefold-action-maximal}.

The groups in Nos.~$1,2,3,5,24,40$ are the six maximal abstract groups
classified in \cite[Theorem~1.1]{LiYuThreefold}.
The additional A-maximal family, No.~$39$, has group $C_8$.
Although $C_8$ is abstractly a subgroup of $C_{16}$, its strict lift is not
conjugate to a subgroup of the strict lift of family~$40$.

\subsection{Proof of the main theorem}

\begin{proof}[Proof of Theorem~\ref{thm:threefold}]
Proposition~\ref{prop:threefold-extraction} gives precisely the \(40\)
saturated families in Table~\ref{tab:threefold}.  For each
\(H=\Aut(F)\), the exact sequence \eqref{eq:linear-projective} gives
\[
 \Aut(X_F)=H/\langle\omega I_5\rangle.
\]
Every listed family contains a smooth cubic threefold, and the preceding
argument proves that the list of saturated families is complete.
\end{proof}

\section{Classification results}\label{sec:tables}

\subsection{Change of field}
\label{subsec:change-of-ground-field}

We explain Remark~\ref{rmk:characteristic-zero}.
In this subsection, \(d\) and \(n\) denote
the degree and dimension of the hypersurface and \(N=n+2\) the number of
homogeneous coordinates with
 $n\geq2, d\geq3$, and $(n,d)\neq (2,4)$.

\begin{lem}\label{lem:automorphisms-base-change}
Let \(K\subset L\) be algebraically closed fields of characteristic
zero, and let \(F\in K[x_1,\ldots,x_N]_d\) define a smooth
hypersurface \(X_F\subset\PP^{n+1}_K\). Base change induces
isomorphisms
\[
\Aut_K(F)\xrightarrow{\sim}\Aut_L(F),\qquad
\Aut_K(X_F)\xrightarrow{\sim}\Aut_L((X_F)_L).
\]
\end{lem}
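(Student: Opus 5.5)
The plan is to prove the linear statement first and then derive the projective one from the exact sequence \eqref{eq:linear-projective}, which holds over any algebraically closed field of characteristic zero because Matsumura--Monsky's argument is algebraic. Base change gives an injective homomorphism $\Aut_K(F)\to\Aut_L(F)$, since $\GL(N,K)\subset\GL(N,L)$ and the condition $F(xA)=F(x)$ does not depend on the field. The real content is surjectivity.

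For surjectivity, I would consider the affine $K$-scheme $\mathcal A=\{A\in\GL_N: A\cdot F=F\}$. It is defined by polynomial equations with coefficients in $K$, namely the coefficients of $F(xA)-F(x)$ together with the inverse of the determinant. Over $L$, the points of this scheme form the group $\Aut_L(F)$, which is finite by Matsumura--Monsky applied to the smooth hypersurface $(X_F)_L$; smoothness is preserved by base change via the Jacobian criterion. So $\mathcal A_L$ is a finite set of $L$-points, and hence $\mathcal A$ is a zero-dimensional scheme of finite type over $K$, because dimension is invariant under field extension. Since $K$ is algebraically closed, every closed point of $\mathcal A$ is $K$-rational, and the $L$-points of a zero-dimensional scheme over algebraically closed $K$ are exactly the base changes of its $K$-points. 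Therefore $\Aut_K(F)\to\Aut_L(F)$ is bijective. An alternative route is to note that each $A\in\Aut_L(F)$ lies in a finite orbit of $\Aut(L/K)$-style specializations, but I expect the scheme argument to be cleaner and to avoid Galois theory of transcendental extensions.

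For the projective statement, I would use \eqref{eq:linear-projective} over both $K$ and $L$. Every automorphism of $X_F$ over either field is induced by a linear map, and the scalar kernel $\mu_d I_N$ is the same finite group over both fields. The five lemma then gives the isomorphism $\Aut_K(X_F)\cong\Aut_L((X_F)_L)$, compatibly with base change. This is where the hypotheses $n\ge2$, $d\ge3$ and $(n,d)\ne(2,4)$ are needed, since they are exactly the cases in which every automorphism is linear and the group is finite.

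The main obstacle is making sure that Matsumura--Monsky's linearity and finiteness results hold over an arbitrary algebraically closed field of characteristic zero, rather than only over $\CC$. I would either cite their paper, which works over any such field, or reduce to $\CC$ via the Lefschetz principle. In the latter case I would embed a finitely generated field of definition into $\CC$, which is legitimate because the finiteness of $\mathcal A$ is a first-order property of the coefficients.
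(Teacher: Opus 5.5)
Your argument is correct. For $\Aut(F)$ it is essentially the paper's. Both proofs view $\Aut(F)$ as the points of the closed subscheme of $\GL_N$ cut out by $F(xA)=F(x)$, note that this $K$-scheme is finite, and use that a finite $K$-scheme has the same points over $K$ and over $L$ when both fields are algebraically closed. You obtain finiteness from the $L$-points and the paper from the $K$-points, which makes no difference.

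For $\Aut(X_F)$ the routes differ. The paper applies the same finite-scheme principle directly to the automorphism group scheme of $X_F$, using that its formation commutes with base change (the cited Stacks Project tag). You instead derive the projective statement from the linear one by a diagram chase on \eqref{eq:linear-projective} over $K$ and over $L$. The square commutes because the base change of the automorphism induced by $A\in\Aut_K(F)$ is the automorphism induced by $A\in\GL(N,L)$. Surjectivity follows by lifting an $L$-automorphism to $\Aut_L(F)$ and using the linear isomorphism, and injectivity is automatic.

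Your version avoids representability of the automorphism functor of $X_F$ and its compatibility with base change. The price is that you need every automorphism of $(X_F)_L$ to be linear, i.e.\ Matsumura--Monsky over $L$ and not only over $K$. That theorem holds over every algebraically closed field of characteristic zero in the stated range of $(n,d)$, so this costs nothing here. The paper's version has the advantage of treating both groups uniformly by one principle.
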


\begin{proof}
By \cite[Theorem~1 and~2]{Matsumura1963OnTA}, \(\Aut(X_F)\) is projective linear
and finite.
The exact sequence \eqref{eq:linear-projective} shows that
\(\Aut(F)\) is finite as well. The corresponding
automorphism group schemes are finite over \(K\).
The automorphism group scheme of \((X_F)_L\) is the base change
of that of \(X_F\) \cite[Tag~0DTR]{stacks-project}.
The same holds for the group scheme associated with \(\Aut(F)\),
which is defined by \(F(xA)=F(x)\) in \(\GL_N\).

For any finite \(K\)-scheme \(Z\), the map \(Z(K)\to Z(L)\)
is bijective since both $K$ and $L$ are algebraically closed.
Applying this to the two automorphism group schemes
proves the assertions.
\end{proof}

For a finite subgroup \(H\leq\GL(N,\overline{\mathbb Q})\) and
an algebraically closed extension \(k/\overline{\mathbb Q}\), put
\[
 W_{d,k}(H)=\bigl(k[x_1,\ldots,x_N]_d\bigr)^H.
\]
Since invariance is given by linear equations over
\(\overline{\mathbb Q}\), \[W_{d,k}(H)=W_{d,\overline{\mathbb Q}}(H)
 \otimes_{\overline{\mathbb Q}}k.\]
The smooth locus is also defined over \(\overline{\mathbb Q}\) since the discriminant locus is so. Therefore, the base change preserves the smooth locus.

\begin{lem}
\label{lem:finite-group-action-descent}
Let \(\overline{\mathbb Q}\subset k\) be an extension of algebraically
closed fields. Every finite subgroup \(H\leq\GL(N,k)\) is conjugate
in \(\GL(N,k)\) to a subgroup
\(H_1\leq\GL(N,\overline{\mathbb Q})\).

Consequently, the $k$-family \(\calF_{H,k}\) with its specified
\(H\)-action is linearly equivalent to the base change of
a $\overline{\mathbb{Q}}$-family \(\calF_{H_1}\) with its specified \(H_1\)-action, via such a conjugacy.
\end{lem}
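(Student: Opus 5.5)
The first assertion is the classical rigidity of finite group representations under extension of algebraically closed fields of characteristic zero. I would argue via characters. Let \(\iota\colon H\hookrightarrow\GL(N,k)\) be the given representation. Every element of \(H\) has finite order, so all eigenvalues of \(\iota(h)\) are roots of unity and lie in \(\overline{\mathbb Q}\subset k\). Hence the character \(\chi=\tr\circ\iota\) takes values in \(\overline{\mathbb Q}\).

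Since \(\overline{\mathbb Q}\) is algebraically closed of characteristic zero, the group algebra \(\overline{\mathbb Q}[H]\) is split semisimple. Its absolutely irreducible representations \(\sigma_1,\ldots,\sigma_s\) over \(\overline{\mathbb Q}\) therefore remain irreducible over \(k\), and they exhaust the irreducible \(k\)-representations of \(H\). I would check this by counting: \(\sum(\dim\sigma_i)^2=|H|\), and by Wedderburn \(k[H]=\overline{\mathbb Q}[H]\otimes k\). Write \(\chi=\sum m_i\chi_{\sigma_i}\), with multiplicities given by the inner products of characters, and put \(\rho_1=\bigoplus\sigma_i^{\oplus m_i}\colon H\to\GL(N,\overline{\mathbb Q})\).

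Over \(k\), the representations \(\iota\) and \(\rho_1\otimes k\) have the same character. Since \(\operatorname{char}k=0\), characters determine representations up to isomorphism, so there is \(P\in\GL(N,k)\) with \(P^{-1}\iota(h)P=\rho_1(h)\) for all \(h\in H\). Put \(H_1=\rho_1(H)\leq\GL(N,\overline{\mathbb Q})\). This proves the conjugacy statement.

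For the consequence, I would use that \(F\mapsto P^{-1}\cdot F\) (with the left action \((A\cdot F)(x)=F(xA)\)) identifies \(W_{d,k}(H)\) with \(W_{d,k}(H_1)\). This map preserves smoothness and commutes with linear equivalence, and it carries the normalizer of \(H\) to that of \(H_1\). By the remark preceding the lemma, \(W_{d,k}(H_1)=W_{d,\overline{\mathbb Q}}(H_1)\otimes k\), and the smooth locus is compatible with base change because the discriminant is defined over \(\overline{\mathbb Q}\). So the \(k\)-family with its \(H\)-action is, via \(P\), the base change of the \(\overline{\mathbb Q}\)-family \(\calF_{H_1}\) with its \(H_1\)-action.

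The main point requiring care is the splitting of \(\overline{\mathbb Q}[H]\) and the invariance of irreducibility under field extension. Both are standard, since \(\overline{\mathbb Q}\) is algebraically closed; the remaining steps are formal.
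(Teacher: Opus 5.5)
Your proposal is correct and follows the paper's own route. The paper simply asserts that every representation of a finite group over \(k\) descends to \(\overline{\mathbb Q}\), and then transports the invariant spaces and smooth loci by the resulting change of coordinates; your character argument (characters valued in \(\overline{\mathbb Q}\), \(\overline{\mathbb Q}[H]\) split semisimple, characters determining representations in characteristic zero) is a spelled-out proof of that descent step, and the rest of your argument matches the paper's.
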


\begin{proof}
Every representation of a finite group over \(k\) descends to
\(\overline{\mathbb Q}\). Choosing a basis over
\(\overline{\mathbb Q}\) gives \(H_1\), the corresponding change
of coordinates identifies the invariant spaces and their smooth loci.
\end{proof}

By Lemma~\ref{lem:finite-group-action-descent}, after a linear
change of coordinates we may assume that the prescribed finite
group \(H\leq\GL(N,k)\) lies in
\(\GL(N,\overline{\mathbb Q})\) in the following proposition.

\begin{prop}\label{prop:classification-base-change}
Let \(n\geq1\), \(d\geq3\), and \((n,d)\neq(1,3),(2,4)\).
Let \(k/\overline{\mathbb Q}\) be an algebraically closed extension.
For families of smooth degree-\(d\) hypersurfaces in \(\PP^{n+1}\)
with finite linear group actions:
\begin{enumerate}
\item The family \(\mathcal F_H\) with specified \(H\)-action is saturated over \(\overline{\mathbb Q}\) if and only if
its base change \((\mathcal F_H)_k\), with the induced \(H\)-action, is saturated over \(k\).
\item Two saturated families over \(\overline{\mathbb Q}\) are
 equivalent if and only if their base changes to \(k\) are equivalent.
\end{enumerate}
\end{prop}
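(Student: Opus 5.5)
The plan is to reduce both assertions to the base-change invariance of $\Aut$ (Lemma~\ref{lem:automorphisms-base-change}) and to a density argument on the irreducible smooth locus. Throughout, $H\leq\GL(N,\overline{\mathbb Q})$ is finite, $W=W_{d,\overline{\mathbb Q}}(H)$, and $W_k=W\otimes k$. By the remark preceding Lemma~\ref{lem:finite-group-action-descent}, the smooth locus $W_k^{\mathrm{sm}}$ is the base change of $W^{\mathrm{sm}}$, so it is nonempty over one field if and only if it is over the other.

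For (1), I would express the failure of saturation by an algebraic condition defined over $\overline{\mathbb Q}$. Finite subgroups $L\leq\GL(N,k)$ properly containing a conjugate of $H$, with $W_{d}(L)^{\mathrm{sm}}\neq\varnothing$, have bounded order, since $\Aut(F)$ of a smooth form is finite and of bounded order (Jordan-type bounds, or simply constructibility). By Lemma~\ref{lem:finite-group-action-descent}, each such $L$ is conjugate to a $\overline{\mathbb Q}$-group. By representation theory, the conjugacy classes of pairs $H\leq L$ are the same over $\overline{\mathbb Q}$ and over $k$; an inclusion up to conjugacy is detected by characters, as in Lemma~\ref{lem:strict-character-criterion}, which are field-independent. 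The dimension $m_d(L)=\dim W_d(L)-\dim C(L)$ is preserved by base change because both terms are dimensions of spaces cut out by linear equations over $\overline{\mathbb Q}$. Proposition~\ref{prop:dimension-test} then gives saturation over one field if and only if over the other. Its proof uses only irreducibility and dimension counts, and also the citation \cite[Proposition~2.5]{FWZ26}, which I assume holds over any algebraically closed field of characteristic zero. As a more direct alternative, consider the constructible set $\{F\in W^{\mathrm{sm}}:\Aut(F)\neq H\}$. It is defined over $\overline{\mathbb Q}$ (a finite union over $L\supsetneq H$ of images of $\GL_N\times W_d(L)$), and by Lemma~\ref{lem:automorphisms-base-change} its $k$-points are exactly the $k$-forms with larger automorphism group. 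It is dense over $\overline{\mathbb Q}$ if and only if its base change is dense.

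For (2), equivalence means $A^{-1}H_1A=H_2$ for some $A\in\GL(N,\cdot)$. If this holds over $\overline{\mathbb Q}$, it holds over $k$. Conversely, a conjugation over $k$ makes the inclusion representations of $H_1$ and $H_2$ isomorphic via the induced group isomorphism. Their characters agree, so they are already isomorphic over $\overline{\mathbb Q}$, which yields $A\in\GL(N,\overline{\mathbb Q})$. Alternatively, the transporter $\{A:A^{-1}H_1A=H_2\}$ is a $\overline{\mathbb Q}$-variety, and since it has a $k$-point it has a $\overline{\mathbb Q}$-point by the Nullstellensatz.

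I expect the main obstacle to be making the density or constructibility argument in (1) rigorous. This requires bounding the finitely many relevant $L$ up to conjugacy and checking that ``general'' commutes with base change. I would handle it via the explicit $\overline{\mathbb Q}$-defined constructible set, using that a constructible subset of an irreducible $\overline{\mathbb Q}$-variety is dense if and only if its base change is.
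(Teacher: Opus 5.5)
Your proposal is correct, and for (2) it matches the paper. The paper also uses the $\overline{\mathbb Q}$-scheme of conjugating matrices, which has a $\overline{\mathbb Q}$-point if and only if it has a $k$-point; your character argument is an equivalent variant.

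For (1), the route you say you would make rigorous differs from the paper's. The paper works with the \emph{good} locus $U_H\subset W^{\mathrm{sm}}$ of forms whose geometric automorphism group is exactly $H$. It takes openness of $U_H$ from an invariant-theory theorem (Popov--Vinberg) and obtains $U_H\times_{\overline{\mathbb Q}}k=U_{H,k}$ by applying Lemma~\ref{lem:automorphisms-base-change} to residue fields of points. A nonempty open set then has rational points over either field.

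You work instead with the \emph{bad} locus $B$ and need only its constructibility. In the irreducible $W^{\mathrm{sm}}$, saturation means exactly that $B$ is not dense, and density of a constructible set is unchanged by field extension. This avoids the external openness theorem and any evaluation at non-closed points. The price is that you must make $B$ explicit and field-independent:
\begin{enumerate}
\item fix a uniform bound $M$ on $|\Aut(F)|$ for smooth $F$, e.g.\ from quasi-finiteness of the stabilizer scheme, which is defined over $\mathbb Q$;
\item choose over $\overline{\mathbb Q}$ representatives $L$ of the finitely many conjugacy classes of finite subgroups with $|H|<|L|\le M$;
\item set $B=W^{\mathrm{sm}}\cap\bigcup_L\GL_N\cdot W_d(L)$.
\end{enumerate}
The union must run over conjugacy classes, since literal overgroups $L\supsetneq H$ are in general infinitely many.

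The claim that the $k$-points of $B_k$ are exactly the $k$-forms with $\Aut_k(F)\neq H$ rests on two facts. First, Lemma~\ref{lem:finite-group-action-descent}: every finite subgroup of $\GL(N,k)$ of order at most $M$ is conjugate to one of your representatives. Second, images of morphisms commute with field extension. It does not rest on Lemma~\ref{lem:automorphisms-base-change}, which only concerns forms defined over the smaller field.

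You should drop your first route through Proposition~\ref{prop:dimension-test}, or at most keep it as secondary. That proposition and \cite[Proposition~2.5]{FWZ26} are stated over $\CC$. Transporting them to $\overline{\mathbb Q}$ and $k$ is an extra unproved step, and the constructible-set argument makes it unnecessary.
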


\begin{proof}
For~(1), fix \(H\leq\GL(N,\overline{\mathbb Q})\) and put
\(W=W_{d,\overline{\mathbb Q}}(H)\). Then, \(W_k^{\mathrm{sm}}=W^{\mathrm{sm}}\times_{\overline{\mathbb Q}}k\).
Define
\[
 U_H=\{x\in W^{\mathrm{sm}}:
 \Aut_{\overline{\kappa(x)}}(F_x)=H_{\overline{\kappa(x)}}\},
\]
where \(F_x\) is the form over \(\kappa(x)\) corresponding to \(x\).
Define \(U_{H,k}\) similarly. By \cite[Part~II, Theorem~6.3]{MR1309681}, these loci are open, and we
regard them as open subschemes. The full geometric automorphism
condition is unchanged by field extension, so
\begin{equation}\label{eq:full-group-locus-base-change}
 U_H\times_{\overline{\mathbb Q}}k=U_{H,k}.
\end{equation}
Each locus is nonempty if and only if the corresponding family
is saturated. Since both ground fields
are algebraically closed, equation \eqref{eq:full-group-locus-base-change} gives that 
$U_H(\overline{\mathbb Q})\neq\varnothing$ is equivalent to $U_{H,k}(k)\neq\varnothing$.

For~(2), consider the $\overline{\mathbb{Q}}$-scheme $\mathcal{C}$ defined by conjugating matrices between the two families. Then $\mathcal{C}(\overline{\mathbb{Q}})\neq \varnothing$ is equivalent to $\mathcal{C}(k)\neq \varnothing$.
\end{proof}

Apply the above Proposition \ref{prop:classification-base-change}, we can state and prove Remark \ref{rmk:characteristic-zero} explicitly.

\begin{cor}\label{cor:cubic-classification-characteristic-zero}
Over an arbitrary algebraically closed field $k$ of characteristic zero,
there are exactly $156$ $(40)$ equivalence
classes of saturated $k$-families of smooth cubic fourfolds (cubic threefolds) with specified automorphism group actions.
They are the base changes of the families in
Tables~\ref{tab:fourfold} and~\ref{tab:threefold}.
\end{cor}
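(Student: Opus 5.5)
The plan is to reduce the corollary to the two base-change statements already established: Lemma~\ref{lem:finite-group-action-descent} and Proposition~\ref{prop:classification-base-change}, combined with Theorems~\ref{thm:fourfold-classification} and~\ref{thm:threefold} over $\CC$. Set $\mathbf{k}_0=\overline{\mathbb Q}$. Every algebraically closed field of characteristic zero contains a copy of $\mathbf{k}_0$, and $\CC$ is one such extension. It therefore suffices to show two things: the classification over $\CC$ descends to $\mathbf{k}_0$, and the classification over $\mathbf{k}_0$ base changes bijectively to any algebraically closed $k\supseteq\mathbf{k}_0$. The hypotheses $(n,d)\neq(1,3),(2,4)$ hold for cubic threefolds $(3,3)$ and cubic fourfolds $(4,3)$, so Matsumura's finiteness is available.

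First, I compare $\CC$ with $\mathbf{k}_0$. By Lemma~\ref{lem:finite-group-action-descent} applied with $k=\CC$, each strict lift $H_i$ in Tables~\ref{tab:fourfold} and~\ref{tab:threefold} is $\GL(N,\CC)$-conjugate to a subgroup of $\GL(N,\mathbf{k}_0)$. In fact the tabulated generators already have cyclotomic entries. By Proposition~\ref{prop:classification-base-change}(1), the resulting $\mathbf{k}_0$-families are saturated, and by part~(2) they are pairwise inequivalent over $\mathbf{k}_0$. Conversely, let $\mathcal F_{H}$ be any saturated $\mathbf{k}_0$-family. Its base change to $\CC$ is saturated by part~(1), so by Theorem~\ref{thm:fourfold-classification} (respectively Theorem~\ref{thm:threefold}) it is equivalent to some $\mathcal F_{H_i}$. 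Part~(2) then shows the equivalence already holds over $\mathbf{k}_0$. This yields exactly $156$ (respectively $40$) classes over $\mathbf{k}_0$.

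Second, I pass from $\mathbf{k}_0$ to an arbitrary $k$. For any saturated $k$-family with specified $H\le\GL(N,k)$, Lemma~\ref{lem:finite-group-action-descent} conjugates $H$ into $\GL(N,\mathbf{k}_0)$ and identifies the family with the base change of a $\mathbf{k}_0$-family $\mathcal F_{H_1}$. Proposition~\ref{prop:classification-base-change}(1) makes $\mathcal F_{H_1}$ saturated, so it is one of the classified families. Part~(2) shows that distinct classified families remain inequivalent after base change. Hence the $k$-classes are exactly the base changes of the tabulated families, in the same numbers.

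The main obstacle is the precise interpretation of ``saturated'' and ``equivalent'' across fields in Proposition~\ref{prop:classification-base-change}. For saturation, I would check that a general member in the sense of a nonempty open locus $U_H$ corresponds under base change, as in \eqref{eq:full-group-locus-base-change}. For equivalence, I would check that the conjugator scheme $\mathcal C$ is of finite type over $\mathbf{k}_0$, so that the Nullstellensatz gives $\mathcal C(\mathbf{k}_0)\ne\varnothing$ if and only if $\mathcal C(k)\ne\varnothing$. Lemma~\ref{lem:automorphisms-base-change} ensures that the automorphism groups, and hence the table data $\Aut(X_F)$ and $\Auts(X_F)$, are unchanged. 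For the symplectic subgroup I would use the determinant description \eqref{eq:symplectic-diagram} rather than Hodge theory, so that it makes sense over any $k$.
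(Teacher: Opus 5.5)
Your proposal is correct and follows the paper's proof. The paper likewise shows that base change gives bijections from saturated $\overline{\mathbb Q}$-families onto saturated $\CC$- and $k$-families, with well-definedness and injectivity coming from Proposition~\ref{prop:classification-base-change} and surjectivity from Lemma~\ref{lem:finite-group-action-descent}. Your additional checks — Lemma~\ref{lem:automorphisms-base-change} for the table data and the determinant description of $\Auts$ — are harmless, and the remark about cyclotomic entries is not needed.
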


\begin{proof}
Choose an embedding \(\overline{\mathbb Q}\hookrightarrow k\).
Apply Proposition~\ref{prop:classification-base-change} to the
extensions \(\overline{\mathbb Q}\subset\CC\) and
\(\overline{\mathbb Q}\subset k\).

Let $\mathcal{S}_{\overline{\mathbb{Q}}}$, $\mathcal{S}_{k}$ and $\mathcal{S}_{\mathbb{C}}$ be sets of all saturated families of smooth cubic fourfolds (cubic threefolds) over $\overline{\mathbb{Q}}$, $k$ and $\mathbb{C}$, respectively.

We consider the following two maps:

\begin{eqnarray*}
&\mathcal{S}_{\overline{\mathbb{Q}}} \longrightarrow \mathcal{S}_{\mathbb{C}},\ \
\mathcal{F}\mapsto \mathcal{F}_{\mathbb{C}},\\
\text{and}\ \ &\mathcal{S}_{\overline{\mathbb{Q}}} \longrightarrow \mathcal{S}_{k},\ \
\mathcal{F}\mapsto \mathcal{F}_{k}.
\end{eqnarray*}

By Proposition \ref{prop:classification-base-change}, the maps are well-defined and injective. By Lemma \ref{lem:finite-group-action-descent}, they are also surjective.
\end{proof}

\subsection{Classification tables}

In the tables, $\calF$ denotes a family and $m=\dim\calF$.
A GAP ID is a SmallGroups identifier; a dash means that no identifier
is available in the library used here.
The symbol $\dagger$ in the dimension column marks an A-maximal family,
with containment understood for strict lifts up to linear conjugacy.

\subsubsection{Cubic fourfolds}

In the table below, $G_s=\Auts(X)$ and $G=\Aut(X)$.  The notation for the
symplectic groups follows \cite[Theorem~1.2]{laza2022automorphisms} and
\cite[\S2]{KOIKE202512}.
The last two columns record liftability and $F$-liftability. By
Corollary~\ref{cor:small-preimage-liftability}, it suffices to examine the
inverse images of representatives of the $G$-conjugacy classes of
$C_3$, $C_9$, and $C_3^2$ subgroups.  The conclusions are also checked
directly using the distinguished scalar subgroup and the commutator and
cube subgroups of each strict lift.  Of the $156$ projective actions,
$134$ are liftable and $131$ are $F$-liftable; see
\path{gap_classification/gap_liftability/}\cite{FuWangZhengAutcub4fold2026} and \path{anc/self_conjugacy.txt}.

{\scriptsize
\setlength{\tabcolsep}{1.5pt}
\begin{longtable}{|c|c|c|c|c|c|c|c|c|c|c|}
\caption{Automorphism groups of smooth cubic fourfolds.}
\label{tab:fourfold}\\
\hline
No. & $\operatorname{rk}S$                 & $G_s$                            & $|G_s|$              & generic index         & $[G:G_s]$ & \multicolumn{1}{c|}{$G$}                             & GAP ID            & $\dim\mathcal F$ & liftable & $F$-liftable \\ \hline
1 & \multirow[t]{8}{*}{$20$} & $C_3^4:A_6$ & $29160$ & $6$ & $6$ & \multicolumn{1}{c|}{$C_3^5:S_6$} & -- & $0^\dagger$ & No & No \\ \cline{1-1} \cline{3-11}
2 &  & \multirow[t]{2}{*}{$A_7$} & \multirow[t]{2}{*}{$2520$} & $1$ & $1$ & \multicolumn{1}{c|}{$A_7$} & -- & $0^\dagger$ & No & No \\ \cline{1-1} \cline{5-11}
3 &  &  &  & $2$ & $2$ & \multicolumn{1}{c|}{$S_7$} & -- & $0^\dagger$ & Yes & Yes \\ \cline{1-1} \cline{3-11}
4 &  & $3^{1+4}:C_2.C_2^2$ & $1944$ & $4$ & $4$ & \multicolumn{1}{c|}{$3^{1+4}:(C_4^2:C_2)$} & -- & $0^\dagger$ & No & No \\ \cline{1-1} \cline{3-11}
5 &  & \multirow[t]{2}{*}{$M_{10}$} & \multirow[t]{2}{*}{$720$} & $1$ & $1$ & \multicolumn{1}{c|}{$M_{10}$} & $(720,765)$ & $0^\dagger$ & No & No \\ \cline{1-1} \cline{5-11}
6 &  &  &  & $1$ & $1$ & \multicolumn{1}{c|}{$M_{10}$} & $(720,765)$ & $0^\dagger$ & No & No \\ \cline{1-1} \cline{3-11}
7 &  & $L_2(11)$ & $660$ & $3$ & $3$ & \multicolumn{1}{c|}{$L_2(11)\times C_3$} & $(1980,57)$ & $0^\dagger$ & Yes & Yes \\ \cline{1-1} \cline{3-11}
8 &  & $A_{3,5}$ & $360$ & $6$ & $6$ & \multicolumn{1}{c|}{$S_{3,5}\times C_3$} & -- & $0^\dagger$ & Yes & Yes \\ \hline
9 & \multirow[t]{12}{*}{$19$} & $3^{1+4}:C_2.C_2$ & $972$ & $2$ & $2$ & \multicolumn{1}{c|}{$3^{1+4}:D_8$} & $(1944,3536)$ & $1$ & No & No \\ \cline{1-1} \cline{3-11}
10 &  & \multirow[t]{2}{*}{$A_6$} & \multirow[t]{2}{*}{$360$} & $1$ & $1$ & \multicolumn{1}{c|}{$A_6$} & $(360,118)$ & $1$ & No & No \\ \cline{1-1} \cline{5-11}
11 &  &  &  & $2$ & $2$ & \multicolumn{1}{c|}{$S_6$} & $(720,763)$ & $1$ & Yes & Yes \\ \cline{1-1} \cline{3-11}
12 &  & \multirow[t]{3}{*}{$L_2(7)$} & \multirow[t]{3}{*}{$168$} & \multirow[t]{3}{*}{$1$} & $1$ & \multicolumn{1}{c|}{$L_2(7)$} & $(168,42)$ & $1$ & Yes & Yes \\ \cline{1-1} \cline{6-11}
13 &  &  &  &  & $2$ & \multicolumn{1}{c|}{$L_2(7):C_2$} & $(336,208)$ & $0^\dagger$ & Yes & Yes \\ \cline{1-1} \cline{6-11}
14 &  &  &  &  & $2$ & \multicolumn{1}{c|}{$L_2(7):C_2$} & $(336,208)$ & $0^\dagger$ & Yes & Yes \\ \cline{1-1} \cline{3-11}
15 &  & \multirow[t]{2}{*}{$S_5$} & \multirow[t]{2}{*}{$120$} & $1$ & $1$ & \multicolumn{1}{c|}{$S_5$} & $(120,34)$ & $1$ & Yes & Yes \\ \cline{1-1} \cline{5-11}
16 &  &  &  & $2$ & $2$ & \multicolumn{1}{c|}{$S_5\times C_2$} & $(240,189)$ & $1$ & Yes & Yes \\ \cline{1-1} \cline{3-11}
17 &  & \multirow[t]{2}{*}{$M_9$} & \multirow[t]{2}{*}{$72$} & \multirow[t]{2}{*}{$1$} & $1$ & \multicolumn{1}{c|}{$M_9$} & $(72,41)$ & $1$ & No & No \\ \cline{1-1} \cline{6-11}
18 &  &  &  &  & $3$ & \multicolumn{1}{c|}{$M_9:C_3$} & $(216,153)$ & $0^\dagger$ & No & No \\ \cline{1-1} \cline{3-11}
19 &  & $N_{72}$ & $72$ & $2$ & $2$ & \multicolumn{1}{c|}{$N_{72}\times C_2$} & $(144,186)$ & $1$ & Yes & Yes \\ \cline{1-1} \cline{3-11}
20 &  & $T_{48}$ & $48$ & $1$ & $1$ & \multicolumn{1}{c|}{$T_{48}$} & $(48,29)$ & $1^\dagger$ & Yes & Yes \\ \hline
21 & \multirow[t]{24}{*}{$18$} & \multirow[t]{4}{*}{$3^{1+4}:C_2$} & \multirow[t]{4}{*}{$486$} & \multirow[t]{4}{*}{$2$} & $2$ & \multicolumn{1}{c|}{$3^{1+4}:C_2^2$} & $(972,812)$ & $2$ & No & No \\ \cline{1-1} \cline{6-11}
22 &  &  &  &  & $4$ & \multicolumn{1}{c|}{$3^{1+4}:(C_4\times C_2)$} & $(1944,3493)$ & $1$ & No & No \\ \cline{1-1} \cline{6-11}
23 &  &  &  &  & $6$ & \multicolumn{2}{c|}{$((C_3\times(C_3^3:C_3)):C_3):(C_2^2)$} & $1$ & No & No \\ \cline{1-1} \cline{6-11}
24 &  &  &  &  & $12$ & \multicolumn{2}{c|}{$((C_3 \times ((C_3^3) : C_3)) : C_3) : (C_4 \times C_2)$} & $0^\dagger$ & No & No \\ \cline{1-1} \cline{3-11}
25 &  & \multirow[t]{4}{*}{$A_{4,3}$} & \multirow[t]{4}{*}{$72$} & \multirow[t]{2}{*}{$1$} & $1$ & \multicolumn{1}{c|}{$A_{4,3}$} & $(72,43)$ & $2$ & No & No \\ \cline{1-1} \cline{6-11}
26 &  &  &  &  & $2$ & \multicolumn{1}{c|}{$A_{4,3}\times C_2$} & $(144,189)$ & $1$ & No & No \\ \cline{1-1} \cline{5-11}
27 &  &  &  & \multirow[t]{2}{*}{$2$} & $2$ & \multicolumn{1}{c|}{$S_{4,3}$} & $(144,183)$ & $2$ & Yes & Yes \\ \cline{1-1} \cline{6-11}
28 &  &  &  &  & $6$ & \multicolumn{1}{c|}{$S_{4,3}\times C_3$} & $(432,745)$ & $1$ & Yes & Yes \\ \cline{1-1} \cline{3-11}
29 &  & \multirow[t]{4}{*}{$A_5$} & \multirow[t]{4}{*}{$60$} & \multirow[t]{2}{*}{$1$} & $1$ & \multicolumn{1}{c|}{$A_5$} & $(60,5)$ & $2$ & Yes & Yes \\ \cline{1-1} \cline{6-11}
30 &  &  &  &  & $3$ & \multicolumn{1}{c|}{$A_5\times C_3$} & $(180,19)$ & $1$ & Yes & Yes \\ \cline{1-1} \cline{5-11}
31 &  &  &  & \multirow[t]{2}{*}{$2$} & $2$ & \multicolumn{1}{c|}{$S_5$} & $(120,34)$ & $2$ & Yes & Yes \\ \cline{1-1} \cline{6-11}
32 &  &  &  &  & $6$ & \multicolumn{1}{c|}{$S_5\times C_3$} & $(360,119)$ & $1$ & Yes & Yes \\ \cline{1-1} \cline{3-11}
33 &  & \multirow[t]{2}{*}{$C_3^2.C_4$} & \multirow[t]{2}{*}{$36$} & $1$ & $1$ & \multicolumn{1}{c|}{$C_3^{2}.C_4$} & $(36,9)$ & $2$ & No & No \\ \cline{1-1} \cline{5-11}
34 &  &  &  & $2$ & $2$ & \multicolumn{1}{c|}{$N_{72}$} & $(72,40)$ & $2$ & Yes & Yes \\ \cline{1-1} \cline{3-11}
35 &  & \multirow[t]{4}{*}{$S_{3,3}$} & \multirow[t]{4}{*}{$36$} & \multirow[t]{2}{*}{$2$} & $2$ & \multicolumn{1}{c|}{$S_{3,3}\times C_2$} & $(72,46)$ & $2$ & Yes & Yes \\ \cline{1-1} \cline{6-11}
36 &  &  &  &  & $6$ & \multicolumn{1}{c|}{$S_{3,3}\times C_6$} & $(216,170)$ & $1$ & Yes & Yes \\ \cline{1-1} \cline{5-11}
37 &  &  &  & \multirow[t]{2}{*}{$2$} & $2$ & \multicolumn{1}{c|}{$N_{72}$} & $(72,40)$ & $2$ & Yes & Yes \\ \cline{1-1} \cline{6-11}
38 &  &  &  &  & $6$ & \multicolumn{1}{c|}{$N_{72}\times C_3$} & $(216,157)$ & $1$ & Yes & Yes \\ \cline{1-1} \cline{3-11}
39 &  & \multirow[t]{3}{*}{$F_{21}$} & \multirow[t]{3}{*}{$21$} & \multirow[t]{3}{*}{$1$} & $1$ & \multicolumn{1}{c|}{$F_{21}$} & $(21,1)$ & $2$ & Yes & Yes \\ \cline{1-1} \cline{6-11}
40 &  &  &  &  & $2$ & \multicolumn{1}{c|}{$C_7:C_6$} & $(42,1)$ & $1$ & Yes & Yes \\ \cline{1-1} \cline{6-11}
41 &  &  &  &  & $6$ & \multicolumn{1}{c|}{$(C_7:C_6)\times C_3$} & $(126,7)$ & $0^\dagger$ & No & No \\ \cline{1-1} \cline{3-11}
42 &  & $\mathrm{Hol}(5)$ & $20$ & $1$ & $1$ & \multicolumn{1}{c|}{$\mathrm{Hol}(5)$} & $(20,3)$ & $2$ & Yes & Yes \\ \cline{1-1} \cline{3-11}
43 &  & \multirow[t]{2}{*}{$\mathrm{QD}_{16}$} & \multirow[t]{2}{*}{$16$} & \multirow[t]{2}{*}{$1$} & $1$ & \multicolumn{1}{c|}{$\mathrm{QD}_{16}$} & $(16,8)$ & $2$ & Yes & Yes \\ \cline{1-1} \cline{6-11}
44 &  &  &  &  & $2$ & \multicolumn{1}{c|}{$(C_8\times C_2): C_2$} & $(32,42)$ & $1^\dagger$ & Yes & Yes \\ \hline
45 & \multirow[t]{7}{*}{$17$} & \multirow[t]{3}{*}{$S_4$} & \multirow[t]{3}{*}{$24$} & \multirow[t]{2}{*}{$1$} & $1$ & \multicolumn{1}{c|}{$S_4$} & $(24,12)$ & $3$ & Yes & Yes \\ \cline{1-1} \cline{6-11}
46 &  &  &  &  & $2$ & \multicolumn{1}{c|}{$S_4\times C_2$} & $(48,48)$ & $2$ & Yes & Yes \\ \cline{1-1} \cline{5-11}
47 &  &  &  & $2$ & $2$ & \multicolumn{1}{c|}{$S_4\times C_2$} & $(48,48)$ & $3$ & Yes & Yes \\ \cline{1-1} \cline{3-11}
48 &  & \multirow[t]{4}{*}{$Q_8$} & \multirow[t]{4}{*}{$8$} & \multirow[t]{4}{*}{$1$} & $1$ & \multicolumn{1}{c|}{$Q_8$} & $(8,4)$ & $3$ & Yes & Yes \\ \cline{1-1} \cline{6-11}
49 &  &  &  &  & $2$ & \multicolumn{1}{c|}{$(C_4\times C_2):C_2$} & $(16,13)$ & $2$ & Yes & Yes \\ \cline{1-1} \cline{6-11}
50 &  &  &  &  & $3$ & \multicolumn{1}{c|}{$\SL(2,3)$} & $(24,3)$ & $1$ & Yes & Yes \\ \cline{1-1} \cline{6-11}
51 &  &  &  &  & $4$ & \multicolumn{1}{c|}{$C_4^2: C_2$} & $(32,11)$ & $1$ & Yes & Yes \\ \hline
52 & \multirow[t]{21}{*}{$16$} & \multirow[t]{6}{*}{$A_{3,3}$} & \multirow[t]{6}{*}{$18$} & \multirow[t]{3}{*}{$1$} & $1$ & \multicolumn{1}{c|}{$A_{3,3}$} & $(18,4)$ & $4$ & No & No \\ \cline{1-1} \cline{6-11}
53 &  &  &  &  & $2$ & \multicolumn{1}{c|}{$A_{3,3}\times C_2$} & $(36,13)$ & $2$ & No & No \\ \cline{1-1} \cline{6-11}
54 &  &  &  &  & $3$ & \multicolumn{1}{c|}{$C_3^2:C_6$} & $(54,5)$ & $1$ & No & No \\ \cline{1-1} \cline{5-11}
55 &  &  &  & \multirow[t]{3}{*}{$2$} & $2$ & \multicolumn{1}{c|}{$S_{3,3}$} & $(36,10)$ & $4$ & Yes & Yes \\ \cline{1-1} \cline{6-11}
56 &  &  &  &  & $6$ & \multicolumn{1}{c|}{$S_{3,3}\times C_3$} & $(108,38)$ & $2$ & Yes & Yes \\ \cline{1-1} \cline{6-11}
57 &  &  &  &  & $6$ & \multicolumn{1}{c|}{$S_{3,3}\times C_3$} & $(108,38)$ & $2$ & Yes & Yes \\ \cline{1-1} \cline{3-11}
58 &  & \multirow[t]{7}{*}{$D_{12}$} & \multirow[t]{7}{*}{$12$} & \multirow[t]{5}{*}{$1$} & $1$ & \multicolumn{1}{c|}{$D_{12}$} & $(12,4)$ & $4$ & Yes & Yes \\ \cline{1-1} \cline{6-11}
59 &  &  &  &  & $2$ & \multicolumn{1}{c|}{$(C_6\times C_2):C_2$} & $(24,8)$ & $2$ & Yes & Yes \\ \cline{1-1} \cline{6-11}
60 &  &  &  &  & $2$ & \multicolumn{1}{c|}{$D_{12}\times C_2$} & $(24,14)$ & $2$ & Yes & Yes \\ \cline{1-1} \cline{6-11}
61 &  &  &  &  & $3$ & \multicolumn{1}{c|}{$S_3\times C_6$} & $(36,12)$ & $2$ & Yes & Yes \\ \cline{1-1} \cline{6-11}
62 &  &  &  &  & $6$ & \multicolumn{1}{c|}{$((C_6\times C_2):C_2)\times C_3$} & $(72,30)$ & $1$ & Yes & Yes \\ \cline{1-1} \cline{5-11}
63 &  &  &  & \multirow[t]{2}{*}{$2$} & $2$ & \multicolumn{1}{c|}{$D_{12}\times C_2$} & $(24,14)$ & $4$ & Yes & Yes \\ \cline{1-1} \cline{6-11}
64 &  &  &  &  & $6$ & \multicolumn{1}{c|}{$S_3\times C_6\times C_2$} & $(72,48)$ & $2$ & Yes & Yes \\ \cline{1-1} \cline{3-11}
65 &  & \multirow[t]{5}{*}{$A_4$} & \multirow[t]{5}{*}{$12$} & \multirow[t]{3}{*}{$1$} & $1$ & \multicolumn{1}{c|}{$A_4$} & $(12,3)$ & $4$ & Yes & Yes \\ \cline{1-1} \cline{6-11}
66 &  &  &  &  & $2$ & \multicolumn{1}{c|}{$A_4\times C_2$} & $(24,13)$ & $3$ & Yes & Yes \\ \cline{1-1} \cline{6-11}
67 &  &  &  &  & $3$ & \multicolumn{1}{c|}{$A_4\times C_3$} & $(36,11)$ & $2$ & Yes & Yes \\ \cline{1-1} \cline{5-11}
68 &  &  &  & \multirow[t]{2}{*}{$2$} & $2$ & \multicolumn{1}{c|}{$S_4$} & $(24,12)$ & $4$ & Yes & Yes \\ \cline{1-1} \cline{6-11}
69 &  &  &  &  & $6$ & \multicolumn{1}{c|}{$S_4\times C_3$} & $(72,42)$ & $2$ & Yes & Yes \\ \cline{1-1} \cline{3-11}
70 &  & \multirow[t]{3}{*}{$D_{10}$} & \multirow[t]{3}{*}{$10$} & \multirow[t]{3}{*}{$1$} & $1$ & \multicolumn{1}{c|}{$D_{10}$} & $(10,1)$ & $4$ & Yes & Yes \\ \cline{1-1} \cline{6-11}
71 &  &  &  &  & $2$ & \multicolumn{1}{c|}{$D_{20}$} & $(20,4)$ & $2$ & Yes & Yes \\ \cline{1-1} \cline{6-11}
72 &  &  &  &  & $3$ & \multicolumn{1}{c|}{$D_{10}\times C_3$} & $(30,2)$ & $2$ & Yes & Yes \\ \hline
73 & \multirow[t]{4}{*}{$15$} & \multirow[t]{4}{*}{$D_8$} & \multirow[t]{4}{*}{$8$} & \multirow[t]{4}{*}{$1$} & $1$ & \multicolumn{1}{c|}{$D_8$} & $(8,3)$ & $5$ & Yes & Yes \\ \cline{1-1} \cline{6-11}
74 &  &  &  &  & $2$ & \multicolumn{1}{c|}{$D_8\times C_2$} & $(16,11)$ & $4$ & Yes & Yes \\ \cline{1-1} \cline{6-11}
75 &  &  &  &  & $2$ & \multicolumn{1}{c|}{$(C_4\times C_2):C_2$} & $(16,13)$ & $3$ & Yes & Yes \\ \cline{1-1} \cline{6-11}
76 &  &  &  &  & $2$ & \multicolumn{1}{c|}{$D_{16}$} & $(16,7)$ & $2$ & Yes & Yes \\ \hline
77 & \multirow[t]{20}{*}{$14$} & \multirow[t]{6}{*}{$C_4$} & \multirow[t]{6}{*}{$4$} & \multirow[t]{6}{*}{$1$} & $1$ & \multicolumn{1}{c|}{$C_{4}$} & $(4,1)$ & $6$ & Yes & Yes \\ \cline{1-1} \cline{6-11}
78 &  &  &  &  & $2$ & \multicolumn{1}{c|}{$D_8$} & $(8,3)$ & $5$ & Yes & Yes \\ \cline{1-1} \cline{6-11}
79 &  &  &  &  & $2$ & \multicolumn{1}{c|}{$C_{4} \times C_{2}$} & $(8,2)$ & $4$ & Yes & Yes \\ \cline{1-1} \cline{6-11}
80 &  &  &  &  & $2$ & \multicolumn{1}{c|}{$D_8$} & $(8,3)$ & $3$ & Yes & Yes \\ \cline{1-1} \cline{6-11}
81 &  &  &  &  & $4$ & \multicolumn{1}{c|}{$C_{4} \times C_{4}$} & $(16,2)$ & $2$ & Yes & Yes \\ \cline{1-1} \cline{6-11}
82 &  &  &  &  & $4$ & \multicolumn{1}{c|}{$C_8:C_2$} & $(16,6)$ & $2$ & Yes & Yes \\ \cline{1-1} \cline{3-11}
83 &  & \multirow[t]{14}{*}{$S_3$} & \multirow[t]{14}{*}{$6$} & \multirow[t]{5}{*}{$1$} & $1$ & \multicolumn{1}{c|}{$S_3$} & $(6,1)$ & $6$ & Yes & Yes \\ \cline{1-1} \cline{6-11}
84 &  &  &  &  & $2$ & \multicolumn{1}{c|}{$D_{12}$} & $(12,4)$ & $3$ & Yes & Yes \\ \cline{1-1} \cline{6-11}
85 &  &  &  &  & $2$ & \multicolumn{1}{c|}{$D_{12}$} & $(12,4)$ & $3$ & Yes & Yes \\ \cline{1-1} \cline{6-11}
86 &  &  &  &  & $3$ & \multicolumn{1}{c|}{$S_3\times C_3$} & $(18,3)$ & $3$ & Yes & Yes \\ \cline{1-1} \cline{6-11}
87 &  &  &  &  & $3$ & \multicolumn{1}{c|}{$S_3\times C_3$} & $(18,3)$ & $2$ & Yes & Yes \\ \cline{1-1} \cline{5-11}
88 &  &  &  & \multirow[t]{9}{*}{$2$} & $2$ & \multicolumn{1}{c|}{$D_{12}$} & $(12,4)$ & $6$ & Yes & Yes \\ \cline{1-1} \cline{6-11}
89 &  &  &  &  & $4$ & \multicolumn{1}{c|}{$S_3\times C_4$} & $(24,5)$ & $3$ & Yes & Yes \\ \cline{1-1} \cline{6-11}
90 &  &  &  &  & $6$ & \multicolumn{1}{c|}{$S_3\times C_6$} & $(36,12)$ & $3$ & Yes & Yes \\ \cline{1-1} \cline{6-11}
91 &  &  &  &  & $6$ & \multicolumn{1}{c|}{$S_3\times C_6$} & $(36,12)$ & $3$ & Yes & Yes \\ \cline{1-1} \cline{6-11}
92 &  &  &  &  & $6$ & \multicolumn{1}{c|}{$S_3\times C_6$} & $(36,12)$ & $2$ & Yes & Yes \\ \cline{1-1} \cline{6-11}
93 &  &  &  &  & $8$ & \multicolumn{1}{c|}{$S_3\times C_8$} & $(48,4)$ & $1$ & Yes & Yes \\ \cline{1-1} \cline{6-11}
94 &  &  &  &  & $12$ & \multicolumn{1}{c|}{$S_3\times C_{12}$} & $(72,27)$ & $1$ & Yes & Yes \\ \cline{1-1} \cline{6-11}
95 &  &  &  &  & $12$ & \multicolumn{1}{c|}{$S_3\times C_{12}$} & $(72,27)$ & $1$ & Yes & Yes \\ \cline{1-1} \cline{6-11}
96 &  &  &  &  & $24$ & \multicolumn{1}{c|}{$S_3\times C_{24}$} & $(144,69)$ & $0^\dagger$ & Yes & Yes \\ \hline
97 & \multirow[t]{22}{*}{$12$} & \multirow[t]{9}{*}{$C_2^2$} & \multirow[t]{9}{*}{$4$} & \multirow[t]{9}{*}{$1$} & $1$ & \multicolumn{1}{c|}{$C_2^2$} & $(4,2)$ & $8$ & Yes & Yes \\ \cline{1-1} \cline{6-11}
98 &  &  &  &  & $2$ & \multicolumn{1}{c|}{$C_2^3$} & $(8,5)$ & $7$ & Yes & Yes \\ \cline{1-1} \cline{6-11}
99 &  &  &  &  & $2$ & \multicolumn{1}{c|}{$D_8$} & $(8,3)$ & $6$ & Yes & Yes \\ \cline{1-1} \cline{6-11}
100 &  &  &  &  & $2$ & \multicolumn{1}{c|}{$C_2^3$} & $(8,5)$ & $5$ & Yes & Yes \\ \cline{1-1} \cline{6-11}
101 &  &  &  &  & $3$ & \multicolumn{1}{c|}{$C_{6} \times C_{2}$} & $(12,5)$ & $4$ & Yes & Yes \\ \cline{1-1} \cline{6-11}
102 &  &  &  &  & $3$ & \multicolumn{1}{c|}{$A_4$} & $(12,3)$ & $3$ & Yes & Yes \\ \cline{1-1} \cline{6-11}
103 &  &  &  &  & $6$ & \multicolumn{1}{c|}{$D_8\times C_3$} & $(24,10)$ & $3$ & Yes & Yes \\ \cline{1-1} \cline{6-11}
104 &  &  &  &  & $6$ & \multicolumn{1}{c|}{$A_4\times C_2$} & $(24,13)$ & $2$ & Yes & Yes \\ \cline{1-1} \cline{6-11}
105 &  &  &  &  & $6$ & \multicolumn{1}{c|}{$A_4\times C_2$} & $(24,13)$ & $2$ & Yes & No \\ \cline{1-1} \cline{3-11}
106 &  & \multirow[t]{13}{*}{$C_3$} & \multirow[t]{13}{*}{$3$} & \multirow[t]{9}{*}{$1$} & $1$ & \multicolumn{1}{c|}{$C_{3}$} & $(3,1)$ & $8$ & Yes & Yes \\ \cline{1-1} \cline{6-11}
107 &  &  &  &  & $2$ & \multicolumn{1}{c|}{$S_3$} & $(6,1)$ & $4$ & Yes & Yes \\ \cline{1-1} \cline{6-11}
108 &  &  &  &  & $2$ & \multicolumn{1}{c|}{$C_{6}$} & $(6,2)$ & $4$ & Yes & Yes \\ \cline{1-1} \cline{6-11}
109 &  &  &  &  & $2$ & \multicolumn{1}{c|}{$C_{6}$} & $(6,2)$ & $4$ & Yes & Yes \\ \cline{1-1} \cline{6-11}
110 &  &  &  &  & $3$ & \multicolumn{1}{c|}{$C_3^2$} & $(9,2)$ & $4$ & Yes & Yes \\ \cline{1-1} \cline{6-11}
111 &  &  &  &  & $3$ & \multicolumn{1}{c|}{$C_3^2$} & $(9,2)$ & $3$ & Yes & Yes \\ \cline{1-1} \cline{6-11}
112 &  &  &  &  & $3$ & \multicolumn{1}{c|}{$C_3^2$} & $(9,2)$ & $2$ & No & No \\ \cline{1-1} \cline{6-11}
113 &  &  &  &  & $6$ & \multicolumn{1}{c|}{$C_{6} \times C_{3}$} & $(18,5)$ & $2$ & Yes & Yes \\ \cline{1-1} \cline{6-11}
114 &  &  &  &  & $6$ & \multicolumn{1}{c|}{$C_{6} \times C_{3}$} & $(18,5)$ & $1$ & No & No \\ \cline{1-1} \cline{5-11}
115 &  &  &  & \multirow[t]{4}{*}{$2$} & $2$ & \multicolumn{1}{c|}{$S_3$} & $(6,1)$ & $8$ & Yes & Yes \\ \cline{1-1} \cline{6-11}
116 &  &  &  &  & $6$ & \multicolumn{1}{c|}{$S_3\times C_3$} & $(18,3)$ & $4$ & Yes & Yes \\ \cline{1-1} \cline{6-11}
117 &  &  &  &  & $6$ & \multicolumn{1}{c|}{$S_3\times C_3$} & $(18,3)$ & $4$ & Yes & Yes \\ \cline{1-1} \cline{6-11}
118 &  &  &  &  & $6$ & \multicolumn{1}{c|}{$S_3\times C_3$} & $(18,3)$ & $3$ & Yes & Yes \\ \hline
119 & \multirow[t]{13}{*}{$8$} & \multirow[t]{13}{*}{$C_2$} & \multirow[t]{13}{*}{$2$} & \multirow[t]{13}{*}{$1$} & $1$ & \multicolumn{1}{c|}{$C_{2}$} & $(2,1)$ & $12$ & Yes & Yes \\ \cline{1-1} \cline{6-11}
120 &  &  &  &  & $2$ & \multicolumn{1}{c|}{$C_2^2$} & $(4,2)$ & $10$ & Yes & Yes \\ \cline{1-1} \cline{6-11}
121 &  &  &  &  & $2$ & \multicolumn{1}{c|}{$C_2^2$} & $(4,2)$ & $8$ & Yes & Yes \\ \cline{1-1} \cline{6-11}
122 &  &  &  &  & $2$ & \multicolumn{1}{c|}{$C_2^2$} & $(4,2)$ & $6$ & Yes & Yes \\ \cline{1-1} \cline{6-11}
123 &  &  &  &  & $3$ & \multicolumn{1}{c|}{$C_{6}$} & $(6,2)$ & $6$ & Yes & Yes \\ \cline{1-1} \cline{6-11}
124 &  &  &  &  & $4$ & \multicolumn{1}{c|}{$C_{4} \times C_{2}$} & $(8,2)$ & $5$ & Yes & Yes \\ \cline{1-1} \cline{6-11}
125 &  &  &  &  & $6$ & \multicolumn{1}{c|}{$C_{6} \times C_{2}$} & $(12,5)$ & $5$ & Yes & Yes \\ \cline{1-1} \cline{6-11}
126 &  &  &  &  & $3$ & \multicolumn{1}{c|}{$C_{6}$} & $(6,2)$ & $4$ & Yes & Yes \\ \cline{1-1} \cline{6-11}
127 &  &  &  &  & $4$ & \multicolumn{1}{c|}{$C_{4} \times C_{2}$} & $(8,2)$ & $4^\dagger$ & Yes & Yes \\ \cline{1-1} \cline{6-11}
128 &  &  &  &  & $6$ & \multicolumn{1}{c|}{$C_{6} \times C_{2}$} & $(12,5)$ & $3$ & Yes & Yes \\ \cline{1-1} \cline{6-11}
129 &  &  &  &  & $8$ & \multicolumn{1}{c|}{$C_{8} \times C_{2}$} & $(16,5)$ & $2$ & Yes & Yes \\ \cline{1-1} \cline{6-11}
130 &  &  &  &  & $12$ & \multicolumn{1}{c|}{$C_{12} \times C_{2}$} & $(24,9)$ & $2$ & Yes & Yes \\ \cline{1-1} \cline{6-11}
131 &  &  &  &  & $12$ & \multicolumn{1}{c|}{$C_{12} \times C_{2}$} & $(24,9)$ & $1$ & Yes & Yes \\ \hline
132 & \multirow[t]{25}{*}{$0$} & \multirow[t]{25}{*}{$C_1$} & \multirow[t]{25}{*}{$1$} & \multirow[t]{25}{*}{$1$} & $1$ & \multicolumn{1}{c|}{$1$} & $(1,1)$ & $20$ & Yes & Yes \\ \cline{1-1} \cline{6-11}
133 &  &  &  &  & $2$ & \multicolumn{1}{c|}{$C_{2}$} & $(2,1)$ & $14$ & Yes & Yes \\ \cline{1-1} \cline{6-11}
134 &  &  &  &  & $2$ & \multicolumn{1}{c|}{$C_{2}$} & $(2,1)$ & $10$ & Yes & Yes \\ \cline{1-1} \cline{6-11}
135 &  &  &  &  & $3$ & \multicolumn{1}{c|}{$C_{3}$} & $(3,1)$ & $10$ & Yes & Yes \\ \cline{1-1} \cline{6-11}
136 &  &  &  &  & $3$ & \multicolumn{1}{c|}{$C_{3}$} & $(3,1)$ & $7$ & Yes & Yes \\ \cline{1-1} \cline{6-11}
137 &  &  &  &  & $4$ & \multicolumn{1}{c|}{$C_{4}$} & $(4,1)$ & $7$ & Yes & Yes \\ \cline{1-1} \cline{6-11}
138 &  &  &  &  & $4$ & \multicolumn{1}{c|}{$C_{4}$} & $(4,1)$ & $7$ & Yes & Yes \\ \cline{1-1} \cline{6-11}
139 &  &  &  &  & $6$ & \multicolumn{1}{c|}{$C_{6}$} & $(6,2)$ & $7$ & Yes & Yes \\ \cline{1-1} \cline{6-11}
140 &  &  &  &  & $3$ & \multicolumn{1}{c|}{$C_{3}$} & $(3,1)$ & $6$ & Yes & No \\ \cline{1-1} \cline{6-11}
141 &  &  &  &  & $6$ & \multicolumn{1}{c|}{$C_{6}$} & $(6,2)$ & $5$ & Yes & Yes \\ \cline{1-1} \cline{6-11}
142 &  &  &  &  & $6$ & \multicolumn{1}{c|}{$C_{6}$} & $(6,2)$ & $4$ & Yes & Yes \\ \cline{1-1} \cline{6-11}
143 &  &  &  &  & $6$ & \multicolumn{1}{c|}{$C_{6}$} & $(6,2)$ & $3$ & Yes & No \\ \cline{1-1} \cline{6-11}
144 &  &  &  &  & $8$ & \multicolumn{1}{c|}{$C_{8}$} & $(8,1)$ & $3$ & Yes & Yes \\ \cline{1-1} \cline{6-11}
145 &  &  &  &  & $8$ & \multicolumn{1}{c|}{$C_{8}$} & $(8,1)$ & $3$ & Yes & Yes \\ \cline{1-1} \cline{6-11}
146 &  &  &  &  & $8$ & \multicolumn{1}{c|}{$C_{8}$} & $(8,1)$ & $3$ & Yes & Yes \\ \cline{1-1} \cline{6-11}
147 &  &  &  &  & $12$ & \multicolumn{1}{c|}{$C_{12}$} & $(12,2)$ & $3$ & Yes & Yes \\ \cline{1-1} \cline{6-11}
148 &  &  &  &  & $12$ & \multicolumn{1}{c|}{$C_{12}$} & $(12,2)$ & $3$ & Yes & Yes \\ \cline{1-1} \cline{6-11}
149 &  &  &  &  & $12$ & \multicolumn{1}{c|}{$C_{12}$} & $(12,2)$ & $2$ & Yes & Yes \\ \cline{1-1} \cline{6-11}
150 &  &  &  &  & $12$ & \multicolumn{1}{c|}{$C_{12}$} & $(12,2)$ & $2$ & Yes & Yes \\ \cline{1-1} \cline{6-11}
151 &  &  &  &  & $16$ & \multicolumn{1}{c|}{$C_{16}$} & $(16,1)$ & $1$ & Yes & Yes \\ \cline{1-1} \cline{6-11}
152 &  &  &  &  & $16$ & \multicolumn{1}{c|}{$C_{16}$} & $(16,1)$ & $1^\dagger$ & Yes & Yes \\ \cline{1-1} \cline{6-11}
153 &  &  &  &  & $24$ & \multicolumn{1}{c|}{$C_{24}$} & $(24,2)$ & $1$ & Yes & Yes \\ \cline{1-1} \cline{6-11}
154 &  &  &  &  & $24$ & \multicolumn{1}{c|}{$C_{24}$} & $(24,2)$ & $1^\dagger$ & Yes & Yes \\ \cline{1-1} \cline{6-11}
155 &  &  &  &  & $32$ & \multicolumn{1}{c|}{$C_{32}$} & $(32,1)$ & $0^\dagger$ & Yes & Yes \\ \cline{1-1} \cline{6-11}
156 &  &  &  &  & $48$ & \multicolumn{1}{c|}{$C_{48}$} & $(48,2)$ & $0^\dagger$ & Yes & Yes \\ \hline
\end{longtable}
}

The A-maximal families of Proposition~\ref{prop:fourfold-containment}
are listed in Table~\ref{tab:fourfold-action-maximal}.

\begin{table}[htbp]
\centering
\caption{A-maximal cubic fourfold families.}
\label{tab:fourfold-action-maximal}
\begin{tabular}{@{}rcccr@{}}
\toprule
No. & $m$ & $G_s$ & $G$ & $|G|$ \\
\midrule
1   & 0 & $C_3^4:A_6$                  & $C_3^5:S_6$                                      & $174\,960$ \\
2   & 0 & $A_7$                        & $A_7$                                            & $2\,520$ \\
3   & 0 & $A_7$                        & $S_7$                                            & $5\,040$ \\
4   & 0 & $3^{1+4}:C_2.C_2^2$          & $3^{1+4}:(C_4^2:C_2)$                           & $7\,776$ \\
5   & 0 & $M_{10}$                     & $M_{10}$                                         & $720$ \\
6   & 0 & $M_{10}$                     & $M_{10}$                                         & $720$ \\
7   & 0 & $L_2(11)$                    & $L_2(11)\times C_3$                             & $1\,980$ \\
8   & 0 & $A_{3,5}$                    & $S_{3,5}\times C_3$                             & $2\,160$ \\
13  & 0 & $L_2(7)$                     & $L_2(7):C_2$                                     & $336$ \\
14  & 0 & $L_2(7)$                     & $L_2(7):C_2$                                     & $336$ \\
18  & 0 & $M_9$                        & $M_9:C_3$                                        & $216$ \\
20  & 1 & $T_{48}$                     & $T_{48}$                                         & $48$ \\
24  & 0 & $3^{1+4}:C_2$                & $((C_3\times(C_3^3:C_3)):C_3):(C_4\times C_2)$ & $5\,832$ \\
41  & 0 & $F_{21}$                     & $(C_7:C_6)\times C_3$                           & $126$ \\
44  & 1 & $\mathrm{QD}_{16}$           & $(C_8\times C_2):C_2$                           & $32$ \\
96  & 0 & $S_3$                        & $S_3\times C_{24}$                              & $144$ \\
127 & 4 & $C_2$                        & $C_4\times C_2$                                 & $8$ \\
152 & 1 & $1$                          & $C_{16}$                                         & $16$ \\
154 & 1 & $1$                          & $C_{24}$                                         & $24$ \\
155 & 0 & $1$                          & $C_{32}$                                         & $32$ \\
156 & 0 & $1$                          & $C_{48}$                                         & $48$ \\
\bottomrule
\end{tabular}
\end{table}

\FloatBarrier

\subsubsection{Cubic threefolds}

In Table~\ref{tab:threefold}, $r$ is the number of single-variable Fermat
summands of a general defining cubic.
By Corollary~\ref{cor:coprime-liftability}, every cubic threefold action is
$F$-liftable, so abstractly
$\Aut(F)\cong C_3\times\Aut(X)$ in every case.

{\scriptsize
\setlength{\tabcolsep}{2.5pt}
\begin{longtable}{|c|c|c|c|c|c|c|}
\caption{Automorphism groups of smooth cubic threefolds.}
\label{tab:threefold}\\
\hline
No. & $\Aut(X)$ & GAP ID & $|\Aut(X)|$ & $|\Aut(F)|$ & $r$ & $\dim\mathcal F$ \\ \hline
1  & $C_3^4:S_5$                         & --          & $9720$ & $29160$ & $5$ & $0^\dagger$ \\ \hline
2  & $L_2(11)$                            & $(660,13)$  & $660$  & $1980$  & $0$ & $0^\dagger$ \\ \hline
3  & $S_5\times C_3$                     & $(360,119)$ & $360$  & $1080$  & $1$ & $0^\dagger$ \\ \hline
4  & $S_3\times((C_3^2:C_3):C_2)$        & $(324,122)$ & $324$  & $972$   & $2$ & $1$ \\ \hline
5  & $S_3\times((C_3^2:C_3):C_4)$        & $(648,541)$ & $648$  & $1944$  & $2$ & $0^\dagger$ \\ \hline
6  & $S_4\times C_3$                     & $(72,42)$   & $72$   & $216$   & $1$ & $1$ \\ \hline
7  & $A_5$                                & $(60,5)$    & $60$   & $180$   & $0$ & $1$ \\ \hline
8  & $S_5$                                & $(120,34)$  & $120$  & $360$   & $0$ & $1$ \\ \hline
9  & $S_3\times C_6$                     & $(36,12)$   & $36$   & $108$   & $1$ & $1$ \\ \hline
10 & $(S_3\times S_3):C_2$               & $(72,40)$   & $72$   & $216$   & $0$ & $1$ \\ \hline
11 & $S_3\times C_3$                     & $(18,3)$    & $18$   & $54$    & $1$ & $2$ \\ \hline
12 & $S_3\times S_3$                     & $(36,10)$   & $36$   & $108$   & $0$ & $2$ \\ \hline
13 & $D_{12}$                             & $(12,4)$    & $12$   & $36$    & $0$ & $2$ \\ \hline
14 & $(C_6\times C_2):C_2$               & $(24,8)$    & $24$   & $72$    & $0$ & $1$ \\ \hline
15 & $C_6\times C_2$                     & $(12,5)$    & $12$   & $36$    & $1$ & $2$ \\ \hline
16 & $A_4$                                & $(12,3)$    & $12$   & $36$    & $0$ & $2$ \\ \hline
17 & $S_4$                                & $(24,12)$   & $24$   & $72$    & $0$ & $2$ \\ \hline
18 & $D_{10}$                             & $(10,1)$    & $10$   & $30$    & $0$ & $2$ \\ \hline
19 & $S_3$                                & $(6,1)$     & $6$    & $18$    & $0$ & $3$ \\ \hline
20 & $C_6$                                & $(6,2)$     & $6$    & $18$    & $1$ & $3$ \\ \hline
21 & $D_{12}$                             & $(12,4)$    & $12$   & $36$    & $0$ & $3$ \\ \hline
22 & $C_{12}$                             & $(12,2)$    & $12$   & $36$    & $1$ & $1$ \\ \hline
23 & $S_3\times C_4$                     & $(24,5)$    & $24$   & $72$    & $0$ & $1$ \\ \hline
24 & $C_{24}$                             & $(24,2)$    & $24$   & $72$    & $1$ & $0^\dagger$ \\ \hline
25 & $C_2^2$                              & $(4,2)$     & $4$    & $12$    & $0$ & $4$ \\ \hline
26 & $D_8$                                & $(8,3)$     & $8$    & $24$    & $0$ & $3$ \\ \hline
27 & $C_3$                                & $(3,1)$     & $3$    & $9$     & $0$ & $4$ \\ \hline
28 & $C_6$                                & $(6,2)$     & $6$    & $18$    & $0$ & $2$ \\ \hline
29 & $C_3$                                & $(3,1)$     & $3$    & $9$     & $1$ & $4$ \\ \hline
30 & $S_3$                                & $(6,1)$     & $6$    & $18$    & $0$ & $4$ \\ \hline
31 & $C_2$                                & $(2,1)$     & $2$    & $6$     & $0$ & $6$ \\ \hline
32 & $C_2^2$                              & $(4,2)$     & $4$    & $12$    & $0$ & $5$ \\ \hline
33 & $C_4\times C_2$                     & $(8,2)$     & $8$    & $24$    & $0$ & $2$ \\ \hline
34 & $1$                                  & $(1,1)$     & $1$    & $3$     & $0$ & $10$ \\ \hline
35 & $C_2$                                & $(2,1)$     & $2$    & $6$     & $0$ & $7$ \\ \hline
36 & $C_4$                                & $(4,1)$     & $4$    & $12$    & $0$ & $3$ \\ \hline
37 & $C_4$                                & $(4,1)$     & $4$    & $12$    & $0$ & $3$ \\ \hline
38 & $C_8$                                & $(8,1)$     & $8$    & $24$    & $0$ & $1$ \\ \hline
39 & $C_8$                                & $(8,1)$     & $8$    & $24$    & $0$ & $1^\dagger$ \\ \hline
40 & $C_{16}$                             & $(16,1)$    & $16$   & $48$    & $0$ & $0^\dagger$ \\ \hline
\end{longtable}
}

The A-maximal families determined in
\S~\ref{subsec:threefold-incidence} are listed in
Table~\ref{tab:threefold-action-maximal}.

\begin{table}[htbp]
\centering
\caption{A-maximal cubic threefold families.}
\label{tab:threefold-action-maximal}
\begin{tabular}{@{}rccr@{}}
\toprule
No. & $m$ & $G$ & $|G|$ \\
\midrule
1 & 0 & $C_3^4:S_5$                           & $9\,720$ \\
2 & 0 & $L_2(11)$                             & $660$ \\
3 & 0 & $S_5\times C_3$                      & $360$ \\
5 & 0 & $S_3\times((C_3^2:C_3):C_4)$         & $648$ \\
24 & 0 & $C_{24}$                              & $24$ \\
39 & 1 & $C_8$                                 & $8$ \\
40 & 0 & $C_{16}$                              & $16$ \\
\bottomrule
\end{tabular}
\end{table}

\FloatBarrier

\subsection{Explicit representatives}

\subsubsection{Cubic fourfolds}
\label{subsubsection: description for fourfolds}

The numbers below refer to the ``No.'' column of
Table~\ref{tab:fourfold}; the families are presented in that order.
In each entry, $H\leq\GL(6,\CC)$ denotes the strict lift of $G$
with respect to a general defining cubic, so
$G\cong H/\langle\omega I_6\rangle$.

Here $G_s$ is the symplectic group, whereas $G$ is the automorphism
group of a general member.  A symplectic family may therefore have
$G\ne G_s$.  Entries whose non-symplectic index equals the generic index
describe the connected symplectic families.  Their proper subfamilies have the same specified
symplectic action and a larger non-symplectic index.

We use Koike's coordinates and the notation fixed above \cite{KOIKE202512,KOIKE2026}.
For each connected symplectic family, $K^+\leq\GL(6,\CC)$ denotes its
strict symplectic lift containing $\langle\omega I_6\rangle$; for the
corresponding group $H$, we have $K^+=H\cap\SL(6,\CC)$. Whenever a basis
$P_1,\ldots,P_r$ of $W_{K^+}$ is displayed, the ensuing proper subfamilies
of that connected family use the same $P_i$. A new displayed basis redefines
the $P_i$, even when the abstract symplectic group is unchanged.
Additional generators are denoted by $T_i$. We write $I_r$,
$J_{r,s}$, $\mathbf 1_r$, and $E_{ij}$ for the identity, all-ones,
all-ones column, and elementary matrices, respectively. A basis may be
specified by its index set.

\par\Needspace{3\baselineskip}\medskip\noindent\textbf{\boldmath 1.\enspace
$(G_s,G)=(C_3^4:A_6,C_3^5:S_6)$.}\label{fam:fourfold-no-1}
This is the Fermat cubic fourfold; see
\cite[Theorem~1.8(1)]{laza2022automorphisms},
\cite[\S6.1, (1)]{yang2024automorphism}, and
\cite[\S8(i), (8.1)]{KOIKE202512}.

\par\Needspace{3\baselineskip}\medskip\noindent\textbf{\boldmath 2.\enspace $(G_s,G)=(A_7,A_7)$.}\label{fam:fourfold-no-2}
An explicit equation and its automorphism
group were given in \cite[Theorem~6.14]{yang2024automorphism}; see also
\cite[\S8(ii), (2.3)]{KOIKE202512} and
\cite[Theorem~1.2(6)]{he2025cubicfourfoldsorder7automorphism}.

\par\Needspace{3\baselineskip}\medskip\noindent\textbf{\boldmath 3.\enspace $(G_s,G)=(A_7,S_7)$.}\label{fam:fourfold-no-3}
This is the Clebsch--Segre cubic fourfold; see \cite[\S6.1, (11)]{yang2024automorphism},
\cite[\S8(ii), (2.1)]{KOIKE202512}, and
\cite[Theorem~1.2(7)]{he2025cubicfourfoldsorder7automorphism}.
Let $K^+$ be the corresponding strict symplectic lift.  Then
$H=\langle K^+,T\rangle$, where $T=(1\,2)$ is a non-symplectic
involution, and $W_H=W_{K^+}$.

\par\Needspace{3\baselineskip}\medskip\noindent\textbf{\boldmath 4.\enspace
$(G_s,G)=(3^{1+4}:C_2.C_2^2,3^{1+4}:(C_4^2:C_2))$.}\label{fam:fourfold-no-4}
See \cite[Theorem~1.8(3)]{laza2022automorphisms}, \cite[\S6.1, (7)]{yang2024automorphism} and
\cite[\S8(iii), (8.2)--(8.3)]{KOIKE202512}.

\par\Needspace{3\baselineskip}\medskip\noindent\mbox{\textbf{\boldmath 5.\enspace $(G_s,G)=(M_{10},M_{10})$, family~1.}}\label{fam:fourfold-no-5}
See
\cite[Theorem~1.8(4)]{laza2022automorphisms},
\cite[\S6.1, (10)]{yang2024automorphism}, and
\cite[\S8(vi), (8.6)]{KOIKE202512},
where it is defined by $F_+$.

\par\Needspace{3\baselineskip}\medskip\noindent\mbox{\textbf{\boldmath 6.\enspace $(G_s,G)=(M_{10},M_{10})$, family~2.}}\label{fam:fourfold-no-6}
See
\cite[Theorem~1.8(4)]{laza2022automorphisms},
\cite[Theorem~6.15]{yang2024automorphism}, and
\cite[\S8(vi), (8.6)]{KOIKE202512},
where it is defined by $F_-$.

\par\Needspace{3\baselineskip}\medskip\noindent\textbf{\boldmath 7.\enspace
$(G_s,G)=(L_2(11),L_2(11)\times C_3)$.}\label{fam:fourfold-no-7}
This is the cubic suspension of the Klein cubic threefold; see \cite[Theorem~1.8(5)]{laza2022automorphisms}, \cite[\S6.1, (6)]{yang2024automorphism} and
\cite[\S8(iv), (8.4)]{KOIKE202512}.

\par\Needspace{3\baselineskip}\medskip\noindent\textbf{\boldmath 8.\enspace
$(G_s,G)=(A_{3,5},S_{3,5}\times C_3)$.}\label{fam:fourfold-no-8}
See \cite[Theorem~1.8(6)]{laza2022automorphisms}, \cite[\S6.1, (4)]{yang2024automorphism} and
\cite[\S8(v), (8.5)]{KOIKE202512}.

\par\Needspace{3\baselineskip}\medskip\noindent\textbf{\boldmath 9.\enspace
$(G_s,G)=(3^{1+4}:C_2.C_2,3^{1+4}:D_8)$.}\label{fam:fourfold-no-9}
See \cite[\S7.1, (7.1)]{KOIKE202512} and
\cite[Proposition~6.1]{FWZ26}.
Let $K^+$ be the corresponding strict symplectic lift.  Then
$H=\langle K^+,T\rangle$, where $T=(1\,2)$ is a non-symplectic
involution, and $W_H=W_{K^+}$.

\par\Needspace{3\baselineskip}\medskip\noindent\textbf{\boldmath 10.\enspace $(G_s,G)=(A_6,A_6)$.}\label{fam:fourfold-no-10}
See \cite[\S2.5, (2.9)]{KOIKE202512} and
\cite[\S6.1.2]{FWZ26}.

\par\Needspace{3\baselineskip}\medskip\noindent\textbf{\boldmath 11.\enspace $(G_s,G)=(A_6,S_6)$.}\label{fam:fourfold-no-11}
See \cite[\S2.5, (2.6)]{KOIKE202512} and
\cite[\S6.1.1]{FWZ26}.
Let $K^+$ be the corresponding strict symplectic lift.  Then
$H=\langle K^+,T\rangle$, where $T=(1\,2)$ is a non-symplectic
involution, and $W_H=W_{K^+}$.

\par\Needspace{3\baselineskip}\medskip\noindent\textbf{\boldmath 12.\enspace $(G_s,G)=(L_2(7),L_2(7))$.}\label{fam:fourfold-no-12}
See \cite[\S7.2, (7.2)]{KOIKE202512} and
\cite[Theorem~1.2(4), \S4.2]{he2025cubicfourfoldsorder7automorphism}.

\par\Needspace{3\baselineskip}\medskip\noindent\textbf{\boldmath 13.\enspace
$(G_s,G)=(L_2(7),L_2(7):C_2)$, family~1.}\label{fam:fourfold-no-13}
$G\text{-ID}=[336,208]$, $H\text{-ID}=[1008,882]$.
See
\cite[\S6.1, (13)]{yang2024automorphism} and
\cite[Theorem~1.2(5), \S4.2]{he2025cubicfourfoldsorder7automorphism},
where it is the first of the two cubics.

\par\Needspace{3\baselineskip}\medskip\noindent\textbf{\boldmath 14.\enspace
$(G_s,G)=(L_2(7),L_2(7):C_2)$, family~2.}\label{fam:fourfold-no-14}
$G\text{-ID}=[336,208]$, $H\text{-ID}=[1008,882]$.
It is the second of the two cubics in
\cite[Theorem~1.2(5), \S4.2]{he2025cubicfourfoldsorder7automorphism}.

\par\Needspace{3\baselineskip}\medskip\noindent\textbf{\boldmath 15.\enspace $(G_s,G)=(S_5,S_5)$.}\label{fam:fourfold-no-15}
See \cite[\S7.3, (7.6)]{KOIKE202512} and
\cite[\S6.2.2]{FWZ26}.

\par\Needspace{3\baselineskip}\medskip\noindent\textbf{\boldmath 16.\enspace
$(G_s,G)=(S_5,S_5\times C_2)$.}\label{fam:fourfold-no-16}
See \cite[\S7.3, (7.3)]{KOIKE202512} and
\cite[\S6.2.1]{FWZ26}.
Let $K^+$ be the corresponding strict symplectic lift.  Then
$H=\langle K^+,T\rangle$, where $T=\diag(1,1,1,1,1,-1)$ is a non-symplectic
involution, and $W_H=W_{K^+}$.

\par\Needspace{3\baselineskip}\medskip\noindent\textbf{\boldmath 17.\enspace $(G_s,G)=(M_9,M_9)$.}\label{fam:fourfold-no-17}
See \cite[\S7.4, (7.9)]{KOIKE202512} and
\cite[\S6.3]{FWZ26}.

\par\Needspace{3\baselineskip}\medskip\noindent\textbf{\boldmath 18.\enspace $(G_s,G)=(M_9,M_9:C_3)$.}\label{fam:fourfold-no-18}
$G\text{-ID}=[216,153]$, $H\text{-ID}=[648,533]$.
See \cite[\S6.1, (15)]{yang2024automorphism},
\cite[\S7.4, (7.9)]{KOIKE202512}, and
\cite[\S6.3]{FWZ26}.

\par\Needspace{3\baselineskip}\medskip\noindent\textbf{\boldmath 19.\enspace
$(G_s,G)=(N_{72},N_{72}\times C_2)$.}\label{fam:fourfold-no-19}
See \cite[\S7.5, (7.10)]{KOIKE202512} and
\cite[\S6.4.1]{FWZ26}.
Let $K^+$ be the corresponding strict symplectic lift.  Then
$H=\langle K^+,T\rangle$, where $T=(1\,2)$ is a non-symplectic
involution, and $W_H=W_{K^+}$.

\par\Needspace{3\baselineskip}\medskip\noindent\textbf{\boldmath 20.\enspace $(G_s,G)=(T_{48},T_{48})$.}\label{fam:fourfold-no-20}
A smooth member with automorphism group
$T_{48}\cong\GL(2,3)$ is given in
\cite[\S6.1, (14)]{yang2024automorphism}.
The family is given in \cite[\S7.6, (7.12)]{KOIKE202512} and
\cite[Theorem~1.2]{FWZ26}.

\par\Needspace{3\baselineskip}\medskip\noindent\textbf{\boldmath 21.\enspace $(G_s,G)=(3^{1+4}:C_2,3^{1+4}:C_2^2)$.}\label{fam:fourfold-no-21}

This is the symplectic family with generic non-symplectic index $2$,
described in \cite[(6.1)]{KOIKE202512}.  Its proper subfamilies are families~22--24.

Let $K^+\subset\GL(6,\CC)$ denote the strict symplectic lift associated with Koike's component (6.1) in \cite{KOIKE202512}.  We use the following basis $P_1,\ldots,P_{4}$ of $W_{K^+}$:
\begin{equation*}\begin{aligned}
(P_1,\ldots,P_{4})&=\bigl({}x_{1}x_{2}x_{3},\ x_{1}^{3}+x_{2}^{3}+x_{3}^{3},\ x_{4}x_{5}x_{6},\ x_{4}^{3}+x_{5}^{3}+x_{6}^{3}\bigr)
\end{aligned}\end{equation*}
Here $H=\langle K^+,T\rangle$, where $T=(1\,2)$ is a
non-symplectic involution.  It fixes $W_{K^+}$ pointwise, so $W_H=W_{K^+}$.
\par\Needspace{3\baselineskip}\medskip\noindent\textbf{\boldmath 22.\enspace $(G_s,G)=(3^{1+4}:C_2,\allowbreak 3^{1+4}:(C_4\times C_2))$.}\label{fam:fourfold-no-22}
$G\text{-ID}=[1944,3493]$, $|H|=5832$.
The non-symplectic index is $4$, and the family dimension is $1$.  The projective action is non-liftable.
\par\Needspace{6\baselineskip}\medskip\noindent Here $H=\langle K^+,T\rangle$, where\par\nopagebreak
\SecEightDenseMatrix{T}{\diag\!\left(
\frac{-1}{\sqrt{3}}\smat{\omega^2&1&\omega\\
\omega^2&\omega^2&\omega^2\\
\omega^2&\omega&1},I_3\right)}

\par\smallskip\noindent A basis of $W_H$ is
$\{-3(\sqrt{3}+1)P_1+P_2,P_3,P_4\}$.

\par\Needspace{3\baselineskip}\medskip\noindent\textbf{\boldmath 23.\enspace $(G_s,G)=(3^{1+4}:C_2,\allowbreak ((C_3\times(C_3^3:C_3)):C_3):(C_2^2))$.}\label{fam:fourfold-no-23}
$|G|=2916$, $|H|=8748$.
The non-symplectic index is $6$, and the family dimension is $1$.  The projective action is non-liftable.
\par\Needspace{4\baselineskip}\medskip\noindent Here $H=\langle K^+,T\rangle$, where\par\nopagebreak
\begin{SecEightMatrixBlock}
\SecEightMatrix{T}{\diag\!\left(1,\smat{0&1\\1&0},\omega^2,1,1\right)}
\end{SecEightMatrixBlock}

\par\smallskip\noindent A basis of $W_H$ is $\{P_{1}, P_{2}, P_{4}\}$.

\par\Needspace{3\baselineskip}\medskip\noindent\textbf{\boldmath 24.\enspace $(G_s,G)=(3^{1+4}:C_2,\allowbreak ((C_3\times(C_3^3:C_3)):C_3):(C_4\times C_2))$.}\label{fam:fourfold-no-24}
$|G|=5832$, $|H|=17496$.
An equation and its automorphism group are given in
\cite[\S6.1, (2)]{yang2024automorphism}.
The non-symplectic index is $12$, and the family dimension is $0$.  The projective action is non-liftable.
\par\Needspace{3\baselineskip}\medskip\noindent\textbf{\boldmath 25.\enspace $(G_s,G)=(A_{4,3},A_{4,3})$.}\label{fam:fourfold-no-25}

This is the symplectic family with generic non-symplectic index $1$,
described in \cite[(6.7)]{KOIKE202512}.
For the generators in \cite[(2.2), (6.6)]{KOIKE202512}, we take
$u_3=b^2aba^{-1}b^{-1}ab^{-2}$.
Its proper subfamily is family~26.

Let $K^+\subset\GL(6,\CC)$ denote the strict symplectic lift associated with Koike's component (6.7) in \cite{KOIKE202512}.  We use the following basis $P_1,\ldots,P_{4}$ of $W_{K^+}$:
\begin{align*}
\PBasisLabel{1}&={}8\,x_{1}x_{3}x_{5}+4\,x_{1}x_{3}x_{6}+4\,x_{1}x_{4}x_{5}+2\,x_{1}x_{4}x_{6}+4\,x_{2}x_{3}x_{5}+2\,x_{2}x_{3}x_{6}+2\,x_{2}x_{4}x_{5}+x_{2}x_{4}x_{6},\\[3pt]
\PBasisLabel{2}&={}2x_{1}^{3}+3x_{1}^{2}x_{2}+48x_{1}x_{3}x_{5}+24x_{1}x_{3}x_{6}+24x_{1}x_{4}x_{5}+9x_{1}x_{4}x_{6}+24x_{2}x_{3}x_{5}\\*[-2pt]
&\quad +9x_{2}x_{3}x_{6}+9x_{2}x_{4}x_{5}+2x_{3}^{3}+3x_{3}^{2}x_{4}+2x_{5}^{3}+3x_{5}^{2}x_{6},\\[3pt]
\PBasisLabel{3}&={}x_{1}x_{2}^{2}-36\,x_{1}x_{3}x_{5}-18\,x_{1}x_{3}x_{6}-18\,x_{1}x_{4}x_{5}-7\,x_{1}x_{4}x_{6}\\*[-2pt]
&\quad -18\,x_{2}x_{3}x_{5}-7\,x_{2}x_{3}x_{6}-7\,x_{2}x_{4}x_{5}+x_{3}x_{4}^{2}+x_{5}x_{6}^{2},\\[3pt]
\PBasisLabel{4}&={}24\,x_{1}x_{3}x_{5}+12\,x_{1}x_{3}x_{6}+12\,x_{1}x_{4}x_{5}+6\,x_{1}x_{4}x_{6}+x_{2}^{3}\\*[-2pt]
&\quad +12\,x_{2}x_{3}x_{5}+6\,x_{2}x_{3}x_{6}+6\,x_{2}x_{4}x_{5}+x_{4}^{3}+x_{6}^{3}.
\end{align*}
\par\Needspace{3\baselineskip}\medskip\noindent\textbf{\boldmath 26.\enspace $(G_s,G)=(A_{4,3},\allowbreak A_{4,3}\times C_2)$.}\label{fam:fourfold-no-26}
$G\text{-ID}=[144,189]$, $H\text{-ID}=[432,538]$.
The non-symplectic index is $2$, and the family dimension is $1$.  The projective action is non-liftable.
\par\Needspace{6\baselineskip}\medskip\noindent Here $H=\langle K^+,T\rangle$, where\par\nopagebreak
\SecEightDenseMatrix{T}{\left(\begin{smallmatrix}0&0&1&0&0&0\\\frac{1}{3}&-\frac{2}{3}&\frac{1}{3}&\frac{1}{3}&\frac{1}{3}&-\frac{2}{3}\\1&0&0&0&0&0\\\frac{1}{3}&\frac{1}{3}&\frac{1}{3}&-\frac{2}{3}&\frac{1}{3}&-\frac{2}{3}\\0&0&0&0&1&0\\\frac{1}{3}&-\frac{2}{3}&\frac{1}{3}&-\frac{2}{3}&\frac{1}{3}&\frac{1}{3}\end{smallmatrix}\right)}

\par\smallskip\noindent A basis of $W_H$ is $\{P_{1}, 2\,P_{2}-P_{4}, 2\,P_{3}+P_{4}\}$.

\par\Needspace{3\baselineskip}\medskip\noindent\textbf{\boldmath 27.\enspace $(G_s,G)=(A_{4,3},S_{4,3})$.}\label{fam:fourfold-no-27}

This is the symplectic family with generic non-symplectic index $2$,
described in \cite[(6.5)]{KOIKE202512}.  Its proper subfamily is family~28.

Use Koike's coordinates
$\boldsymbol{x}=(x_1,x_2,x_3,x_4)$ and
$\boldsymbol{y}=(y_1,y_2,y_3)$ on the hyperplane
$s_1(\boldsymbol{x})+s_1(\boldsymbol{y})=0$, where
$s_j(\boldsymbol{z})=\sum_i z_i^j$.  Then
\begin{equation*}
W_{K^+}=\left\langle
s_1(\boldsymbol{x})^3,
s_1(\boldsymbol{x})s_2(\boldsymbol{x}),
s_1(\boldsymbol{x})s_2(\boldsymbol{y}),
s_3(\boldsymbol{x}),s_3(\boldsymbol{y})
\right\rangle
\end{equation*}
as in \cite[\S6.2, (6.5)]{KOIKE202512}.  We identify this quotient
with $\CC[x_1,\ldots,x_6]$ by setting
$y_1=x_5$, $y_2=x_6$, and
$y_3=-x_1-x_2-x_3-x_4-x_5-x_6$.
Let $K^+$ be the corresponding strict symplectic lift.  Then
$H=\langle K^+,T\rangle$, where $T=(1\,2)$ is a non-symplectic
involution, and $W_H=W_{K^+}$.
\par\Needspace{3\baselineskip}\medskip\noindent\textbf{\boldmath 28.\enspace $(G_s,G)=(A_{4,3},\allowbreak S_{4,3}\times C_3)$.}\label{fam:fourfold-no-28}
$G\text{-ID}=[432,745]$, $H\text{-ID}=[1296,3545]$.
The non-symplectic index is $6$, and the family dimension is $1$.  The projective action is liftable.
\par\Needspace{6\baselineskip}\medskip\noindent Here $H=\langle K^+,T\rangle$, where\par\nopagebreak
\SecEightDenseMatrix{T}{\left(\begin{smallmatrix}
1&0&0&0&\frac{\omega-1}{3}&\frac{\omega^2-\omega}{3}\\
0&1&0&0&\frac{\omega-1}{3}&\frac{\omega^2-\omega}{3}\\
0&0&1&0&\frac{\omega-1}{3}&\frac{\omega^2-\omega}{3}\\
0&0&0&1&\frac{\omega-1}{3}&\frac{\omega^2-\omega}{3}\\
0&0&0&0&\omega&-\omega\\
0&0&0&0&0&-\omega
\end{smallmatrix}\right)}

\par\medskip\noindent In this notation,\par\nopagebreak
\[
W_H=\left\langle
s_1(\boldsymbol{x})^3,
s_1(\boldsymbol{x})s_2(\boldsymbol{x}),
s_3(\boldsymbol{x}),
s_1(\boldsymbol{x})s_2(\boldsymbol{y})+s_3(\boldsymbol{y})
\right\rangle.
\]

\par\Needspace{3\baselineskip}\medskip\noindent\textbf{\boldmath 29.\enspace $(G_s,G)=(A_5,A_5)$.}\label{fam:fourfold-no-29}

This is the symplectic family with generic non-symplectic index $1$,
described in \cite[(6.10)]{KOIKE202512}.  Its proper subfamily is family~30.

Let $K^+\subset\GL(6,\CC)$ denote the strict symplectic lift associated with Koike's component (6.10) in \cite{KOIKE202512}.  We use the following basis $P_1,\ldots,P_{4}$ of $W_{K^+}$:
\begin{align*}
P_{1}&={}x_{1}x_{2}x_{3}+x_{1}x_{2}x_{5}+x_{1}x_{3}x_{6}+x_{1}x_{4}x_{5}+x_{1}x_{4}x_{6}+x_{2}x_{3}x_{4}\\*[-2pt]
&\quad +x_{2}x_{4}x_{6}+x_{2}x_{5}x_{6}+x_{3}x_{4}x_{5}+x_{3}x_{5}x_{6},
\\[2pt]
P_{2}&={}x_{1}x_{2}x_{4}+x_{1}x_{2}x_{6}+x_{1}x_{3}x_{4}+x_{1}x_{3}x_{5}+x_{1}x_{5}x_{6}+x_{2}x_{3}x_{5}\\*[-2pt]
&\quad +x_{2}x_{3}x_{6}+x_{2}x_{4}x_{5}+x_{3}x_{4}x_{6}+x_{4}x_{5}x_{6},
\\[2pt]
P_{3}&={}x_{1}^{2}x_{2}+x_{1}^{2}x_{3}+x_{1}^{2}x_{4}+x_{1}^{2}x_{5}+x_{1}^{2}x_{6}+x_{1}x_{2}^{2}\\*[-2pt]
&\quad +x_{1}x_{3}^{2}+x_{1}x_{4}^{2}+x_{1}x_{5}^{2}+x_{1}x_{6}^{2}+x_{2}^{2}x_{3}+x_{2}^{2}x_{4}\\*[-2pt]
&\quad +x_{2}^{2}x_{5}+x_{2}^{2}x_{6}+x_{2}x_{3}^{2}+x_{2}x_{4}^{2}+x_{2}x_{5}^{2}+x_{2}x_{6}^{2}\\*[-2pt]
&\quad +x_{3}^{2}x_{4}+x_{3}^{2}x_{5}+x_{3}^{2}x_{6}+x_{3}x_{4}^{2}+x_{3}x_{5}^{2}+x_{3}x_{6}^{2}\\*[-2pt]
&\quad +x_{4}^{2}x_{5}+x_{4}^{2}x_{6}+x_{4}x_{5}^{2}+x_{4}x_{6}^{2}+x_{5}^{2}x_{6}+x_{5}x_{6}^{2},\\[2pt]
P_{4}&=x_{1}^{3}+x_{2}^{3}+x_{3}^{3}+x_{4}^{3}+x_{5}^{3}+x_{6}^{3}.
\end{align*}\par
\par\Needspace{3\baselineskip}\medskip\noindent\textbf{\boldmath 30.\enspace $(G_s,G)=(A_5,\allowbreak A_5\times C_3)$.}\label{fam:fourfold-no-30}
$G\text{-ID}=[180,19]$, $H\text{-ID}=[540,88]$.
The non-symplectic index is $3$, and the family dimension is $1$.  The projective action is liftable.
\par\Needspace{6\baselineskip}\medskip\noindent Here $H=\langle K^+,T\rangle$, where\par\nopagebreak
\SecEightDenseMatrix{T}{\left(\begin{smallmatrix}-\frac{1}{6}+\frac{2}{3}\,\omega&-\frac{1}{6}-\frac{1}{3}\,\omega&-\frac{1}{6}-\frac{1}{3}\,\omega&-\frac{1}{6}-\frac{1}{3}\,\omega&-\frac{1}{6}-\frac{1}{3}\,\omega&-\frac{1}{6}-\frac{1}{3}\,\omega\\-\frac{1}{6}-\frac{1}{3}\,\omega&-\frac{1}{6}-\frac{1}{3}\,\omega&-\frac{1}{6}-\frac{1}{3}\,\omega&-\frac{1}{6}+\frac{2}{3}\,\omega&-\frac{1}{6}-\frac{1}{3}\,\omega&-\frac{1}{6}-\frac{1}{3}\,\omega\\-\frac{1}{6}-\frac{1}{3}\,\omega&-\frac{1}{6}-\frac{1}{3}\,\omega&-\frac{1}{6}-\frac{1}{3}\,\omega&-\frac{1}{6}-\frac{1}{3}\,\omega&-\frac{1}{6}+\frac{2}{3}\,\omega&-\frac{1}{6}-\frac{1}{3}\,\omega\\-\frac{1}{6}-\frac{1}{3}\,\omega&-\frac{1}{6}-\frac{1}{3}\,\omega&-\frac{1}{6}+\frac{2}{3}\,\omega&-\frac{1}{6}-\frac{1}{3}\,\omega&-\frac{1}{6}-\frac{1}{3}\,\omega&-\frac{1}{6}-\frac{1}{3}\,\omega\\-\frac{1}{6}-\frac{1}{3}\,\omega&-\frac{1}{6}-\frac{1}{3}\,\omega&-\frac{1}{6}-\frac{1}{3}\,\omega&-\frac{1}{6}-\frac{1}{3}\,\omega&-\frac{1}{6}-\frac{1}{3}\,\omega&-\frac{1}{6}+\frac{2}{3}\,\omega\\-\frac{1}{6}-\frac{1}{3}\,\omega&-\frac{1}{6}+\frac{2}{3}\,\omega&-\frac{1}{6}-\frac{1}{3}\,\omega&-\frac{1}{6}-\frac{1}{3}\,\omega&-\frac{1}{6}-\frac{1}{3}\,\omega&-\frac{1}{6}-\frac{1}{3}\,\omega\end{smallmatrix}\right)}

\par\smallskip\noindent A basis of $W_H$ is $\{3\,P_{1}+P_{4}, 3\,P_{2}+P_{4}, P_{3}-P_{4}\}$.

\par\Needspace{3\baselineskip}\medskip\noindent\textbf{\boldmath 31.\enspace $(G_s,G)=(A_5,S_5)$.}\label{fam:fourfold-no-31}

This is the symplectic family with generic non-symplectic index $2$,
described in \cite[(6.9)]{KOIKE202512}.  Its proper subfamily is family~32.

Let $K^+\subset\GL(6,\CC)$ denote the strict symplectic lift associated with Koike's component (6.9) in \cite{KOIKE202512}.  We use the following basis $P_1,\ldots,P_{7}$ of $W_{K^+}$:
\begin{alignat*}{2}
P_{1}&={}x_{1}x_{2}x_{3}+x_{1}x_{2}x_{4}+x_{1}x_{2}x_{5}+x_{1}x_{3}x_{4}+x_{1}x_{3}x_{5}+x_{1}x_{4}x_{5}\\[-2pt]
&\quad +x_{2}x_{3}x_{4}+x_{2}x_{3}x_{5}+x_{2}x_{4}x_{5}+x_{3}x_{4}x_{5},&&\\[2pt]
P_{2}&={}x_{1}^{2}x_{2}+x_{1}^{2}x_{3}+x_{1}^{2}x_{4}+x_{1}^{2}x_{5}+x_{1}x_{2}^{2}+x_{1}x_{3}^{2}\\[-2pt]
&\quad +x_{1}x_{4}^{2}+x_{1}x_{5}^{2}+x_{2}^{2}x_{3}+x_{2}^{2}x_{4}+x_{2}^{2}x_{5}+x_{2}x_{3}^{2}\\[-2pt]
&\quad +x_{2}x_{4}^{2}+x_{2}x_{5}^{2}+x_{3}^{2}x_{4}+x_{3}^{2}x_{5}+x_{3}x_{4}^{2}+x_{3}x_{5}^{2}\\[-2pt]
&\quad +x_{4}^{2}x_{5}+x_{4}x_{5}^{2},&&\\[2pt]
P_{3}&={}x_{1}x_{2}x_{6}+x_{1}x_{3}x_{6}+x_{1}x_{4}x_{6}+x_{1}x_{5}x_{6}+x_{2}x_{3}x_{6}+x_{2}x_{4}x_{6}\\[-2pt]
&\quad +x_{2}x_{5}x_{6}+x_{3}x_{4}x_{6}+x_{3}x_{5}x_{6}+x_{4}x_{5}x_{6},&&\\[2pt]
P_{4}&=x_{1}^{3}+x_{2}^{3}+x_{3}^{3}+x_{4}^{3}+x_{5}^{3},&&\\
P_{5}&=x_{1}^{2}x_{6}+x_{2}^{2}x_{6}+x_{3}^{2}x_{6}+x_{4}^{2}x_{6}+x_{5}^{2}x_{6},&&\\
P_{6}&=x_{1}x_{6}^{2}+x_{2}x_{6}^{2}+x_{3}x_{6}^{2}+x_{4}x_{6}^{2}+x_{5}x_{6}^{2},&&\\
P_{7}&=x_{6}^{3}.&&
\end{alignat*}
Here $H=\langle K^+,T\rangle$, where $T=(1\,2)$ is a
non-symplectic involution.  It fixes $W_{K^+}$ pointwise, so $W_H=W_{K^+}$.
\par\Needspace{3\baselineskip}\medskip\noindent\textbf{\boldmath 32.\enspace $(G_s,G)=(A_5,\allowbreak S_5\times C_3)$.}\label{fam:fourfold-no-32}
$G\text{-ID}=[360,119]$, $H\text{-ID}=[1080,490]$.
The non-symplectic index is $6$, and the family dimension is $1$.  The projective action is liftable.
\par\Needspace{4\baselineskip}\medskip\noindent Here $H=\langle K^+,T\rangle$, where\par\nopagebreak
\begin{SecEightMatrixBlock}
\SecEightMatrix{T}{(1\,2\,5)(3\,4)\,\diag(1,1,1,1,1,\omega)}
\end{SecEightMatrixBlock}

\par\smallskip\noindent A basis of $W_H$ is $\{P_i\mid i\in\{1, 2, 4, 7\}\}$.

\par\Needspace{3\baselineskip}\medskip\noindent\textbf{\boldmath 33.\enspace $(G_s,G)=(C_3^2.C_4,C_3^2.C_4)$.}\label{fam:fourfold-no-33}
This family is given in \cite[(6.12)]{KOIKE202512}.

\par\Needspace{3\baselineskip}\medskip\noindent\textbf{\boldmath 34.\enspace $(G_s,G)=(C_3^2.C_4,N_{72})$.}\label{fam:fourfold-no-34}
This family is given in \cite[(6.11)]{KOIKE202512}.
Let $K^+$ be the corresponding strict symplectic lift.  Then
$H=\langle K^+,T\rangle$, where $T=(1\,4)(2\,5)(3\,6)$ is a non-symplectic
involution, and $W_H=W_{K^+}$.

\par\Needspace{3\baselineskip}\medskip\noindent\textbf{\boldmath 35.\enspace $(G_s,G)=(S_{3,3},S_{3,3}\times C_2)$.}\label{fam:fourfold-no-35}

This is the symplectic family with generic non-symplectic index $2$,
described in \cite[(6.13)]{KOIKE202512}.  Its proper subfamily is family~36.

Let $K^+\subset\GL(6,\CC)$ denote the strict symplectic lift associated with Koike's component (6.13) in \cite{KOIKE202512}.  We use the following basis $P_1,\ldots,P_{6}$ of $W_{K^+}$:
\begin{equation*}\begin{aligned}
P_{1}&={}x_{1}x_{2}x_{4}+x_{1}x_{2}x_{5}+x_{1}x_{2}x_{6}+x_{1}x_{3}x_{4}+x_{1}x_{3}x_{5}+x_{1}x_{3}x_{6}\\[-2pt]
&\quad +x_{2}x_{3}x_{4}+x_{2}x_{3}x_{5}+x_{2}x_{3}x_{6},
\\[2pt]
P_{2}&={}2x_{1}^{3}-3x_{1}^{2}x_{2}-3x_{1}^{2}x_{3}-3x_{1}x_{2}^{2}\\[-2pt]
&\quad +12x_{1}x_{2}x_{3}-3x_{1}x_{3}^{2}+2x_{2}^{3}-3x_{2}^{2}x_{3}\\[-2pt]
&\quad -3x_{2}x_{3}^{2}+2x_{3}^{3},\\[2pt]
P_{3}&=x_{1}^{2}x_{4}+x_{1}^{2}x_{5}+x_{1}^{2}x_{6}+x_{2}^{2}x_{4}+x_{2}^{2}x_{5}+x_{2}^{2}x_{6}\\[-2pt]
&\quad +x_{3}^{2}x_{4}+x_{3}^{2}x_{5}+x_{3}^{2}x_{6},\\[2pt]
P_{4}&=x_{4}x_{5}x_{6},\\[2pt]
P_{5}&=x_{4}^{2}x_{5}+x_{4}^{2}x_{6}+x_{4}x_{5}^{2}+x_{4}x_{6}^{2}+x_{5}^{2}x_{6}+x_{5}x_{6}^{2},\\[2pt]
P_{6}&=x_{4}^{3}+x_{5}^{3}+x_{6}^{3}.
\end{aligned}\end{equation*}
Here $H=\langle K^+,T\rangle$, where $T=(1\,2)$ is a
non-symplectic involution.  It fixes $W_{K^+}$ pointwise, so $W_H=W_{K^+}$.
\par\Needspace{3\baselineskip}\medskip\noindent\textbf{\boldmath 36.\enspace $(G_s,G)=(S_{3,3},\allowbreak S_{3,3}\times C_6)$.}\label{fam:fourfold-no-36}
$G\text{-ID}=[216,170]$, $H\text{-ID}=[648,746]$.
The non-symplectic index is $6$, and the family dimension is $1$.  The projective action is liftable.
\par\Needspace{6\baselineskip}\medskip\noindent Here $H=\langle K^+,T\rangle$, where\par\nopagebreak
\SecEightDenseMatrix{T}{\diag\!\left(I_3,
\smat{(2+\omega^2)/3&(-1-2\omega^2)/3&(2+\omega^2)/3\\
(-1-2\omega^2)/3&(2+\omega^2)/3&(2+\omega^2)/3\\
(2+\omega^2)/3&(2+\omega^2)/3&(-1-2\omega^2)/3}\right)}

\par\smallskip\noindent A basis of $W_H$ is $\{P_{1}, P_{2}, P_{3}, 6\,P_{4}+P_{6}, P_{5}\}$.

\par\Needspace{3\baselineskip}\medskip\noindent\textbf{\boldmath 37.\enspace $(G_s,G)=(S_{3,3},N_{72})$.}\label{fam:fourfold-no-37}

This is the symplectic family with generic non-symplectic index $2$,
described in \cite{KOIKE2026}.
Its proper subfamily is family~38.

Let $K^+\subset\GL(6,\CC)$ denote the strict symplectic lift associated with the second $S_{3,3}$ component of the corrigendum \cite{KOIKE2026}.  We use the following basis $P_1,\ldots,P_{7}$ of $W_{K^+}$:
\begin{SecEightAlignedBasis}{l@{\qquad}l}
\multicolumn{2}{@{}l@{}}{\PBasisLabel{1}=x_{1}^{3},\qquad \PBasisLabel{2}=x_{1}^{2}x_{2},\qquad \PBasisLabel{3}=x_{1}x_{2}^{2},}\\
\multicolumn{2}{@{}l@{}}{\PBasisLabel{4}=x_{1}x_{3}^{2}+x_{1}x_{3}x_{4}+x_{1}x_{4}^{2}+x_{1}x_{5}^{2}+x_{1}x_{5}x_{6}+x_{1}x_{6}^{2},}\\
\multicolumn{2}{@{}l@{}}{\PBasisLabel{5}=x_{2}^{3},\qquad \PBasisLabel{6}=x_{2}x_{3}^{2}+x_{2}x_{3}x_{4}+x_{2}x_{4}^{2}+x_{2}x_{5}^{2}+x_{2}x_{5}x_{6}+x_{2}x_{6}^{2},}\\
\multicolumn{2}{@{}l@{}}{\PBasisLabel{7}=x_{3}^{2}x_{4}+x_{3}x_{4}^{2}+x_{5}^{2}x_{6}+x_{5}x_{6}^{2}.}
\end{SecEightAlignedBasis}
Here $H=\langle K^+,T\rangle$, where $T=(3\,4)$ is a
non-symplectic involution.  It fixes $W_{K^+}$ pointwise, so $W_H=W_{K^+}$.
\par\Needspace{3\baselineskip}\medskip\noindent\textbf{\boldmath 38.\enspace $(G_s,G)=(S_{3,3},\allowbreak N_{72}\times C_3)$.}\label{fam:fourfold-no-38}
$G\text{-ID}=[216,157]$, $H\text{-ID}=[648,718]$.
The non-symplectic index is $6$, and the family dimension is $1$.  The projective action is liftable.
\par\Needspace{4\baselineskip}\medskip\noindent Here $H=\langle K^+,T\rangle$, where\par\nopagebreak
\begin{SecEightMatrixBlock}
\SecEightMatrix{T}{\diag\!\left(1,\omega,1,1,\smat{0&1\\1&0}\right)}
\end{SecEightMatrixBlock}

\par\smallskip\noindent A basis of $W_H$ is $\{P_i\mid i\in\{1,4,5,7\}\}$.

\par\Needspace{3\baselineskip}\medskip\noindent\textbf{\boldmath 39.\enspace $(G_s,G)=(F_{21},F_{21})$.}\label{fam:fourfold-no-39}
See
\cite[Theorem~1.2(1), \S4.1]{he2025cubicfourfoldsorder7automorphism}.

\par\Needspace{3\baselineskip}\medskip\noindent\textbf{\boldmath 40.\enspace
$(G_s,G)=(F_{21},C_7:C_6)$.}\label{fam:fourfold-no-40}
$G\text{-ID}=[42,1]$, $H\text{-ID}=[126,7]$.
See
\cite[Theorem~1.2(2), \S4.1]{he2025cubicfourfoldsorder7automorphism}.

\par\Needspace{3\baselineskip}\medskip\noindent\textbf{\boldmath 41.\enspace
$(G_s,G)=(F_{21},(C_7:C_6)\times C_3)$.}\label{fam:fourfold-no-41}
$G\text{-ID}=[126,7]$, $H\text{-ID}=[378,8]$.
This is the Klein cubic fourfold; see
\cite[\S6.1, (9)]{yang2024automorphism} and
\cite[Theorem~1.2(3), \S4.1]{he2025cubicfourfoldsorder7automorphism}.

\par\Needspace{3\baselineskip}\medskip\noindent\textbf{\boldmath 42.\enspace $(G_s,G)=(\mathrm{Hol}(5),\mathrm{Hol}(5))$.}\label{fam:fourfold-no-42}
This family is given in \cite[(6.15)]{KOIKE202512}.

\par\Needspace{3\baselineskip}\medskip\noindent\textbf{\boldmath 43.\enspace $(G_s,G)=(\mathrm{QD}_{16},\mathrm{QD}_{16})$.}\label{fam:fourfold-no-43}
This family is given in \cite[(6.3)]{KOIKE202512}.
Here $H=K^+=\langle\omega I_6,a,b\rangle$, where
\[
a=\diag(1,-1,\zeta_4,-\zeta_4,\zeta_8,\zeta_8^3),
\qquad b=(3\,4)(5\,6).
\]
A basis of $W_{K^+}$ is
\begin{alignat*}{3}
\PBasisLabel{1}&=x_1^3,\qquad &\PBasisLabel{2}&=x_1x_2^2,\qquad &\PBasisLabel{3}&=x_2(x_3^2+x_4^2),\\
\PBasisLabel{4}&=x_1x_3x_4,\qquad &\PBasisLabel{5}&=x_4x_5^2+x_3x_6^2,\qquad &\PBasisLabel{6}&=x_2x_5x_6.
\end{alignat*}

\par\Needspace{3\baselineskip}\medskip\noindent\textbf{\boldmath 44.\enspace $(G_s,G)=(\mathrm{QD}_{16},(C_8\times C_2):C_2)$.}\label{fam:fourfold-no-44}
$G\text{-ID}=[32,42]$, $H\text{-ID}=[96,182]$.
It is the one-dimensional subfamily of family~43 with non-symplectic
index $2$.  Here $H=\langle K^+,T\rangle$, where
$T=\diag(1,1,1,1,-1,1)$ is a non-symplectic involution.  A basis of $W_H$ is
$\{P_i\mid 1\le i\le5\}$.
Here $\dim W_H=5$ and $\dim C_{\GL(6,\CC)}(H)=4$.
The member $P_1+\cdots+P_5=0$ is, up to a permutation of variables,
the example with automorphism group $G$ in
\cite[\S6.1, (12)]{yang2024automorphism}.

\par\Needspace{3\baselineskip}\medskip\noindent\textbf{\boldmath 45.\enspace $(G_s,G)=(S_4,S_4)$.}\label{fam:fourfold-no-45}

This is the symplectic family with generic non-symplectic index $1$,
described in \cite[(5.3)]{KOIKE202512}.  Its proper subfamily is family~46.

Let $K^+\subset\GL(6,\CC)$ denote the strict symplectic lift associated with Koike's component (5.3) in \cite{KOIKE202512}.  We use the following basis $P_1,\ldots,P_{6}$ of $W_{K^+}$:
\begin{align*}
\PBasisLabel{1}&=x_{1}x_{2}x_{3}+x_{1}x_{2}x_{4}+x_{1}x_{3}x_{4}+x_{2}x_{3}x_{4},\\[3pt]
\PBasisLabel{2}&={}x_{1}^{2}x_{2}+x_{1}^{2}x_{3}+x_{1}^{2}x_{4}+x_{1}x_{2}^{2}+x_{1}x_{3}^{2}+x_{1}x_{4}^{2}\\*[-2pt]
&\quad +x_{2}^{2}x_{3}+x_{2}^{2}x_{4}+x_{2}x_{3}^{2}+x_{2}x_{4}^{2}+x_{3}^{2}x_{4}+x_{3}x_{4}^{2},\\[3pt]
\PBasisLabel{3}&={}-\omega\,x_{1}x_{2}x_{5}+x_{1}x_{2}x_{6}-x_{1}x_{3}x_{5}+\omega\,x_{1}x_{3}x_{6}-\omega^{2}\,x_{1}x_{4}x_{5}+\omega^{2}\,x_{1}x_{4}x_{6}\\*[-2pt]
&\quad -\omega^{2}\,x_{2}x_{3}x_{5}+\omega^{2}\,x_{2}x_{3}x_{6}-x_{2}x_{4}x_{5}+\omega\,x_{2}x_{4}x_{6}-\omega\,x_{3}x_{4}x_{5}+x_{3}x_{4}x_{6},\\[3pt]
\PBasisLabel{4}&=x_{1}^{3}+x_{2}^{3}+x_{3}^{3}+x_{4}^{3},\\[3pt]
\PBasisLabel{5}&=x_{1}x_{5}x_{6}+x_{2}x_{5}x_{6}+x_{3}x_{5}x_{6}+x_{4}x_{5}x_{6},\\[3pt]
\PBasisLabel{6}&=x_{6}^{3}-x_{5}^{3}.
\end{align*}
\par\Needspace{3\baselineskip}\medskip\noindent\textbf{\boldmath 46.\enspace $(G_s,G)=(S_4,\allowbreak S_4\times C_2)$, family~1.}\label{fam:fourfold-no-46}
$G\text{-ID}=[48,48]$, $H\text{-ID}=[144,188]$.
The non-symplectic index is $2$, and the family dimension is $2$.  The projective action is liftable.
\par\Needspace{6\baselineskip}\medskip\noindent Here $H=\langle K^+,T\rangle$, where\par\nopagebreak
\SecEightDenseMatrix{T}{\diag\!\left(\tfrac12(J_{4,4}-2I_4),I_2\right)}

\par\smallskip\noindent A basis of $W_H$ is $\{3\,P_{1}-P_{4}, P_{2}+P_{4}, P_{3}, P_{5}, P_{6}\}$.

\par\Needspace{3\baselineskip}\medskip\noindent\textbf{\boldmath 47.\enspace $(G_s,G)=(S_4,S_4\times C_2)$, family~2.}\label{fam:fourfold-no-47}

This is the symplectic family with generic non-symplectic index $2$,
described in \cite[(5.2)]{KOIKE202512}.
Put
\[
D=\diag(1,1,1,1,1,-1),\qquad
a=D(1\,2),\qquad b=D(1\,2\,3\,4).
\]
Then $K^+=\langle\omega I_6,a,b\rangle$ and $H=\langle K^+,T\rangle$,
where $T=(1\,2)$ is a non-symplectic involution.
Write $s_j=\sum_{i=1}^4x_i^j$.  A basis of $W_{K^+}=W_H$ is
\begin{alignat*}{3}
\PBasisLabel{1}&=s_1^3,\qquad &\PBasisLabel{2}&=s_1^2x_5,\qquad &\PBasisLabel{3}&=s_1x_5^2,\\
\PBasisLabel{4}&=s_1x_6^2, &\PBasisLabel{5}&=x_5^3, &\PBasisLabel{6}&=x_5x_6^2,\\
\PBasisLabel{7}&=s_1s_2, &\PBasisLabel{8}&=s_2x_5, &\PBasisLabel{9}&=s_3.
\end{alignat*}
Here $\dim W_H=9$ and $\dim C_{\GL(6,\CC)}(H)=6$, so the family dimension is $3$.

\par\Needspace{3\baselineskip}\medskip\noindent\textbf{\boldmath 48.\enspace $(G_s,G)=(Q_8,Q_8)$.}\label{fam:fourfold-no-48}

This is the symplectic family with generic non-symplectic index $1$,
described in \cite[(5.1)]{KOIKE202512}.  Its proper subfamilies are families~49--51.

Let $K^+\subset\GL(6,\CC)$ denote the strict symplectic lift associated with Koike's component (5.1) in \cite{KOIKE202512}.  We use the following basis $P_1,\ldots,P_{8}$ of $W_{K^+}$:
\begin{equation*}\begin{aligned}
(P_1,\ldots,P_{8})&=\bigl({}x_{1}^{3},\ x_{1}x_{2}^{2},\ x_{1}x_{3}^{2},\ x_{1}x_{4}^{2},\ x_{2}x_{3}x_{4},\ x_{2}x_{5}x_{6},\\[-2pt]
&\quad x_{3}(x_{5}^{2}+x_{6}^{2}),\ x_{4}(x_{6}^{2}-x_{5}^{2})\bigr)
\end{aligned}\end{equation*}
\par\Needspace{3\baselineskip}\medskip\noindent\textbf{\boldmath 49.\enspace $(G_s,G)=(Q_8,\allowbreak (C_4\times C_2):C_2)$.}\label{fam:fourfold-no-49}
$G\text{-ID}=[16,13]$, $H\text{-ID}=[48,47]$.
The non-symplectic index is $2$, and the family dimension is $2$.  The projective action is liftable.
\par\Needspace{4\baselineskip}\medskip\noindent Here $H=\langle K^+,T\rangle$, where\par\nopagebreak
\begin{SecEightMatrixBlock}
\SecEightMatrix{T}{\diag\!\left(1,-1,-1,1,\smat{0&\zeta_4\\-\zeta_4&0}\right)}
\end{SecEightMatrixBlock}

\par\smallskip\noindent A basis of $W_H$ is $\{P_i\mid i\in\{1,2,3,4,5,7,8\}\}$.

\par\Needspace{3\baselineskip}\medskip\noindent\textbf{\boldmath 50.\enspace $(G_s,G)=(Q_8,\allowbreak \SL(2,3))$.}\label{fam:fourfold-no-50}
$G\text{-ID}=[24,3]$, $H\text{-ID}=[72,25]$.
The non-symplectic index is $3$, and the family dimension is $1$.  The projective action is liftable.
\par\Needspace{6\baselineskip}\medskip\noindent Here $H=\langle K^+,T\rangle$, where\par\nopagebreak
\SecEightDenseMatrix{T}{(2\,3\,4)\,\diag\!\left(-\zeta_6,1,1,1,
\smat{(-1-\zeta_4)/2&(-1+\zeta_4)/2\\
(1+\zeta_4)/2&(-1+\zeta_4)/2}\right)}

\par\smallskip\noindent A basis of $W_H$ is $\{P_{1}, P_{2}+\omega\,P_{3}-\zeta_{6}\,P_{4}, P_{5}, 2\,P_{6}+\zeta_{4}\,P_{7}-P_{8}\}$.

\par\Needspace{3\baselineskip}\medskip\noindent\textbf{\boldmath 51.\enspace $(G_s,G)=(Q_8,\allowbreak C_4^2: C_2)$.}\label{fam:fourfold-no-51}
$G\text{-ID}=[32,11]$, $H\text{-ID}=[96,54]$.
The non-symplectic index is $4$, and the family dimension is $1$.  The projective action is liftable.
\par\Needspace{4\baselineskip}\medskip\noindent Here $H=\langle K^+,T\rangle$, where\par\nopagebreak
\begin{SecEightMatrixBlock}
\SecEightMatrix{T}{\diag\!\left(1,-1,\zeta_4\smat{0&1\\1&0},
\smat{0&\zeta_4\\1&0}\right)}
\end{SecEightMatrixBlock}

\par\smallskip\noindent A basis of $W_H$ is $\{P_{1}, P_{2}, P_{3}-P_{4}, P_{5}, P_{7}+\zeta_{4}\,P_{8}\}$.

\par\Needspace{3\baselineskip}\medskip\noindent\textbf{\boldmath 52.\enspace $(G_s,G)=(A_{3,3},A_{3,3})$.}\label{fam:fourfold-no-52}

This is the symplectic family with generic non-symplectic index $1$,
described in \cite[(4.11)]{KOIKE202512}.  Its proper subfamilies are families~53 and~54.

Let $K^+\subset\GL(6,\CC)$ denote the strict symplectic lift associated with Koike's component (4.11) in \cite{KOIKE202512}.  We use the following basis $P_1,\ldots,P_{8}$ of $W_{K^+}$:
\begin{equation*}\begin{aligned}
(P_1,\ldots,P_{8})&=\bigl({}x_{1}x_{2}x_{3},\ x_{1}x_{2}x_{6}+x_{1}x_{3}x_{5}+x_{2}x_{3}x_{4},\ x_{1}^{3}+x_{2}^{3}+x_{3}^{3},\\[-2pt]
&\quad x_{1}^{2}x_{4}+x_{2}^{2}x_{5}+x_{3}^{2}x_{6},\ x_{1}x_{5}x_{6}+x_{2}x_{4}x_{6}+x_{3}x_{4}x_{5},\\[-2pt]
&\quad x_{1}x_{4}^{2}+x_{2}x_{5}^{2}+x_{3}x_{6}^{2},\ x_{4}x_{5}x_{6},\ x_{4}^{3}+x_{5}^{3}+x_{6}^{3}\bigr)
\end{aligned}\end{equation*}
\par\Needspace{3\baselineskip}\medskip\noindent\textbf{\boldmath 53.\enspace $(G_s,G)=(A_{3,3},\allowbreak A_{3,3}\times C_2)$.}\label{fam:fourfold-no-53}
$G\text{-ID}=[36,13]$, $H\text{-ID}=[108,28]$.
The non-symplectic index is $2$, and the family dimension is $2$.  The projective action is non-liftable.
\par\Needspace{4\baselineskip}\medskip\noindent Here $H=\langle K^+,T\rangle$, where\par\nopagebreak
\begin{SecEightMatrixBlock}
\SecEightMatrix{T}{\diag(-1,-1,-1,1,1,1)}
\end{SecEightMatrixBlock}

\par\smallskip\noindent A basis of $W_H$ is $\{P_i\mid i\in\{2, 4, 7, 8\}\}$.

\par\Needspace{3\baselineskip}\medskip\noindent\textbf{\boldmath 54.\enspace $(G_s,G)=(A_{3,3},\allowbreak C_3^2:C_6)$.}\label{fam:fourfold-no-54}
$G\text{-ID}=[54,5]$, $H\text{-ID}=[162,10]$.
The non-symplectic index is $3$, and the family dimension is $1$.  The projective action is non-liftable.
\par\Needspace{4\baselineskip}\medskip\noindent Here $H=\langle K^+,T\rangle$, where\par\nopagebreak
\begin{SecEightMatrixBlock}
\SecEightMatrix{T}{\diag(\omega,\omega,1,1,1,\omega^2)}
\end{SecEightMatrixBlock}

\par\smallskip\noindent A basis of $W_H$ is $\{P_{3}, P_{5}, P_{8}\}$.

\par\Needspace{3\baselineskip}\medskip\noindent\textbf{\boldmath 55.\enspace $(G_s,G)=(A_{3,3},S_{3,3})$.}\label{fam:fourfold-no-55}

This is the symplectic family with generic non-symplectic index $2$,
described in \cite[(4.6)]{KOIKE202512}.  Its proper subfamilies, families~56 and~57, have inequivalent linear actions.

Let $K^+\subset\GL(6,\CC)$ denote the strict symplectic lift associated with Koike's component (4.6) in \cite{KOIKE202512}.  We use the following basis $P_1,\ldots,P_{10}$ of $W_{K^+}$:
\begin{align*}
\PBasisLabel{1}&=x_{1}x_{2}x_{3},\\[3pt]
\PBasisLabel{2}&=x_{1}^{2}x_{2}+x_{1}^{2}x_{3}+x_{1}x_{2}^{2}+x_{1}x_{3}^{2}+x_{2}^{2}x_{3}+x_{2}x_{3}^{2},\\[3pt]
\PBasisLabel{3}&={}x_{1}x_{2}x_{4}+x_{1}x_{2}x_{5}+x_{1}x_{2}x_{6}+x_{1}x_{3}x_{4}+x_{1}x_{3}x_{5}+x_{1}x_{3}x_{6}+x_{2}x_{3}x_{4}+x_{2}x_{3}x_{5}+x_{2}x_{3}x_{6},\\[3pt]
\PBasisLabel{4}&=x_{1}^{3}+x_{2}^{3}+x_{3}^{3},\\[3pt]
\PBasisLabel{5}&=x_{1}^{2}x_{4}+x_{1}^{2}x_{5}+x_{1}^{2}x_{6}+x_{2}^{2}x_{4}+x_{2}^{2}x_{5}+x_{2}^{2}x_{6}+x_{3}^{2}x_{4}+x_{3}^{2}x_{5}+x_{3}^{2}x_{6},\\[3pt]
\PBasisLabel{6}&={}x_{1}x_{4}x_{5}+x_{1}x_{4}x_{6}+x_{1}x_{5}x_{6}+x_{2}x_{4}x_{5}+x_{2}x_{4}x_{6}+x_{2}x_{5}x_{6}+x_{3}x_{4}x_{5}+x_{3}x_{4}x_{6}+x_{3}x_{5}x_{6},\\[3pt]
\PBasisLabel{7}&=x_{1}x_{4}^{2}+x_{1}x_{5}^{2}+x_{1}x_{6}^{2}+x_{2}x_{4}^{2}+x_{2}x_{5}^{2}+x_{2}x_{6}^{2}+x_{3}x_{4}^{2}+x_{3}x_{5}^{2}+x_{3}x_{6}^{2},\\[3pt]
\PBasisLabel{8}&=x_{4}x_{5}x_{6},\\[3pt]
\PBasisLabel{9}&=x_{4}^{2}x_{5}+x_{4}^{2}x_{6}+x_{4}x_{5}^{2}+x_{4}x_{6}^{2}+x_{5}^{2}x_{6}+x_{5}x_{6}^{2},\\[3pt]
\PBasisLabel{10}&=x_{4}^{3}+x_{5}^{3}+x_{6}^{3}.
\end{align*}
\par\smallskip\noindent Here $H=\langle K^+,T\rangle$, where $T=(1\,2)$ is a
non-symplectic involution.  It fixes $W_{K^+}$ pointwise, so $W_H=W_{K^+}$.
\par\Needspace{3\baselineskip}\medskip\noindent\textbf{\boldmath 56.\enspace $(G_s,G)=(A_{3,3},\allowbreak S_{3,3}\times C_3)$, family~1.}\label{fam:fourfold-no-56}
$G\text{-ID}=[108,38]$, $H\text{-ID}=[324,165]$.
The non-symplectic index is $6$, and the family dimension is $2$.  The projective action is liftable.
\par\Needspace{6\baselineskip}\medskip\noindent Here $H=\langle K^+,T\rangle$, where\par\nopagebreak
\SecEightDenseMatrix{T}{\diag\!\left(
\smat{(2+\omega^2)/3&(-1-2\omega^2)/3&(2+\omega^2)/3\\
(-1-2\omega^2)/3&(2+\omega^2)/3&(2+\omega^2)/3\\
(2+\omega^2)/3&(2+\omega^2)/3&(-1-2\omega^2)/3},I_3\right)}

\par\medskip\noindent A basis of $W_H$ is\par\nopagebreak
\begin{SecEightBasisBlock}
\SecEightBasisLine{6\,P_{1}+P_{4},}
\SecEightBasisLine{P_{2},}
\SecEightBasisLine{2\,P_{3}+P_{5},}
\SecEightBasisLine{P_{6},}
\SecEightBasisLine{P_{7},}
\SecEightBasisLine{P_{8},}
\SecEightBasisLine{P_{9},}
\SecEightBasisLine{P_{10}.}
\end{SecEightBasisBlock}

\par\Needspace{3\baselineskip}\medskip\noindent\textbf{\boldmath 57.\enspace $(G_s,G)=(A_{3,3},\allowbreak S_{3,3}\times C_3)$, family~2.}\label{fam:fourfold-no-57}
$G\text{-ID}=[108,38]$, $H\text{-ID}=[324,165]$.
The non-symplectic index is $6$, and the family dimension is $2$.  The projective action is liftable.
\par\Needspace{6\baselineskip}\medskip\noindent Here $H=\langle K^+,T\rangle$, where\par\nopagebreak
\SecEightDenseMatrix{T}{\left(\begin{smallmatrix}0&1&0&0&0&0\\1&0&0&0&0&0\\0&0&1&0&0&0\\-\frac{2}{3}-\frac{1}{3}\,\omega^{2}&-\frac{2}{3}-\frac{1}{3}\,\omega^{2}&-\frac{2}{3}-\frac{1}{3}\,\omega^{2}&\frac{1}{3}-\frac{1}{3}\,\omega^{2}&-\frac{2}{3}-\frac{1}{3}\,\omega^{2}&-\frac{2}{3}-\frac{1}{3}\,\omega^{2}\\-\frac{2}{3}-\frac{1}{3}\,\omega^{2}&-\frac{2}{3}-\frac{1}{3}\,\omega^{2}&-\frac{2}{3}-\frac{1}{3}\,\omega^{2}&-\frac{2}{3}-\frac{1}{3}\,\omega^{2}&\frac{1}{3}-\frac{1}{3}\,\omega^{2}&-\frac{2}{3}-\frac{1}{3}\,\omega^{2}\\-\frac{2}{3}-\frac{1}{3}\,\omega^{2}&-\frac{2}{3}-\frac{1}{3}\,\omega^{2}&-\frac{2}{3}-\frac{1}{3}\,\omega^{2}&-\frac{2}{3}-\frac{1}{3}\,\omega^{2}&-\frac{2}{3}-\frac{1}{3}\,\omega^{2}&\frac{1}{3}-\frac{1}{3}\,\omega^{2}\end{smallmatrix}\right)}

\par\medskip\noindent A basis of $W_H$ is\par\nopagebreak
\begin{SecEightBasisBlock}
\SecEightBasisLine{3\,P_{1}-P_{3}+P_{6}-3\,P_{8},}
\SecEightBasisLine{3\,P_{2}-4\,P_{3}-6\,P_{4}+4\,P_{5},}
\SecEightBasisLine{P_{4}-P_{5}+P_{6}-3\,P_{8},}
\SecEightBasisLine{-2\,P_{6}+2\,P_{7}-3\,P_{10},}
\SecEightBasisLine{6\,P_{8}+P_{10},}
\SecEightBasisLine{P_{9}.}
\end{SecEightBasisBlock}

\par\Needspace{3\baselineskip}\medskip\noindent\textbf{\boldmath 58.\enspace $(G_s,G)=(D_{12},D_{12})$.}\label{fam:fourfold-no-58}

This is the symplectic family with generic non-symplectic index $1$,
described in \cite[(4.3)]{KOIKE202512}.  Its proper subfamilies are families~59--62.

Let $K^+\subset\GL(6,\CC)$ denote the strict symplectic lift associated with Koike's component (4.3) in \cite{KOIKE202512}.  We use the following basis $P_1,\ldots,P_{10}$ of $W_{K^+}$:
\begin{equation*}\begin{aligned}
(P_1,\ldots,P_{10})&=\bigl({}x_{1}x_{3}x_{5},\ x_{1}x_{3}x_{6},\ x_{2}x_{4}x_{5},\ x_{2}x_{4}x_{6},\ x_{1}^{3}+x_{3}^{3},\\[-2pt]
&\quad x_{1}x_{2}^{2}+x_{3}x_{4}^{2},\ x_{5}^{3},\ x_{5}^{2}x_{6},\ x_{5}x_{6}^{2},\ x_{6}^{3}\bigr)
\end{aligned}\end{equation*}
\par\Needspace{3\baselineskip}\medskip\noindent\textbf{\boldmath 59.\enspace $(G_s,G)=(D_{12},\allowbreak (C_6\times C_2):C_2)$.}\label{fam:fourfold-no-59}
$G\text{-ID}=[24,8]$, $H\text{-ID}=[72,30]$.
The non-symplectic index is $2$, and the family dimension is $2$.  The projective action is liftable.
\par\Needspace{4\baselineskip}\medskip\noindent Here $H=\langle K^+,T\rangle$, where\par\nopagebreak
\begin{SecEightMatrixBlock}
\SecEightMatrix{T}{\diag(1,1,1,-1,1,1)}
\end{SecEightMatrixBlock}

\par\smallskip\noindent A basis of $W_H$ is $\{P_i\mid i\in\{1,2,5,6,7,8,9,10\}\}$.

\par\Needspace{3\baselineskip}\medskip\noindent\textbf{\boldmath 60.\enspace $(G_s,G)=(D_{12},\allowbreak D_{12}\times C_2)$, family~1.}\label{fam:fourfold-no-60}
$G\text{-ID}=[24,14]$, $H\text{-ID}=[72,48]$.
The non-symplectic index is $2$, and the family dimension is $2$.  The projective action is liftable.
\par\Needspace{4\baselineskip}\medskip\noindent Here $H=\langle K^+,T\rangle$, where\par\nopagebreak
\begin{SecEightMatrixBlock}
\SecEightMatrix{T}{\diag\!\left(\smat{0&0&1&0\\0&0&0&-1\\1&0&0&0\\0&-1&0&0},-1,1\right)}
\end{SecEightMatrixBlock}

\par\smallskip\noindent A basis of $W_H$ is $\{P_i\mid i\in\{2, 4, 5, 6, 8, 10\}\}$.

\par\Needspace{3\baselineskip}\medskip\noindent\textbf{\boldmath 61.\enspace $(G_s,G)=(D_{12},\allowbreak S_3\times C_6)$.}\label{fam:fourfold-no-61}
$G\text{-ID}=[36,12]$, $H\text{-ID}=[108,42]$.
The non-symplectic index is $3$, and the family dimension is $2$.  The projective action is liftable.
\par\Needspace{4\baselineskip}\medskip\noindent Here $H=\langle K^+,T\rangle$, where\par\nopagebreak
\begin{SecEightMatrixBlock}
\SecEightMatrix{T}{\diag(1,1,1,1,1,\omega)}
\end{SecEightMatrixBlock}

\par\smallskip\noindent A basis of $W_H$ is $\{P_i\mid i\in\{1, 3, 5, 6, 7, 10\}\}$.

\par\Needspace{3\baselineskip}\medskip\noindent\textbf{\boldmath 62.\enspace $(G_s,G)=(D_{12},\allowbreak ((C_6\times C_2):C_2)\times C_3)$.}\label{fam:fourfold-no-62}
$G\text{-ID}=[72,30]$, $H\text{-ID}=[216,139]$.
The non-symplectic index is $6$, and the family dimension is $1$.  The projective action is liftable.
\par\Needspace{4\baselineskip}\medskip\noindent Here $H=\langle K^+,T\rangle$, where\par\nopagebreak
\begin{SecEightMatrixBlock}
\SecEightMatrix{T}{\diag(1,-1,1,1,1,\omega)}
\end{SecEightMatrixBlock}

\par\smallskip\noindent A basis of $W_H$ is $\{P_i\mid i\in\{1, 5, 6, 7, 10\}\}$.

\par\Needspace{3\baselineskip}\medskip\noindent\textbf{\boldmath 63.\enspace $(G_s,G)=(D_{12},D_{12}\times C_2)$, family~2.}\label{fam:fourfold-no-63}

This is the symplectic family with generic non-symplectic index $2$,
described in \cite[(4.2)]{KOIKE202512}.  Its proper subfamily is family~64.

Let $K^+\subset\GL(6,\CC)$ denote the strict symplectic lift associated with Koike's component (4.2) in \cite{KOIKE202512}.  We use the following basis $P_1,\ldots,P_{11}$ of $W_{K^+}$:
\begin{equation*}\begin{aligned}
(P_1,\ldots,P_{11})&=\bigl({}x_{1}x_{2}x_{3},\ x_{1}x_{2}x_{4},\ x_{1}^{2}x_{3}+x_{2}^{2}x_{3},\ x_{1}^{2}x_{4}+x_{2}^{2}x_{4},\ x_{3}^{3},\ x_{3}^{2}x_{4},\\[-2pt]
&\quad x_{3}x_{4}^{2},\ x_{3}x_{5}x_{6},\ x_{4}^{3},\ x_{4}x_{5}x_{6},\ x_{5}^{3}+x_{6}^{3}\bigr)
\end{aligned}\end{equation*}
Here $H=\langle K^+,T\rangle$, where $T=(5\,6)$ is a
non-symplectic involution.  It fixes $W_{K^+}$ pointwise, so $W_H=W_{K^+}$.
\par\Needspace{3\baselineskip}\medskip\noindent\textbf{\boldmath 64.\enspace $(G_s,G)=(D_{12},\allowbreak S_3\times C_6\times C_2)$.}\label{fam:fourfold-no-64}
$G\text{-ID}=[72,48]$, $H\text{-ID}=[216,174]$.
The non-symplectic index is $6$, and the family dimension is $2$.  The projective action is liftable.
\par\Needspace{4\baselineskip}\medskip\noindent Here $H=\langle K^+,T\rangle$, where\par\nopagebreak
\begin{SecEightMatrixBlock}
\SecEightMatrix{T}{\diag\!\left(\smat{0&-1\\-1&0},1,1,\omega,\omega\right)}
\end{SecEightMatrixBlock}

\par\smallskip\noindent A basis of $W_H$ is $\{P_i\mid i\in\{1,2,3,4,5,6,7,9,11\}\}$.

\par\Needspace{3\baselineskip}\medskip\noindent\textbf{\boldmath 65.\enspace $(G_s,G)=(A_4,A_4)$.}\label{fam:fourfold-no-65}

This is the symplectic family with generic non-symplectic index $1$,
described in \cite[(4.16)]{KOIKE202512}.  Its proper subfamilies are families~66 and~67.

Let $K^+\subset\GL(6,\CC)$ denote the strict symplectic lift associated with Koike's component (4.16) in \cite{KOIKE202512}.  We use the following basis $P_1,\ldots,P_{8}$ of $W_{K^+}$:
\begin{SecEightAlignedBasis}{l@{\qquad}l}
\multicolumn{2}{@{}l@{}}{\PBasisLabel{1}=x_{1}x_{2}x_{3}+x_{1}x_{2}x_{4}+x_{1}x_{3}x_{4}+x_{2}x_{3}x_{4},}\\
\multicolumn{2}{@{}l@{}}{\PBasisLabel{2}=x_{1}^{2}x_{2}+x_{1}^{2}x_{3}+x_{1}^{2}x_{4}+x_{1}x_{2}^{2}+x_{1}x_{3}^{2}+x_{1}x_{4}^{2}}\\
\multicolumn{2}{@{}l@{}}{\hphantom{\PBasisLabel{1}=}\quad +x_{2}^{2}x_{3}+x_{2}^{2}x_{4}+x_{2}x_{3}^{2}+x_{2}x_{4}^{2}+x_{3}^{2}x_{4}+x_{3}x_{4}^{2},}\\
\multicolumn{2}{@{}l@{}}{\PBasisLabel{3}=x_{1}x_{2}x_{5}+\omega^{2}\,x_{1}x_{3}x_{5}+\omega\,x_{1}x_{4}x_{5}}\\
\multicolumn{2}{@{}l@{}}{\hphantom{\PBasisLabel{1}=}\quad +\omega\,x_{2}x_{3}x_{5}+\omega^{2}\,x_{2}x_{4}x_{5}+x_{3}x_{4}x_{5},}\\
\multicolumn{2}{@{}l@{}}{\PBasisLabel{4}=x_{1}x_{2}x_{6}+\omega\,x_{1}x_{3}x_{6}+\omega^{2}\,x_{1}x_{4}x_{6}+\omega^{2}\,x_{2}x_{3}x_{6}}\\
\multicolumn{2}{@{}l@{}}{\hphantom{\PBasisLabel{1}=}\quad +\omega\,x_{2}x_{4}x_{6}+x_{3}x_{4}x_{6},}\\
\PBasisLabel{5}=x_{1}^{3}+x_{2}^{3}+x_{3}^{3}+x_{4}^{3}, & \PBasisLabel{6}=x_{1}x_{5}x_{6}+x_{2}x_{5}x_{6}+x_{3}x_{5}x_{6}+x_{4}x_{5}x_{6},\\
\PBasisLabel{7}=x_{5}^{3}, & \PBasisLabel{8}=x_{6}^{3}.
\end{SecEightAlignedBasis}
\par\Needspace{3\baselineskip}\medskip\noindent\textbf{\boldmath 66.\enspace $(G_s,G)=(A_4,\allowbreak A_4\times C_2)$.}\label{fam:fourfold-no-66}
$G\text{-ID}=[24,13]$, $H\text{-ID}=[72,47]$.
The non-symplectic index is $2$, and the family dimension is $3$.  The projective action is liftable.
\par\Needspace{6\baselineskip}\medskip\noindent Here $H=\langle K^+,T\rangle$, where\par\nopagebreak
\SecEightDenseMatrix{T}{\diag\!\left(\tfrac12(J_{4,4}-2I_4),I_2\right)}

\par\medskip\noindent A basis of $W_H$ is\par\nopagebreak
\begin{SecEightBasisBlock}
\SecEightBasisLine{3\,P_{1}-P_{5},}
\SecEightBasisLine{P_{2}+P_{5},}
\SecEightBasisLine{P_{3},}
\SecEightBasisLine{P_{4},}
\SecEightBasisLine{P_{6},}
\SecEightBasisLine{P_{7},}
\SecEightBasisLine{P_{8}.}
\end{SecEightBasisBlock}

\par\Needspace{3\baselineskip}\medskip\noindent\textbf{\boldmath 67.\enspace $(G_s,G)=(A_4,\allowbreak A_4\times C_3)$.}\label{fam:fourfold-no-67}
$G\text{-ID}=[36,11]$, $H\text{-ID}=[108,41]$.
The non-symplectic index is $3$, and the family dimension is $2$.  The projective action is liftable.
\par\Needspace{4\baselineskip}\medskip\noindent Here $H=\langle K^+,T\rangle$, where\par\nopagebreak
\begin{SecEightMatrixBlock}
\SecEightMatrix{T}{\diag(1,1,1,1,1,\omega)}
\end{SecEightMatrixBlock}

\par\smallskip\noindent A basis of $W_H$ is $\{P_i\mid i\in\{1,2,3,5,7,8\}\}$.

\par\Needspace{3\baselineskip}\medskip\noindent\textbf{\boldmath 68.\enspace $(G_s,G)=(A_4,S_4)$.}\label{fam:fourfold-no-68}

This is the symplectic family with generic non-symplectic index $2$,
described in \cite[(4.14)]{KOIKE202512}.  Its proper subfamily is family~69.

Let $K^+\subset\GL(6,\CC)$ denote the strict symplectic lift associated with Koike's component (4.14) in \cite{KOIKE202512}.  We use the following basis $P_1,\ldots,P_{14}$ of $W_{K^+}$:
\begin{SecEightAlignedBasis}{l@{\quad}l}
\multicolumn{2}{@{}l@{}}{\PBasisLabel{1}=x_{1}x_{2}x_{3}+x_{1}x_{2}x_{4}+x_{1}x_{3}x_{4}+x_{2}x_{3}x_{4},}\\
\multicolumn{2}{@{}l@{}}{\PBasisLabel{2}=x_{1}^{2}x_{2}+x_{1}^{2}x_{3}+x_{1}^{2}x_{4}+x_{1}x_{2}^{2}+x_{1}x_{3}^{2}+x_{1}x_{4}^{2}}\\
\multicolumn{2}{@{}l@{}}{\hphantom{\PBasisLabel{2}=}\quad +x_{2}^{2}x_{3}+x_{2}^{2}x_{4}+x_{2}x_{3}^{2}+x_{2}x_{4}^{2}+x_{3}^{2}x_{4}+x_{3}x_{4}^{2},}\\
\multicolumn{2}{@{}l@{}}{\PBasisLabel{3}=x_{1}x_{2}x_{5}+x_{1}x_{3}x_{5}+x_{1}x_{4}x_{5}+x_{2}x_{3}x_{5}+x_{2}x_{4}x_{5}+x_{3}x_{4}x_{5},}\\
\multicolumn{2}{@{}l@{}}{\PBasisLabel{4}=x_{1}x_{2}x_{6}+x_{1}x_{3}x_{6}+x_{1}x_{4}x_{6}+x_{2}x_{3}x_{6}+x_{2}x_{4}x_{6}+x_{3}x_{4}x_{6},\quad \PBasisLabel{5}=x_{1}^{3}+x_{2}^{3}+x_{3}^{3}+x_{4}^{3},}\\
\PBasisLabel{6}=x_{1}^{2}x_{5}+x_{2}^{2}x_{5}+x_{3}^{2}x_{5}+x_{4}^{2}x_{5}, & \PBasisLabel{7}=x_{1}^{2}x_{6}+x_{2}^{2}x_{6}+x_{3}^{2}x_{6}+x_{4}^{2}x_{6},\\
\PBasisLabel{8}=x_{1}x_{5}^{2}+x_{2}x_{5}^{2}+x_{3}x_{5}^{2}+x_{4}x_{5}^{2}, & \PBasisLabel{9}=x_{1}x_{5}x_{6}+x_{2}x_{5}x_{6}+x_{3}x_{5}x_{6}+x_{4}x_{5}x_{6},\\
\multicolumn{2}{@{}l@{}}{\PBasisLabel{10}=x_{1}x_{6}^{2}+x_{2}x_{6}^{2}+x_{3}x_{6}^{2}+x_{4}x_{6}^{2},\quad \PBasisLabel{11}=x_{5}^{3},\quad \PBasisLabel{12}=x_{5}^{2}x_{6},\quad \PBasisLabel{13}=x_{5}x_{6}^{2},\quad \PBasisLabel{14}=x_{6}^{3}.}
\end{SecEightAlignedBasis}
Here $H=\langle K^+,T\rangle$, where $T=(1\,2)$ is a
non-symplectic involution.  It fixes $W_{K^+}$ pointwise, so $W_H=W_{K^+}$.
\par\Needspace{3\baselineskip}\medskip\noindent\textbf{\boldmath 69.\enspace $(G_s,G)=(A_4,\allowbreak S_4\times C_3)$.}\label{fam:fourfold-no-69}
$G\text{-ID}=[72,42]$, $H\text{-ID}=[216,163]$.
The non-symplectic index is $6$, and the family dimension is $2$.  The projective action is liftable.
\par\Needspace{4\baselineskip}\medskip\noindent Here $H=\langle K^+,T\rangle$, where\par\nopagebreak
\begin{SecEightMatrixBlock}
\SecEightMatrix{T}{\diag\!\left(\smat{0&1\\1&0},1,1,1,\omega\right)}
\end{SecEightMatrixBlock}

\par\smallskip\noindent A basis of $W_H$ is $\{P_i\mid i\in\{1,2,3,5,6,8,11,14\}\}$.

\par\Needspace{3\baselineskip}\medskip\noindent\textbf{\boldmath 70.\enspace $(G_s,G)=(D_{10},D_{10})$.}\label{fam:fourfold-no-70}

This is the symplectic family with generic non-symplectic index $1$,
described in \cite[(4.1)]{KOIKE202512}.  Its proper subfamilies are families~71 and~72.

Let $K^+\subset\GL(6,\CC)$ denote the strict symplectic lift associated with Koike's component (4.1) in \cite{KOIKE202512}.  We use the following basis $P_1,\ldots,P_{10}$ of $W_{K^+}$:
\begin{equation*}\begin{aligned}
(P_1,\ldots,P_{10})&=\bigl({}x_{1}^{3},\ x_{1}^{2}x_{2},\ x_{1}x_{2}^{2},\ x_{1}x_{3}x_{6},\ x_{1}x_{4}x_{5},\ x_{2}^{3},\\[-2pt]
&\quad x_{2}x_{3}x_{6},\ x_{2}x_{4}x_{5},\ x_{3}^{2}x_{5}+x_{4}x_{6}^{2},\ x_{3}x_{4}^{2}+x_{5}^{2}x_{6}\bigr)
\end{aligned}\end{equation*}
\par\Needspace{3\baselineskip}\medskip\noindent\textbf{\boldmath 71.\enspace $(G_s,G)=(D_{10},\allowbreak D_{20})$.}\label{fam:fourfold-no-71}
$G\text{-ID}=[20,4]$, $H\text{-ID}=[60,10]$.
The non-symplectic index is $2$, and the family dimension is $2$.  The projective action is liftable.
\par\Needspace{4\baselineskip}\medskip\noindent Here $H=\langle K^+,T\rangle$, where\par\nopagebreak
\begin{SecEightMatrixBlock}
\SecEightMatrix{T}{\diag(-1,1,1,1,1,1)}
\end{SecEightMatrixBlock}

\par\smallskip\noindent A basis of $W_H$ is $\{P_i\mid i\in\{2, 6, 7, 8, 9, 10\}\}$.

\par\Needspace{3\baselineskip}\medskip\noindent\textbf{\boldmath 72.\enspace $(G_s,G)=(D_{10},\allowbreak D_{10}\times C_3)$.}\label{fam:fourfold-no-72}
$G\text{-ID}=[30,2]$, $H\text{-ID}=[90,5]$.
The non-symplectic index is $3$, and the family dimension is $2$.  The projective action is liftable.
\par\Needspace{4\baselineskip}\medskip\noindent Here $H=\langle K^+,T\rangle$, where\par\nopagebreak
\begin{SecEightMatrixBlock}
\SecEightMatrix{T}{\diag(1,\omega,1,1,1,1)}
\end{SecEightMatrixBlock}

\par\smallskip\noindent A basis of $W_H$ is $\{P_i\mid i\in\{1, 4, 5, 6, 9, 10\}\}$.

\par\Needspace{3\baselineskip}\medskip\noindent\textbf{\boldmath 73.\enspace $(G_s,G)=(D_8,D_8)$.}\label{fam:fourfold-no-73}

This is the symplectic family with generic non-symplectic index $1$,
described in \cite[(3.7)]{KOIKE202512}.  Its proper subfamilies are families~74--76.

Let $K^+\subset\GL(6,\CC)$ denote the strict symplectic lift associated with Koike's component (3.7) in \cite{KOIKE202512}.  We use the following basis $P_1,\ldots,P_{12}$ of $W_{K^+}$:
\begin{equation*}\begin{aligned}
(P_1,\ldots,P_{12})&=\bigl({}x_{1}^{3},\ x_{1}^{2}x_{2},\ x_{1}x_{2}^{2},\ x_{1}x_{3}^{2},\ x_{1}x_{4}^{2},\ x_{1}x_{5}x_{6},\\[-2pt]
&\quad x_{2}^{3},\ x_{2}x_{3}^{2},\ x_{2}x_{4}^{2},\ x_{2}x_{5}x_{6},\ x_{3}x_{5}^{2}+x_{3}x_{6}^{2},\ x_{4}(x_{6}^{2}-x_{5}^{2})\bigr)
\end{aligned}\end{equation*}
\par\Needspace{3\baselineskip}\medskip\noindent\textbf{\boldmath 74.\enspace $(G_s,G)=(D_8,\allowbreak D_8\times C_2)$.}\label{fam:fourfold-no-74}
$G\text{-ID}=[16,11]$, $H\text{-ID}=[48,45]$.
The non-symplectic index is $2$, and the family dimension is $4$.  The projective action is liftable.
\par\Needspace{4\baselineskip}\medskip\noindent Here $H=\langle K^+,T\rangle$, where\par\nopagebreak
\begin{SecEightMatrixBlock}
\SecEightMatrix{T}{\diag\!\left(1,1,-1,-1,\smat{0&-\zeta_4\\\zeta_4&0}\right)}
\end{SecEightMatrixBlock}

\par\smallskip\noindent A basis of $W_H$ is $\{P_i\mid 1\le i\le 11\}$.
\par\Needspace{3\baselineskip}\medskip\noindent\textbf{\boldmath 75.\enspace $(G_s,G)=(D_8,\allowbreak (C_4\times C_2):C_2)$.}\label{fam:fourfold-no-75}
$G\text{-ID}=[16,13]$, $H\text{-ID}=[48,47]$.
The non-symplectic index is $2$, and the family dimension is $3$.  The projective action is liftable.
\par\Needspace{4\baselineskip}\medskip\noindent Here $H=\langle K^+,T\rangle$, where\par\nopagebreak
\begin{SecEightMatrixBlock}
\SecEightMatrix{T}{\diag(1,1,-1,-1,-\zeta_4,-\zeta_4)}
\end{SecEightMatrixBlock}

\par\smallskip\noindent A basis of $W_H$ is $\{P_i\mid i\in\{1,\allowbreak \ldots,\allowbreak 5,\allowbreak 7,\allowbreak 8,\allowbreak 9,\allowbreak 11,\allowbreak 12\}\}$.

\par\Needspace{3\baselineskip}\medskip\noindent\textbf{\boldmath 76.\enspace $(G_s,G)=(D_8,\allowbreak D_{16})$.}\label{fam:fourfold-no-76}
$G\text{-ID}=[16,7]$, $H\text{-ID}=[48,25]$.
The non-symplectic index is $2$, and the family dimension is $2$.  The projective action is liftable.
\par\Needspace{4\baselineskip}\medskip\noindent Here $H=\langle K^+,T\rangle$, where\par\nopagebreak
\begin{SecEightMatrixBlock}
\SecEightMatrix{T}{\diag\!\left(-1,1,\smat{0&1\\1&0},
\smat{0&\zeta_8\\-\zeta_8^3&0}\right)}
\end{SecEightMatrixBlock}

\par\medskip\noindent A basis of $W_H$ is\par\nopagebreak
\begin{SecEightBasisBlock}
\SecEightBasisLine{P_{2},}
\SecEightBasisLine{P_{4}-P_{5},}
\SecEightBasisLine{P_{7},}
\SecEightBasisLine{P_{8}+P_{9},}
\SecEightBasisLine{P_{10},}
\SecEightBasisLine{P_{11}-\zeta_{4}\,P_{12}.}
\end{SecEightBasisBlock}

\par\Needspace{3\baselineskip}\medskip\noindent\textbf{\boldmath 77.\enspace $(G_s,G)=(C_4,C_4)$.}\label{fam:fourfold-no-77}

This is the symplectic family with generic non-symplectic index $1$,
described in \cite[(3.5)]{KOIKE202512}.  Its proper subfamilies are families~78--82.

Let $K^+\subset\GL(6,\CC)$ denote the strict symplectic lift associated with Koike's $C_4$ component (3.5) in \cite{KOIKE202512}.  We use the following basis $P_1,\ldots,P_{16}$ of $W_{K^+}$:
\begin{equation*}\begin{aligned}
(P_1,\ldots,P_{16})&=\bigl({}x_{1}^{3},\ x_{1}^{2}x_{2},\ x_{1}x_{2}^{2},\ x_{1}x_{3}^{2},\ x_{1}x_{3}x_{4},\ x_{1}x_{4}^{2},\\[-2pt]
&\quad x_{1}x_{5}x_{6},\ x_{2}^{3},\ x_{2}x_{3}^{2},\ x_{2}x_{3}x_{4},\ x_{2}x_{4}^{2},\ x_{2}x_{5}x_{6},\\[-2pt]
&\quad x_{3}x_{5}^{2},\ x_{3}x_{6}^{2},\ x_{4}x_{5}^{2},\ x_{4}x_{6}^{2}\bigr)
\end{aligned}\end{equation*}

\par\Needspace{3\baselineskip}\medskip\noindent\textbf{\boldmath 78.\enspace $(G_s,G)=(C_4,D_8)$, family~1.}\label{fam:fourfold-no-78}
$G\text{-ID}=[8,3]$, $H\text{-ID}=[24,10]$.
The non-symplectic index is $2$, and the family dimension is $5$.  The projective action is liftable.
This is a proper subfamily of family~77.

\par\Needspace{4\baselineskip}\medskip\noindent Here $H=\langle K^+,T\rangle$, where\par\nopagebreak
\begin{SecEightMatrixBlock}
\SecEightMatrix{T}{\diag(1,1,-1,-1,\smat{0&1\\1&0})}
\end{SecEightMatrixBlock}

\par\medskip\noindent A basis of $W_H$ is\par\nopagebreak
\begin{SecEightBasisBlock}
\SecEightBasisLine{P_i\ (1\le i\le 12),}
\SecEightBasisLine{P_{13}-P_{14},}
\SecEightBasisLine{P_{15}-P_{16}.}
\end{SecEightBasisBlock}

\par\Needspace{3\baselineskip}\medskip\noindent\textbf{\boldmath 79.\enspace $(G_s,G)=(C_4,C_4\times C_2)$.}\label{fam:fourfold-no-79}
$G\text{-ID}=[8,2]$, $H\text{-ID}=[24,9]$.
The non-symplectic index is $2$, and the family dimension is $4$.  The projective action is liftable.
This is a proper subfamily of family~77.

\par\Needspace{4\baselineskip}\medskip\noindent Here $H=\langle K^+,T\rangle$, where\par\nopagebreak
\begin{SecEightMatrixBlock}
\SecEightMatrix{T}{\diag(1,1,-1,-1,-\zeta_4,-\zeta_4)}
\end{SecEightMatrixBlock}

\par\smallskip\noindent A basis of $W_H$ is $\{P_i\mid i\in\{1,\allowbreak \ldots,\allowbreak 6,\allowbreak 8,\allowbreak \ldots,\allowbreak 11,\allowbreak 13,\allowbreak \ldots,\allowbreak 16\}\}$.

\par\Needspace{3\baselineskip}\medskip\noindent\textbf{\boldmath 80.\enspace $(G_s,G)=(C_4,D_8)$, family~2.}\label{fam:fourfold-no-80}
$G\text{-ID}=[8,3]$, $H\text{-ID}=[24,10]$.
The non-symplectic index is $2$, and the family dimension is $3$.  The projective action is liftable.
This is a proper subfamily of family~77.

\par\Needspace{4\baselineskip}\medskip\noindent Here $H=\langle K^+,T\rangle$, where\par\nopagebreak
\begin{SecEightMatrixBlock}
\SecEightMatrix{T}{\diag\!\left(1,-1,1,-1,\smat{0&-1\\-1&0}\right)}
\end{SecEightMatrixBlock}

\par\medskip\noindent A basis of $W_H$ is\par\nopagebreak
\begin{SecEightBasisBlock}
\SecEightBasisLine{P_{1},}
\SecEightBasisLine{P_{3},}
\SecEightBasisLine{P_{4},}
\SecEightBasisLine{P_{6},}
\SecEightBasisLine{P_{7},}
\SecEightBasisLine{P_{10},}
\SecEightBasisLine{P_{13}+P_{14},}
\SecEightBasisLine{P_{15}-P_{16}.}
\end{SecEightBasisBlock}

\par\Needspace{3\baselineskip}\medskip\noindent\textbf{\boldmath 81.\enspace $(G_s,G)=(C_4,C_4\times C_4)$.}\label{fam:fourfold-no-81}
$G\text{-ID}=[16,2]$, $H\text{-ID}=[48,20]$.
The non-symplectic index is $4$, and the family dimension is $2$.  The projective action is liftable.
This is a proper subfamily of family~77.

\par\Needspace{4\baselineskip}\medskip\noindent Here $H=\langle K^+,T\rangle$, where\par\nopagebreak
\begin{SecEightMatrixBlock}
\SecEightMatrix{T}{\diag(1,1,-1,1,-\zeta_4,1)}
\end{SecEightMatrixBlock}

\par\smallskip\noindent A basis of $W_H$ is $\{P_i\mid i\in\{1,\allowbreak \ldots,\allowbreak 4,\allowbreak 6,\allowbreak 8,\allowbreak 9,\allowbreak 11,\allowbreak 13,\allowbreak 16\}\}$.

\par\Needspace{3\baselineskip}\medskip\noindent\textbf{\boldmath 82.\enspace $(G_s,G)=(C_4,C_8:C_2)$.}\label{fam:fourfold-no-82}
$G\text{-ID}=[16,6]$, $H\text{-ID}=[48,24]$.
The non-symplectic index is $4$, and the family dimension is $2$.  The projective action is liftable.
This is a proper subfamily of family~77.

\par\Needspace{4\baselineskip}\medskip\noindent Here $H=\langle K^+,T\rangle$, where\par\nopagebreak
\begin{SecEightMatrixBlock}
\SecEightMatrix{T}{\diag\!\left(
\smat{13/17&6/17\\20/17&-13/17},
\smat{\zeta_4&(10/3)\zeta_4\\0&-\zeta_4},
\smat{0&-(4/3)\zeta_8^3\\-(3/4)\zeta_8^3&0}\right)}
\end{SecEightMatrixBlock}

\par\medskip\noindent A basis of $W_H$ is\par\nopagebreak
\begin{SecEightBasisBlock}
\SecEightBasisLine{27\,P_{1}+117\,P_{3}+86\,P_{8},}
\SecEightBasisLine{18\,P_{2}-27\,P_{3}-26\,P_{8},}
\SecEightBasisLine{P_{4}-5\,P_{9},}
\SecEightBasisLine{9\,P_{5}-85\,P_{9}+6\,P_{10},}
\SecEightBasisLine{27\,P_{6}-850\,P_{9}+510\,P_{10}-135\,P_{11},}
\SecEightBasisLine{160\,P_{13}-48\,P_{15}+27\,P_{16},}
\SecEightBasisLine{30\,P_{14}+16\,P_{15}-9\,P_{16}.}
\end{SecEightBasisBlock}

\par\Needspace{3\baselineskip}\medskip\noindent\textbf{\boldmath 83.\enspace $(G_s,G)=(S_3,S_3)$.}\label{fam:fourfold-no-83}

This is the symplectic family with generic non-symplectic index $1$,
described in \cite[(3.6)]{KOIKE202512}.  Its proper subfamilies are families~84--87.

Let $K^+\subset\GL(6,\CC)$ denote the strict symplectic lift associated with Koike's initial $S_3$ component (3.6) in \cite{KOIKE202512}.  We use the following basis $P_1,\ldots,P_{14}$ of $W_{K^+}$:
\begin{SecEightAlignedBasis}{l@{\qquad}l}
\multicolumn{2}{@{}l@{}}{\PBasisLabel{1}=x_{4}\,x_{5}\,x_{6},\qquad \PBasisLabel{2}=x_{5}\,x_{6}^{2}+x_{5}^{2}\,x_{6}+x_{4}\,x_{6}^{2}+x_{4}\,x_{5}^{2}+x_{4}^{2}\,x_{6}+x_{4}^{2}\,x_{5},}\\
\PBasisLabel{3}=x_{6}^{3}+x_{5}^{3}+x_{4}^{3}, & \PBasisLabel{4}=x_{3}\,x_{4}\,x_{5}+x_{2}\,x_{4}\,x_{6}+x_{1}\,x_{5}\,x_{6},\\
\multicolumn{2}{@{}l@{}}{\PBasisLabel{5}=x_{3}\,x_{5}^{2}+x_{3}\,x_{4}^{2}+x_{2}\,x_{6}^{2}+x_{2}\,x_{4}^{2}+x_{1}\,x_{6}^{2}+x_{1}\,x_{5}^{2},}\\
\multicolumn{2}{@{}l@{}}{\PBasisLabel{6}=x_{3}\,x_{5}\,x_{6}+x_{3}\,x_{4}\,x_{6}+x_{2}\,x_{5}\,x_{6}+x_{2}\,x_{4}\,x_{5}+x_{1}\,x_{4}\,x_{6}+x_{1}\,x_{4}\,x_{5},}\\
\PBasisLabel{7}=x_{3}\,x_{6}^{2}+x_{2}\,x_{5}^{2}+x_{1}\,x_{4}^{2}, & \PBasisLabel{8}=x_{2}\,x_{3}\,x_{4}+x_{1}\,x_{3}\,x_{5}+x_{1}\,x_{2}\,x_{6},\\
\multicolumn{2}{@{}l@{}}{\PBasisLabel{9}=x_{2}\,x_{3}\,x_{6}+x_{2}\,x_{3}\,x_{5}+x_{1}\,x_{3}\,x_{6}+x_{1}\,x_{3}\,x_{4}+x_{1}\,x_{2}\,x_{5}+x_{1}\,x_{2}\,x_{4},\qquad \PBasisLabel{10}=x_{1}\,x_{2}\,x_{3},}\\
\multicolumn{2}{@{}l@{}}{\PBasisLabel{11}=x_{3}^{2}\,x_{5}+x_{3}^{2}\,x_{4}+x_{2}^{2}\,x_{6}+x_{2}^{2}\,x_{4}+x_{1}^{2}\,x_{6}+x_{1}^{2}\,x_{5},\qquad \PBasisLabel{12}=x_{3}^{2}\,x_{6}+x_{2}^{2}\,x_{5}+x_{1}^{2}\,x_{4},}\\
\multicolumn{2}{@{}l@{}}{\PBasisLabel{13}=x_{2}\,x_{3}^{2}+x_{2}^{2}\,x_{3}+x_{1}\,x_{3}^{2}+x_{1}\,x_{2}^{2}+x_{1}^{2}\,x_{3}+x_{1}^{2}\,x_{2},\qquad \PBasisLabel{14}=x_{3}^{3}+x_{2}^{3}+x_{1}^{3}.}
\end{SecEightAlignedBasis}

\par\Needspace{3\baselineskip}\medskip\noindent\textbf{\boldmath 84.\enspace $(G_s,G)=(S_3,D_{12})$, family~1.}\label{fam:fourfold-no-84}
$G\text{-ID}=[12,4]$, $H\text{-ID}=[36,12]$.
The non-symplectic index is $2$, and the family dimension is $3$.  The projective action is liftable.
This is a proper subfamily of family~83.

\par\Needspace{4\baselineskip}\medskip\noindent Here $H=\langle K^+,T\rangle$, where\par\nopagebreak
\begin{SecEightMatrixBlock}
\SecEightMatrix{T}{\left(\begin{smallmatrix}-1&0&0&0&0&0\\0&-1&0&0&0&0\\0&0&-1&0&0&0\\2&0&0&1&0&0\\0&2&0&0&1&0\\0&0&2&0&0&1\end{smallmatrix}\right)}
\end{SecEightMatrixBlock}

\par\medskip\noindent A basis of $W_H$ is\par\nopagebreak
\begin{SecEightBasisBlock}
\SecEightBasisLine{P_{1},}
\SecEightBasisLine{P_{2},}
\SecEightBasisLine{P_{3},}
\SecEightBasisLine{-2\,P_{4}+P_{8},}
\SecEightBasisLine{-P_{5}-P_{6}+P_{9},}
\SecEightBasisLine{-2\,P_{6}+P_{11},}
\SecEightBasisLine{-2\,P_{7}+P_{12}.}
\end{SecEightBasisBlock}

\par\Needspace{3\baselineskip}\medskip\noindent\textbf{\boldmath 85.\enspace $(G_s,G)=(S_3,D_{12})$, family~2.}\label{fam:fourfold-no-85}
$G\text{-ID}=[12,4]$, $H\text{-ID}=[36,12]$.
The non-symplectic index is $2$, and the family dimension is $3$.  The projective action is liftable.
This is a proper subfamily of family~83.

\par\Needspace{4\baselineskip}\medskip\noindent Here $H=\langle K^+,T\rangle$, where\par\nopagebreak
\begin{SecEightMatrixBlock}
\SecEightMatrix{T}{\left(\begin{smallmatrix}\frac{1}{3}&-\frac{2}{3}&-\frac{2}{3}&0&0&0\\-\frac{2}{3}&\frac{1}{3}&-\frac{2}{3}&0&0&0\\-\frac{2}{3}&-\frac{2}{3}&\frac{1}{3}&0&0&0\\\frac{2}{3}&\frac{2}{3}&\frac{2}{3}&1&0&0\\\frac{2}{3}&\frac{2}{3}&\frac{2}{3}&0&1&0\\\frac{2}{3}&\frac{2}{3}&\frac{2}{3}&0&0&1\end{smallmatrix}\right)}
\end{SecEightMatrixBlock}

\par\medskip\noindent A basis of $W_H$ is\par\nopagebreak
\begin{SecEightBasisBlock}
\SecEightBasisLine{P_{1},}
\SecEightBasisLine{P_{2},}
\SecEightBasisLine{P_{3},}
\SecEightBasisLine{-2\,P_{4}+P_{6},}
\SecEightBasisLine{-P_{5}+2\,P_{7},}
\SecEightBasisLine{-4\,P_{4}-P_{5}+P_{8}+P_{9},}
\SecEightBasisLine{-2\,P_{8}+P_{11},}
\SecEightBasisLine{P_{8}-P_{9}+P_{12},}
\SecEightBasisLine{12\,P_{10}-3\,P_{13}+2\,P_{14}.}
\end{SecEightBasisBlock}

\par\Needspace{3\baselineskip}\medskip\noindent\textbf{\boldmath 86.\enspace $(G_s,G)=(S_3,S_3\times C_3)$, family~1.}\label{fam:fourfold-no-86}
$G\text{-ID}=[18,3]$, $H\text{-ID}=[54,12]$.
The non-symplectic index is $3$, and the family dimension is $3$.  The projective action is liftable.
This is a proper subfamily of family~83.

\par\Needspace{6\baselineskip}\medskip\noindent Here $H=\langle K^+,T\rangle$, where\par\nopagebreak
\SecEightDenseMatrix{T}{\left(\begin{smallmatrix}1&0&0&0&0&0\\0&1&0&0&0&0\\0&0&1&0&0&0\\-\frac{1}{3}+\frac{1}{3}\,\omega^{2}&-\frac{1}{3}+\frac{1}{3}\,\omega^{2}&-\frac{1}{3}+\frac{1}{3}\,\omega^{2}&\frac{2}{3}+\frac{1}{3}\,\omega^{2}&-\frac{1}{3}+\frac{1}{3}\,\omega^{2}&-\frac{1}{3}+\frac{1}{3}\,\omega^{2}\\-\frac{1}{3}+\frac{1}{3}\,\omega^{2}&-\frac{1}{3}+\frac{1}{3}\,\omega^{2}&-\frac{1}{3}+\frac{1}{3}\,\omega^{2}&-\frac{1}{3}+\frac{1}{3}\,\omega^{2}&\frac{2}{3}+\frac{1}{3}\,\omega^{2}&-\frac{1}{3}+\frac{1}{3}\,\omega^{2}\\-\frac{1}{3}+\frac{1}{3}\,\omega^{2}&-\frac{1}{3}+\frac{1}{3}\,\omega^{2}&-\frac{1}{3}+\frac{1}{3}\,\omega^{2}&-\frac{1}{3}+\frac{1}{3}\,\omega^{2}&-\frac{1}{3}+\frac{1}{3}\,\omega^{2}&\frac{2}{3}+\frac{1}{3}\,\omega^{2}\end{smallmatrix}\right)}

\par\medskip\noindent A basis of $W_H$ is\par\nopagebreak
\begin{SecEightBasisBlock}
\SecEightBasisLine{P_{2},}
\SecEightBasisLine{6\,P_{1}+P_{3},}
\SecEightBasisLine{6\,P_{1}-2\,P_{4}+P_{5},}
\SecEightBasisLine{3\,P_{1}+P_{4}-P_{6}+P_{7},}
\SecEightBasisLine{2\,P_{4}-P_{6}-2\,P_{8}+P_{9},}
\SecEightBasisLine{-P_{1}+P_{4}-P_{8}+P_{10},}
\SecEightBasisLine{4\,P_{4}-2\,P_{6}-P_{11}+2\,P_{12},}
\SecEightBasisLine{-6\,P_{1}+6\,P_{4}-4\,P_{8}-P_{11}+P_{13},}
\SecEightBasisLine{-6\,P_{1}+6\,P_{4}-3\,P_{11}+2\,P_{14}.}
\end{SecEightBasisBlock}

\par\Needspace{3\baselineskip}\medskip\noindent\textbf{\boldmath 87.\enspace $(G_s,G)=(S_3,S_3\times C_3)$, family~2.}\label{fam:fourfold-no-87}
$G\text{-ID}=[18,3]$, $H\text{-ID}=[54,12]$.
The non-symplectic index is $3$, and the family dimension is $2$.  The projective action is liftable.
This is a proper subfamily of family~83.

\par\Needspace{6\baselineskip}\medskip\noindent Here $H=\langle K^+,T\rangle$, where\par\nopagebreak
\SecEightDenseMatrix{T}{\left(\begin{smallmatrix}1&0&0&0&0&0\\0&1&0&0&0&0\\0&0&1&0&0&0\\-\frac{5}{3}-\frac{1}{3}\,\omega^{2}&\frac{1}{3}+\frac{2}{3}\,\omega^{2}&\frac{1}{3}+\frac{2}{3}\,\omega^{2}&-\frac{2}{3}-\frac{1}{3}\,\omega^{2}&\frac{1}{3}+\frac{2}{3}\,\omega^{2}&\frac{1}{3}+\frac{2}{3}\,\omega^{2}\\\frac{1}{3}+\frac{2}{3}\,\omega^{2}&-\frac{5}{3}-\frac{1}{3}\,\omega^{2}&\frac{1}{3}+\frac{2}{3}\,\omega^{2}&\frac{1}{3}+\frac{2}{3}\,\omega^{2}&-\frac{2}{3}-\frac{1}{3}\,\omega^{2}&\frac{1}{3}+\frac{2}{3}\,\omega^{2}\\\frac{1}{3}+\frac{2}{3}\,\omega^{2}&\frac{1}{3}+\frac{2}{3}\,\omega^{2}&-\frac{5}{3}-\frac{1}{3}\,\omega^{2}&\frac{1}{3}+\frac{2}{3}\,\omega^{2}&\frac{1}{3}+\frac{2}{3}\,\omega^{2}&-\frac{2}{3}-\frac{1}{3}\,\omega^{2}\end{smallmatrix}\right)}

\par\medskip\noindent A basis of $W_H$ is\par\nopagebreak
\begin{SecEightBasisBlock}
\SecEightBasisLine{P_{2},}
\SecEightBasisLine{6\,P_{1}+P_{3},}
\SecEightBasisLine{18\,P_{1}-2\,P_{4}-P_{5}+P_{6}+2\,P_{7},}
\SecEightBasisLine{-P_{1}+P_{4}-P_{8}+P_{10},}
\SecEightBasisLine{P_{5}+2\,P_{6}-2\,P_{9}-P_{11}+P_{13},}
\SecEightBasisLine{-42\,P_{1}+6\,P_{4}+3\,P_{5}-3\,P_{6}-6\,P_{12}+2\,P_{14}.}
\end{SecEightBasisBlock}

\par\Needspace{3\baselineskip}\medskip\noindent\textbf{\boldmath 88.\enspace $(G_s,G)=(S_3,D_{12})$, family~3.}\label{fam:fourfold-no-88}

This is the symplectic family with generic non-symplectic index $2$,
described in \cite{KOIKE2026}.  Its proper subfamilies are families~89--96.

Let $K^+\subset\GL(6,\CC)$ denote the strict symplectic lift associated with the additional $S_3$ component (0.1) in the corrigendum \cite{KOIKE2026}.  Put
\[
s=x_1+x_2+x_3,\qquad q=x_1^2+x_2^2+x_3^2,\qquad c=x_1^3+x_2^3+x_3^3.
\]
The following is a basis of $W_{K^+}$:
\begin{SecEightAlignedBasis}{l@{\qquad}l@{\qquad}l@{\qquad}l}
\PBasisLabel{1}=s^3,  & \PBasisLabel{2}=sq,  & \PBasisLabel{3}=c,  & \PBasisLabel{4}=s^2x_4,\\
\PBasisLabel{5}=s^2x_5, & \PBasisLabel{6}=qx_4, & \PBasisLabel{7}=qx_5, & \PBasisLabel{8}=sx_4^2,\\
\PBasisLabel{9}=sx_4x_5, & \PBasisLabel{10}=sx_5^2, & \PBasisLabel{11}=x_4^3, & \PBasisLabel{12}=x_4^2x_5,\\
\PBasisLabel{13}=x_4x_5^2, & \PBasisLabel{14}=x_5^3, & \PBasisLabel{15}=sx_6^2, & \PBasisLabel{16}=x_4x_6^2,\\
\PBasisLabel{17}=x_5x_6^2. & & &
\end{SecEightAlignedBasis}
Here $H=\langle K^+,T\rangle$, where $T=\diag(1,1,1,1,1,-1)$ is a
non-symplectic involution.  It fixes $W_{K^+}$ pointwise, so $W_H=W_{K^+}$.

\par\Needspace{3\baselineskip}\medskip\noindent\textbf{\boldmath 89.\enspace $(G_s,G)=(S_3,S_3\times C_4)$.}\label{fam:fourfold-no-89}
$G\text{-ID}=[24,5]$, $H\text{-ID}=[72,27]$.
The non-symplectic index is $4$, and the family dimension is $3$.  The projective action is liftable.
This is a proper subfamily of family~88.

\par\Needspace{4\baselineskip}\medskip\noindent Here $H=\langle K^+,T\rangle$, where\par\nopagebreak
\begin{SecEightMatrixBlock}
\SecEightMatrix{T}{\diag(1,1,1,1,-1,-\zeta_4)}
\end{SecEightMatrixBlock}

\par\smallskip\noindent A basis of $W_H$ is $\{P_i\mid i\in\{1,\allowbreak \ldots,\allowbreak 4,\allowbreak 6,\allowbreak 8,\allowbreak 10,\allowbreak 11,\allowbreak 13,\allowbreak 17\}\}$.

\par\Needspace{3\baselineskip}\medskip\noindent\textbf{\boldmath 90.\enspace $(G_s,G)=(S_3,S_3\times C_6)$, family~1.}\label{fam:fourfold-no-90}
$G\text{-ID}=[36,12]$, $H\text{-ID}=[108,42]$.
The non-symplectic index is $6$, and the family dimension is $3$.  The projective action is liftable.
This is a proper subfamily of family~88.

\par\Needspace{6\baselineskip}\medskip\noindent Here $H=\langle K^+,T\rangle$, where\par\nopagebreak
\SecEightDenseMatrix{T}{\diag\!\left(
\smat{(1+2\omega^2)/3&(1-\omega^2)/3&(1-\omega^2)/3\\
(1-\omega^2)/3&(1+2\omega^2)/3&(1-\omega^2)/3\\
(1-\omega^2)/3&(1-\omega^2)/3&(1+2\omega^2)/3},1,1,-1\right)}

\par\medskip\noindent A basis of $W_H$ is\par\nopagebreak
\begin{SecEightBasisBlock}
\SecEightBasisLine{P_{1},}
\SecEightBasisLine{P_{2}-P_{3},}
\SecEightBasisLine{P_{4},}
\SecEightBasisLine{P_{5},}
\SecEightBasisLine{P_{8},}
\SecEightBasisLine{P_{9},}
\SecEightBasisLine{P_{10},}
\SecEightBasisLine{P_{11},}
\SecEightBasisLine{P_{12},}
\SecEightBasisLine{P_{13},}
\SecEightBasisLine{P_{14},}
\SecEightBasisLine{P_{15},}
\SecEightBasisLine{P_{16},}
\SecEightBasisLine{P_{17}.}
\end{SecEightBasisBlock}

\par\Needspace{3\baselineskip}\medskip\noindent\textbf{\boldmath 91.\enspace $(G_s,G)=(S_3,S_3\times C_6)$, family~2.}\label{fam:fourfold-no-91}
$G\text{-ID}=[36,12]$, $H\text{-ID}=[108,42]$.
The non-symplectic index is $6$, and the family dimension is $3$.  The projective action is liftable.
This is a proper subfamily of family~88.

\par\Needspace{6\baselineskip}\medskip\noindent Here $H=\langle K^+,T\rangle$, where\par\nopagebreak
\SecEightDenseMatrix{T}{\diag\!\left(
\smat{\frac23+\frac13\omega^2&-\frac13+\frac13\omega^2&-\frac13+\frac13\omega^2&0&0\\
-\frac13+\frac13\omega^2&\frac23+\frac13\omega^2&-\frac13+\frac13\omega^2&0&0\\
-\frac13+\frac13\omega^2&-\frac13+\frac13\omega^2&\frac23+\frac13\omega^2&0&0\\
1-\omega^2&1-\omega^2&1-\omega^2&1&0\\
-\frac32+\frac32\omega^2&-\frac32+\frac32\omega^2&-\frac32+\frac32\omega^2&0&1},-1\right)}

\par\medskip\noindent A basis of $W_H$ is\par\nopagebreak
\begin{SecEightBasisBlock}
\SecEightBasisLine{2\,P_{1}-9\,P_{2}+9\,P_{3},}
\SecEightBasisLine{2\,P_{2}-2\,P_{3}-6\,P_{4}+9\,P_{5}+6\,P_{6}-9\,P_{7}+12\,P_{8}-36\,P_{9}+27\,P_{10},}
\SecEightBasisLine{3\,P_{6}-P_{4},}
\SecEightBasisLine{-P_{5}+3\,P_{7},}
\SecEightBasisLine{P_{11},}
\SecEightBasisLine{P_{12},}
\SecEightBasisLine{P_{13},}
\SecEightBasisLine{P_{14},}
\SecEightBasisLine{P_{16},}
\SecEightBasisLine{P_{17}.}
\end{SecEightBasisBlock}

\par\Needspace{3\baselineskip}\medskip\noindent\textbf{\boldmath 92.\enspace $(G_s,G)=(S_3,S_3\times C_6)$, family~3.}\label{fam:fourfold-no-92}
$G\text{-ID}=[36,12]$, $H\text{-ID}=[108,42]$.
The non-symplectic index is $6$, and the family dimension is $2$.  The projective action is liftable.
This is a proper subfamily of family~88.

\par\Needspace{6\baselineskip}\medskip\noindent Here $H=\langle K^+,T\rangle$, where\par\nopagebreak
\SecEightDenseMatrix{T}{\diag\!\left(
\smat{(-1+\omega^2)/3&(-1-2\omega^2)/3&(-1-2\omega^2)/3\\
(-1-2\omega^2)/3&(-1+\omega^2)/3&(-1-2\omega^2)/3\\
(-1-2\omega^2)/3&(-1-2\omega^2)/3&(-1+\omega^2)/3},\omega^2,1,-1\right)}

\par\medskip\noindent A basis of $W_H$ is\par\nopagebreak
\begin{SecEightBasisBlock}
\SecEightBasisLine{P_{1},}
\SecEightBasisLine{P_{3}-P_{2},}
\SecEightBasisLine{-P_{4}+3\,P_{6},}
\SecEightBasisLine{P_{9},}
\SecEightBasisLine{P_{11},}
\SecEightBasisLine{P_{14},}
\SecEightBasisLine{P_{17}.}
\end{SecEightBasisBlock}

\par\Needspace{3\baselineskip}\medskip\noindent\textbf{\boldmath 93.\enspace $(G_s,G)=(S_3,S_3\times C_8)$.}\label{fam:fourfold-no-93}
$G\text{-ID}=[48,4]$, $H\text{-ID}=[144,69]$.
The non-symplectic index is $8$, and the family dimension is $1$.  The projective action is liftable.
This is a proper subfamily of family~88.

\par\Needspace{4\baselineskip}\medskip\noindent Here $H=\langle K^+,T\rangle$, where\par\nopagebreak
\begin{SecEightMatrixBlock}
\SecEightMatrix{T}{\diag(1,1,1,-1,-\zeta_4,-\zeta_8)}
\end{SecEightMatrixBlock}

\par\smallskip\noindent A basis of $W_H$ is $\{P_i\mid i\in\{1,2,3,8,13,17\}\}$.

\par\Needspace{3\baselineskip}\medskip\noindent\textbf{\boldmath 94.\enspace $(G_s,G)=(S_3,S_3\times C_{12})$, family~1.}\label{fam:fourfold-no-94}
$G\text{-ID}=[72,27]$, $H\text{-ID}=[216,136]$.
The non-symplectic index is $12$, and the family dimension is $1$.  The projective action is liftable.
This is a proper subfamily of family~88.

\par\Needspace{4\baselineskip}\medskip\noindent Here $H=\langle K^+,T\rangle$, where\par\nopagebreak
\begin{SecEightMatrixBlock}
\SecEightMatrix{T}{\diag\!\left(
\smat{(-\omega+\omega^2)/3&(-\omega-2\omega^2)/3&(-\omega-2\omega^2)/3\\
(-\omega-2\omega^2)/3&(-\omega+\omega^2)/3&(-\omega-2\omega^2)/3\\
(-\omega-2\omega^2)/3&(-\omega-2\omega^2)/3&(-\omega+\omega^2)/3},1,-1,\zeta_4\right)}
\end{SecEightMatrixBlock}

\par\medskip\noindent A basis of $W_H$ is\par\nopagebreak
\begin{SecEightBasisBlock}
\SecEightBasisLine{P_{1},}
\SecEightBasisLine{P_{2}-P_{3},}
\SecEightBasisLine{P_{4},}
\SecEightBasisLine{P_{8},}
\SecEightBasisLine{P_{10},}
\SecEightBasisLine{P_{11},}
\SecEightBasisLine{P_{13},}
\SecEightBasisLine{P_{17}.}
\end{SecEightBasisBlock}

\par\Needspace{3\baselineskip}\medskip\noindent\textbf{\boldmath 95.\enspace $(G_s,G)=(S_3,S_3\times C_{12})$, family~2.}\label{fam:fourfold-no-95}
$G\text{-ID}=[72,27]$, $H\text{-ID}=[216,136]$.
The non-symplectic index is $12$, and the family dimension is $1$.  The projective action is liftable.
This is a proper subfamily of family~88.

\par\Needspace{4\baselineskip}\medskip\noindent Here $H=\langle K^+,T\rangle$, where\par\nopagebreak
\begin{SecEightMatrixBlock}
\SecEightMatrix{T}{\diag\!\left(
\smat{(1-\omega^2)/3&(-2-\omega^2)/3&(-2-\omega^2)/3\\
(-2-\omega^2)/3&(1-\omega^2)/3&(-2-\omega^2)/3\\
(-2-\omega^2)/3&(-2-\omega^2)/3&(1-\omega^2)/3},-1,1,\zeta_4\right)}
\end{SecEightMatrixBlock}

\par\medskip\noindent A basis of $W_H$ is\par\nopagebreak
\begin{SecEightBasisBlock}
\SecEightBasisLine{P_{1},}
\SecEightBasisLine{P_{2}-P_{3},}
\SecEightBasisLine{-P_{5}+3\,P_{7},}
\SecEightBasisLine{P_{12},}
\SecEightBasisLine{P_{14},}
\SecEightBasisLine{P_{16}.}
\end{SecEightBasisBlock}

\par\Needspace{3\baselineskip}\medskip\noindent\textbf{\boldmath 96.\enspace $(G_s,G)=(S_3,S_3\times C_{24})$.}\label{fam:fourfold-no-96}
$G\text{-ID}=[144,69]$, $H\text{-ID}=[432,464]$.
See
\cite[\S6.1, (3)]{yang2024automorphism} and
\cite[Proposition~7.1]{FWZ26}.

\par\Needspace{3\baselineskip}\medskip\noindent\textbf{\boldmath 97.\enspace $(G_s,G)=(C_2^2,C_2^2)$.}\label{fam:fourfold-no-97}

This is the symplectic family with generic non-symplectic index $1$,
described in \cite[(3.2)]{KOIKE202512}.  Its proper subfamilies are families~98--105.

Let $K^+\subset\GL(6,\CC)$ denote the strict symplectic lift associated with Koike's $C_2^2$ component (3.2) in \cite{KOIKE202512}.  We use the following basis $P_1,\ldots,P_{20}$ of $W_{K^+}$:
\begin{equation*}\begin{aligned}
(P_1,\ldots,P_{20})&=\bigl({}x_{1}^{3},\ x_{1}^{2}x_{2},\ x_{1}^{2}x_{3},\ x_{1}x_{2}^{2},\ x_{1}x_{2}x_{3},\ x_{1}x_{3}^{2},\\[-2pt]
&\quad x_{1}x_{4}^{2},\ x_{1}x_{5}^{2},\ x_{1}x_{6}^{2},\ x_{2}^{3},\ x_{2}^{2}x_{3},\ x_{2}x_{3}^{2},\\[-2pt]
&\quad x_{2}x_{4}^{2},\ x_{2}x_{5}^{2},\ x_{2}x_{6}^{2},\ x_{3}^{3},\ x_{3}x_{4}^{2},\ x_{3}x_{5}^{2},\\[-2pt]
&\quad x_{3}x_{6}^{2},\ x_{4}x_{5}x_{6}\bigr)
\end{aligned}\end{equation*}

\par\Needspace{3\baselineskip}\medskip\noindent\textbf{\boldmath 98.\enspace $(G_s,G)=(C_2^2,C_2^3)$, family~1.}\label{fam:fourfold-no-98}
$G\text{-ID}=[8,5]$, $H\text{-ID}=[24,15]$.
The non-symplectic index is $2$, and the family dimension is $7$.  The projective action is liftable.
This is a proper subfamily of family~97.

\par\Needspace{4\baselineskip}\medskip\noindent Here $H=\langle K^+,T\rangle$, where\par\nopagebreak
\begin{SecEightMatrixBlock}
\SecEightMatrix{T}{\diag(1,1,1,1,-1,1)}
\end{SecEightMatrixBlock}

\par\smallskip\noindent A basis of $W_H$ is $\{P_i\mid 1\le i\le 19\}$.

\par\Needspace{3\baselineskip}\medskip\noindent\textbf{\boldmath 99.\enspace $(G_s,G)=(C_2^2,D_8)$.}\label{fam:fourfold-no-99}
$G\text{-ID}=[8,3]$, $H\text{-ID}=[24,10]$.
The non-symplectic index is $2$, and the family dimension is $6$.  The projective action is liftable.
This is a proper subfamily of family~97.

\par\Needspace{4\baselineskip}\medskip\noindent Here $H=\langle K^+,T\rangle$, where\par\nopagebreak
\begin{SecEightMatrixBlock}
\SecEightMatrix{T}{(5\,6)}
\end{SecEightMatrixBlock}

\par\medskip\noindent A basis of $W_H$ is\par\nopagebreak
\begin{SecEightBasisBlock}
\SecEightBasisLine{P_i\ (i\in\{1,2,3,4,5,6,7,10,11,12,13,16,17,20\}),}
\SecEightBasisLine{P_{8}+P_{9},}
\SecEightBasisLine{P_{14}+P_{15},}
\SecEightBasisLine{P_{18}+P_{19}.}
\end{SecEightBasisBlock}

\par\Needspace{3\baselineskip}\medskip\noindent\textbf{\boldmath 100.\enspace $(G_s,G)=(C_2^2,C_2^3)$, family~2.}\label{fam:fourfold-no-100}
$G\text{-ID}=[8,5]$, $H\text{-ID}=[24,15]$.
The non-symplectic index is $2$, and the family dimension is $5$.  The projective action is liftable.
This is a proper subfamily of family~97.

\par\Needspace{4\baselineskip}\medskip\noindent Here $H=\langle K^+,T\rangle$, where\par\nopagebreak
\begin{SecEightMatrixBlock}
\SecEightMatrix{T}{\diag(1,1,-1,1,1,1)}
\end{SecEightMatrixBlock}

\par\smallskip\noindent A basis of $W_H$ is $\{P_i\mid i\in\{1,\allowbreak 2,\allowbreak 4,\allowbreak 6,\allowbreak \ldots,\allowbreak 10,\allowbreak 12,\allowbreak \ldots,\allowbreak 15,\allowbreak 20\}\}$.

\par\Needspace{3\baselineskip}\medskip\noindent\textbf{\boldmath 101.\enspace $(G_s,G)=(C_2^2,C_6\times C_2)$.}\label{fam:fourfold-no-101}
$G\text{-ID}=[12,5]$, $H\text{-ID}=[36,14]$.
The non-symplectic index is $3$, and the family dimension is $4$.  The projective action is liftable.
This is a proper subfamily of family~97.

\par\Needspace{4\baselineskip}\medskip\noindent Here $H=\langle K^+,T\rangle$, where\par\nopagebreak
\begin{SecEightMatrixBlock}
\SecEightMatrix{T}{\diag(\omega,1,1,1,1,1)}
\end{SecEightMatrixBlock}

\par\smallskip\noindent A basis of $W_H$ is $\{P_i\mid i\in\{1,\allowbreak 10,\allowbreak \ldots,\allowbreak 20\}\}$.

\par\Needspace{3\baselineskip}\medskip\noindent\textbf{\boldmath 102.\enspace $(G_s,G)=(C_2^2,A_4)$.}\label{fam:fourfold-no-102}
$G\text{-ID}=[12,3]$, $H\text{-ID}=[36,11]$.
The non-symplectic index is $3$, and the family dimension is $3$.  The projective action is liftable.
This is a proper subfamily of family~97.

\par\Needspace{4\baselineskip}\medskip\noindent Here $H=\langle K^+,T\rangle$, where\par\nopagebreak
\begin{SecEightMatrixBlock}
\SecEightMatrix{T}{\diag\!\left(\omega,\omega^2,\omega^2,
\smat{0&0&-1\\1&0&0\\0&-1&0}\right)}
\end{SecEightMatrixBlock}

\par\medskip\noindent A basis of $W_H$ is\par\nopagebreak
\begin{SecEightBasisBlock}
\SecEightBasisLine{P_{1},}
\SecEightBasisLine{P_{7}+\omega\,P_{8}+\omega^{2}\,P_{9},}
\SecEightBasisLine{P_{10},}
\SecEightBasisLine{P_{11},}
\SecEightBasisLine{P_{12},}
\SecEightBasisLine{P_{13}+\omega^{2}\,P_{14}+\omega\,P_{15},}
\SecEightBasisLine{P_{16},}
\SecEightBasisLine{P_{17}+\omega^{2}\,P_{18}+\omega\,P_{19},}
\SecEightBasisLine{P_{20}.}
\end{SecEightBasisBlock}

\par\Needspace{3\baselineskip}\medskip\noindent\textbf{\boldmath 103.\enspace $(G_s,G)=(C_2^2,D_8\times C_3)$.}\label{fam:fourfold-no-103}
$G\text{-ID}=[24,10]$, $H\text{-ID}=[72,37]$.
The non-symplectic index is $6$, and the family dimension is $3$.  The projective action is liftable.
This is a proper subfamily of family~97.

\par\Needspace{4\baselineskip}\medskip\noindent Here $H=\langle K^+,T\rangle$, where\par\nopagebreak
\begin{SecEightMatrixBlock}
\SecEightMatrix{T}{\diag\!\left(\omega^2,1,1,1,\smat{0&-1\\-1&0}\right)}
\end{SecEightMatrixBlock}

\par\medskip\noindent A basis of $W_H$ is\par\nopagebreak
\begin{SecEightBasisBlock}
\SecEightBasisLine{P_i\ (i\in\{1,10,11,12,13,16,17,20\}),}
\SecEightBasisLine{P_{14}+P_{15},}
\SecEightBasisLine{P_{18}+P_{19}.}
\end{SecEightBasisBlock}

\par\Needspace{3\baselineskip}\medskip\noindent\textbf{\boldmath 104.\enspace $(G_s,G)=(C_2^2,A_4\times C_2)$, family~1.}\label{fam:fourfold-no-104}
$G\text{-ID}=[24,13]$, $H\text{-ID}=[72,47]$.
The non-symplectic index is $6$, and the family dimension is $2$.  The projective action is liftable.
This is a proper subfamily of family~97.

\par\Needspace{4\baselineskip}\medskip\noindent Here $H=\langle K^+,T\rangle$, where\par\nopagebreak
\begin{SecEightMatrixBlock}
\SecEightMatrix{T}{\diag\!\left(1,\omega^2,-1,
\smat{0&0&1\\1&0&0\\0&1&0}\right)}
\end{SecEightMatrixBlock}

\par\medskip\noindent A basis of $W_H$ is\par\nopagebreak
\begin{SecEightBasisBlock}
\SecEightBasisLine{P_{1},}
\SecEightBasisLine{P_{6},}
\SecEightBasisLine{P_{7}+P_{8}+P_{9},}
\SecEightBasisLine{P_{10},}
\SecEightBasisLine{P_{13}+\omega^{2}\,P_{14}+\omega\,P_{15},}
\SecEightBasisLine{P_{20}.}
\end{SecEightBasisBlock}

\par\Needspace{3\baselineskip}\medskip\noindent\textbf{\boldmath 105.\enspace $(G_s,G)=(C_2^2,A_4\times C_2)$, family~2.}\label{fam:fourfold-no-105}
$G\text{-ID}=[24,13]$, $H\text{-ID}=[72,16]$.
The non-symplectic index is $6$, and the family dimension is $2$.  The projective action is liftable.
This is a proper subfamily of family~97.

\par\Needspace{4\baselineskip}\medskip\noindent Here $H=\langle K^+,T\rangle$, where\par\nopagebreak
\begin{SecEightMatrixBlock}
\SecEightMatrix{T}{\diag\!\left(\zeta_{9}^{2},\zeta_{9}^{5},
-\zeta_{9}^{2}-\zeta_{9}^{5},\smat{0&-1&0\\0&0&-\omega^{2}\\-1&0&0}\right)}
\end{SecEightMatrixBlock}

\par\medskip\noindent A basis of $W_H$ is\par\nopagebreak
\begin{SecEightBasisBlock}
\SecEightBasisLine{P_{2},}
\SecEightBasisLine{P_{6},}
\SecEightBasisLine{\zeta_{9}^{7}\,P_{7}+\zeta_{9}^{5}\,P_{8}+P_{9},}
\SecEightBasisLine{P_{11},}
\SecEightBasisLine{\zeta_{9}^{4}\,P_{13}+\zeta_{9}^{8}\,P_{14}+P_{15},}
\SecEightBasisLine{\zeta_{9}\,P_{17}+\zeta_{9}^{2}\,P_{18}+P_{19}.}
\end{SecEightBasisBlock}

\par\Needspace{3\baselineskip}\medskip\noindent\textbf{\boldmath 106.\enspace $(G_s,G)=(C_3,C_3)$.}\label{fam:fourfold-no-106}

This is the symplectic family with generic non-symplectic index $1$,
described in \cite[(3.4)]{KOIKE202512}.  Its proper subfamilies are families~107--114.

Let $K^+\subset\GL(6,\CC)$ denote the strict symplectic lift associated with Koike's $C_3$ component (3.4) in \cite{KOIKE202512}.  We use the following basis $P_1,\ldots,P_{20}$ of $W_{K^+}$:
\begin{equation*}\begin{aligned}
(P_1,\ldots,P_{20})&=\bigl({}x_{1}^{3},\ x_{1}^{2}x_{2},\ x_{1}x_{2}^{2},\ x_{1}x_{3}x_{5},\ x_{1}x_{3}x_{6},\ x_{1}x_{4}x_{5},\\[-2pt]
&\quad x_{1}x_{4}x_{6},\ x_{2}^{3},\ x_{2}x_{3}x_{5},\ x_{2}x_{3}x_{6},\ x_{2}x_{4}x_{5},\ x_{2}x_{4}x_{6},\\[-2pt]
&\quad x_{3}^{3},\ x_{3}^{2}x_{4},\ x_{3}x_{4}^{2},\ x_{4}^{3},\\[-2pt]
&\quad x_{5}^{3},\ x_{5}^{2}x_{6},\ x_{5}x_{6}^{2},\ x_{6}^{3}\bigr)
\end{aligned}\end{equation*}

\par\Needspace{3\baselineskip}\medskip\noindent\textbf{\boldmath 107.\enspace $(G_s,G)=(C_3,S_3)$, family~1.}\label{fam:fourfold-no-107}
$G\text{-ID}=[6,1]$, $H\text{-ID}=[18,3]$.
The non-symplectic index is $2$, and the family dimension is $4$.  The projective action is liftable.
This is a proper subfamily of family~106.

\par\Needspace{4\baselineskip}\medskip\noindent Here $H=\langle K^+,T\rangle$, where\par\nopagebreak
\begin{SecEightMatrixBlock}
\SecEightMatrix{T}{\diag\!\left(1,-1,\smat{0&0&1&0\\0&0&0&1\\1&0&0&0\\0&1&0&0}\right)}
\end{SecEightMatrixBlock}

\par\medskip\noindent A basis of $W_H$ is\par\nopagebreak
\begin{SecEightBasisBlock}
\SecEightBasisLine{P_{1},}
\SecEightBasisLine{P_{3},}
\SecEightBasisLine{P_{4},}
\SecEightBasisLine{P_{5}+P_{6},}
\SecEightBasisLine{P_{7},}
\SecEightBasisLine{P_{11}-P_{10},}
\SecEightBasisLine{P_{13}+P_{17},}
\SecEightBasisLine{P_{14}+P_{18},}
\SecEightBasisLine{P_{15}+P_{19},}
\SecEightBasisLine{P_{16}+P_{20}.}
\end{SecEightBasisBlock}

\par\Needspace{3\baselineskip}\medskip\noindent\textbf{\boldmath 108.\enspace $(G_s,G)=(C_3,C_6)$, family~1.}\label{fam:fourfold-no-108}
$G\text{-ID}=[6,2]$, $H\text{-ID}=[18,5]$.
The non-symplectic index is $2$, and the family dimension is $4$.  The projective action is liftable.
This is a proper subfamily of family~106.

\par\Needspace{4\baselineskip}\medskip\noindent Here $H=\langle K^+,T\rangle$, where\par\nopagebreak
\begin{SecEightMatrixBlock}
\SecEightMatrix{T}{\diag(1,1,1,-1,1,1)}
\end{SecEightMatrixBlock}

\par\smallskip\noindent A basis of $W_H$ is $\{P_i\mid i\in\{1,\allowbreak \ldots,\allowbreak 5,\allowbreak 8,\allowbreak 9,\allowbreak 10,\allowbreak 13,\allowbreak 15,\allowbreak 17,\allowbreak \ldots,\allowbreak 20\}\}$.

\par\Needspace{3\baselineskip}\medskip\noindent\textbf{\boldmath 109.\enspace $(G_s,G)=(C_3,C_6)$, family~2.}\label{fam:fourfold-no-109}
$G\text{-ID}=[6,2]$, $H\text{-ID}=[18,5]$.
The non-symplectic index is $2$, and the family dimension is $4$.  The projective action is liftable.
This is a proper subfamily of family~106.

\par\Needspace{4\baselineskip}\medskip\noindent Here $H=\langle K^+,T\rangle$, where\par\nopagebreak
\begin{SecEightMatrixBlock}
\SecEightMatrix{T}{\diag(-1,1,-1,1,-1,1)}
\end{SecEightMatrixBlock}

\par\smallskip\noindent A basis of $W_H$ is $\{P_i\mid i\in\{2,\allowbreak 5,\allowbreak 6,\allowbreak 8,\allowbreak 9,\allowbreak 12,\allowbreak 14,\allowbreak 16,\allowbreak 18,\allowbreak 20\}\}$.

\par\Needspace{3\baselineskip}\medskip\noindent\textbf{\boldmath 110.\enspace $(G_s,G)=(C_3,C_3^2)$, family~1.}\label{fam:fourfold-no-110}
$G\text{-ID}=[9,2]$, $H\text{-ID}=[27,5]$.
The non-symplectic index is $3$, and the family dimension is $4$.  The projective action is liftable.
This is a proper subfamily of family~106.

\par\Needspace{4\baselineskip}\medskip\noindent Here $H=\langle K^+,T\rangle$, where\par\nopagebreak
\begin{SecEightMatrixBlock}
\SecEightMatrix{T}{\diag(1,\omega,1,1,1,1)}
\end{SecEightMatrixBlock}

\par\smallskip\noindent A basis of $W_H$ is $\{P_i\mid i\in\{1,\allowbreak 4,\allowbreak \ldots,\allowbreak 8,\allowbreak 13,\allowbreak \ldots,\allowbreak 20\}\}$.

\par\Needspace{3\baselineskip}\medskip\noindent\textbf{\boldmath 111.\enspace $(G_s,G)=(C_3,C_3^2)$, family~2.}\label{fam:fourfold-no-111}
$G\text{-ID}=[9,2]$, $H\text{-ID}=[27,5]$.
The non-symplectic index is $3$, and the family dimension is $3$.  The projective action is liftable.
This is a proper subfamily of family~106.

\par\Needspace{4\baselineskip}\medskip\noindent Here $H=\langle K^+,T\rangle$, where\par\nopagebreak
\begin{SecEightMatrixBlock}
\SecEightMatrix{T}{\diag(1,\omega,\omega^2,1,\omega,1)}
\end{SecEightMatrixBlock}

\par\smallskip\noindent A basis of $W_H$ is $\{P_i\mid i\in\{1, 4, 7, 8, 10, 13, 16, 17, 20\}\}$.

\par\Needspace{3\baselineskip}\medskip\noindent\textbf{\boldmath 112.\enspace $(G_s,G)=(C_3,C_3^2)$, family~3.}\label{fam:fourfold-no-112}
$G\text{-ID}=[9,2]$, $H\text{-ID}=[27,4]$.
The non-symplectic index is $3$, and the family dimension is $2$.  The projective action is non-liftable.
This is a proper subfamily of family~106.

\par\Needspace{4\baselineskip}\medskip\noindent Here $H=\langle K^+,T\rangle$, where\par\nopagebreak
\begin{SecEightMatrixBlock}
\SecEightMatrix{T}{\diag(-\zeta_9,-\zeta_9,\zeta_9,\zeta_9,-\zeta_9,-\zeta_9)(1\,6\,4)(2\,5\,3)}
\end{SecEightMatrixBlock}

\par\medskip\noindent A basis of $W_H$ is\par\nopagebreak
\begin{SecEightBasisBlock}
\SecEightBasisLine{P_{1}+\omega\,P_{16}-\omega^{2}\,P_{20},}
\SecEightBasisLine{P_{2}+\omega\,P_{15}-\omega^{2}\,P_{19},}
\SecEightBasisLine{P_{3}+\omega\,P_{14}-\omega^{2}\,P_{18},}
\SecEightBasisLine{P_{4}+\omega^{2}\,P_{10}+\omega\,P_{11},}
\SecEightBasisLine{P_{5}+\omega\,P_{6}+\omega^{2}\,P_{12},}
\SecEightBasisLine{P_{8}+\omega\,P_{13}-\omega^{2}\,P_{17}.}
\end{SecEightBasisBlock}

\par\Needspace{3\baselineskip}\medskip\noindent\textbf{\boldmath 113.\enspace $(G_s,G)=(C_3,C_6\times C_3)$, family~1.}\label{fam:fourfold-no-113}
$G\text{-ID}=[18,5]$, $H\text{-ID}=[54,15]$.
The non-symplectic index is $6$, and the family dimension is $2$.  The projective action is liftable.
This is a proper subfamily of family~106.

\par\Needspace{4\baselineskip}\medskip\noindent Here $H=\langle K^+,T\rangle$, where\par\nopagebreak
\begin{SecEightMatrixBlock}
\SecEightMatrix{T}{\diag(-1,1,1,1,1,\omega)}
\end{SecEightMatrixBlock}

\par\smallskip\noindent A basis of $W_H$ is $\{P_i\mid i\in\{2,\allowbreak 8,\allowbreak 9,\allowbreak 11,\allowbreak 13,\allowbreak \ldots,\allowbreak 17,\allowbreak 20\}\}$.

\par\Needspace{3\baselineskip}\medskip\noindent\textbf{\boldmath 114.\enspace $(G_s,G)=(C_3,C_6\times C_3)$, family~2.}\label{fam:fourfold-no-114}
$G\text{-ID}=[18,5]$, $H\text{-ID}=[54,11]$.
The non-symplectic index is $6$, and the family dimension is $1$.  The projective action is non-liftable.
This is a proper subfamily of family~106.

\par\Needspace{4\baselineskip}\medskip\noindent Here $H=\langle K^+,T\rangle$, where\par\nopagebreak
\begin{SecEightMatrixBlock}
\SecEightMatrix{T}{\zeta_9^2\diag(-1,1,-\omega^2,\omega^2,\omega,-\omega)
(1\,6\,4)(2\,5\,3)}
\end{SecEightMatrixBlock}

\par\smallskip\noindent A basis of $W_H$ is $\{P_{1}+\omega^{2}\,P_{16}-\omega\,P_{20}, P_{3}+\omega^{2}\,P_{14}-\omega\,P_{18}, P_{4}+\omega\,P_{10}+\omega^{2}\,P_{11}\}$.

\par\Needspace{3\baselineskip}\medskip\noindent\textbf{\boldmath 115.\enspace $(G_s,G)=(C_3,S_3)$, family~2.}\label{fam:fourfold-no-115}

This is the symplectic family with generic non-symplectic index $2$,
described in \cite[(3.3)]{KOIKE202512}.  Its proper subfamilies are families~116--118.

Let $K^+\subset\GL(6,\CC)$ denote the strict symplectic lift associated with Koike's second $C_3$ component (3.3) in \cite{KOIKE202512}.  We use the following basis $P_1,\ldots,P_{26}$ of $W_{K^+}$:
\begin{equation*}\begin{aligned}
(P_1,\ldots,P_{26})&=\bigl({}x_{1}^{3},\ x_{1}^{2}x_{2},\ x_{1}^{2}x_{3},\ x_{1}^{2}x_{4},\ x_{1}x_{2}^{2},\ x_{1}x_{2}x_{3},\\[-2pt]
&\quad x_{1}x_{2}x_{4},\ x_{1}x_{3}^{2},\ x_{1}x_{3}x_{4},\ x_{1}x_{4}^{2},\ x_{1}x_{5}x_{6},\ x_{2}^{3},\\[-2pt]
&\quad x_{2}^{2}x_{3},\ x_{2}^{2}x_{4},\ x_{2}x_{3}^{2},\ x_{2}x_{3}x_{4},\ x_{2}x_{4}^{2},\ x_{2}x_{5}x_{6},\\[-2pt]
&\quad x_{3}^{3},\ x_{3}^{2}x_{4},\ x_{3}x_{4}^{2},\ x_{3}x_{5}x_{6},\ x_{4}^{3},\\[-2pt]
&\quad x_{4}x_{5}x_{6},\ x_{5}^{3},\ x_{6}^{3}\bigr)
\end{aligned}\end{equation*}
After rescaling $x_5$ and $x_6$, the coefficients of $P_{25}=x_5^3$
and $P_{26}=x_6^3$ may be taken equal: both are nonzero for a smooth
member.  In these coordinates,
\[
H=\langle K^+,T\rangle,\qquad T=(5\,6),
\]
where $T$ is a non-symplectic involution.  A basis of $W_H$ is
\[
P_1,\ldots,P_{24},\quad P_{25}+P_{26}.
\]

\par\Needspace{3\baselineskip}\medskip\noindent\textbf{\boldmath 116.\enspace $(G_s,G)=(C_3,S_3\times C_3)$, family~1.}\label{fam:fourfold-no-116}
$G\text{-ID}=[18,3]$, $H\text{-ID}=[54,12]$.
The non-symplectic index is $6$, and the family dimension is $4$.  The projective action is liftable.
This is a proper subfamily of family~115.

\par\Needspace{4\baselineskip}\medskip\noindent Here $H=\langle K^+,T\rangle$, where\par\nopagebreak
\begin{SecEightMatrixBlock}
\SecEightMatrix{T}{\diag\!\left(1,1,1,1,\smat{0&1\\\omega&0}\right)}
\end{SecEightMatrixBlock}

\par\noindent A basis of $W_H$ is\par\nopagebreak
\begin{SecEightBasisBlock}
\SecEightBasisLine{P_i\ (i\in\{1,2,3,4,5,6,7,8,9,10,12,13,14,15,16,17,19,20,21,23\}),}
\SecEightBasisLine{P_{25}+P_{26}.}
\end{SecEightBasisBlock}

\par\Needspace{3\baselineskip}\medskip\noindent\textbf{\boldmath 117.\enspace $(G_s,G)=(C_3,S_3\times C_3)$, family~2.}\label{fam:fourfold-no-117}
$G\text{-ID}=[18,3]$, $H\text{-ID}=[54,12]$.
The non-symplectic index is $6$, and the family dimension is $4$.  The projective action is liftable.
This is a proper subfamily of family~115.

\par\Needspace{4\baselineskip}\medskip\noindent Here $H=\langle K^+,T\rangle$, where\par\nopagebreak
\begin{SecEightMatrixBlock}
\SecEightMatrix{T}{\diag\!\left(\omega^{2},1,1,1,\smat{0&1\\1&0}\right)}
\end{SecEightMatrixBlock}

\par\noindent A basis of $W_H$ is\par\nopagebreak
\begin{SecEightBasisBlock}
\SecEightBasisLine{P_i\ (i\in\{1,12,13,14,15,16,17,18,19,20,21,22,23,24\}),}
\SecEightBasisLine{P_{25}+P_{26}.}
\end{SecEightBasisBlock}

\par\Needspace{3\baselineskip}\medskip\noindent\textbf{\boldmath 118.\enspace $(G_s,G)=(C_3,S_3\times C_3)$, family~3.}\label{fam:fourfold-no-118}
$G\text{-ID}=[18,3]$, $H\text{-ID}=[54,12]$.
The non-symplectic index is $6$, and the family dimension is $3$.  The projective action is liftable.
This is a proper subfamily of family~115.

\par\Needspace{4\baselineskip}\medskip\noindent Here $H=\langle K^+,T\rangle$, where\par\nopagebreak
\begin{SecEightMatrixBlock}
\SecEightMatrix{T}{\diag\!\left(\omega^{2},1,\omega,\omega,\smat{0&1\\1&0}\right)}
\end{SecEightMatrixBlock}

\par\noindent A basis of $W_H$ is\par\nopagebreak
\begin{SecEightBasisBlock}
\SecEightBasisLine{P_i\ (i\in\{1,6,7,12,18,19,20,21,23\}),}
\SecEightBasisLine{P_{25}+P_{26}.}
\end{SecEightBasisBlock}

\par\Needspace{3\baselineskip}\medskip\noindent\textbf{\boldmath 119.\enspace $(G_s,G)=(C_2,C_2)$.}\label{fam:fourfold-no-119}

This is the symplectic family with generic non-symplectic index $1$,
described in \cite[(3.1)]{KOIKE202512}.  Its proper subfamilies are families~120--131.

Let $K^+\subset\GL(6,\CC)$ denote the strict symplectic lift associated with Koike's $C_2$ component (3.1) in \cite{KOIKE202512}.  We use the following basis $P_1,\ldots,P_{32}$ of $W_{K^+}$:
\begin{equation*}\begin{aligned}
(P_1,\ldots,P_{32})&=\bigl({}x_{1}^{3},\ x_{1}^{2}x_{2},\ x_{1}^{2}x_{3},\ x_{1}^{2}x_{4},\ x_{1}x_{2}^{2},\ x_{1}x_{2}x_{3},\\[-2pt]
&\quad x_{1}x_{2}x_{4},\ x_{1}x_{3}^{2},\ x_{1}x_{3}x_{4},\ x_{1}x_{4}^{2},\ x_{1}x_{5}^{2},\ x_{1}x_{5}x_{6},\\[-2pt]
&\quad x_{1}x_{6}^{2},\ x_{2}^{3},\ x_{2}^{2}x_{3},\ x_{2}^{2}x_{4},\ x_{2}x_{3}^{2},\ x_{2}x_{3}x_{4},\\[-2pt]
&\quad x_{2}x_{4}^{2},\ x_{2}x_{5}^{2},\ x_{2}x_{5}x_{6},\ x_{2}x_{6}^{2},\ x_{3}^{3},\ x_{3}^{2}x_{4},\\[-2pt]
&\quad x_{3}x_{4}^{2},\ x_{3}x_{5}^{2},\ x_{3}x_{5}x_{6},\ x_{3}x_{6}^{2},\ x_{4}^{3},\ x_{4}x_{5}^{2},\\[-2pt]
&\quad x_{4}x_{5}x_{6},\ x_{4}x_{6}^{2}\bigr)
\end{aligned}\end{equation*}

\par\Needspace{3\baselineskip}\medskip\noindent\textbf{\boldmath 120.\enspace $(G_s,G)=(C_2,C_2^2)$, family~1.}\label{fam:fourfold-no-120}
$G\text{-ID}=[4,2]$, $H\text{-ID}=[12,5]$.
The non-symplectic index is $2$, and the family dimension is $10$.  The projective action is liftable.
This is a proper subfamily of family~119.

\par\Needspace{4\baselineskip}\medskip\noindent Here $H=\langle K^+,T\rangle$, where\par\nopagebreak
\begin{SecEightMatrixBlock}
\SecEightMatrix{T}{\diag(1,1,1,1,1,-1)}
\end{SecEightMatrixBlock}

\par\smallskip\noindent A basis of $W_H$ is $\{P_i\mid i\in\{1,\ldots,32\}\setminus\{12,21,27,31\}\}$.

\par\Needspace{3\baselineskip}\medskip\noindent\textbf{\boldmath 121.\enspace $(G_s,G)=(C_2,C_2^2)$, family~2.}\label{fam:fourfold-no-121}
$G\text{-ID}=[4,2]$, $H\text{-ID}=[12,5]$.
The non-symplectic index is $2$, and the family dimension is $8$.  The projective action is liftable.
This is a proper subfamily of family~119.

\par\Needspace{4\baselineskip}\medskip\noindent Here $H=\langle K^+,T\rangle$, where\par\nopagebreak
\begin{SecEightMatrixBlock}
\SecEightMatrix{T}{\diag(1,1,1,-1,1,1)}
\end{SecEightMatrixBlock}

\par\smallskip\noindent A basis of $W_H$ is $\{P_i\mid i\in\{1,\ldots,32\}\setminus\{4,7,9,16,18,24,29,\ldots,32\}\}$.

\par\Needspace{3\baselineskip}\medskip\noindent\textbf{\boldmath 122.\enspace $(G_s,G)=(C_2,C_2^2)$, family~3.}\label{fam:fourfold-no-122}
$G\text{-ID}=[4,2]$, $H\text{-ID}=[12,5]$.
The non-symplectic index is $2$, and the family dimension is $6$.  The projective action is liftable.
This is a proper subfamily of family~119.

\par\Needspace{4\baselineskip}\medskip\noindent Here $H=\langle K^+,T\rangle$, where\par\nopagebreak
\begin{SecEightMatrixBlock}
\SecEightMatrix{T}{\diag(1,1,-1,-1,1,-1)}
\end{SecEightMatrixBlock}

\par\smallskip\noindent A basis of $W_H$ is $\{P_i\mid i\in\{1,\allowbreak 2,\allowbreak 5,\allowbreak 8,\allowbreak \ldots,\allowbreak 11,\allowbreak 13,\allowbreak 14,\allowbreak 17,\allowbreak \ldots,\allowbreak 20,\allowbreak 22,\allowbreak 27,\allowbreak 31\}\}$.

\par\Needspace{3\baselineskip}\medskip\noindent\textbf{\boldmath 123.\enspace $(G_s,G)=(C_2,C_6)$, family~1.}\label{fam:fourfold-no-123}
$G\text{-ID}=[6,2]$, $H\text{-ID}=[18,5]$.
The non-symplectic index is $3$, and the family dimension is $6$.  The projective action is liftable.
This is a proper subfamily of family~119.

\par\Needspace{4\baselineskip}\medskip\noindent Here $H=\langle K^+,T\rangle$, where\par\nopagebreak
\begin{SecEightMatrixBlock}
\SecEightMatrix{T}{\diag(1,1,1,\omega,1,1)}
\end{SecEightMatrixBlock}

\par\smallskip\noindent A basis of $W_H$ is $\{P_i\mid i\in\{1,\allowbreak 2,\allowbreak 3,\allowbreak 5,\allowbreak 6,\allowbreak 8,\allowbreak 11,\allowbreak \ldots,\allowbreak 15,\allowbreak 17,\allowbreak 20,\allowbreak \ldots,\allowbreak 23,\allowbreak 26,\allowbreak \ldots,\allowbreak 29\}\}$.

\par\Needspace{3\baselineskip}\medskip\noindent\textbf{\boldmath 124.\enspace $(G_s,G)=(C_2,C_4\times C_2)$, family~1.}\label{fam:fourfold-no-124}
$G\text{-ID}=[8,2]$, $H\text{-ID}=[24,9]$.
The non-symplectic index is $4$, and the family dimension is $5$.  The projective action is liftable.
This is a proper subfamily of family~119.

\par\Needspace{4\baselineskip}\medskip\noindent Here $H=\langle K^+,T\rangle$, where\par\nopagebreak
\begin{SecEightMatrixBlock}
\SecEightMatrix{T}{\diag(1,1,1,-1,1,-\zeta_4)}
\end{SecEightMatrixBlock}

\par\smallskip\noindent A basis of $W_H$ is $\{P_i\mid i\in\{1,\allowbreak 2,\allowbreak 3,\allowbreak 5,\allowbreak 6,\allowbreak 8,\allowbreak 10,\allowbreak 11,\allowbreak 14,\allowbreak 15,\allowbreak 17,\allowbreak 19,\allowbreak 20,\allowbreak 23,\allowbreak 25,\allowbreak 26,\allowbreak 32\}\}$.

\par\Needspace{3\baselineskip}\medskip\noindent\textbf{\boldmath 125.\enspace $(G_s,G)=(C_2,C_6\times C_2)$, family~1.}\label{fam:fourfold-no-125}
$G\text{-ID}=[12,5]$, $H\text{-ID}=[36,14]$.
The non-symplectic index is $6$, and the family dimension is $5$.  The projective action is liftable.
This is a proper subfamily of family~119.

\par\Needspace{4\baselineskip}\medskip\noindent Here $H=\langle K^+,T\rangle$, where\par\nopagebreak
\begin{SecEightMatrixBlock}
\SecEightMatrix{T}{\diag(1,1,1,\omega,-1,1)}
\end{SecEightMatrixBlock}

\par\smallskip\noindent A basis of $W_H$ is $\{P_i\mid i\in\{1,\allowbreak 2,\allowbreak 3,\allowbreak 5,\allowbreak 6,\allowbreak 8,\allowbreak 11,\allowbreak 13,\allowbreak 14,\allowbreak 15,\allowbreak 17,\allowbreak 20,\allowbreak 22,\allowbreak 23,\allowbreak 26,\allowbreak 28,\allowbreak 29\}\}$.

\par\Needspace{3\baselineskip}\medskip\noindent\textbf{\boldmath 126.\enspace $(G_s,G)=(C_2,C_6)$, family~2.}\label{fam:fourfold-no-126}
$G\text{-ID}=[6,2]$, $H\text{-ID}=[18,5]$.
The non-symplectic index is $3$, and the family dimension is $4$.  The projective action is liftable.
This is a proper subfamily of family~119.

\par\Needspace{4\baselineskip}\medskip\noindent Here $H=\langle K^+,T\rangle$, where\par\nopagebreak
\begin{SecEightMatrixBlock}
\SecEightMatrix{T}{\diag(\omega^2,\omega,1,1,1,\omega)}
\end{SecEightMatrixBlock}

\par\smallskip\noindent A basis of $W_H$ is $\{P_i\mid i\in\{1,\allowbreak 6,\allowbreak 7,\allowbreak 12,\allowbreak 14,\allowbreak 22,\allowbreak \ldots,\allowbreak 26,\allowbreak 29,\allowbreak 30\}\}$.

\par\Needspace{3\baselineskip}\medskip\noindent\textbf{\boldmath 127.\enspace $(G_s,G)=(C_2,C_4\times C_2)$, family~2.}\label{fam:fourfold-no-127}
$G\text{-ID}=[8,2]$, $H\text{-ID}=[24,9]$.
See also \cite[Theorem~4.2(3)]{zheng2021liftableabelian}.
The non-symplectic index is $4$, and the family dimension is $4$.  The projective action is liftable.
This is a proper subfamily of family~119.

\par\Needspace{4\baselineskip}\medskip\noindent Here $H=\langle K^+,T\rangle$, where\par\nopagebreak
\begin{SecEightMatrixBlock}
\SecEightMatrix{T}{\diag(\zeta_4,-1,1,1,1,-1)}
\end{SecEightMatrixBlock}

\par\smallskip\noindent A basis of $W_H$ is $\{P_i\mid i\in\{2,\allowbreak 15,\allowbreak 16,\allowbreak 21,\allowbreak 23,\allowbreak \ldots,\allowbreak 26,\allowbreak 28,\allowbreak 29,\allowbreak 30,\allowbreak 32\}\}$.

\par\Needspace{3\baselineskip}\medskip\noindent\textbf{\boldmath 128.\enspace $(G_s,G)=(C_2,C_6\times C_2)$, family~2.}\label{fam:fourfold-no-128}
$G\text{-ID}=[12,5]$, $H\text{-ID}=[36,14]$.
The non-symplectic index is $6$, and the family dimension is $3$.  The projective action is liftable.
This is a proper subfamily of family~119.

\par\Needspace{4\baselineskip}\medskip\noindent Here $H=\langle K^+,T\rangle$, where\par\nopagebreak
\begin{SecEightMatrixBlock}
\SecEightMatrix{T}{\diag(1,1,\omega,\omega^2,-1,\omega)}
\end{SecEightMatrixBlock}

\par\smallskip\noindent A basis of $W_H$ is $\{P_i\mid i\in\{1,\allowbreak 2,\allowbreak 5,\allowbreak 9,\allowbreak 11,\allowbreak 14,\allowbreak 18,\allowbreak 20,\allowbreak 23,\allowbreak 28,\allowbreak 29\}\}$.

\par\Needspace{3\baselineskip}\medskip\noindent\textbf{\boldmath 129.\enspace $(G_s,G)=(C_2,C_8\times C_2)$.}\label{fam:fourfold-no-129}
$G\text{-ID}=[16,5]$, $H\text{-ID}=[48,23]$.
The non-symplectic index is $8$, and the family dimension is $2$.  The projective action is liftable.
This is a proper subfamily of family~119.

\par\Needspace{4\baselineskip}\medskip\noindent Here $H=\langle K^+,T\rangle$, where\par\nopagebreak
\begin{SecEightMatrixBlock}
\SecEightMatrix{T}{\diag(-\zeta_4,-1,1,1,\zeta_8,1)}
\end{SecEightMatrixBlock}

\par\smallskip\noindent A basis of $W_H$ is $\{P_i\mid i\in\{2,\allowbreak 11,\allowbreak 15,\allowbreak 16,\allowbreak 23,\allowbreak 24,\allowbreak 25,\allowbreak 28,\allowbreak 29,\allowbreak 32\}\}$.

\par\Needspace{3\baselineskip}\medskip\noindent\textbf{\boldmath 130.\enspace $(G_s,G)=(C_2,C_{12}\times C_2)$, family~1.}\label{fam:fourfold-no-130}
$G\text{-ID}=[24,9]$, $H\text{-ID}=[72,36]$.
The non-symplectic index is $12$, and the family dimension is $2$.  The projective action is liftable.
This is a proper subfamily of family~119.

\par\Needspace{4\baselineskip}\medskip\noindent Here $H=\langle K^+,T\rangle$, where\par\nopagebreak
\begin{SecEightMatrixBlock}
\SecEightMatrix{T}{\diag(-1,1,1,\omega,\zeta_4,1)}
\end{SecEightMatrixBlock}

\par\smallskip\noindent A basis of $W_H$ is $\{P_i\mid i\in\{2,\allowbreak 3,\allowbreak 11,\allowbreak 14,\allowbreak 15,\allowbreak 17,\allowbreak 22,\allowbreak 23,\allowbreak 28,\allowbreak 29\}\}$.

\par\Needspace{3\baselineskip}\medskip\noindent\textbf{\boldmath 131.\enspace $(G_s,G)=(C_2,C_{12}\times C_2)$, family~2.}\label{fam:fourfold-no-131}
$G\text{-ID}=[24,9]$, $H\text{-ID}=[72,36]$.
The non-symplectic index is $12$, and the family dimension is $1$.  The projective action is liftable.
This is a proper subfamily of family~119.

\par\Needspace{4\baselineskip}\medskip\noindent Here $H=\langle K^+,T\rangle$, where\par\nopagebreak
\begin{SecEightMatrixBlock}
\SecEightMatrix{T}{\diag(\zeta_6,\omega^2,1,\omega,\zeta_{12}^{11},1)}
\end{SecEightMatrixBlock}

\par\smallskip\noindent A basis of $W_H$ is $\{P_i\mid i\in\{2,\allowbreak 11,\allowbreak 14,\allowbreak 18,\allowbreak 23,\allowbreak 28,\allowbreak 29\}\}$.

\par\Needspace{3\baselineskip}\medskip\noindent\textbf{\boldmath 132.\enspace $(G_s,G)=(1,1)$.}\label{fam:fourfold-no-132}
This is the symplectic family with $G_s=1$ and generic non-symplectic
index $1$.  Its proper subfamilies with $G_s=1$ are families~133--156.
We take $K^+=\langle\omega I_6\rangle$ and order
the $56$ cubic monomials as follows:
\begin{equation*}\begin{aligned}
(P_1,\ldots,P_{56})&=\bigl({}  x_{1}^{3},\ x_{1}^{2}\,x_{2},\ x_{1}^{2}\,x_{3},\ x_{1}^{2}\,x_{4},\ x_{1}^{2}\,x_{5},\ x_{1}^{2}\,x_{6},\ x_{1}\,x_{2}^{2},\\[-2pt]
&\quad   x_{1}\,x_{2}\,x_{3},\ x_{1}\,x_{2}\,x_{4},\ x_{1}\,x_{2}\,x_{5},\ x_{1}\,x_{2}\,x_{6},\ x_{1}\,x_{3}^{2},\ x_{1}\,x_{3}\,x_{4},\ x_{1}\,x_{3}\,x_{5},\\[-2pt]
&\quad   x_{1}\,x_{3}\,x_{6},\ x_{1}\,x_{4}^{2},\ x_{1}\,x_{4}\,x_{5},\ x_{1}\,x_{4}\,x_{6},\ x_{1}\,x_{5}^{2},\ x_{1}\,x_{5}\,x_{6},\ x_{1}\,x_{6}^{2},\\[-2pt]
&\quad   x_{2}^{3},\ x_{2}^{2}\,x_{3},\ x_{2}^{2}\,x_{4},\ x_{2}^{2}\,x_{5},\ x_{2}^{2}\,x_{6},\ x_{2}\,x_{3}^{2},\ x_{2}\,x_{3}\,x_{4},\\[-2pt]
&\quad   x_{2}\,x_{3}\,x_{5},\ x_{2}\,x_{3}\,x_{6},\ x_{2}\,x_{4}^{2},\ x_{2}\,x_{4}\,x_{5},\ x_{2}\,x_{4}\,x_{6},\ x_{2}\,x_{5}^{2},\ x_{2}\,x_{5}\,x_{6},\\[-2pt]
&\quad   x_{2}\,x_{6}^{2},\ x_{3}^{3},\ x_{3}^{2}\,x_{4},\ x_{3}^{2}\,x_{5},\ x_{3}^{2}\,x_{6},\ x_{3}\,x_{4}^{2},\ x_{3}\,x_{4}\,x_{5},\\[-2pt]
&\quad   x_{3}\,x_{4}\,x_{6},\ x_{3}\,x_{5}^{2},\ x_{3}\,x_{5}\,x_{6},\ x_{3}\,x_{6}^{2},\ x_{4}^{3},\ x_{4}^{2}\,x_{5},\ x_{4}^{2}\,x_{6},\\[-2pt]
&\quad   x_{4}\,x_{5}^{2},\ x_{4}\,x_{5}\,x_{6},\ x_{4}\,x_{6}^{2},\ x_{5}^{3},\ x_{5}^{2}\,x_{6},\ x_{5}\,x_{6}^{2},\ x_{6}^{3}\bigr).
\end{aligned}\end{equation*}
\par\Needspace{3\baselineskip}\medskip\noindent\textbf{\boldmath 133.\enspace $(G_s,G)=(1,C_2)$, family~1.}\label{fam:fourfold-no-133}
$G\text{-ID}=[2,1]$, $H\text{-ID}=[6,2]$.
The non-symplectic index is $2$, and the family dimension is $14$.  The projective action is liftable.
This is a proper subfamily of family~132.

\par\Needspace{4\baselineskip}\medskip\noindent With $K^+=\langle\omega I_6\rangle$, $H=\langle K^+,T\rangle$, where\par\nopagebreak
\begin{SecEightMatrixBlock}
\SecEightMatrix{T}{\diag(-1,1,1,1,1,1)}
\end{SecEightMatrixBlock}

\par\smallskip\noindent A basis of $W_H$ is $\{P_i\mid 2\le i\le6\text{ or }22\le i\le56\}$.

\par\Needspace{3\baselineskip}\medskip\noindent\textbf{\boldmath 134.\enspace $(G_s,G)=(1,C_2)$, family~2.}\label{fam:fourfold-no-134}
$G\text{-ID}=[2,1]$, $H\text{-ID}=[6,2]$.
The non-symplectic index is $2$, and the family dimension is $10$.  The projective action is liftable.
This is a proper subfamily of family~132.

\par\Needspace{4\baselineskip}\medskip\noindent With $K^+=\langle\omega I_6\rangle$, $H=\langle K^+,T\rangle$, where\par\nopagebreak
\begin{SecEightMatrixBlock}
\SecEightMatrix{T}{\diag(1,1,1,-1,-1,-1)}
\end{SecEightMatrixBlock}

\par\smallskip\noindent A basis of $W_H$ is $\{P_i\mid i\in\{1,\allowbreak 2,\allowbreak 3,\allowbreak 7,\allowbreak 8,\allowbreak 12,\allowbreak 16,\allowbreak \ldots,\allowbreak 23,\allowbreak 27,\allowbreak 31,\allowbreak \ldots,\allowbreak 37,\allowbreak 41,\allowbreak \ldots,\allowbreak 46\}\}$.

\par\Needspace{3\baselineskip}\medskip\noindent\textbf{\boldmath 135.\enspace $(G_s,G)=(1,C_3)$, family~1.}\label{fam:fourfold-no-135}
$G\text{-ID}=[3,1]$, $H\text{-ID}=[9,2]$.
The non-symplectic index is $3$, and the family dimension is $10$.  The projective action is liftable.
This is a proper subfamily of family~132.

\par\Needspace{4\baselineskip}\medskip\noindent With $K^+=\langle\omega I_6\rangle$, $H=\langle K^+,T\rangle$, where\par\nopagebreak
\begin{SecEightMatrixBlock}
\SecEightMatrix{T}{\diag(1,1,1,\omega,1,1)}
\end{SecEightMatrixBlock}

\par\smallskip\noindent A basis of $W_H$ is $\{P_i\mid i\in\{1,\allowbreak \ldots,\allowbreak 56\}\setminus \{4,\allowbreak 9,\allowbreak 13,\allowbreak 16,\allowbreak \ldots,\allowbreak 18,\allowbreak 24,\allowbreak 28,\allowbreak 31,\allowbreak \ldots,\allowbreak 33,\allowbreak 38,\allowbreak 41,\allowbreak \ldots,\allowbreak 43,\allowbreak 48,\allowbreak \ldots,\allowbreak 52\}\}$.

\par\Needspace{3\baselineskip}\medskip\noindent\textbf{\boldmath 136.\enspace $(G_s,G)=(1,C_3)$, family~2.}\label{fam:fourfold-no-136}
$G\text{-ID}=[3,1]$, $H\text{-ID}=[9,2]$.
The non-symplectic index is $3$, and the family dimension is $7$.  The projective action is liftable.
This is a proper subfamily of family~132.

\par\Needspace{4\baselineskip}\medskip\noindent With $K^+=\langle\omega I_6\rangle$, $H=\langle K^+,T\rangle$, where\par\nopagebreak
\begin{SecEightMatrixBlock}
\SecEightMatrix{T}{\diag(\omega^2,1,\omega,\omega,1,1)}
\end{SecEightMatrixBlock}

\par\smallskip\noindent A basis of $W_H$ is $\{P_i\mid i\in\{1,\allowbreak 8,\allowbreak 9,\allowbreak 14,\allowbreak 15,\allowbreak 17,\allowbreak 18,\allowbreak 22,\allowbreak 25,\allowbreak 26,\allowbreak 34,\allowbreak \ldots,\allowbreak 38,\allowbreak 41,\allowbreak 47,\allowbreak 53,\allowbreak \ldots,\allowbreak 56\}\}$.

\par\Needspace{3\baselineskip}\medskip\noindent\textbf{\boldmath 137.\enspace $(G_s,G)=(1,C_4)$, family~1.}\label{fam:fourfold-no-137}
$G\text{-ID}=[4,1]$, $H\text{-ID}=[12,2]$.
The non-symplectic index is $4$, and the family dimension is $7$.  The projective action is liftable.
This is a proper subfamily of family~132.

\par\Needspace{4\baselineskip}\medskip\noindent With $K^+=\langle\omega I_6\rangle$, $H=\langle K^+,T\rangle$, where\par\nopagebreak
\begin{SecEightMatrixBlock}
\SecEightMatrix{T}{\diag(-\zeta_4,-1,1,1,1,1)}
\end{SecEightMatrixBlock}

\par\smallskip\noindent A basis of $W_H$ is $\{P_i\mid i\in\{2,23,\ldots,26,37,\ldots,56\}\}$.

\par\Needspace{3\baselineskip}\medskip\noindent\textbf{\boldmath 138.\enspace $(G_s,G)=(1,C_4)$, family~2.}\label{fam:fourfold-no-138}
$G\text{-ID}=[4,1]$, $H\text{-ID}=[12,2]$.
The non-symplectic index is $4$, and the family dimension is $7$.  The projective action is liftable.
This is a proper subfamily of family~132.

\par\Needspace{4\baselineskip}\medskip\noindent With $K^+=\langle\omega I_6\rangle$, $H=\langle K^+,T\rangle$, where\par\nopagebreak
\begin{SecEightMatrixBlock}
\SecEightMatrix{T}{\diag(\zeta_4,-1,1,-1,1,1)}
\end{SecEightMatrixBlock}

\par\smallskip\noindent A basis of $W_H$ is\par\nopagebreak
\begin{SecEightBasisBlock}
\SecEightBasisLine{\{P_i\mid i\in\{2,4,23,25,26,28,32,33,37,39,40,41,44,45,46,48,49,53,\ldots,56\}\}.}
\end{SecEightBasisBlock}

\par\Needspace{3\baselineskip}\medskip\noindent\textbf{\boldmath 139.\enspace $(G_s,G)=(1,C_6)$, family~1.}\label{fam:fourfold-no-139}
$G\text{-ID}=[6,2]$, $H\text{-ID}=[18,5]$.
The non-symplectic index is $6$, and the family dimension is $7$.  The projective action is liftable.
This is a proper subfamily of family~132.

\par\Needspace{4\baselineskip}\medskip\noindent With $K^+=\langle\omega I_6\rangle$, $H=\langle K^+,T\rangle$, where\par\nopagebreak
\begin{SecEightMatrixBlock}
\SecEightMatrix{T}{\diag(-1,1,1,1,1,\omega^2)}
\end{SecEightMatrixBlock}

\par\smallskip\noindent A basis of $W_H$ is $\{P_i\mid i\in\{2,\allowbreak \ldots,\allowbreak 5,\allowbreak 22,\allowbreak \ldots,\allowbreak 25,\allowbreak 27,\allowbreak 28,\allowbreak 29,\allowbreak 31,\allowbreak 32,\allowbreak 34,\allowbreak 37,\allowbreak 38,\allowbreak 39,\allowbreak 41,\allowbreak 42,\allowbreak 44,\allowbreak 47,\allowbreak 48,\allowbreak 50,\allowbreak 53,\allowbreak 56\}\}$.

\par\Needspace{3\baselineskip}\medskip\noindent\textbf{\boldmath 140.\enspace $(G_s,G)=(1,C_3)$, family~3.}\label{fam:fourfold-no-140}
$G\text{-ID}=[3,1]$, $H\text{-ID}=[9,1]$.
The non-symplectic index is $3$, and the family dimension is $6$.  The projective action is liftable.
This is a proper subfamily of family~132.

\par\Needspace{4\baselineskip}\medskip\noindent With $K^+=\langle\omega I_6\rangle$, $H=\langle K^+,T\rangle$, where\par\nopagebreak
\begin{SecEightMatrixBlock}
\SecEightMatrix{T}{\diag(\zeta_9^7,\zeta_9^4,\zeta_9,\zeta_9,\zeta_9^7,\zeta_9^4)}
\end{SecEightMatrixBlock}

\par\smallskip\noindent A basis of $W_H$ is $\{P_i\mid i\in\{2,\allowbreak 6,\allowbreak 10,\allowbreak 12,\allowbreak 13,\allowbreak 16,\allowbreak 20,\allowbreak 23,\allowbreak 24,\allowbreak 30,\allowbreak 33,\allowbreak 34,\allowbreak 39,\allowbreak 42,\allowbreak 46,\allowbreak 48,\allowbreak 52,\allowbreak 54\}\}$.

\par\Needspace{3\baselineskip}\medskip\noindent\textbf{\boldmath 141.\enspace $(G_s,G)=(1,C_6)$, family~2.}\label{fam:fourfold-no-141}
$G\text{-ID}=[6,2]$, $H\text{-ID}=[18,5]$.
The non-symplectic index is $6$, and the family dimension is $5$.  The projective action is liftable.
This is a proper subfamily of family~132.

\par\Needspace{4\baselineskip}\medskip\noindent With $K^+=\langle\omega I_6\rangle$, $H=\langle K^+,T\rangle$, where\par\nopagebreak
\begin{SecEightMatrixBlock}
\SecEightMatrix{T}{\diag(-1,1,1,\omega^2,\omega,\omega)}
\end{SecEightMatrixBlock}

\par\smallskip\noindent A basis of $W_H$ is $\{P_i\mid i\in\{2,\allowbreak 3,\allowbreak 22,\allowbreak 23,\allowbreak 27,\allowbreak 32,\allowbreak 33,\allowbreak 37,\allowbreak 42,\allowbreak 43,\allowbreak 47,\allowbreak 53,\allowbreak \ldots,\allowbreak 56\}\}$.

\par\Needspace{3\baselineskip}\medskip\noindent\textbf{\boldmath 142.\enspace $(G_s,G)=(1,C_6)$, family~3.}\label{fam:fourfold-no-142}
$G\text{-ID}=[6,2]$, $H\text{-ID}=[18,5]$.
The non-symplectic index is $6$, and the family dimension is $4$.  The projective action is liftable.
This is a proper subfamily of family~132.

\par\Needspace{4\baselineskip}\medskip\noindent With $K^+=\langle\omega I_6\rangle$, $H=\langle K^+,T\rangle$, where\par\nopagebreak
\begin{SecEightMatrixBlock}
\SecEightMatrix{T}{\diag(\zeta_6,\omega^2,1,1,\omega,1)}
\end{SecEightMatrixBlock}

\par\smallskip\noindent A basis of $W_H$ is $\{P_i\mid i\in\{2,\allowbreak 22,\allowbreak 29,\allowbreak 32,\allowbreak 35,\allowbreak 37,\allowbreak 38,\allowbreak 40,\allowbreak 41,\allowbreak 43,\allowbreak 46,\allowbreak 47,\allowbreak 49,\allowbreak 52,\allowbreak 53,\allowbreak 56\}\}$.

\par\Needspace{3\baselineskip}\medskip\noindent\textbf{\boldmath 143.\enspace $(G_s,G)=(1,C_6)$, family~4.}\label{fam:fourfold-no-143}
$G\text{-ID}=[6,2]$, $H\text{-ID}=[18,2]$.
See also \cite[Theorem~4.2(1)]{zheng2021liftableabelian}.
The non-symplectic index is $6$, and the family dimension is $3$.  The projective action is liftable.
This is a proper subfamily of family~132.

\par\Needspace{4\baselineskip}\medskip\noindent With $K^+=\langle\omega I_6\rangle$, $H=\langle K^+,T\rangle$, where\par\nopagebreak
\begin{SecEightMatrixBlock}
\SecEightMatrix{T}{\diag(\zeta_{18}^4,\zeta_{18}^{10},\zeta_{18}^{16},\zeta_{18}^7,\zeta_{18}^{13},\zeta_{18})}
\end{SecEightMatrixBlock}

\par\smallskip\noindent A basis of $W_H$ is $\{P_i\mid i\in\{2, 12, 16, 20, 23, 33, 34, 42, 46\}\}$.

\par\Needspace{3\baselineskip}\medskip\noindent\textbf{\boldmath 144.\enspace $(G_s,G)=(1,C_8)$, family~1.}\label{fam:fourfold-no-144}
$G\text{-ID}=[8,1]$, $H\text{-ID}=[24,2]$.
The non-symplectic index is $8$, and the family dimension is $3$.  The projective action is liftable.
This is a proper subfamily of family~132.

\par\Needspace{4\baselineskip}\medskip\noindent With $K^+=\langle\omega I_6\rangle$, $H=\langle K^+,T\rangle$, where\par\nopagebreak
\begin{SecEightMatrixBlock}
\SecEightMatrix{T}{\diag(\zeta_8^3,\zeta_4,-1,1,1,1)}
\end{SecEightMatrixBlock}

\par\smallskip\noindent A basis of $W_H$ is $\{P_i\mid i\in\{2,23,38,39,40,47,\ldots,56\}\}$.

\par\Needspace{3\baselineskip}\medskip\noindent\textbf{\boldmath 145.\enspace $(G_s,G)=(1,C_8)$, family~2.}\label{fam:fourfold-no-145}
$G\text{-ID}=[8,1]$, $H\text{-ID}=[24,2]$.
The non-symplectic index is $8$, and the family dimension is $3$.  The projective action is liftable.
This is a proper subfamily of family~132.

\par\Needspace{4\baselineskip}\medskip\noindent With $K^+=\langle\omega I_6\rangle$, $H=\langle K^+,T\rangle$, where\par\nopagebreak
\begin{SecEightMatrixBlock}
\SecEightMatrix{T}{\diag(\zeta_8,-\zeta_4,-1,-1,1,1)}
\end{SecEightMatrixBlock}

\par\smallskip\noindent A basis of $W_H$ is $\{P_i\mid i\in\{2,\allowbreak 23,\allowbreak 24,\allowbreak 39,\allowbreak 40,\allowbreak 42,\allowbreak 43,\allowbreak 48,\allowbreak 49,\allowbreak 53,\allowbreak \ldots,\allowbreak 56\}\}$.

\par\Needspace{3\baselineskip}\medskip\noindent\textbf{\boldmath 146.\enspace $(G_s,G)=(1,C_8)$, family~3.}\label{fam:fourfold-no-146}
$G\text{-ID}=[8,1]$, $H\text{-ID}=[24,2]$.
The non-symplectic index is $8$, and the family dimension is $3$.  The projective action is liftable.
This is a proper subfamily of family~132.

\par\Needspace{4\baselineskip}\medskip\noindent With $K^+=\langle\omega I_6\rangle$, $H=\langle K^+,T\rangle$, where\par\nopagebreak
\begin{SecEightMatrixBlock}
\SecEightMatrix{T}{\diag(\zeta_8^7,\zeta_4,-\zeta_4,-1,1,1)}
\end{SecEightMatrixBlock}

\par\smallskip\noindent A basis of $W_H$ is $\{P_i\mid i\in\{2,\allowbreak 24,\allowbreak 29,\allowbreak 30,\allowbreak 38,\allowbreak 48,\allowbreak 49,\allowbreak 53,\allowbreak \ldots,\allowbreak 56\}\}$.

\par\Needspace{3\baselineskip}\medskip\noindent\textbf{\boldmath 147.\enspace $(G_s,G)=(1,C_{12})$, family~1.}\label{fam:fourfold-no-147}
$G\text{-ID}=[12,2]$, $H\text{-ID}=[36,8]$.
The non-symplectic index is $12$, and the family dimension is $3$.  The projective action is liftable.
This is a proper subfamily of family~132.

\par\Needspace{4\baselineskip}\medskip\noindent With $K^+=\langle\omega I_6\rangle$, $H=\langle K^+,T\rangle$, where\par\nopagebreak
\begin{SecEightMatrixBlock}
\SecEightMatrix{T}{\diag(\zeta_4,-1,1,1,1,\omega)}
\end{SecEightMatrixBlock}

\par\smallskip\noindent A basis of $W_H$ is $\{P_i\mid i\in\{2,\allowbreak 23,\allowbreak 24,\allowbreak 25,\allowbreak 37,\allowbreak 38,\allowbreak 39,\allowbreak 41,\allowbreak 42,\allowbreak 44,\allowbreak 47,\allowbreak 48,\allowbreak 50,\allowbreak 53,\allowbreak 56\}\}$.

\par\Needspace{3\baselineskip}\medskip\noindent\textbf{\boldmath 148.\enspace $(G_s,G)=(1,C_{12})$, family~2.}\label{fam:fourfold-no-148}
$G\text{-ID}=[12,2]$, $H\text{-ID}=[36,8]$.
The non-symplectic index is $12$, and the family dimension is $3$.  The projective action is liftable.
This is a proper subfamily of family~132.

\par\Needspace{4\baselineskip}\medskip\noindent With $K^+=\langle\omega I_6\rangle$, $H=\langle K^+,T\rangle$, where\par\nopagebreak
\begin{SecEightMatrixBlock}
\SecEightMatrix{T}{\diag(\zeta_4,-1,1,-1,1,\omega)}
\end{SecEightMatrixBlock}

\par\smallskip\noindent A basis of $W_H$ is $\{P_i\mid i\in\{2,\allowbreak 4,\allowbreak 23,\allowbreak 25,\allowbreak 28,\allowbreak 32,\allowbreak 37,\allowbreak 39,\allowbreak 41,\allowbreak 44,\allowbreak 48,\allowbreak 53,\allowbreak 56\}\}$.

\par\Needspace{3\baselineskip}\medskip\noindent\textbf{\boldmath 149.\enspace $(G_s,G)=(1,C_{12})$, family~3.}\label{fam:fourfold-no-149}
$G\text{-ID}=[12,2]$, $H\text{-ID}=[36,8]$.
The non-symplectic index is $12$, and the family dimension is $2$.  The projective action is liftable.
This is a proper subfamily of family~132.

\par\Needspace{4\baselineskip}\medskip\noindent With $K^+=\langle\omega I_6\rangle$, $H=\langle K^+,T\rangle$, where\par\nopagebreak
\begin{SecEightMatrixBlock}
\SecEightMatrix{T}{\diag(\zeta_{12}^{11},\zeta_6,\omega^2,\omega,1,1)}
\end{SecEightMatrixBlock}

\par\smallskip\noindent A basis of $W_H$ is $\{P_i\mid i\in\{2,\allowbreak 23,\allowbreak 37,\allowbreak 42,\allowbreak 43,\allowbreak 47,\allowbreak 53,\allowbreak \ldots,\allowbreak 56\}\}$.

\par\Needspace{3\baselineskip}\medskip\noindent\textbf{\boldmath 150.\enspace $(G_s,G)=(1,C_{12})$, family~4.}\label{fam:fourfold-no-150}
$G\text{-ID}=[12,2]$, $H\text{-ID}=[36,8]$.
The non-symplectic index is $12$, and the family dimension is $2$.  The projective action is liftable.
This is a proper subfamily of family~132.

\par\Needspace{4\baselineskip}\medskip\noindent With $K^+=\langle\omega I_6\rangle$, $H=\langle K^+,T\rangle$, where\par\nopagebreak
\begin{SecEightMatrixBlock}
\SecEightMatrix{T}{\diag(\zeta_4,-1,1,-\omega,\omega,\omega^2)}
\end{SecEightMatrixBlock}

\par\smallskip\noindent A basis of $W_H$ is $\{P_i\mid i\in\{2, 23, 33, 37, 45, 48, 53, 56\}\}$.

\par\Needspace{3\baselineskip}\medskip\noindent\textbf{\boldmath 151.\enspace $(G_s,G)=(1,C_{16})$, family~1.}\label{fam:fourfold-no-151}
$G\text{-ID}=[16,1]$, $H\text{-ID}=[48,2]$.
The non-symplectic index is $16$, and the family dimension is $1$.  The projective action is liftable.
This is a proper subfamily of family~132.

\par\Needspace{4\baselineskip}\medskip\noindent With $K^+=\langle\omega I_6\rangle$, $H=\langle K^+,T\rangle$, where\par\nopagebreak
\begin{SecEightMatrixBlock}
\SecEightMatrix{T}{\diag(\zeta_{16}^{11},-\zeta_8,-\zeta_4,-1,1,1)}
\end{SecEightMatrixBlock}

\par\smallskip\noindent A basis of $W_H$ is $\{P_i\mid i\in\{2, 23, 38, 48, 49, 53, 54, 55, 56\}\}$.

\par\Needspace{3\baselineskip}\medskip\noindent\textbf{\boldmath 152.\enspace $(G_s,G)=(1,C_{16})$, family~2.}\label{fam:fourfold-no-152}
$G\text{-ID}=[16,1]$, $H\text{-ID}=[48,2]$.
See also \cite[Proposition~3.3 and Theorem~4.2]{zheng2021liftableabelian}.
The non-symplectic index is $16$, and the family dimension is $1$.  The projective action is liftable.
This is a proper subfamily of family~132.

\par\Needspace{4\baselineskip}\medskip\noindent With $K^+=\langle\omega I_6\rangle$, $H=\langle K^+,T\rangle$, where\par\nopagebreak
\begin{SecEightMatrixBlock}
\SecEightMatrix{T}{\diag(\zeta_{16},-\zeta_8^3,\zeta_4,-\zeta_4,-1,1)}
\end{SecEightMatrixBlock}

\par\smallskip\noindent A basis of $W_H$ is $\{P_i\mid i\in\{2,23,39,43,48,54,56\}\}$.

\par\Needspace{3\baselineskip}\medskip\noindent\textbf{\boldmath 153.\enspace $(G_s,G)=(1,C_{24})$, family~1.}\label{fam:fourfold-no-153}
$G\text{-ID}=[24,2]$, $H\text{-ID}=[72,14]$.
The non-symplectic index is $24$, and the family dimension is $1$.  The projective action is liftable.
This is a proper subfamily of family~132.

\par\Needspace{4\baselineskip}\medskip\noindent With $K^+=\langle\omega I_6\rangle$, $H=\langle K^+,T\rangle$, where\par\nopagebreak
\begin{SecEightMatrixBlock}
\SecEightMatrix{T}{\diag(\zeta_8,-\zeta_4,-1,1,1,\omega)}
\end{SecEightMatrixBlock}

\par\smallskip\noindent A basis of $W_H$ is $\{P_i\mid i\in\{2, 23, 38, 39, 47, 48, 50, 53, 56\}\}$.

\par\Needspace{3\baselineskip}\medskip\noindent\textbf{\boldmath 154.\enspace $(G_s,G)=(1,C_{24})$, family~2.}\label{fam:fourfold-no-154}
$G\text{-ID}=[24,2]$, $H\text{-ID}=[72,14]$.
See also \cite[Proposition~3.3 and Theorem~4.2]{zheng2021liftableabelian}.
The non-symplectic index is $24$, and the family dimension is $1$.  The projective action is liftable.
This is a proper subfamily of family~132.

\par\Needspace{4\baselineskip}\medskip\noindent With $K^+=\langle\omega I_6\rangle$, $H=\langle K^+,T\rangle$, where\par\nopagebreak
\begin{SecEightMatrixBlock}
\SecEightMatrix{T}{\diag(-\zeta_8,-\zeta_4,\zeta_4,-1,1,\omega^2)}
\end{SecEightMatrixBlock}

\par\smallskip\noindent A basis of $W_H$ is $\{P_i\mid i\in\{2,24,29,38,48,53,56\}\}$.

\par\Needspace{3\baselineskip}\medskip\noindent\textbf{\boldmath 155.\enspace $(G_s,G)=(1,C_{32})$.}\label{fam:fourfold-no-155}
$G\text{-ID}=[32,1]$, $H\text{-ID}=[96,2]$.
See
\cite[\S2.2 and Theorem~4.2]{zheng2021liftableabelian},
\cite[\S6.1, (8)]{yang2024automorphism}, and
\cite[Proposition~7.1]{FWZ26}.

\par\Needspace{3\baselineskip}\medskip\noindent\textbf{\boldmath 156.\enspace $(G_s,G)=(1,C_{48})$.}\label{fam:fourfold-no-156}
$G\text{-ID}=[48,2]$, $H\text{-ID}=[144,30]$.
See
\cite[\S2.2 and Theorem~4.2]{zheng2021liftableabelian},
\cite[\S6.1, (5)]{yang2024automorphism}, and
\cite[Proposition~7.1]{FWZ26}.

\subsubsection{Cubic threefolds}
\label{subsubsection: description of threefolds}

The entries are numbered as in Table~\ref{tab:threefold} and ordered by
their source cubic fourfold families in Table~\ref{tab:fourfold}.  Each
source family contains the cubic suspensions of the corresponding threefold
family.  For a general defining cubic, $H\leq\GL(5,\CC)$ denotes the
strict lift of $G$, so $G\cong H/\langle\omega I_5\rangle$.  Where
generators and cubic bases are displayed, we use convenient coordinates
on the fixed five-space; their agreement with the extraction is verified
in \path{gap_classification/gap_threefold/}. See also \path{anc/threefold.txt}

\par\Needspace{3\baselineskip}\medskip\noindent\textbf{\boldmath 1.\enspace $G=C_3^4:S_5$.}\label{fam:threefold-no-1}
This is the Fermat cubic threefold; see \cite[Example~3.1(1)]{LiYuThreefold}.
Source cubic fourfold family: No.~1.

\par\Needspace{3\baselineskip}\medskip\noindent\textbf{\boldmath 2.\enspace $G=L_2(11)$.}\label{fam:threefold-no-2}
This is the Klein cubic threefold; see \cite[Example~3.1(5)]{LiYuThreefold}.
Source cubic fourfold family: No.~7.

\par\Needspace{3\baselineskip}\medskip\noindent\textbf{\boldmath 3.\enspace $G=S_5\times C_3$.}\label{fam:threefold-no-3}
$G\text{-ID}=[360,119]$, $H\text{-ID}=[1080,490]$.
This is the suspension of the Clebsch diagonal cubic surface; see \cite[Example~3.1(6)]{LiYuThreefold}.
Source cubic fourfold family: No.~8.  Here
$H\cong S_5\times C_3^2$.
The family dimension is $0$.
\par\smallskip\noindent Generators of $H$ are\par\nopagebreak
\noindent \(g_1=(2\,3)\).  The remaining generators are
\begin{familydisplay}
\[
g_{2}=\diag\!\left(1,
\smat{\omega^2&0&0&-\omega^2\\0&0&0&-\omega^2\\0&\omega^2&0&-\omega^2\\0&0&\omega^2&-\omega^2}\right)
\qquad
g_{3}=(2\,4\,5\,3)
\qquad g_{4}=\omega I_5.
\]
\end{familydisplay}
\par\smallskip\noindent A basis of $W_H$ is\par\nopagebreak
\begin{familydisplay}
\begin{alignat*}{2}
P_{1}&=x_{1}^{3},\\[2pt]
P_{2}&=x_{2}^{2}x_{3}+x_{2}^{2}x_{4}+x_{2}^{2}x_{5}+x_{2}x_{3}^{2}+2x_{2}x_{3}x_{4}+2x_{2}x_{3}x_{5}+x_{2}x_{4}^{2}+2x_{2}x_{4}x_{5}\\
&\quad +x_{2}x_{5}^{2}+x_{3}^{2}x_{4}+x_{3}^{2}x_{5}+x_{3}x_{4}^{2}+2x_{3}x_{4}x_{5}+x_{3}x_{5}^{2}+x_{4}^{2}x_{5}+x_{4}x_{5}^{2}.\\[2pt]
\end{alignat*}
\end{familydisplay}

\par\Needspace{3\baselineskip}\medskip\noindent\textbf{\boldmath 4.\enspace $G=S_3\times((C_3^2:C_3):C_2)$.}\label{fam:threefold-no-4}
$G\text{-ID}=[324,122]$, $H\text{-ID}=[972,791]$.
Source cubic fourfold family: No.~23.  Here $H\cong C_3\times(S_3\times((C_3^2:C_3):C_2))$.
The family dimension is $1$.
\par\smallskip\noindent Generators of $H$ are\par\nopagebreak
\begin{familydisplay}
\begin{align*}
g_1&=\diag(1,\omega^2,\omega,1,1),
&
g_2&=(1\,3)\diag(1,1,1,1,\omega),\\
g_3&=\diag\!\left(
\smat{0&1&0\\0&0&\omega\\\omega^2&0&0},
\smat{0&\omega\\\omega^2&0}
\right),
&
g_4&=\omega I_5.
\end{align*}
\end{familydisplay}
\par\smallskip\noindent A basis of $W_H$ is\par\nopagebreak
\begin{familydisplay}
\begin{SecEightAlignedBasis}{l@{\qquad}l@{\qquad}l}
\PBasisLabel{1}=x_{1}x_{2}x_{3}, &
\PBasisLabel{2}=x_{1}^{3}+x_{2}^{3}+x_{3}^{3}, &
\PBasisLabel{3}=x_{4}^{3}+x_{5}^{3}.
\end{SecEightAlignedBasis}
\end{familydisplay}

\par\Needspace{3\baselineskip}\medskip\noindent\textbf{\boldmath 5.\enspace $G=S_3\times((C_3^2:C_3):C_4)$.}\label{fam:threefold-no-5}
$G\text{-ID}=[648,541]$, $H\text{-ID}=[1944,3483]$.
See \cite[Example~3.1(2)]{LiYuThreefold}.
Source cubic fourfold family: No.~24.  Here $H\cong C_3\times(S_3\times((C_3^2:C_3):C_4))$.
The family dimension is $0$.
\par\smallskip\noindent Generators of $H$ are\par\nopagebreak
\begin{familydisplay}
\begin{align*}
&g_1=(1\,2)\diag(1,1,1,1,\omega^2)\qquad
g_2=\diag\!\left(\frac{1}{\sqrt{3}}\smat{1&1&1\\1&\omega&\omega^2\\1&\omega^2&\omega},I_2\right),\\
&g_3=\diag(1,1,1,\omega,1)\qquad g_4=\diag(1,1,1,\omega,\omega),\\
&g_5=(4\,5)\qquad g_6=\omega I_5.
\end{align*}
\end{familydisplay}
\par\smallskip\noindent A basis of $W_H$ is\par\nopagebreak
\begin{familydisplay}
\begin{alignat*}{2}
P_{1}&=x_{1}^{3}+x_{2}^{3}+x_{3}^{3}+3(\sqrt{3}-1)x_{1}x_{2}x_{3},\\[2pt]
P_{2}&=x_{4}^{3}+x_{5}^{3}.\\[2pt]
\end{alignat*}
\end{familydisplay}

\par\Needspace{3\baselineskip}\medskip\noindent\textbf{\boldmath 6.\enspace $G=S_4\times C_3$.}\label{fam:threefold-no-6}
$G\text{-ID}=[72,42]$, $H\text{-ID}=[216,163]$.
Source cubic fourfold family: No.~28.  Here
$H\cong S_4\times C_3^2$.
The family dimension is $1$.
\par\smallskip\noindent Generators of $H$ are\par\nopagebreak
\begin{familydisplay}
\begin{align*}
&g_{1}=\diag(1,1,\omega,\omega^2,\omega^2)(1\,5\,3)
\qquad g_{2}=\diag\!\left(1,\omega,\smat{0&1\\1&0},1\right)\\
\end{align*}
\end{familydisplay}
\noindent We also include $g_{3}=\omega I_5$.
\par\smallskip\noindent A basis of $W_H$ is\par\nopagebreak
\begin{familydisplay}
\begin{alignat*}{2}
P_{1}&=x_{2}^{3},\\[2pt]
P_{2}&=x_{1}^{3}+x_{3}^{3}+x_{4}^{3}+x_{5}^{3},\\[2pt]
P_{3}&=x_{1}x_{3}x_{4}+x_{1}x_{3}x_{5}+x_{1}x_{4}x_{5}+\omega^2x_{3}x_{4}x_{5},\\[2pt]
P_{4}&=x_{1}^{2}x_{3}+x_{1}^{2}x_{4}+x_{1}^{2}x_{5}+\omega^2x_{1}x_{3}^{2}+\omega^2x_{1}x_{4}^{2}+\omega^2x_{1}x_{5}^{2}\\
&\quad +\omega\,x_{3}^{2}x_{4}+\omega\,x_{3}^{2}x_{5}+\omega\,x_{3}x_{4}^{2}+\omega\,x_{3}x_{5}^{2}\\
&\quad +\omega\,x_{4}^{2}x_{5}+\omega\,x_{4}x_{5}^{2}.\\[2pt]
\end{alignat*}
\end{familydisplay}

\par\Needspace{3\baselineskip}\medskip\noindent\textbf{\boldmath 7.\enspace $G=A_5$.}\label{fam:threefold-no-7}
$G\text{-ID}=[60,5]$, $H\text{-ID}=[180,19]$.
Source cubic fourfold family: No.~30.  Here
$H\cong\GL(2,4)$.
The family dimension is $1$.
\par\smallskip\noindent Generators of $H$ are\par\nopagebreak
\begin{familydisplay}
\begin{align*}
g_{1}&=\diag(1,1,\omega,1,\omega^2)(1\,5\,4\,3\,2),
&
g_{2}&=\diag(1,1,\omega,1,1)(1\,2\,5\,3\,4),\\
g_{3}&=\diag(1,\omega^2,\omega^2,1,\omega)(1\,5\,2),
&
g_{4}&=\omega I_5.
\end{align*}
\end{familydisplay}
\par\smallskip\noindent A basis of $W_H$ is\par\nopagebreak
\begin{familydisplay}
\begin{alignat*}{2}
P_{1}&=x_{1}^{3}+x_{2}^{3}+x_{3}^{3}+x_{4}^{3}+x_{5}^{3},\\[2pt]
P_{2}&=x_{1}x_{2}x_{3}+\omega^2x_{1}x_{2}x_{4}+\omega^2x_{1}x_{2}x_{5}+x_{1}x_{3}x_{4}+\omega^2x_{1}x_{3}x_{5}\\
&\quad +x_{1}x_{4}x_{5}+\omega\,x_{2}x_{3}x_{4}+\omega^2x_{2}x_{3}x_{5}+\omega^2x_{2}x_{4}x_{5}+\omega\,x_{3}x_{4}x_{5}.\\[2pt]
\end{alignat*}
\end{familydisplay}

\par\Needspace{3\baselineskip}\medskip\noindent\textbf{\boldmath 8.\enspace $G=S_5$.}\label{fam:threefold-no-8}
$G\text{-ID}=[120,34]$, $H\text{-ID}=[360,119]$.
Source cubic fourfold family: No.~32.  Here
$H\cong S_5\times C_3$.
The family dimension is $1$.
\par\smallskip\noindent Generators of $H$ are\par\nopagebreak
\begin{familydisplay}
\begin{align*}
&g_{1}=(1\,2\,5)(3\,4)\qquad g_{2}=(2\,5\,4)\\
&g_{3}=(1\,4\,2)\qquad g_{4}=\omega I_5\\
\end{align*}
\end{familydisplay}
\par\smallskip\noindent A basis of $W_H$ is\par\nopagebreak
\begin{familydisplay}
\begin{alignat*}{2}
P_{1}&=x_{1}x_{2}x_{3}+x_{1}x_{2}x_{4}+x_{1}x_{2}x_{5}+x_{1}x_{3}x_{4}+x_{1}x_{3}x_{5}+x_{1}x_{4}x_{5}\\
&\quad +x_{2}x_{3}x_{4}+x_{2}x_{3}x_{5}+x_{2}x_{4}x_{5}+x_{3}x_{4}x_{5},\\[2pt]
P_{2}&=x_{1}^{2}x_{2}+x_{1}^{2}x_{3}+x_{1}^{2}x_{4}+x_{1}^{2}x_{5}+x_{1}x_{2}^{2}+x_{1}x_{3}^{2}+x_{1}x_{4}^{2}\\
&\quad +x_{1}x_{5}^{2}+x_{2}^{2}x_{3}+x_{2}^{2}x_{4}+x_{2}^{2}x_{5}+x_{2}x_{3}^{2}+x_{2}x_{4}^{2}+x_{2}x_{5}^{2}\\
&\quad +x_{3}^{2}x_{4}+x_{3}^{2}x_{5}+x_{3}x_{4}^{2}+x_{3}x_{5}^{2}+x_{4}^{2}x_{5}+x_{4}x_{5}^{2},\\[2pt]
P_{3}&=x_{1}^{3}+x_{2}^{3}+x_{3}^{3}+x_{4}^{3}+x_{5}^{3}.\\[2pt]
\end{alignat*}
\end{familydisplay}

\par\Needspace{3\baselineskip}\medskip\noindent\textbf{\boldmath 9.\enspace $G=S_3\times C_6$.}\label{fam:threefold-no-9}
$G\text{-ID}=[36,12]$, $H\text{-ID}=[108,42]$.
Source cubic fourfold family: No.~36.  Here
$H\cong S_3\times C_6\times C_3$.
The family dimension is $1$.
\par\smallskip\noindent Generators of $H$ are\par\nopagebreak
\begin{familydisplay}
\begin{align*}
g_{1}&=\diag(1,1,\omega^2,\omega,\omega^2),
&
g_{2}&=\diag(\omega^2,1,1,\omega,1),\\
g_{3}&=\diag(1,\omega,1,1,1),
&
g_{4}&=(1\,4),\\
g_{5}&=(3\,5),
&
g_{6}&=\omega I_5.
\end{align*}
\end{familydisplay}
\par\smallskip\noindent A basis of $W_H$ is\par\nopagebreak
\begin{familydisplay}
\begin{alignat*}{2}
P_{1}&=x_{1}x_{3}x_{4}+x_{1}x_{4}x_{5},\\[2pt]
P_{2}&=x_{1}^{3}+x_{4}^{3},\\[2pt]
P_{3}&=x_{3}^{3}-x_{3}^{2}x_{5}-x_{3}x_{5}^{2}+x_{5}^{3},\\[2pt]
P_{4}&=x_{2}^{3},\\[2pt]
P_{5}&=x_{3}^{2}x_{5}+x_{3}x_{5}^{2}.\\[2pt]
\end{alignat*}
\end{familydisplay}

\par\Needspace{3\baselineskip}\medskip\noindent\textbf{\boldmath 10.\enspace $G=(S_3\times S_3):C_2$.}\label{fam:threefold-no-10}
$G\text{-ID}=[72,40]$, $H\text{-ID}=[216,157]$.
Source cubic fourfold family: No.~38.  Here
$H\cong((S_3\times S_3):C_2)\times C_3$.
The family dimension is $1$.
\par\smallskip\noindent Generators of $H$ are\par\nopagebreak
\begin{familydisplay}
\begin{align*}
&g_{1}=\diag(1,\omega,1,1,\omega)(1\,5)(3\,4)
\qquad g_{2}=(3\,5)\\
\end{align*}
\end{familydisplay}
\noindent We also include $g_{3}=\omega I_5$.
\par\smallskip\noindent A basis of $W_H$ is\par\nopagebreak
\begin{familydisplay}
\begin{SecEightAlignedBasis}{l@{\qquad}l}
\PBasisLabel{1}=x_{2}^{3}, & \PBasisLabel{2}=x_{1}x_{2}x_{4}+\omega^2x_{2}x_{3}x_{5},\\
\multicolumn{2}{@{}l@{}}{\PBasisLabel{3}=x_{1}^{3}+x_{3}^{3}+x_{4}^{3}+x_{5}^{3}.}
\end{SecEightAlignedBasis}
\end{familydisplay}

\par\Needspace{3\baselineskip}\medskip\noindent\textbf{\boldmath 11.\enspace $G=S_3\times C_3$.}\label{fam:threefold-no-11}
$G\text{-ID}=[18,3]$, $H\text{-ID}=[54,12]$.
Source cubic fourfold family: No.~56.  Here
$H\cong S_3\times C_3^2$.
The family dimension is $2$.
\par\smallskip\noindent Generators of $H$ are\par\nopagebreak
\begin{familydisplay}
\begin{align*}
&g_{1}=\diag\!\left(\smat{0&-\omega\\1&-\omega^2},I_3\right)
\qquad g_{2}=(3\,4)\\
&g_{3}=(3\,5\,4)\qquad
g_{4}=\diag\!\left(\smat{0&-1\\\omega^2&-\omega},\omega^2I_3\right)\\
\end{align*}
\end{familydisplay}
\noindent We also include $g_{5}=\omega I_5$.
\par\smallskip\noindent A basis of $W_H$ is\par\nopagebreak
\begin{familydisplay}
\begin{alignat*}{2}
P_{1}&=x_{1}^{2}x_{2}+\omega\,x_{1}x_{2}^{2},\\[2pt]
P_{2}&=x_{1}^{3}-3\,\omega^2x_{1}x_{2}^{2}-x_{2}^{3},\\[2pt]
P_{3}&=(x_{1}-\omega^2x_{2})^2(x_{3}+x_{4}+x_{5}),\\[2pt]
P_{4}&=(x_{1}-\omega^2x_{2})(x_{3}x_{4}+x_{3}x_{5}+x_{4}x_{5}),\\[2pt]
P_{5}&=(x_{1}-\omega^2x_{2})(x_{3}^{2}+x_{4}^{2}+x_{5}^{2}),\\[2pt]
P_{6}&=x_{3}x_{4}x_{5},\\[2pt]
P_{7}&=x_{3}^{2}x_{4}+x_{3}^{2}x_{5}+x_{3}x_{4}^{2}+x_{3}x_{5}^{2}+x_{4}^{2}x_{5}+x_{4}x_{5}^{2},\\[2pt]
P_{8}&=x_{3}^{3}+x_{4}^{3}+x_{5}^{3}.\\[2pt]
\end{alignat*}
\end{familydisplay}

\par\Needspace{3\baselineskip}\medskip\noindent\textbf{\boldmath 12.\enspace $G=S_3\times S_3$.}\label{fam:threefold-no-12}
$G\text{-ID}=[36,10]$, $H\text{-ID}=[108,38]$.
Source cubic fourfold family: No.~57.  Here
$H\cong S_3^2\times C_3$.
The family dimension is $2$.
\par\smallskip\noindent Generators of $H$ are\par\nopagebreak
\begin{familydisplay}
\begin{align*}
&g_{1}=\diag\!\left(I_3,\smat{-1&0\\-1&1}\right)
\qquad g_{2}=\diag\!\left(I_3,\smat{-1&1\\-1&0}\right)\\
&g_{3}=(1\,2)\qquad g_{4}=(1\,2\,3)\qquad g_{5}=\omega I_5\\
\end{align*}
\end{familydisplay}
\par\smallskip\noindent A basis of $W_H$ is\par\nopagebreak
\begin{familydisplay}
\begin{alignat*}{2}
P_{1}&=x_{1}x_{2}x_{3},\\[2pt]
P_{2}&=x_{1}^{2}x_{2}+x_{1}^{2}x_{3}+x_{1}x_{2}^{2}+x_{1}x_{3}^{2}+x_{2}^{2}x_{3}+x_{2}x_{3}^{2},\\[2pt]
P_{3}&=x_{1}^{3}+x_{2}^{3}+x_{3}^{3},\\[2pt]
P_{4}&=(x_{1}+x_{2}+x_{3})(x_{4}^{2}+x_{4}x_{5}+x_{5}^{2}),\\[2pt]
P_{5}&=x_{4}^{2}x_{5}+x_{4}x_{5}^{2}.\\[2pt]
\end{alignat*}
\end{familydisplay}

\par\Needspace{3\baselineskip}\medskip\noindent\textbf{\boldmath 13.\enspace $G=D_{12}$, family~1.}\label{fam:threefold-no-13}
$G\text{-ID}=[12,4]$, $H\text{-ID}=[36,12]$.
Source cubic fourfold family: No.~61.  Here
$H\cong S_3\times C_6$.
The family dimension is $2$.
\par\smallskip\noindent Generators of $H$ are\par\nopagebreak
\begin{familydisplay}
\begin{align*}
&g_{1}=\diag\!\left(\smat{0&0&\omega^2&0\\0&0&0&-\omega^2\\\omega&0&0&0\\0&-\omega&0&0},1\right)
\qquad g_{2}=\diag(1,-1,1,-1,1)\\
&g_{3}=\diag(\omega,\omega,\omega^2,\omega^2,1)
\qquad g_{4}=\omega I_5\\
\end{align*}
\end{familydisplay}
\par\smallskip\noindent A basis of $W_H$ is\par\nopagebreak
\begin{familydisplay}
\begin{SecEightAlignedBasis}{l@{\qquad}l}
\PBasisLabel{1}=x_{1}x_{3}x_{5}, & \PBasisLabel{2}=x_{2}x_{4}x_{5},\\
\PBasisLabel{3}=x_{1}^{3}+x_{3}^{3}, & \PBasisLabel{4}=x_{1}x_{2}^{2}+x_{3}x_{4}^{2},\\
\multicolumn{2}{@{}l@{}}{\PBasisLabel{5}=x_{5}^{3}.}
\end{SecEightAlignedBasis}
\end{familydisplay}

\par\Needspace{3\baselineskip}\medskip\noindent\textbf{\boldmath 14.\enspace $G=(C_6\times C_2):C_2$.}\label{fam:threefold-no-14}
$G\text{-ID}=[24,8]$, $H\text{-ID}=[72,30]$.
Source cubic fourfold family: No.~62.  Here $H\cong((C_6\times C_2):C_2)\times C_3$.
The family dimension is $1$.
\par\smallskip\noindent Generators of $H$ are\par\nopagebreak
\begin{familydisplay}
\begin{align*}
&g_{1}=\diag(\omega^2,\omega^2,\omega,\omega,1)
\qquad g_{2}=\omega I_5\\
&g_{3}=(1\,3)(2\,4)\qquad
g_{4}=\diag(1,-1,1,-1,1)(1\,3)(2\,4)\\
&g_{5}=\diag(1,-1,1,1,1)\\
\end{align*}
\end{familydisplay}
\par\smallskip\noindent A basis of $W_H$ is\par\nopagebreak
\begin{familydisplay}
\begin{SecEightAlignedBasis}{l@{\qquad}l}
\PBasisLabel{1}=x_{1}x_{3}x_{5}, & \PBasisLabel{2}=x_{1}^{3}+x_{3}^{3},\\
\PBasisLabel{3}=x_{1}x_{2}^{2}+x_{3}x_{4}^{2}, & \PBasisLabel{4}=x_{5}^{3}.
\end{SecEightAlignedBasis}
\end{familydisplay}

\par\Needspace{3\baselineskip}\medskip\noindent\textbf{\boldmath 15.\enspace $G=C_6\times C_2$.}\label{fam:threefold-no-15}
$G\text{-ID}=[12,5]$, $H\text{-ID}=[36,14]$.
Source cubic fourfold family: No.~64.  Here
$H\cong C_6^2$.
The family dimension is $2$.
\par\smallskip\noindent Generators of $H$ are\par\nopagebreak
\begin{familydisplay}
\begin{align*}
&g_{1}=\diag(1,1,1,1,\omega)\qquad
g_{2}=\diag(-1,-1,1,1,1)(1\,2)\\
&g_{3}=(1\,2)\qquad g_{4}=\diag(\omega^2,\omega^2,\omega^2,\omega^2,1)\\
\end{align*}
\end{familydisplay}
\noindent We also include $g_{5}=\omega I_5$.
\par\smallskip\noindent A basis of $W_H$ is\par\nopagebreak
\begin{familydisplay}
\begin{SecEightAlignedBasis}{l@{\qquad}l@{\qquad}l}
\PBasisLabel{1}=x_{1}x_{2}x_{3}, & \PBasisLabel{2}=x_{1}x_{2}x_{4}, & \PBasisLabel{3}=x_{1}^{2}x_{3}+x_{2}^{2}x_{3},\\
\PBasisLabel{4}=x_{1}^{2}x_{4}+x_{2}^{2}x_{4}, & \PBasisLabel{5}=x_{3}^{3}, & \PBasisLabel{6}=x_{3}^{2}x_{4},\\
\PBasisLabel{7}=x_{3}x_{4}^{2}, & \PBasisLabel{8}=x_{4}^{3}, & \PBasisLabel{9}=x_{5}^{3}.
\end{SecEightAlignedBasis}
\end{familydisplay}

\par\Needspace{3\baselineskip}\medskip\noindent\textbf{\boldmath 16.\enspace $G=A_4$.}\label{fam:threefold-no-16}
$G\text{-ID}=[12,3]$, $H\text{-ID}=[36,11]$.
Source cubic fourfold family: No.~67.  Here
$H\cong A_4\times C_3$.
The family dimension is $2$.
\par\smallskip\noindent Generators of $H$ are\par\nopagebreak
\begin{familydisplay}
\begin{align*}
&g_{1}=(1\,2\,3)\,\diag(1,1,1,1,\omega)\qquad g_{2}=(1\,3)(2\,4)\qquad g_{3}=\omega I_5\\
\end{align*}
\end{familydisplay}
\par\smallskip\noindent A basis of $W_H$ is\par\nopagebreak
\begin{familydisplay}
\begin{SecEightAlignedBasis}{l@{\qquad}l}
\multicolumn{2}{@{}l@{}}{\PBasisLabel{1}=x_{1}x_{2}x_{3}+x_{1}x_{2}x_{4}+x_{1}x_{3}x_{4}+x_{2}x_{3}x_{4},}\\
\multicolumn{2}{@{}l@{}}{\PBasisLabel{2}=x_{1}^{2}x_{2}+x_{1}^{2}x_{3}+x_{1}^{2}x_{4}+x_{1}x_{2}^{2}+x_{1}x_{3}^{2}+x_{1}x_{4}^{2}}\\
\multicolumn{2}{@{}l@{}}{\hphantom{\PBasisLabel{1}=}\quad +x_{2}^{2}x_{3}+x_{2}^{2}x_{4}+x_{2}x_{3}^{2}+x_{2}x_{4}^{2}+x_{3}^{2}x_{4}+x_{3}x_{4}^{2},}\\
\multicolumn{2}{@{}l@{}}{\PBasisLabel{3}=x_{1}x_{2}x_{5}+\omega\,x_{1}x_{3}x_{5}+\omega^2x_{1}x_{4}x_{5}+\omega^2x_{2}x_{3}x_{5}+\omega\,x_{2}x_{4}x_{5}+x_{3}x_{4}x_{5},}\\
\PBasisLabel{4}=x_{1}^{3}+x_{2}^{3}+x_{3}^{3}+x_{4}^{3}, & \PBasisLabel{5}=x_{5}^{3}.
\end{SecEightAlignedBasis}
\end{familydisplay}

\par\Needspace{3\baselineskip}\medskip\noindent\textbf{\boldmath 17.\enspace $G=S_4$.}\label{fam:threefold-no-17}
$G\text{-ID}=[24,12]$, $H\text{-ID}=[72,42]$.
Source cubic fourfold family: No.~69.  Here
$H\cong S_4\times C_3$.
The family dimension is $2$.
\par\smallskip\noindent Generators of $H$ are\par\nopagebreak
\begin{familydisplay}
\begin{align*}
&g_{1}=(1\,3\,2)\qquad g_{2}=\left(\begin{smallmatrix}0 & \omega^2 & -\omega^2 & 0 & 0 \\ 0 & 0 & -\omega^2 & 0 & 0 \\ \omega^2 & 0 & -\omega^2 & 0 & 0 \\ 0 & 0 & -12/5\,\omega^2 & \omega^2 & 0 \\ 0 & 0 & -2\,\omega^2 & 0 & \omega^2\end{smallmatrix}\right)\\
\end{align*}
\end{familydisplay}
\noindent We also include $g_{3}=\omega I_5$.
\par\smallskip\noindent A basis of $W_H$ is\par\nopagebreak
\begin{familydisplay}
\begin{alignat*}{2}
P_{1}&=5x_{1}^{2}x_{2}+5x_{1}^{2}x_{3}+5x_{1}x_{2}^{2}+10x_{1}x_{2}x_{3}+12x_{1}x_{2}x_{4}\\
&\quad +10x_{1}x_{2}x_{5}+5x_{1}x_{3}^{2}+12x_{1}x_{3}x_{4}+10x_{1}x_{3}x_{5}+5x_{2}^{2}x_{3}\\
&\quad +5x_{2}x_{3}^{2}+12x_{2}x_{3}x_{4}+10x_{2}x_{3}x_{5},\\[2pt]
P_{2}&=5x_{1}^{2}x_{4}+5x_{1}x_{2}x_{4}+5x_{1}x_{3}x_{4}+12x_{1}x_{4}^{2}+10x_{1}x_{4}x_{5}\\
&\quad +5x_{2}^{2}x_{4}+5x_{2}x_{3}x_{4}+12x_{2}x_{4}^{2}+10x_{2}x_{4}x_{5}+5x_{3}^{2}x_{4}\\
&\quad +12x_{3}x_{4}^{2}+10x_{3}x_{4}x_{5},\\[2pt]
P_{3}&=5x_{1}^{2}x_{5}+5x_{1}x_{2}x_{5}+5x_{1}x_{3}x_{5}+12x_{1}x_{4}x_{5}+10x_{1}x_{5}^{2}\\
&\quad +5x_{2}^{2}x_{5}+5x_{2}x_{3}x_{5}+12x_{2}x_{4}x_{5}+10x_{2}x_{5}^{2}+5x_{3}^{2}x_{5}\\
&\quad +12x_{3}x_{4}x_{5}+10x_{3}x_{5}^{2},\\[2pt]
P_{4}&=\mathmakebox[\widthof{$x_{4}x_{5}^{2}$}][l]{x_{4}^{3}},\qquad P_{5}=x_{4}^{2}x_{5},\\[2pt]
P_{6}&=x_{4}x_{5}^{2},\qquad P_{7}=x_{5}^{3}.\\[2pt]
\end{alignat*}
\end{familydisplay}

\par\Needspace{3\baselineskip}\medskip\noindent\textbf{\boldmath 18.\enspace $G=D_{10}$.}\label{fam:threefold-no-18}
$G\text{-ID}=[10,1]$, $H\text{-ID}=[30,2]$.
Source cubic fourfold family: No.~72.  Here
$H\cong D_{10}\times C_3$.
The family dimension is $2$.
\par\smallskip\noindent Generators of $H$ are\par\nopagebreak
\begin{familydisplay}
\begin{align*}
&g_{1}=(2\,5)(3\,4)\qquad
g_{2}=\diag(1,\zeta_5,\zeta_5^2,\zeta_5^3,\zeta_5^4)\qquad g_{3}=\omega I_5\\
\end{align*}
\end{familydisplay}
\par\smallskip\noindent A basis of $W_H$ is\par\nopagebreak
\begin{familydisplay}
\begin{SecEightAlignedBasis}{l@{\qquad}l}
\PBasisLabel{1}=x_{1}^{3}, & \PBasisLabel{2}=x_{1}x_{2}x_{5},\\
\PBasisLabel{3}=x_{1}x_{3}x_{4}, & \PBasisLabel{4}=x_{2}^{2}x_{4}+x_{3}x_{5}^{2},\\
\multicolumn{2}{@{}l@{}}{\PBasisLabel{5}=x_{2}x_{3}^{2}+x_{4}^{2}x_{5}.}
\end{SecEightAlignedBasis}
\end{familydisplay}

\par\Needspace{3\baselineskip}\medskip\noindent\textbf{\boldmath 19.\enspace $G=S_3$, family~1.}\label{fam:threefold-no-19}
$G\text{-ID}=[6,1]$, $H\text{-ID}=[18,3]$.
Source cubic fourfold family: No.~86.  Here
$H\cong S_3\times C_3$.
The family dimension is $3$.
\par\smallskip\noindent Generators of $H$ are\par\nopagebreak
\begin{familydisplay}
\begin{align*}
&g_{1}=\diag\!\left(\smat{0&1\\1&0},1,\smat{-1&0\\-1&1}\right)\\
&g_{2}=\diag\!\left(\smat{0&0&1\\1&0&0\\0&1&0},
\smat{0&-1\\1&-1}\right)\qquad g_{3}=\omega I_5\\
\end{align*}
\end{familydisplay}
\par\smallskip\noindent A basis of $W_H$ is\par\nopagebreak
\begin{familydisplay}
\begin{alignat*}{2}
P_{1}&=x_{1}x_{2}x_{3},\\[2pt]
P_{2}&=x_{1}^{2}x_{2}+x_{1}^{2}x_{3}+x_{1}x_{2}^{2}+x_{1}x_{3}^{2}+x_{2}^{2}x_{3}+x_{2}x_{3}^{2},\\[2pt]
P_{3}&=x_{1}x_{2}x_{5}+x_{1}x_{3}x_{4}-x_{2}x_{3}x_{4}-x_{2}x_{3}x_{5},\\[2pt]
P_{4}&=x_{1}^{3}+x_{2}^{3}+x_{3}^{3},\\[2pt]
P_{5}&=x_{1}^{2}x_{4}+x_{1}^{2}x_{5}-x_{2}^{2}x_{4}-x_{3}^{2}x_{5},\\[2pt]
P_{6}&=x_{1}x_{4}x_{5}-x_{2}x_{4}x_{5}-x_{2}x_{5}^{2}-x_{3}x_{4}^{2}-x_{3}x_{4}x_{5},\\[2pt]
P_{7}&=x_{1}x_{4}^{2}+x_{1}x_{5}^{2}+x_{2}x_{4}^{2}+2x_{2}x_{4}x_{5}+2x_{2}x_{5}^{2}+2x_{3}x_{4}^{2}+2x_{3}x_{4}x_{5}+x_{3}x_{5}^{2},\\[2pt]
P_{8}&=x_{4}^{2}x_{5}+x_{4}x_{5}^{2}.\\[2pt]
\end{alignat*}
\end{familydisplay}

\par\Needspace{3\baselineskip}\medskip\noindent\textbf{\boldmath 20.\enspace $G=C_6$, family~1.}\label{fam:threefold-no-20}
$G\text{-ID}=[6,2]$, $H\text{-ID}=[18,5]$.
Source cubic fourfold family: No.~90.  Here
$H\cong C_6\times C_3$.
The family dimension is $3$.
\par\smallskip\noindent Generators of $H$ are\par\nopagebreak
\begin{familydisplay}
\begin{align*}
&g_{1}=\diag(1,\omega,1,1,1)
\qquad g_{2}=\diag(1,1,1,1,-1)\qquad g_{3}=\omega I_5\\
\end{align*}
\end{familydisplay}
\par\smallskip\noindent A basis of $W_H$ is\par\nopagebreak
\begin{familydisplay}
\begin{SecEightAlignedBasis}{l@{\qquad}l@{\qquad}l@{\qquad}l}
\PBasisLabel{1}=x_{1}^{3}, & \PBasisLabel{2}=x_{2}^{3}, & \PBasisLabel{3}=x_{1}^{2}x_{3}, & \PBasisLabel{4}=x_{1}^{2}x_{4},\\
\PBasisLabel{5}=x_{1}x_{3}^{2}, & \PBasisLabel{6}=x_{1}x_{3}x_{4}, & \PBasisLabel{7}=x_{1}x_{4}^{2}, & \PBasisLabel{8}=x_{1}x_{5}^{2},\\
\PBasisLabel{9}=x_{3}^{3}, & \PBasisLabel{10}=x_{3}^{2}x_{4}, & \PBasisLabel{11}=x_{3}x_{4}^{2}, & \PBasisLabel{12}=x_{3}x_{5}^{2},\\
\PBasisLabel{13}=x_{4}^{3}, & \PBasisLabel{14}=x_{4}x_{5}^{2}.
\end{SecEightAlignedBasis}
\end{familydisplay}

\par\Needspace{3\baselineskip}\medskip\noindent\textbf{\boldmath 21.\enspace $G=D_{12}$, family~2.}\label{fam:threefold-no-21}
$G\text{-ID}=[12,4]$, $H\text{-ID}=[36,12]$.
Source cubic fourfold family: No.~91.  Here
$H\cong S_3\times C_6$.
The family dimension is $3$.
\par\smallskip\noindent Generators of $H$ are\par\nopagebreak
\begin{familydisplay}
\begin{align*}
&g_{1}=(1\,2)
\qquad g_{2}=\diag(\omega,\omega^2,1,1,1)\\
&g_{3}=\diag(1,1,1,1,-1)
\qquad g_{4}=\omega I_5\\
\end{align*}
\end{familydisplay}
\par\smallskip\noindent A basis of $W_H$ is\par\nopagebreak
\begin{familydisplay}
\begin{SecEightAlignedBasis}{l@{\qquad}l@{\qquad}l}
\PBasisLabel{1}=x_{1}^{3}+x_{2}^{3}, & \PBasisLabel{2}=x_{1}x_{2}x_{3}, & \PBasisLabel{3}=x_{1}x_{2}x_{4},\\
\PBasisLabel{4}=x_{3}^{3}, & \PBasisLabel{5}=x_{3}^{2}x_{4}, & \PBasisLabel{6}=x_{3}x_{4}^{2},\\
\PBasisLabel{7}=x_{3}x_{5}^{2}, & \PBasisLabel{8}=x_{4}^{3}, & \PBasisLabel{9}=x_{4}x_{5}^{2}.
\end{SecEightAlignedBasis}
\end{familydisplay}

\par\Needspace{3\baselineskip}\medskip\noindent\textbf{\boldmath 22.\enspace $G=C_{12}$.}\label{fam:threefold-no-22}
$G\text{-ID}=[12,2]$, $H\text{-ID}=[36,8]$.
Source cubic fourfold family: No.~94.  Here
$H\cong C_{12}\times C_3$.
The family dimension is $1$.
\par\smallskip\noindent Generators of $H$ are\par\nopagebreak
\begin{familydisplay}
\begin{align*}
&g_{1}=\diag\!\left(\smat{0&-\omega\\1&-\omega^2},1,-1,\zeta_4\right)
\qquad g_{2}=\diag\!\left(\smat{-\omega^2&\omega\\-1&0},\omega,\omega,\omega\right)\\
\end{align*}
\end{familydisplay}
\noindent We also include $g_{3}=\omega I_5$.
\par\smallskip\noindent A basis of $W_H$ is\par\nopagebreak
\begin{familydisplay}
\begin{SecEightAlignedBasis}{l@{\qquad}l}
\multicolumn{2}{@{}l@{}}{\PBasisLabel{1}=x_{1}^{2}x_{2}+\omega x_{1}x_{2}^{2},}\\
\multicolumn{2}{@{}l@{}}{\PBasisLabel{2}=x_{1}^{3}-3\omega^{2}x_{1}x_{2}^{2}-x_{2}^{3},}\\
\multicolumn{2}{@{}l@{}}{\PBasisLabel{3}=x_{1}^{2}x_{3}-2\omega^{2}x_{1}x_{2}x_{3}+\omega x_{2}^{2}x_{3},}\\
\PBasisLabel{4}=x_{1}x_{3}^{2}-\omega^{2}x_{2}x_{3}^{2}, & \PBasisLabel{5}=x_{1}x_{4}^{2}-\omega^{2}x_{2}x_{4}^{2},\\
\PBasisLabel{6}=x_{3}^{3}, & \PBasisLabel{7}=x_{3}x_{4}^{2},\\
\multicolumn{2}{@{}l@{}}{\PBasisLabel{8}=x_{4}x_{5}^{2}.}
\end{SecEightAlignedBasis}
\end{familydisplay}

\par\Needspace{3\baselineskip}\medskip\noindent\textbf{\boldmath 23.\enspace $G=S_3\times C_4$.}\label{fam:threefold-no-23}
$G\text{-ID}=[24,5]$, $H\text{-ID}=[72,27]$.
Source cubic fourfold family: No.~95.  Here
$H\cong S_3\times C_{12}$.
The family dimension is $1$.
\par\smallskip\noindent Generators of $H$ are\par\nopagebreak
\begin{familydisplay}
\begin{align*}
&g_{1}=\diag\!\left(\smat{-1&0\\-1&1},1,1,-1\right)
\qquad g_{2}=\diag\!\left(\smat{-1&1\\-1&0},I_3\right)\\
&g_{3}=\diag(\omega^2,\omega^2,\zeta_6,\omega^2,\zeta_{12}^{11})\\
\end{align*}
\end{familydisplay}
\noindent We also include $g_{4}=\omega I_5$.
\par\smallskip\noindent A basis of $W_H$ is\par\nopagebreak
\begin{familydisplay}
\begin{SecEightAlignedBasis}{l@{\qquad}l}
\PBasisLabel{1}=x_{1}^{2}x_{2}+x_{1}x_{2}^{2}, & \PBasisLabel{2}=x_{1}^{2}x_{4}+x_{1}x_{2}x_{4}+x_{2}^{2}x_{4},\\
\PBasisLabel{3}=x_{3}^{2}x_{4}, & \PBasisLabel{4}=x_{3}x_{5}^{2},\\
\multicolumn{2}{@{}l@{}}{\PBasisLabel{5}=x_{4}^{3}.}
\end{SecEightAlignedBasis}
\end{familydisplay}

\par\Needspace{3\baselineskip}\medskip\noindent\textbf{\boldmath 24.\enspace $G=C_{24}$.}\label{fam:threefold-no-24}
$G\text{-ID}=[24,2]$, $H\text{-ID}=[72,14]$.
See \cite[Example~3.1(3)]{LiYuThreefold}.
Source cubic fourfold family: No.~96.  Here
$H\cong C_{24}\times C_3$.
The family dimension is $0$.
\par\smallskip\noindent Generators of $H$ are\par\nopagebreak
\begin{familydisplay}
\begin{align*}
&g_{1}=\diag\!\left(\smat{0&-1\\\omega^2&-\omega},-\omega^2,-\zeta_{12}^{11},\zeta_{24}^{19}\right)
\qquad g_{2}=\diag\!\left(\smat{0&-\omega\\1&-\omega^2},I_3\right)\\
\end{align*}
\end{familydisplay}
\noindent We also include $g_{3}=\omega I_5$.
\par\smallskip\noindent A basis of $W_H$ is\par\nopagebreak
\begin{familydisplay}
\begin{SecEightAlignedBasis}{l@{\quad}l@{\quad}l}
\PBasisLabel{1}=x_{1}^{2}x_{2}+\omega\,x_{1}x_{2}^{2}, & \multicolumn{2}{@{}l@{}}{\PBasisLabel{2}=x_{1}^{3}-3\,\omega^2x_{1}x_{2}^{2}-x_{2}^{3},}\\
\PBasisLabel{3}=x_{1}x_{3}^{2}-\omega^2x_{2}x_{3}^{2}, & \PBasisLabel{4}=x_{3}x_{4}^{2}, & \PBasisLabel{5}=x_{4}x_{5}^{2}.
\end{SecEightAlignedBasis}
\end{familydisplay}

\par\Needspace{3\baselineskip}\medskip\noindent\textbf{\boldmath 25.\enspace $G=C_2^2$, family~1.}\label{fam:threefold-no-25}
$G\text{-ID}=[4,2]$, $H\text{-ID}=[12,5]$.
Source cubic fourfold family: No.~101.  Here
$H\cong C_6\times C_2$.
The family dimension is $4$.
\par\smallskip\noindent Generators of $H$ are\par\nopagebreak
\begin{familydisplay}
\begin{align*}
&g_{1}=\omega I_5
\qquad g_{2}=\diag(-1,1,-1,1,1)\\
&g_{3}=\diag(1,-1,-1,1,1)\\
\end{align*}
\end{familydisplay}
\par\smallskip\noindent A basis of $W_H$ is\par\nopagebreak
\begin{familydisplay}
\begin{SecEightAlignedBasis}{l@{\quad}l@{\quad}l@{\quad}l@{\quad}l}
\PBasisLabel{1}=x_{1}^{2}x_{4}, & \PBasisLabel{2}=x_{1}^{2}x_{5}, & \PBasisLabel{3}=x_{1}x_{2}x_{3}, & \PBasisLabel{4}=x_{2}^{2}x_{4}, & \PBasisLabel{5}=x_{2}^{2}x_{5},\\
\PBasisLabel{6}=x_{3}^{2}x_{4}, & \PBasisLabel{7}=x_{3}^{2}x_{5}, & \PBasisLabel{8}=x_{4}^{3}, & \PBasisLabel{9}=x_{4}^{2}x_{5}, & \PBasisLabel{10}=x_{4}x_{5}^{2},\\
\multicolumn{5}{@{}l@{}}{\PBasisLabel{11}=x_{5}^{3}.}
\end{SecEightAlignedBasis}
\end{familydisplay}

\par\Needspace{3\baselineskip}\medskip\noindent\textbf{\boldmath 26.\enspace $G=D_8$.}\label{fam:threefold-no-26}
$G\text{-ID}=[8,3]$, $H\text{-ID}=[24,10]$.
Source cubic fourfold family: No.~103.  Here
$H\cong D_8\times C_3$.
The family dimension is $3$.
\par\smallskip\noindent Generators of $H$ are\par\nopagebreak
\begin{familydisplay}
\begin{align*}
&g_{1}=\omega I_5
\qquad g_{2}=\diag(1,1,-1,-1,1)\\
&g_{3}=\diag(1,1,-1,1,-1)\qquad
g_{4}=\diag(1,1,1,-1,-1)(4\,5)\\
\end{align*}
\end{familydisplay}
\par\smallskip\noindent A basis of $W_H$ is\par\nopagebreak
\begin{familydisplay}
\begin{SecEightAlignedBasis}{l@{\qquad}l@{\qquad}l}
\PBasisLabel{1}=x_{1}^{3}, & \PBasisLabel{2}=x_{1}^{2}x_{2}, & \PBasisLabel{3}=x_{1}x_{2}^{2},\\
\PBasisLabel{4}=x_{1}x_{3}^{2}, & \PBasisLabel{5}=x_{1}x_{4}^{2}+x_{1}x_{5}^{2}, & \PBasisLabel{6}=x_{2}^{3},\\
\PBasisLabel{7}=x_{2}x_{3}^{2}, & \PBasisLabel{8}=x_{2}x_{4}^{2}+x_{2}x_{5}^{2}, & \PBasisLabel{9}=x_{3}x_{4}x_{5}.
\end{SecEightAlignedBasis}
\end{familydisplay}

\par\Needspace{3\baselineskip}\medskip\noindent\textbf{\boldmath 27.\enspace $G=C_3$, family~1.}\label{fam:threefold-no-27}
$G\text{-ID}=[3,1]$, $H\text{-ID}=[9,2]$.
Source cubic fourfold family: No.~110.  Here
$H\cong C_3^2$.
The family dimension is $4$.
\par\smallskip\noindent Generators of $H$ are\par\nopagebreak
\begin{familydisplay}
\begin{align*}
&g_{1}=\diag(\omega^2,1,\omega,\omega^2,\omega)
\qquad g_{2}=\omega I_5\\
\end{align*}
\end{familydisplay}
\par\smallskip\noindent A basis of $W_H$ is\par\nopagebreak
\begin{familydisplay}
\begin{SecEightAlignedBasis}{l@{\quad}l@{\quad}l@{\quad}l@{\quad}l}
\PBasisLabel{1}=x_{1}^{3}, & \PBasisLabel{2}=x_{1}^{2}x_{4}, & \PBasisLabel{3}=x_{1}x_{2}x_{3}, & \PBasisLabel{4}=x_{1}x_{2}x_{5}, & \PBasisLabel{5}=x_{1}x_{4}^{2},\\
\PBasisLabel{6}=x_{2}^{3}, & \PBasisLabel{7}=x_{2}x_{3}x_{4}, & \PBasisLabel{8}=x_{2}x_{4}x_{5}, & \PBasisLabel{9}=x_{3}^{3}, & \PBasisLabel{10}=x_{3}^{2}x_{5},\\
\PBasisLabel{11}=x_{3}x_{5}^{2}, & \PBasisLabel{12}=x_{4}^{3}, & \multicolumn{3}{@{}l@{}}{\PBasisLabel{13}=x_{5}^{3}.}
\end{SecEightAlignedBasis}
\end{familydisplay}

\par\Needspace{3\baselineskip}\medskip\noindent\textbf{\boldmath 28.\enspace $G=C_6$, family~2.}\label{fam:threefold-no-28}
$G\text{-ID}=[6,2]$, $H\text{-ID}=[18,5]$.
Source cubic fourfold family: No.~113.  Here
$H\cong C_6\times C_3$.
The family dimension is $2$.
\par\smallskip\noindent Generators of $H$ are\par\nopagebreak
\begin{familydisplay}
\begin{align*}
&g_{1}=\diag(\omega,\omega,1,1,\omega^2)
\qquad g_{2}=\omega I_5\\
&g_{3}=\diag(-1,1,1,1,1)\\
\end{align*}
\end{familydisplay}
\par\smallskip\noindent A basis of $W_H$ is\par\nopagebreak
\begin{familydisplay}
\begin{SecEightAlignedBasis}{l@{\quad}l@{\quad}l@{\quad}l@{\quad}l}
\PBasisLabel{1}=x_{1}^{2}x_{2}, & \PBasisLabel{2}=x_{2}^{3}, & \PBasisLabel{3}=x_{2}x_{3}x_{5}, & \PBasisLabel{4}=x_{2}x_{4}x_{5}, & \PBasisLabel{5}=x_{3}^{3},\\
\PBasisLabel{6}=x_{3}^{2}x_{4}, & \PBasisLabel{7}=x_{3}x_{4}^{2}, & \PBasisLabel{8}=x_{4}^{3}, & \multicolumn{2}{@{}l@{}}{\PBasisLabel{9}=x_{5}^{3}.}
\end{SecEightAlignedBasis}
\end{familydisplay}

\par\Needspace{3\baselineskip}\medskip\noindent\textbf{\boldmath 29.\enspace $G=C_3$, family~2.}\label{fam:threefold-no-29}
$G\text{-ID}=[3,1]$, $H\text{-ID}=[9,2]$.
Source cubic fourfold family: No.~116.  Here
$H\cong C_3^2$.
The family dimension is $4$.
\par\smallskip\noindent Generators of $H$ are\par\nopagebreak
\begin{familydisplay}
\begin{align*}
&g_{1}=\diag(1,1,1,1,\omega)
\qquad g_{2}=\diag(\omega^2,\omega^2,\omega^2,\omega^2,1)\\
\end{align*}
\end{familydisplay}
\noindent We also include $g_{3}=\omega I_5$.
\par\smallskip\noindent A basis of $W_H$ is\par\nopagebreak
\begin{familydisplay}
\begin{SecEightAlignedBasis}{l@{\quad}l@{\quad}l@{\quad}l@{\quad}l}
\PBasisLabel{1}=x_{1}^{3}, & \PBasisLabel{2}=x_{1}^{2}x_{2}, & \PBasisLabel{3}=x_{1}^{2}x_{3}, & \PBasisLabel{4}=x_{1}^{2}x_{4}, & \PBasisLabel{5}=x_{1}x_{2}^{2},\\
\PBasisLabel{6}=x_{1}x_{2}x_{3}, & \PBasisLabel{7}=x_{1}x_{2}x_{4}, & \PBasisLabel{8}=x_{1}x_{3}^{2}, & \PBasisLabel{9}=x_{1}x_{3}x_{4}, & \PBasisLabel{10}=x_{1}x_{4}^{2},\\
\PBasisLabel{11}=x_{2}^{3}, & \PBasisLabel{12}=x_{2}^{2}x_{3}, & \PBasisLabel{13}=x_{2}^{2}x_{4}, & \PBasisLabel{14}=x_{2}x_{3}^{2}, & \PBasisLabel{15}=x_{2}x_{3}x_{4},\\
\PBasisLabel{16}=x_{2}x_{4}^{2}, & \PBasisLabel{17}=x_{3}^{3}, & \PBasisLabel{18}=x_{3}^{2}x_{4}, & \PBasisLabel{19}=x_{3}x_{4}^{2}, & \PBasisLabel{20}=x_{4}^{3},\\
\multicolumn{5}{@{}l@{}}{\PBasisLabel{21}=x_{5}^{3}.}
\end{SecEightAlignedBasis}
\end{familydisplay}

\par\Needspace{3\baselineskip}\medskip\noindent\textbf{\boldmath 30.\enspace $G=S_3$, family~2.}\label{fam:threefold-no-30}
$G\text{-ID}=[6,1]$, $H\text{-ID}=[18,3]$.
Source cubic fourfold family: No.~117.  Here
$H\cong S_3\times C_3$.
The family dimension is $4$.
\par\smallskip\noindent Generators of $H$ are\par\nopagebreak
\begin{familydisplay}
\begin{align*}
&g_{1}=\diag\!\left(I_3,\smat{0&-\tfrac35\omega^2\\-\tfrac53\omega&0}\right)\\
&g_{2}=\diag(1,1,1,\omega^2,\omega)\qquad g_{3}=\omega I_5\\
\end{align*}
\end{familydisplay}
\par\smallskip\noindent A basis of $W_H$ is\par\nopagebreak
\begin{familydisplay}
\begin{SecEightAlignedBasis}{l@{\qquad}l@{\qquad}l}
\PBasisLabel{1}=x_{1}^{3}, & \PBasisLabel{2}=x_{1}^{2}x_{2}, & \PBasisLabel{3}=x_{1}^{2}x_{3},\\
\PBasisLabel{4}=x_{1}x_{2}^{2}, & \PBasisLabel{5}=x_{1}x_{2}x_{3}, & \PBasisLabel{6}=x_{1}x_{3}^{2},\\
\PBasisLabel{7}=x_{1}x_{4}x_{5}, & \PBasisLabel{8}=x_{2}^{3}, & \PBasisLabel{9}=x_{2}^{2}x_{3},\\
\PBasisLabel{10}=x_{2}x_{3}^{2}, & \PBasisLabel{11}=x_{2}x_{4}x_{5}, & \PBasisLabel{12}=x_{3}^{3},\\
\PBasisLabel{13}=x_{3}x_{4}x_{5}, & \PBasisLabel{14}=27x_{4}^{3}-125x_{5}^{3}.
\end{SecEightAlignedBasis}
\end{familydisplay}

\par\Needspace{3\baselineskip}\medskip\noindent\textbf{\boldmath 31.\enspace $G=C_2$, family~1.}\label{fam:threefold-no-31}
$G\text{-ID}=[2,1]$, $H\text{-ID}=[6,2]$.
Source cubic fourfold family: No.~123.  Here
$H\cong C_6$.
The family dimension is $6$.
\par\smallskip\noindent Generators of $H$ are\par\nopagebreak
\begin{familydisplay}
\begin{align*}
&g_{1}=\omega I_5
\qquad g_{2}=\diag(-1,1,-1,1,1)\\
\end{align*}
\end{familydisplay}
\par\smallskip\noindent A basis of $W_H$ is\par\nopagebreak
\begin{familydisplay}
\begin{SecEightAlignedBasis}{l@{\quad}l@{\quad}l@{\quad}l@{\quad}l}
\PBasisLabel{1}=x_{1}^{2}x_{2}, & \PBasisLabel{2}=x_{1}^{2}x_{4}, & \PBasisLabel{3}=x_{1}^{2}x_{5}, & \PBasisLabel{4}=x_{1}x_{2}x_{3}, & \PBasisLabel{5}=x_{1}x_{3}x_{4},\\
\PBasisLabel{6}=x_{1}x_{3}x_{5}, & \PBasisLabel{7}=x_{2}^{3}, & \PBasisLabel{8}=x_{2}^{2}x_{4}, & \PBasisLabel{9}=x_{2}^{2}x_{5}, & \PBasisLabel{10}=x_{2}x_{3}^{2},\\
\PBasisLabel{11}=x_{2}x_{4}^{2}, & \PBasisLabel{12}=x_{2}x_{4}x_{5}, & \PBasisLabel{13}=x_{2}x_{5}^{2}, & \PBasisLabel{14}=x_{3}^{2}x_{4}, & \PBasisLabel{15}=x_{3}^{2}x_{5},\\
\PBasisLabel{16}=x_{4}^{3}, & \PBasisLabel{17}=x_{4}^{2}x_{5}, & \PBasisLabel{18}=x_{4}x_{5}^{2}, & \multicolumn{2}{@{}l@{}}{\PBasisLabel{19}=x_{5}^{3}.}
\end{SecEightAlignedBasis}
\end{familydisplay}

\par\Needspace{3\baselineskip}\medskip\noindent\textbf{\boldmath 32.\enspace $G=C_2^2$, family~2.}\label{fam:threefold-no-32}
$G\text{-ID}=[4,2]$, $H\text{-ID}=[12,5]$.
Source cubic fourfold family: No.~125.  Here
$H\cong C_6\times C_2$.
The family dimension is $5$.
\par\smallskip\noindent Generators of $H$ are\par\nopagebreak
\begin{familydisplay}
\begin{align*}
&g_{1}=\omega I_5
\qquad g_{2}=\diag(-1,1,1,1,1)\\
&g_{3}=\diag(1,1,1,-1,1)\\
\end{align*}
\end{familydisplay}
\par\smallskip\noindent A basis of $W_H$ is\par\nopagebreak
\begin{familydisplay}
\begin{SecEightAlignedBasis}{l@{\quad}l@{\quad}l@{\quad}l@{\quad}l}
\PBasisLabel{1}=x_{1}^{2}x_{2}, & \PBasisLabel{2}=x_{1}^{2}x_{3}, & \PBasisLabel{3}=x_{1}^{2}x_{5}, & \PBasisLabel{4}=x_{2}^{3}, & \PBasisLabel{5}=x_{2}^{2}x_{3},\\
\PBasisLabel{6}=x_{2}^{2}x_{5}, & \PBasisLabel{7}=x_{2}x_{3}^{2}, & \PBasisLabel{8}=x_{2}x_{3}x_{5}, & \PBasisLabel{9}=x_{2}x_{4}^{2}, & \PBasisLabel{10}=x_{2}x_{5}^{2},\\
\PBasisLabel{11}=x_{3}^{3}, & \PBasisLabel{12}=x_{3}^{2}x_{5}, & \PBasisLabel{13}=x_{3}x_{4}^{2}, & \PBasisLabel{14}=x_{3}x_{5}^{2}, & \PBasisLabel{15}=x_{4}^{2}x_{5},\\
\multicolumn{5}{@{}l@{}}{\PBasisLabel{16}=x_{5}^{3}.}
\end{SecEightAlignedBasis}
\end{familydisplay}

\par\Needspace{3\baselineskip}\medskip\noindent\textbf{\boldmath 33.\enspace $G=C_4\times C_2$.}\label{fam:threefold-no-33}
$G\text{-ID}=[8,2]$, $H\text{-ID}=[24,9]$.
Source cubic fourfold family: No.~130.  Here
$H\cong C_{12}\times C_2$.
The family dimension is $2$.
\par\smallskip\noindent Generators of $H$ are\par\nopagebreak
\begin{familydisplay}
\begin{align*}
&g_{1}=\diag(-\zeta_4,-1,1,1,1)
\qquad g_{2}=\omega I_5\\
&g_{3}=\diag(1,1,1,-1,1)\\
\end{align*}
\end{familydisplay}
\par\smallskip\noindent A basis of $W_H$ is\par\nopagebreak
\begin{familydisplay}
\begin{SecEightAlignedBasis}{l@{\quad}l@{\quad}l@{\quad}l@{\quad}l}
\PBasisLabel{1}=x_{1}^{2}x_{2}, & \PBasisLabel{2}=x_{2}^{2}x_{3}, & \PBasisLabel{3}=x_{2}^{2}x_{5}, & \PBasisLabel{4}=x_{3}^{3}, & \PBasisLabel{5}=x_{3}^{2}x_{5},\\
\PBasisLabel{6}=x_{3}x_{4}^{2}, & \PBasisLabel{7}=x_{3}x_{5}^{2}, & \PBasisLabel{8}=x_{4}^{2}x_{5}, & \multicolumn{2}{@{}l@{}}{\PBasisLabel{9}=x_{5}^{3}.}
\end{SecEightAlignedBasis}
\end{familydisplay}

\par\Needspace{3\baselineskip}\medskip\noindent\textbf{\boldmath 34.\enspace $G=1$.}\label{fam:threefold-no-34}
$G\text{-ID}=[1,1]$, $H\text{-ID}=[3,1]$.
Source cubic fourfold family: No.~135.  Here
$H\cong C_3$.
The family dimension is $10$.
\par\smallskip\noindent The group $H$ is generated by $g_1=\omega I_5$.
\par\smallskip\noindent A basis of $W_H$ is\par\nopagebreak
\begin{familydisplay}
\begin{SecEightAlignedBasis}{l@{\quad}l@{\quad}l@{\quad}l@{\quad}l}
\PBasisLabel{1}=x_{1}^{3}, & \PBasisLabel{2}=x_{1}^{2}x_{2}, & \PBasisLabel{3}=x_{1}^{2}x_{3}, & \PBasisLabel{4}=x_{1}^{2}x_{4}, & \PBasisLabel{5}=x_{1}^{2}x_{5},\\
\PBasisLabel{6}=x_{1}x_{2}^{2}, & \PBasisLabel{7}=x_{1}x_{2}x_{3}, & \PBasisLabel{8}=x_{1}x_{2}x_{4}, & \PBasisLabel{9}=x_{1}x_{2}x_{5}, & \PBasisLabel{10}=x_{1}x_{3}^{2},\\
\PBasisLabel{11}=x_{1}x_{3}x_{4}, & \PBasisLabel{12}=x_{1}x_{3}x_{5}, & \PBasisLabel{13}=x_{1}x_{4}^{2}, & \PBasisLabel{14}=x_{1}x_{4}x_{5}, & \PBasisLabel{15}=x_{1}x_{5}^{2},\\
\PBasisLabel{16}=x_{2}^{3}, & \PBasisLabel{17}=x_{2}^{2}x_{3}, & \PBasisLabel{18}=x_{2}^{2}x_{4}, & \PBasisLabel{19}=x_{2}^{2}x_{5}, & \PBasisLabel{20}=x_{2}x_{3}^{2},\\
\PBasisLabel{21}=x_{2}x_{3}x_{4}, & \PBasisLabel{22}=x_{2}x_{3}x_{5}, & \PBasisLabel{23}=x_{2}x_{4}^{2}, & \PBasisLabel{24}=x_{2}x_{4}x_{5}, & \PBasisLabel{25}=x_{2}x_{5}^{2},\\
\PBasisLabel{26}=x_{3}^{3}, & \PBasisLabel{27}=x_{3}^{2}x_{4}, & \PBasisLabel{28}=x_{3}^{2}x_{5}, & \PBasisLabel{29}=x_{3}x_{4}^{2}, & \PBasisLabel{30}=x_{3}x_{4}x_{5},\\
\PBasisLabel{31}=x_{3}x_{5}^{2}, & \PBasisLabel{32}=x_{4}^{3}, & \PBasisLabel{33}=x_{4}^{2}x_{5}, & \PBasisLabel{34}=x_{4}x_{5}^{2}, & \PBasisLabel{35}=x_{5}^{3}.
\end{SecEightAlignedBasis}
\end{familydisplay}

\par\Needspace{3\baselineskip}\medskip\noindent\textbf{\boldmath 35.\enspace $G=C_2$, family~2.}\label{fam:threefold-no-35}
$G\text{-ID}=[2,1]$, $H\text{-ID}=[6,2]$.
Source cubic fourfold family: No.~139.  Here
$H\cong C_6$.
The family dimension is $7$.
\par\smallskip\noindent Generators of $H$ are\par\nopagebreak
\begin{familydisplay}
\begin{align*}
&g_{1}=\omega I_5
\qquad g_{2}=\diag(-1,1,1,1,1)\\
\end{align*}
\end{familydisplay}
\par\smallskip\noindent A basis of $W_H$ is\par\nopagebreak
\begin{familydisplay}
\begin{SecEightAlignedBasis}{l@{\quad}l@{\quad}l@{\quad}l@{\quad}l}
\PBasisLabel{1}=x_{1}^{2}x_{2}, & \PBasisLabel{2}=x_{1}^{2}x_{3}, & \PBasisLabel{3}=x_{1}^{2}x_{4}, & \PBasisLabel{4}=x_{1}^{2}x_{5}, & \PBasisLabel{5}=x_{2}^{3},\\
\PBasisLabel{6}=x_{2}^{2}x_{3}, & \PBasisLabel{7}=x_{2}^{2}x_{4}, & \PBasisLabel{8}=x_{2}^{2}x_{5}, & \PBasisLabel{9}=x_{2}x_{3}^{2}, & \PBasisLabel{10}=x_{2}x_{3}x_{4},\\
\PBasisLabel{11}=x_{2}x_{3}x_{5}, & \PBasisLabel{12}=x_{2}x_{4}^{2}, & \PBasisLabel{13}=x_{2}x_{4}x_{5}, & \PBasisLabel{14}=x_{2}x_{5}^{2}, & \PBasisLabel{15}=x_{3}^{3},\\
\PBasisLabel{16}=x_{3}^{2}x_{4}, & \PBasisLabel{17}=x_{3}^{2}x_{5}, & \PBasisLabel{18}=x_{3}x_{4}^{2}, & \PBasisLabel{19}=x_{3}x_{4}x_{5}, & \PBasisLabel{20}=x_{3}x_{5}^{2},\\
\PBasisLabel{21}=x_{4}^{3}, & \PBasisLabel{22}=x_{4}^{2}x_{5}, & \PBasisLabel{23}=x_{4}x_{5}^{2}, & \multicolumn{2}{@{}l@{}}{\PBasisLabel{24}=x_{5}^{3}.}
\end{SecEightAlignedBasis}
\end{familydisplay}

\par\Needspace{3\baselineskip}\medskip\noindent\textbf{\boldmath 36.\enspace $G=C_4$, family~1.}\label{fam:threefold-no-36}
$G\text{-ID}=[4,1]$, $H\text{-ID}=[12,2]$.
Source cubic fourfold family: No.~147.  Here
$H\cong C_{12}$.
The family dimension is $3$.
\par\smallskip\noindent Generators of $H$ are\par\nopagebreak
\begin{familydisplay}
\begin{align*}
&g_{1}=\diag(-\zeta_4,-1,1,1,1)
\qquad g_{2}=\omega I_5\\
\end{align*}
\end{familydisplay}
\par\smallskip\noindent A basis of $W_H$ is\par\nopagebreak
\begin{familydisplay}
\begin{SecEightAlignedBasis}{l@{\quad}l@{\quad}l@{\quad}l@{\quad}l}
\PBasisLabel{1}=x_{1}^{2}x_{2}, & \PBasisLabel{2}=x_{2}^{2}x_{3}, & \PBasisLabel{3}=x_{2}^{2}x_{4}, & \PBasisLabel{4}=x_{2}^{2}x_{5}, & \PBasisLabel{5}=x_{3}^{3},\\
\PBasisLabel{6}=x_{3}^{2}x_{4}, & \PBasisLabel{7}=x_{3}^{2}x_{5}, & \PBasisLabel{8}=x_{3}x_{4}^{2}, & \PBasisLabel{9}=x_{3}x_{4}x_{5}, & \PBasisLabel{10}=x_{3}x_{5}^{2},\\
\PBasisLabel{11}=x_{4}^{3}, & \PBasisLabel{12}=x_{4}^{2}x_{5}, & \PBasisLabel{13}=x_{4}x_{5}^{2}, & \multicolumn{2}{@{}l@{}}{\PBasisLabel{14}=x_{5}^{3}.}
\end{SecEightAlignedBasis}
\end{familydisplay}

\par\Needspace{3\baselineskip}\medskip\noindent\textbf{\boldmath 37.\enspace $G=C_4$, family~2.}\label{fam:threefold-no-37}
$G\text{-ID}=[4,1]$, $H\text{-ID}=[12,2]$.
Source cubic fourfold family: No.~148.  Here
$H\cong C_{12}$.
The family dimension is $3$.
\par\smallskip\noindent Generators of $H$ are\par\nopagebreak
\begin{familydisplay}
\begin{align*}
&g_{1}=\diag(-\zeta_4,-1,1,-1,1)
\qquad g_{2}=\omega I_5\\
\end{align*}
\end{familydisplay}
\par\smallskip\noindent A basis of $W_H$ is\par\nopagebreak
\begin{familydisplay}
\begin{SecEightAlignedBasis}{l@{\quad}l@{\quad}l@{\quad}l@{\quad}l}
\PBasisLabel{1}=x_{1}^{2}x_{2}, & \PBasisLabel{2}=x_{1}^{2}x_{4}, & \PBasisLabel{3}=x_{2}^{2}x_{3}, & \PBasisLabel{4}=x_{2}^{2}x_{5}, & \PBasisLabel{5}=x_{2}x_{3}x_{4},\\
\PBasisLabel{6}=x_{2}x_{4}x_{5}, & \PBasisLabel{7}=x_{3}^{3}, & \PBasisLabel{8}=x_{3}^{2}x_{5}, & \PBasisLabel{9}=x_{3}x_{4}^{2}, & \PBasisLabel{10}=x_{3}x_{5}^{2},\\
\PBasisLabel{11}=x_{4}^{2}x_{5}, & \multicolumn{4}{@{}l@{}}{\PBasisLabel{12}=x_{5}^{3}.}
\end{SecEightAlignedBasis}
\end{familydisplay}

\par\Needspace{3\baselineskip}\medskip\noindent\textbf{\boldmath 38.\enspace $G=C_8$, family~1.}\label{fam:threefold-no-38}
$G\text{-ID}=[8,1]$, $H\text{-ID}=[24,2]$.
Source cubic fourfold family: No.~153.  Here
$H\cong C_{24}$.
The family dimension is $1$.
\par\smallskip\noindent Generators of $H$ are\par\nopagebreak
\begin{familydisplay}
\begin{align*}
&g_{1}=\diag(\zeta_8^3,\zeta_4,-1,1,1)
\qquad g_{2}=\omega I_5\\
\end{align*}
\end{familydisplay}
\par\smallskip\noindent A basis of $W_H$ is\par\nopagebreak
\begin{familydisplay}
\begin{SecEightAlignedBasis}{l@{\quad}l@{\quad}l@{\quad}l@{\quad}l}
\PBasisLabel{1}=x_{1}^{2}x_{2}, & \PBasisLabel{2}=x_{2}^{2}x_{3}, & \PBasisLabel{3}=x_{3}^{2}x_{4}, & \PBasisLabel{4}=x_{3}^{2}x_{5}, & \PBasisLabel{5}=x_{4}^{3},\\
\PBasisLabel{6}=x_{4}^{2}x_{5}, & \PBasisLabel{7}=x_{4}x_{5}^{2}, & \multicolumn{3}{@{}l@{}}{\PBasisLabel{8}=x_{5}^{3}.}
\end{SecEightAlignedBasis}
\end{familydisplay}

\par\Needspace{3\baselineskip}\medskip\noindent\textbf{\boldmath 39.\enspace $G=C_8$, family~2.}\label{fam:threefold-no-39}
$G\text{-ID}=[8,1]$, $H\text{-ID}=[24,2]$.
Source cubic fourfold family: No.~154.  Here
$H\cong C_{24}$.
The family dimension is $1$.
\par\smallskip\noindent Generators of $H$ are\par\nopagebreak
\begin{familydisplay}
\begin{align*}
&g_{1}=\diag(\zeta_8^7,\zeta_4,-\zeta_4,-1,1)
\qquad g_{2}=\omega I_5\\
\end{align*}
\end{familydisplay}
\par\smallskip\noindent A basis of $W_H$ is\par\nopagebreak
\begin{familydisplay}
\begin{SecEightAlignedBasis}{l@{\quad}l@{\quad}l@{\quad}l@{\quad}l}
\PBasisLabel{1}=x_{1}^{2}x_{2}, & \PBasisLabel{2}=x_{2}^{2}x_{4}, & \PBasisLabel{3}=x_{2}x_{3}x_{5}, & \PBasisLabel{4}=x_{3}^{2}x_{4}, & \PBasisLabel{5}=x_{4}^{2}x_{5},\\
\multicolumn{5}{@{}l@{}}{\PBasisLabel{6}=x_{5}^{3}.}
\end{SecEightAlignedBasis}
\end{familydisplay}

\par\Needspace{3\baselineskip}\medskip\noindent\textbf{\boldmath 40.\enspace $G=C_{16}$.}\label{fam:threefold-no-40}
$G\text{-ID}=[16,1]$, $H\text{-ID}=[48,2]$.
See \cite[Example~3.1(4)]{LiYuThreefold}.
Source cubic fourfold family: No.~156.  Here
$H\cong C_{48}$.
The family dimension is $0$.
\par\smallskip\noindent Generators of $H$ are\par\nopagebreak
\begin{familydisplay}
\begin{align*}
&g_{1}=\diag(\zeta_{16}^{13},\zeta_8^3,\zeta_4,-1,1)
\qquad g_{2}=\omega I_5\\
\end{align*}
\end{familydisplay}
\par\smallskip\noindent A basis of $W_H$ is\par\nopagebreak
\begin{familydisplay}
\begin{SecEightAlignedBasis}{l@{\quad}l@{\quad}l@{\quad}l@{\quad}l}
\PBasisLabel{1}=x_{1}^{2}x_{2}, & \PBasisLabel{2}=x_{2}^{2}x_{3}, & \PBasisLabel{3}=x_{3}^{2}x_{4}, & \PBasisLabel{4}=x_{4}^{2}x_{5}, & \PBasisLabel{5}=x_{5}^{3}.
\end{SecEightAlignedBasis}
\end{familydisplay}

\appendix
\section{Singular families in the small-group enumeration}
\label{app:singular-families}

This appendix proves that seven candidate invariant cubic spaces consist
entirely of singular cubics.  Thus none defines a family in
Table~\ref{tab:fourfold}.

The dimensions below refer to the vector spaces $W_H$.  The GAP IDs
of the projective and linear groups, respectively, are
\[
\begin{array}{c|c|c|c|c}
 &G_s&\operatorname{Id}(\pi(H))&\operatorname{Id}(H)&\dim W_H\\ \hline
1&C_2^2&[8,3]&[24,10]&9\\
2&C_2^2&[16,6]&[48,24]&6\\
3&C_2^2&[24,10]&[72,37]&6\\
4&C_4&[8,4]&[24,11]&8\\
5&C_4&[24,10]&[72,37]&8\\
6&S_3&[24,5]&[72,27]&8\\
7&S_3&[72,27]&[216,136]&7
\end{array}
\]
All matrix and polynomial identities below are exact over
$\mathbb Q(\zeta_{12})$.

\subsection{The first $C_2^2$ family}

Let $H$ be generated by $\omega I_6$ and
\[
 A_1=\frac12(0,0,0,1,1,0),\qquad
 A_3=\frac12(0,0,0,0,1,1),
\]
and
\[
 A_2=\diag(B_1,1,C_1),
\]
where
\[
 B_1=
 \begin{pmatrix}
 -443/523&-90/523&-180/523\\
 -2512/1569&419/523&1884/523\\
 -32/1569&12/523&-499/523
 \end{pmatrix},\qquad
 C_1=
 \begin{pmatrix}
 0&(2/7)\omega^2\\
 (7/2)\omega&0
 \end{pmatrix}.
\]

Set
\[
 L=-15x_1+157x_2+2x_3,\qquad
 I=(x_4,x_5,x_6).
\]
Solving the invariance equations gives a basis $f_1,\ldots,f_9$ of
$W_H$ such that $f_1,\ldots,f_5\in I^2$ and
\[
 f_i=LQ_i\qquad (6\leq i\leq9),
\]
where $Q_i\in\CC[x_1,x_2,x_3]_2$.
Consider the line
\[
 \ell=\{L=x_4=x_5=x_6=0\}\subset\PP^5.
\]
For $F=\sum_{i=1}^9a_if_i$, both $F$ and the gradients of
$a_1f_1+\cdots+a_5f_5$ vanish on $\ell$.  On this line,
\[
 \nabla F=\left(\sum_{i=6}^9a_iQ_i\right)\nabla L.
\]
The coefficient in parentheses is a homogeneous quadratic on
$\ell\cong\PP^1$.  It is either zero or has a zero over $\CC$.  Hence every
$F\in W_H$ has a projective singular point.

\subsection{The second $C_2^2$ family}

Let $H$ be generated by $\omega I_6$ and
\begin{align*}
 A_1&=\diag(B_2,C_2,-\zeta _4),\\
 A_2&=\frac12(0,0,0,0,1,1),\\
 A_3&=\diag(B_3,\zeta _4,\zeta _4,-1),\\
 A_4&=\frac12(0,0,0,1,1,0),
\end{align*}
where
\[
 B_2=
 \begin{pmatrix}
 \frac{337-780\zeta _4}{523}&\frac{-12-78\zeta _4}{523}&
   \frac{-453+273\zeta _4}{523}\\
 \frac{-3992+1480\zeta _4}{1569}&\frac{1109+148\zeta _4}{1569}&
   \frac{6170-518\zeta _4}{1569}\\
 \frac{1748-1780\zeta _4}{1569}&\frac{214-178\zeta _4}{1569}&
   \frac{-2120+623\zeta _4}{1569}
 \end{pmatrix},
\]
\[
 C_2=\begin{pmatrix}0&(2/7)\zeta _4\\7/2&0\end{pmatrix},
 \qquad
 B_3=
 \begin{pmatrix}
 -1037/523&-156/523&546/523\\
 2960/1569&1865/1569&-1036/1569\\
 -3560/1569&-356/1569&2815/1569
 \end{pmatrix}.
\]

Put
\[
 L=99x_1-304x_2+196x_3,\qquad
 p=(-20,-2,7).
\]
The invariant space has a basis $f_1,\ldots,f_6$ with
\[
 f_1=x_4x_5x_6,\qquad f_2=Lx_6^2,\qquad
 f_3\in(x_4,x_5)^2,\qquad
 f_4,f_5,f_6\in\CC[x_1,x_2,x_3].
\]
Direct evaluation gives $f_i(p)=0$ for $4\leq i\leq6$ and
\[
 \nabla(a_4f_4+a_5f_5+a_6f_6)(p)=\lambda\nabla L
\]
for some $\lambda\in\CC$.  Write $F=\sum_{i=1}^6a_if_i$.
If $a_2=0$, then $[0:0:0:0:0:1]$ is singular on $F=0$.  If
$a_2\neq0$, choose $w\in\CC$ such that $w^2=-\lambda/a_2$.  The point
\[
 [-20:-2:7:0:0:w]
\]
is singular on $F=0$. Thus every cubic in $W_H$ is singular.

\subsection{The third $C_2^2$ family}

Let $H$ be generated by $\omega I_6$ and
\begin{align*}
 A_1&=\frac12(0,0,0,1,1,0),\\
 A_2&=\diag(B_3,-1,C_3),\\
 A_3&=\diag(B_4,\omega^2,\omega^2,\omega^2),\\
 A_4&=\frac12(0,0,0,0,1,1),
\end{align*}
where $B_3$ is as above,
\[
 C_3=\begin{pmatrix}0&-7/2\\-2/7&0\end{pmatrix},
\]
and
\[
 B_4=\begin{pmatrix}
 \frac{40\omega+483\omega^2}{523}&
 \frac{-45\omega+45\omega^2}{523}&
 \frac{-90\omega+90\omega^2}{523}\\
 \frac{-1256\omega+1256\omega^2}{1569}&
 \frac{471\omega+52\omega^2}{523}&
 \frac{942\omega-942\omega^2}{523}\\
 \frac{-16\omega+16\omega^2}{1569}&
 \frac{6\omega-6\omega^2}{523}&
 \frac{12\omega+511\omega^2}{523}
 \end{pmatrix}.
\]

Choose a basis $f_1,\ldots,f_6$ of $W_H$ so that $f_1,f_2,f_3$ are
linear in $x_1,x_2,x_3$ and quadratic in $x_4,x_5,x_6$, while
$f_4,f_5,f_6\in\CC[x_1,x_2,x_3]$.  On the plane
\[
 \Pi=\{x_1=x_2=x_3=0\},
\]
all the basis forms vanish.  For $F=\sum_{i=1}^6a_if_i$, its only possibly
nonzero partial derivatives on $\Pi$ are
\[
 \begin{pmatrix}F_{x_1}\\F_{x_2}\\F_{x_3}\end{pmatrix}
 =M_a\begin{pmatrix}x_4^2\\x_5^2\\x_6^2\end{pmatrix},
\]
where the rows of $M_a$ are
\begin{align*}
 r_1={}&(a_3,a_2,a_1),\\
 r_2={}&\left(-\frac{304}{99}a_3,
 -\frac{2383}{1287}a_2-\frac{25627}{1716}a_1,
 -\frac{2092}{21021}a_2-\frac{2383}{1287}a_1\right),\\
 r_3={}&\left(\frac{196}{99}a_3,
 \frac{3527}{2574}a_2+\frac{25627}{3432}a_1,
 \frac{1046}{21021}a_2+\frac{3527}{2574}a_1\right).
\end{align*}
These rows satisfy the identity
\[
 -\frac89r_1+r_2+2r_3=0.
\]
Consequently $M_a$ has a nonzero kernel vector $(z_4,z_5,z_6)$.  Since
$\CC$ is algebraically closed, choose $x_i$ with $x_i^2=z_i$ for
$4\leq i\leq6$.  The resulting point
$[0:0:0:x_4:x_5:x_6]$ is singular on $F=0$.

\subsection{The first $C_4$ family}

Let $H$ be generated by $\omega I_6$ and
\begin{align*}
 A_1&=\frac14(0,0,2,2,1,3),\\
 A_2&=\diag(B_5,-1,-1,C_4),\\
 A_3&=\frac12(0,0,0,0,1,1),
\end{align*}
where
\[
 B_5=\begin{pmatrix}-25/101&72/101\\133/101&25/101\end{pmatrix},
 \qquad
 C_4=\begin{pmatrix}0&(3/4)\zeta _4\\(4/3)\zeta _4&0\end{pmatrix}.
\]
The invariant space is spanned by
\begin{align*}
 f_1&=16x_4x_6^2+9x_4x_5^2,&
 f_2&=16x_3x_6^2+9x_3x_5^2,\\
 f_3&=(18x_1-19x_2)x_5x_6,&
 f_4&=(4x_1+7x_2)x_4^2,\\
 f_5&=(4x_1+7x_2)x_3x_4,&
 f_6&=(4x_1+7x_2)x_3^2,\\
 f_7&=175x_2^3+604x_1x_2^2+288x_1^2x_2,&
 f_8&=833x_2^3-700x_1x_2^2+384x_1^3.
\end{align*}
For $F=\sum_{i=1}^8a_if_i$, restrict to the line
\[
 [x_1:x_2:x_3:x_4:x_5:x_6]=[0:0:s:t:0:0].
\]
The value of $F$ is zero, and its gradient is
\[
 (a_4t^2+a_5st+a_6s^2)(4,7,0,0,0,0).
\]
The binary quadratic has a projective zero.  Every member of the family is
therefore singular.

\subsection{The second $C_4$ family}

Let $H$ be generated by $\omega I_6$ and
\begin{align*}
 A_1&=\diag(1,1,-1,-1,C_5),\\
 A_2&=\diag(1,1,1,1,C_6),\\
 A_3&=\diag(B_6,\omega^2,\omega^2,\omega^2,\omega^2),\\
 A_4&=\frac12(0,0,0,0,1,1),
\end{align*}
where
\[
 C_5=\begin{pmatrix}0&3/4\\4/3&0\end{pmatrix},\qquad
 C_6=\begin{pmatrix}0&-(3/4)\zeta _4\\(4/3)\zeta _4&0\end{pmatrix},
\]
and
\[
 B_6=\begin{pmatrix}
 \frac{38\omega+63\omega^2}{101}&
 \frac{36\omega-36\omega^2}{101}\\
 \frac{133\omega-133\omega^2}{202}&
 \frac{63\omega+38\omega^2}{101}
 \end{pmatrix}.
\]
The invariant space is spanned by
\begin{align*}
 f_1&=-16x_4x_6^2+9x_4x_5^2,&
 f_2&=-16x_3x_6^2+9x_3x_5^2,\\
 f_3&=(18x_1-19x_2)x_5x_6,&
 f_4&=(18x_1-19x_2)x_4^2,\\
 f_5&=(18x_1-19x_2)x_3x_4,&
 f_6&=(18x_1-19x_2)x_3^2,\\
 f_7&=3019x_2^3+2700x_1x_2^2+3888x_1^2x_2,&
 f_8&=3325x_2^3+14364x_1x_2^2+2592x_1^3.
\end{align*}
On the line $[0:0:s:t:0:0]$, the value of
$F=\sum_{i=1}^8a_if_i$ is zero and
\[
 \nabla F=(a_4t^2+a_5st+a_6s^2)(18,-19,0,0,0,0).
\]
Again the binary quadratic has a projective zero, so every invariant cubic
is singular.

\subsection{The first $S_3$ family}

Let $H$ be generated by $\omega I_6$, $(1\,2)$, $(1\,2\,3)$,
$\frac12(0,0,0,0,0,1)$, and $\diag(D_1,-\zeta _4)$, where
\[
 D_1=\begin{pmatrix}
 1717/1995&-278/1995&-278/1995&263/133&-2104/1995\\
 -278/1995&1717/1995&-278/1995&263/133&-2104/1995\\
 -278/1995&-278/1995&1717/1995&263/133&-2104/1995\\
 -312/665&-312/665&-312/665&-367/133&624/665\\
 -724/665&-724/665&-724/665&-543/133&783/665
 \end{pmatrix}.
\]

Take a basis $f_1,\ldots,f_8$ of $W_H$ in which
$f_1,f_2\in(x_6^2)$.  On the $S_3$-fixed plane, write
\[
 (x_1,x_2,x_3,x_4,x_5,x_6)=(t,t,t,u,v,0)
\]
and put $L=-263t+78u+181v$.  Direct substitution gives
\[
 f_i(t,t,t,u,v,0)=LQ_i(t,u,v)\qquad(3\leq i\leq8),
\]
where $Q_i\in\CC[t,u,v]_2$.
For $F=\sum_{i=1}^8a_if_i$, put
$Q=\sum_{i=3}^8a_iQ_i$.  On the line in the fixed plane defined by
$L=0$, symmetry in $x_1,x_2,x_3$ gives
\[
 \nabla F=\frac{Q}{3}\,(-263,-263,-263,234,543,0).
\]
The restriction of $Q$ to this line is a binary quadratic.  It has a
projective zero, so every member of the family is singular.

\subsection{The second $S_3$ family}

Let
\[
 Q_{12}=(1\,2)\frac12(0,0,0,0,0,1).
\]
Put
\[
 a=\frac{-\omega+\omega^2}{3},\qquad
 b=\frac{-\omega-2\omega^2}{3},\qquad
 D_2=\begin{pmatrix}a&b&b\\b&a&b\\b&b&a\end{pmatrix}.
\]
Let $H$ be generated by $\omega I_6$, $Q_{12}$, $(1\,3\,2)$, and
\[
 \diag(D_2,-1,-1,-\zeta _4).
\]
The invariant space is spanned by
\begin{align*}
 f_1&=x_5x_6^2,& f_2&=x_4x_6^2,\\
 f_3&=(x_1+x_2+x_3)x_5^2,&
 f_4&=(x_1+x_2+x_3)x_4x_5,\\
 f_5&=(x_1+x_2+x_3)x_4^2,\\
 f_6&=x_1^2x_2+x_1^2x_3+x_2^2x_1+x_2^2x_3+x_3^2x_1+x_3^2x_2,\\
 f_7&=x_1^3+x_2^3+x_3^3+6x_1x_2x_3.
\end{align*}
For $F=\sum_{i=1}^7a_if_i$, consider points
\[
 [x_1:x_2:x_3:x_4:x_5:x_6]=[0:0:0:u:v:0].
\]
At such a point $F=0$, and all partial derivatives are zero except possibly
the first three.  These three derivatives are equal to
\[
 a_5u^2+a_4uv+a_3v^2.
\]
This binary quadratic has a projective zero.  Hence every cubic in $W_H$ is
singular.

\bibliography{reference}

@article {Matsumura1963OnTA,
    AUTHOR = {Matsumura, H. and Monsky, P.},
     TITLE = {On the automorphisms of hypersurfaces},
   JOURNAL = {J. Math. Kyoto Univ.},
  FJOURNAL = {Journal of Mathematics of Kyoto University},
    VOLUME = {3},
      YEAR = {1963/64},
     PAGES = {347--361},
      ISSN = {0023-608X},
   MRCLASS = {14.20},
       DOI = {10.1215/kjm/1250524785},
       URL = {https://doi.org/10.1215/kjm/1250524785},
}

@article {voisin1986torelli,
    AUTHOR = {Voisin, C.},
     TITLE = {Th\'eor\`eme de {T}orelli pour les cubiques de {$\mathbb P^5$}},
   JOURNAL = {Invent. Math.},
  FJOURNAL = {Inventiones Mathematicae},
    VOLUME = {86},
      YEAR = {1986},
    NUMBER = {3},
     PAGES = {577--601},
      ISSN = {0020-9910,1432-1297},
   MRCLASS = {14J30 (14C30)},
MRREVIEWER = {Daniel\ Huybrechts},
       DOI = {10.1007/BF01389270},
       URL = {https://doi.org/10.1007/BF01389270},
}

@article {voisin2008erratum,
    AUTHOR = {Voisin, C.},
     TITLE = {Erratum: ``{A} {T}orelli theorem for cubics in {$\mathbb P^5$}''},
   JOURNAL = {Invent. Math.},
  FJOURNAL = {Inventiones Mathematicae},
    VOLUME = {172},
      YEAR = {2008},
    NUMBER = {2},
     PAGES = {455--458},
      ISSN = {0020-9910,1432-1297},
   MRCLASS = {14J30},
       DOI = {10.1007/s00222-008-0116-z},
       URL = {https://doi.org/10.1007/s00222-008-0116-z},
}

@article {hassett2000special,
    AUTHOR = {Hassett, B.},
     TITLE = {Special cubic fourfolds},
   JOURNAL = {Compositio Math.},
  FJOURNAL = {Compositio Mathematica},
    VOLUME = {120},
      YEAR = {2000},
    NUMBER = {1},
     PAGES = {1--23},
      ISSN = {0010-437X},
       DOI = {10.1023/A:1001706324425},
       URL = {https://doi.org/10.1023/A:1001706324425},
}

@article {laza2010period,
    AUTHOR = {Laza, R.},
     TITLE = {The moduli space of cubic fourfolds via the period map},
   JOURNAL = {Ann. of Math. (2)},
  FJOURNAL = {Annals of Mathematics. Second Series},
    VOLUME = {172},
      YEAR = {2010},
    NUMBER = {1},
     PAGES = {673--711},
      ISSN = {0003-486X,1939-8980},
   MRCLASS = {14J10 (14D22 14J45 14L24)},
MRREVIEWER = {Keiji\ Oguiso},
       DOI = {10.4007/annals.2010.172.673},
       URL = {https://doi.org/10.4007/annals.2010.172.673},
}

@article {looi2009period,
    AUTHOR = {Looijenga, E.},
     TITLE = {The period map for cubic fourfolds},
   JOURNAL = {Invent. Math.},
  FJOURNAL = {Inventiones Mathematicae},
    VOLUME = {177},
      YEAR = {2009},
    NUMBER = {1},
     PAGES = {213--233},
      ISSN = {0020-9910,1432-1297},
   MRCLASS = {14J10 (14D07 14D20 14J45)},
MRREVIEWER = {Valery\ Alexeev},
       DOI = {10.1007/s00222-009-0178-6},
       URL = {https://doi.org/10.1007/s00222-009-0178-6},
}

@article {gonzalez2011automorphisms,
    AUTHOR = {Gonz\'alez-Aguilera, V. and Liendo, A.},
     TITLE = {Automorphisms of prime order of smooth cubic {$n$}-folds},
   JOURNAL = {Arch. Math. (Basel)},
  FJOURNAL = {Archiv der Mathematik},
    VOLUME = {97},
      YEAR = {2011},
    NUMBER = {1},
     PAGES = {25--37},
      ISSN = {0003-889X,1420-8938},
   MRCLASS = {14J50 (14J45)},
MRREVIEWER = {Ewa\ Szelachowska},
       DOI = {10.1007/s00013-011-0247-0},
       URL = {https://doi.org/10.1007/s00013-011-0247-0},
}

@article {laza2022automorphisms,
    AUTHOR = {Laza, R. and Zheng, Z.},
     TITLE = {Automorphisms and periods of cubic fourfolds},
   JOURNAL = {Math. Z.},
  FJOURNAL = {Mathematische Zeitschrift},
    VOLUME = {300},
      YEAR = {2022},
    NUMBER = {2},
     PAGES = {1455--1507},
      ISSN = {0025-5874,1432-1823},
   MRCLASS = {14J50 (14C30 14D20 14J10 14J28)},
       DOI = {10.1007/s00209-021-02810-x},
       URL = {https://doi.org/10.1007/s00209-021-02810-x},
}

@article {LiYuThreefold,
    AUTHOR = {Wei, L. and Yu, X.},
     TITLE = {Automorphism groups of smooth cubic threefolds},
   JOURNAL = {J. Math. Soc. Japan},
  FJOURNAL = {Journal of the Mathematical Society of Japan},
    VOLUME = {72},
      YEAR = {2020},
    NUMBER = {4},
     PAGES = {1327--1343},
      ISSN = {0025-5645,1881-1167},
   MRCLASS = {14J30 (14L30)},
       DOI = {10.2969/jmsj/83088308},
       URL = {https://doi.org/10.2969/jmsj/83088308},
}

@article {yu2020moduli,
    AUTHOR = {Yu, C. and Zheng, Z. },
     TITLE = {Moduli spaces of symmetric cubic fourfolds and locally symmetric varieties},
   JOURNAL = {Algebra Number Theory},
  FJOURNAL = {Algebra \& Number Theory},
    VOLUME = {14},
      YEAR = {2020},
    NUMBER = {10},
     PAGES = {2647--2683},
      ISSN = {1937-0652,1944-7833},
   MRCLASS = {14J10 (11F55 14D20 14J28 14J45 14L30)},
       DOI = {10.2140/ant.2020.14.2647},
       URL = {https://doi.org/10.2140/ant.2020.14.2647},
}

@article {Zheng2020OnAA,
    AUTHOR = {Zheng, Z. },
     TITLE = {On abelian automorphism groups of hypersurfaces},
   JOURNAL = {Israel J. Math.},
  FJOURNAL = {Israel Journal of Mathematics},
    VOLUME = {247},
      YEAR = {2022},
    NUMBER = {1},
     PAGES = {479--498},
      ISSN = {0021-2172,1565-8511},
   MRCLASS = {14J50 (14J40 14J45 14L30)},
       DOI = {10.1007/s11856-021-2275-1},
       URL = {https://doi.org/10.1007/s11856-021-2275-1},
}

@article {yang2024automorphism,
    AUTHOR = {Yang, S. and Yu, X. and Zhu, Z. },
     TITLE = {Automorphism groups of cubic fivefolds and fourfolds},
   JOURNAL = {J. Lond. Math. Soc. (2)},
  FJOURNAL = {Journal of the London Mathematical Society. Second Series},
    VOLUME = {110},
      YEAR = {2024},
    NUMBER = {4},
     PAGES = {Paper No. e12997, 35},
      ISSN = {0024-6107,1469-7750},
   MRCLASS = {14J50 (14J30 14J45 14L30)},
       DOI = {10.1112/jlms.12997},
       URL = {https://doi.org/10.1112/jlms.12997},
}

@article {KOIKE202512,
    AUTHOR = {Koike, K. },
     TITLE = {Cubic fourfolds with symplectic automorphisms},
   JOURNAL = {J. Algebra},
  FJOURNAL = {Journal of Algebra},
    VOLUME = {680},
      YEAR = {2025},
     PAGES = {12--57},
      ISSN = {0021-8693,1090-266X},
   MRCLASS = {14J50 (14L30)},
       DOI = {10.1016/j.jalgebra.2025.04.037},
       URL = {https://doi.org/10.1016/j.jalgebra.2025.04.037},
}

@article {KOIKE2026,
    AUTHOR = {Koike, K. },
     TITLE = {Corrigendum to ``{C}ubic fourfolds with symplectic automorphisms''},
   JOURNAL = {J. Algebra},
  FJOURNAL = {Journal of Algebra},
    VOLUME = {705},
      YEAR = {2026},
     PAGES = {358--362},
      ISSN = {0021-8693,1090-266X},
       DOI = {10.1016/j.jalgebra.2026.05.021},
       URL = {https://doi.org/10.1016/j.jalgebra.2026.05.021},
}

@misc {mayanskiy2013abelianautomorphismgroupscubic,
    AUTHOR = {Mayanskiy, E. },
     TITLE = {Abelian automorphism groups of cubic fourfolds},
      YEAR = {2013},
      NOTE = {arXiv:1308.5150v2},
       URL = {https://arxiv.org/abs/1308.5150},
}

@misc {zheng2021liftableabelian,
    AUTHOR = {Peng, T.  and Zheng, Z. },
     TITLE = {Abelian automorphism groups of quartic surfaces and cubic fourfolds},
      YEAR = {2021},
      NOTE = {arXiv:2106.14214v2},
       URL = {https://arxiv.org/abs/2106.14214},
}

@misc {FWZ26,
    AUTHOR = {Fu, J.  and Wang, S.  and Zheng, Z. },
     TITLE = {Non-symplectic indices of automorphism groups of smooth cubic fourfolds},
      YEAR = {2026},
      NOTE = {arXiv:2606.11754v1},
       URL = {https://arxiv.org/abs/2606.11754},
}

@misc {FZ26,
    AUTHOR = {Fu, J.  and Zheng, Z. },
     TITLE = {Automorphism groups of smooth cubic fourfolds through lattice theory},
      YEAR = {2026},
      NOTE = {arXiv:2609.06683v1},
       URL = {https://arxiv.org/abs/2609.06683},
}

@misc {xiezhenglift,
    AUTHOR = {Xie, B.  and Zheng, Z. },
     TITLE = {Sylow criteria for liftability of automorphism groups of smooth hypersurfaces},
      YEAR = {2026},
      NOTE = {arXiv:2607.23465v1},
       URL = {https://arxiv.org/abs/2607.23465},
}

@misc {xiezhengfull,
    AUTHOR = {Xie, B.  and Zheng, Z. },
     TITLE = {Small-subgroup criteria for liftability of automorphism groups of smooth hypersurfaces},
      YEAR = {2026},
      NOTE = {arXiv:2609.15613v1},
       URL = {https://arxiv.org/abs/2609.15613},
}

@incollection {MR1309681,
    AUTHOR = {Popov, V. L. and Vinberg, E. B.},
     TITLE = {Invariant theory},
 BOOKTITLE = {Algebraic geometry. {IV}},
    SERIES = {Encyclopaedia of Mathematical Sciences},
    VOLUME = {55},
      YEAR = {1994},
     PAGES = {123--278},
 PUBLISHER = {Springer-Verlag},
   ADDRESS = {Berlin},
       DOI = {10.1007/978-3-662-03073-8_2},
       URL = {https://doi.org/10.1007/978-3-662-03073-8_2},
}

@misc {stacks-project,
    AUTHOR = {{The Stacks Project Authors}},
     TITLE = {The {Stacks Project}},
      YEAR = {2026},
       URL = {https://stacks.math.columbia.edu},
}

@article {harrison1975higherdegreeforms,
    AUTHOR = {Harrison, D. K.},
     TITLE = {A {G}rothendieck ring of higher degree forms},
   JOURNAL = {J. Algebra},
  FJOURNAL = {Journal of Algebra},
    VOLUME = {35},
      YEAR = {1975},
     PAGES = {123--138},
      ISSN = {0021-8693},
       DOI = {10.1016/0021-8693(75)90039-3},
       URL = {https://doi.org/10.1016/0021-8693(75)90039-3},
}

@article {huang2022centres,
    AUTHOR = {Huang, H.-L. and Lu, H. and Ye, Y. and Zhang, C.},
     TITLE = {On centres and direct sum decompositions of higher degree forms},
   JOURNAL = {Linear Multilinear Algebra},
  FJOURNAL = {Linear and Multilinear Algebra},
    VOLUME = {70},
      YEAR = {2022},
    NUMBER = {22},
     PAGES = {7290--7306},
       DOI = {10.1080/03081087.2021.1985057},
       URL = {https://doi.org/10.1080/03081087.2021.1985057},
}

@article {Hosoh1997,
    AUTHOR = {Hosoh, T.},
     TITLE = {Automorphism groups of cubic surfaces},
   JOURNAL = {J. Algebra},
  FJOURNAL = {Journal of Algebra},
    VOLUME = {192},
      YEAR = {1997},
    NUMBER = {2},
     PAGES = {651--677},
       DOI = {10.1006/jabr.1996.6968},
       URL = {https://doi.org/10.1006/jabr.1996.6968},
}

@book {Dolgachev2012,
    AUTHOR = {Dolgachev, I. V.},
     TITLE = {Classical algebraic geometry: a modern view},
 PUBLISHER = {Cambridge University Press},
   ADDRESS = {Cambridge},
      YEAR = {2012},
     PAGES = {xii+639},
      ISBN = {978-1-107-01765-8},
       DOI = {10.1017/CBO9781139084437},
       URL = {https://doi.org/10.1017/CBO9781139084437},
}

@misc {he2025cubicfourfoldsorder7automorphism,
    AUTHOR = {He, X. and Li, Y. and Wang, S. and Zheng, Z.},
     TITLE = {Cubic fourfolds with an order-$7$ automorphism},
      YEAR = {2025},
      NOTE = {arXiv:2509.26359v1},
       URL = {https://arxiv.org/abs/2509.26359},
}

@article {Brandhorst_Hofmann_2023,
    AUTHOR = {Brandhorst, S. and Hofmann, T.},
     TITLE = {Finite subgroups of automorphisms of {K3} surfaces},
   JOURNAL = {Forum Math. Sigma},
  FJOURNAL = {Forum of Mathematics, Sigma},
    VOLUME = {11},
      YEAR = {2023},
     PAGES = {e54},
       DOI = {10.1017/fms.2023.50},
       URL = {https://doi.org/10.1017/fms.2023.50},
}

@manual {GAP4,
       KEY = {GAP},
ORGANIZATION = {The GAP Group},
     TITLE = {{GAP} -- Groups, Algorithms, and Programming, Version 4.15.1},
      YEAR = {2025},
       URL = {https://www.gap-system.org},
}

@manual {OSCAR,
       KEY = {OSCAR},
ORGANIZATION = {The OSCAR Team},
     TITLE = {{OSCAR} -- Open Source Computer Algebra Research System, Version 1.7.3},
      YEAR = {2026},
       URL = {https://www.oscar-system.org},
}

@misc{FuWangZhengAutcub4fold2026,
  author       = {Fu, J. and Wang, S. and Zheng, Z.},
  title        = {Computational materials for automorphism groups of smooth cubic threefolds and fourfolds},
  year         = {2026},
  howpublished = {Zenodo, \url{https://doi.org/10.5281/zenodo.22799306}},
  note         = {Version v1.0.1},
  doi          = {10.5281/zenodo.22799306},
  url          = {https://zenodo.org/records/22799306}
}

\end{document}